\crefname{hypothesis}{Hypothesis}{Hypotheses}
\title{Nonlocal problems with local boundary conditions I: function spaces and variational principles
}
\author{James M. Scott\thanks{Department of Applied Physics and Applied Mathematics, and the Data Science Institute, Columbia University, New York, NY 10027 
  (\email{jms2555@columbia.edu}, \email{qd2125@columbia.edu}).}
\and Qiang Du\footnotemark[1]}
\begin{document}

\maketitle

\begin{abstract}
We present a systematic study on a class of nonlocal integral functionals
for functions defined on a bounded domain
and the naturally induced function spaces.
The function spaces are equipped with a seminorm depending on finite differences weighted by a position-dependent function, which leads to heterogeneous localization on the domain boundary.
We show the existence of minimizers for nonlocal variational problems with classically-defined, local boundary constraints,
together with the variational convergence of these functionals to classical counterparts in the localization limit. This program necessitates a thorough study of the nonlocal space; we demonstrate properties such as a Meyers-Serrin theorem, trace inequalities, and compact embeddings,  which are facilitated by new studies of boundary-localized convolution operators.
\end{abstract}

\begin{keywords}
nonlocal equations, boundary-value problems, nonlocal function spaces, fractional Sobolev spaces, Gamma convergence, heterogeneous localization, vanishing horizon
\end{keywords}

\begin{MSCcodes}
45K05, 35J20, 46E35
\end{MSCcodes}

\section{Introduction}

We are interested in nonlocal variational problems posed on a bounded domain $\Omega \subset \bbR^d$ with natural energy space characterized by the seminorm
\begin{equation}
    \int_{\Omega} \int_{\Omega} \gamma(\bx,\by) |u(\by)-u(\bx)|^p \, \rmd \by \, \rmd \bx\,,
\end{equation}
for measurable functions $u : \Omega \to \bbR^d$. 
Here, the constant $p \in [1,\infty)$ is a Lebesgue exponent and the function $\gamma$ represents a nonlocal interaction kernel.
In this work, our focus is given to kernels of the form
\begin{equation*}   \gamma(\bx,\by) =  
\mathds{1}_{ \{ |\by-\bx| < \delta \eta(\bx) \}}
\frac{C}{ |\by-\bx|^{\beta} (\delta \eta(\bx))^{d+p-\beta} }\,
\end{equation*}
with an exponent $\beta\in [0, d+p) $, a normalization constant $C>0$, a scaling parameter $\delta>0$,  and a position-dependent weight $\eta=\eta(\bx)$.

Variational problems associated to nonlocal energies with various forms of $\gamma(\bx,\by)$ appear widely in both analysis and applications \cite{andreu2010nonlocal,barles2014neumann,barlow2009non,braides2022compactness,bucur2016nonlocal,caffarelli2007extension,caffarelli2011regularity,craig2016blob,carrillo2019blob,Du12sirev,
Du2019book,Gilboa-Osher,grinfeld2005non,lovasz2012large,MeKl04,mogilner1999non,Nochetto2015,Coifman05,shi2017,Valdinoci:2009,Kassmann}.
Earlier studies of these variational problems on bounded domains have taken several different paths. 
Along the path that
$\gamma(\bx,\by) =C|\by-\bx|^{-\beta}
$ with
$\beta \in (0,d+p)$ and 
$\eta$ constant, so that ${\gamma=\gamma}(\bx,\by)$ is singular on the diagonal $\bx=\by$, both volume-constraint problems and classical boundary-value problems have been investigated, see for example \cite{cortazar,DyKa19,Ros16,andreu2010nonlocal} and additional references cited therein. If in particular $\beta > d + 1$, then classical boundary values can be prescribed via the trace operator, see \cite{A75,Hitchhiker,grube2023robust} for cases of singular kernels that give rise to solutions in fractional
Sobolev-Slobodeckij spaces. 

Down another path, with a compactly supported and translation invariant kernel, e.g., $\gamma(\bx,\by)= \delta^{-d-p}\rho(|\bx-\by|/\delta)$ for a function $\rho$ supported in the unit interval $(0,1)$ and a constant (horizon parameter $\delta>0$) that measures the range of nonlocal interactions. One natural route to take is to define the so-called nonlocal \textit{volumetric constraint} to complement the equation defined on $\Omega$ \cite{Du12sirev,Du-NonlocalCalculus,Du2022nonlocal}. An example is the prescription of $u(\bx)$ in a layer consisting of $\bx\in\Omega^c$ with $\dist(\bx, \Omega)<\delta$.
An alternative is to modify the nonlocal interaction rules involving $u=u(\bx)$ in a layered domain, say, for $\bx \in \Omega$ with $\dist(\bx, \p \Omega) < \delta$. These volumetric conditions can recover traditional boundary conditions in the local limit as $\delta\to 0$ under suitable conditions, see for example \cite{bellido2015hyperelasticity,mengesha2015localization,Du2022nonlocal,d2020physically,foss2022convergence,Lipton-2016}. Meanwhile, in the regime $\delta\to \infty$ with a suitably rescaled fractional kernel, these problems are related to studies of fractional differential equations defined on a bounded domain
\cite{bellido2021restricted,Foghem2022,Grubb,d2021connections}. In addition, one can find connections to the continuum limits of discrete graph operators and discrete particle interactions \cite{braides2022compactness,Coifman05,garcia2020error}.
{For various nonlocal problems, studies of their well-posedness subject to nonlocal volumetric constraints can be found, for example, in \cite{Du-NonlocalCalculus, MengeshaDuElasticity}, which offered desirable mathematical insight as demonstrated for a number of applications such as the peridynamics models developed in mechanics \cite{Silling2000,Silling2008,Du-Zhou2011,Lipton-2014,Valdinoci-peridynamic}, nonlocal diffusion and jump processes \cite{Du12sirev,Burch2014exit-time} and nonlocal Stokes equations for the analysis of smoothed particle hydrodynamics \cite{Du-Tian2020}.}

Still another path is to mix classical boundary conditions and volume-constraint conditions in constitutive models that blend local and nonlocal models. For an extensive discussion relating to the many choices of blended models in applications such as peridynamics, see the survey \cite{d2022review}.

We are interested in  boundary-value problems for nonlocal problems on a bounded domain in the classical sense, that is, the boundary conditions are prescribed on $\partial \Omega$ only.
The motivation is two-fold: first,  while the nonlocal constraints are natural, they are not perfect choices.  Theoretically, nonlocal constraints may raise unintended concerns about the regularity of solutions, for instance, non-constant functions vanishing in a layer of nonzero measure no longer enjoy analyticity, and solutions of problems with smooth kernels may experience non-physical or undesirable jumps at the boundary due to unmatched nonlocal constraints \cite{Du2022nonlocal}. In practice, developers of simulation codes for applications of nonlocal models
have ample practical reasons to keep local boundary conditions in implementation, even though a nonlocal model might be derived and/or deemed a better modeling choice in the domain of interest.

To allow for the prescription of local boundary conditions, the nonlocal energies and the nonlocal solution spaces must be defined so that boundary values of the solutions make sense. 
In the case where the
kernel $\gamma=\gamma(\bx,\by)$ does not have sufficient singularity on the diagonal  $\bx=\by$, it means that some localizing property near the boundary should hold. For instance,
in \cite{du2022fractional, tian2017trace, tao2019nonlocal},
a function $\delta \min\{1, \dist(\bx, \partial \Omega)\}$ is introduced to characterize the extent of nonlocal interactions at a point $\bx\in \Omega$, instead of taking a constant $\delta$ as the horizon parameter everywhere in the domain.
Clearly, the interactions are localized on the boundary.
A consequence of this type of {\em heterogeneous localization} is that functions in $L^p(\Omega)$  with a  bounded energy can have well-defined traces on $\partial \Omega$ to allow classical,  
local boundary conditions for nonlocal problems, see \cite{tian2017trace,Foss2021}. Studies of nonlocal operators with heterogeneous localization also appear in the seamless coupling of local and nonlocal models \cite{tao2019nonlocal}.

In this first part of a series of works on the analysis of these nonlocal variational problems with local boundary conditions imposed via heterogeneous localization, 
we rigorously establish their well-posedness theory and examine the convergence to their classical local counterparts.
We adopt a general heterogeneous localization strategy elucidated in later sections by a function $q$ controlling the rate of localization at the boundary, which bears significant consequences in the studies presented in subsequent papers.
This aspect is novel in the context of the analysis of nonlocal problems, so we also study the general nonlocal function spaces $\mathfrak{W}^{\beta,p}[\delta;q](\Omega)$.

\subsection{Nonlocal function spaces}\label{sec:FunctionSpaces}
Throughout the paper, we assume that for $d\geq 1$, $\Omega \subset \bbR^d$ is an open connected set (a domain) that is bounded and Lipschitz.
To describe our main findings, we first introduce the function space
\begin{equation}
    \frak{W}^{\beta,p}[\delta;q](\Omega) := \{ u \in L^p(\Omega) \, :\, [u]_{ \frak{W}^{\beta,p}[\delta;q](\Omega) } < \infty \}\,, 
\end{equation}
which is a Banach space equipped with the norm determined by
$$
\Vnorm{u}_{\frak{W}^{\beta,p}[\delta;q](\Omega)}^p := \Vnorm{u}_{L^p(\Omega)}^p + [u]_{\frak{W}^{\beta,p}[\delta;q](\Omega)}^p\, .
$$
The specific form of the nonlocal seminorm under consideration here, for given exponents $p$ and $\beta$ and constant $\delta$,
is defined by 
\begin{equation}\label{eq:Intro:NonlocalSeminorm}
[u]_{\frak{W}^{\beta,p}[\delta;q](\Omega)}^p =   \int_{\Omega} \int_{\Omega} \gamma_{\beta,p}[\delta;q](\bx,\by) |u(\by)-u(\bx)|^p \, \rmd \by \, \rmd \bx\,,
\end{equation}
where 
$p \in [1,\infty)$ and
\begin{equation}
\label{assump:beta}
 \beta \in [0,d+p)\,,
\tag{\ensuremath{\rmA_{\beta}}}
\end{equation}
taken as assumptions throughout the paper unless noted otherwise. The constant
$\delta > 0$ is the \textit{bulk horizon parameter} and
the kernel in \eqref{eq:Intro:NonlocalSeminorm} is defined as
\begin{equation}\label{eq:Intro:kernelgamma}
	\gamma_{\beta,p}[\delta;q](\bx,\by) := \mathds{1}_{ \{ |\by-\bx| < \delta q(\dist(\bx,\p \Omega)) \} } \frac{ C_{d,\beta,p} }{ |\bx-\by|^{\beta} } \frac{1}{ (\delta q(\dist(\bx,\p \Omega)))^{d+p-\beta} }\,.
\end{equation}
For a Lebesgue measurable set $A \subset \bbR^d$, $\mathds{1}_A$ defines its standard characteristic function. $C_{d,\beta,p} > 0$ is a normalization constant so that for any $\bx \in \Omega$, 
\begin{equation}\label{eq:Intro:StdKernelNormalization}
    \int_{\bbR^d} \gamma_{\beta,p}[\delta;q](\bx,\by)|\bx-\by|^p \, \rmd \by = \int_{B(0,1)} \frac{ C_{d,\beta,p} }{ |\bsxi|^{\beta-p} } \, \rmd \bsxi =  \frac{ \sqrt{\pi} \Gamma ( \frac{d+p}{2} ) }{ \Gamma(\frac{p+1}{2} ) \Gamma(\frac{d}{2}) } := \overline{C}_{d,p}\,,
\end{equation}
with $B(0,1)$ denoting the unit ball centered at the origin in $\mathbb{R}^d$ and $\Gamma(z)$ denoting the Euler gamma function.
In fact, we see directly that $C_{d,\beta,p} = \overline{C}_{d,p} \frac{d+p-\beta}{\sigma(\bbS^{d-1})}$, where $\sigma$ denotes surface measure and $\bbS^{d-1} \subset \bbR^d$ is the unit sphere. 
These constants are defined so that the nonlocal seminorm is consistent with the classical Sobolev seminorm in a precise way, as will be discussed later.

The function $q: [0,\infty) \to [0,\infty)$ is used to characterize the dependence of the localization  on the distance function. It is assumed to satisfy the following:
\begin{equation}\label{assump:NonlinearLocalization}	\begin{aligned}
		i) &\, q \in C^k([0,\infty)) \text{ for some } k \in \bbN \cup \{\infty\},
        \; q(0)=0 \text{ and } 0 < q(r) \leq r,\; \forall r >  0;\\
		ii) &\, 0 \leq q'(r) \leq 1,\; \forall r \geq 0\,, \text{ and for a fixed } c_q >0,\;  q'(r) > 0,\; \forall r \in (0,c_q]; \\
        iii) &\, \text{there exists } C_q \geq 1 \text{ such that } q(2r) \leq C_q \, q(r)\,\; \forall r \in (0,\infty); \\
		iv) &\, \text{if } k \geq 2\,, |q''(r)| \in L^{\infty}([0,\infty))\,.
	\end{aligned}
	\tag{\ensuremath{\rmA_{q}}}
\end{equation}
Conditions i) and ii) ensure that $q$ is (super)linear near $0$, and condition iii) is a kind of homogeneity condition. Condition iii) additionally implies that 
\begin{equation}\label{eq:NonlinLocAssump:Consequence}
    \frac{q(R)}{q(r)} \leq C_q \left( \frac{R}{r} \right)^{\log_2(C_q)} \quad \text{for all }  0 < r \leq R < \infty\,.
\end{equation}
Some examples of $q$ satisfying \eqref{assump:NonlinearLocalization} with $k = \infty$ are $q(r) = r$ and $q(r) = \frac{2}{\pi} \arctan(r)$. 
Another class of examples is $q(r) \approx \frac{1}{N} \min \{ r^N, 1 \}$ for $N \in \bbN$, mollified in a neighborhood of $r=1$ so that $q \in C^{k}$ for any desired $k$.

The function $\delta q(\dist(\bx,\p \Omega))$ used in \eqref{eq:Intro:kernelgamma} does not exceed $\delta$ for $\bx$ in all of $\Omega$, which leads to the naming of $\delta$ as the bulk horizon parameter, but shrinks to $0$ as $\bx\to \p\Omega$, hence leading to boundary localization. With these features, it
represents the extent of nonlocal interaction that takes on a more complex form than merely staying as a constant throughout the domain. The latter case, given by 
$\delta q(\dist(\bx,\p \Omega))=\delta$  
for any $\bx\in\Omega$ and a constant horizon parameter $\delta>0$ has been a popular choice for which the normalization condition for the kernel used in the seminorm implies that $[v]_{ \frak{W}^{\beta,p}[\delta;q](\Omega) } = \Vnorm{\grad v}_{L^p(\Omega)}$ for any linear function $v=v(\bx)$. 
Meanwhile, the choice of exponents $d+p-\beta$ and $\beta$ are made so that the nonlocal seminorm $[\cdot]_{ \frak{W}^{\beta,p}[\delta;q](\Omega) }$ serves as an analogue of the seminorm on the classical Sobolev space.  
Note that the only factors that ``genuinely'' determine the nonlocal function space $\frak{W}^{\beta,p}[\delta;q](\Omega)$ are $\beta$, $p$, $q$ and $\Omega$.  Different positive values of $\delta$ result in the same equivalent space, as demonstrated later in \Cref{thm:InvariantHorizon}.

Throughout this work we assume the existence of
a generalized distance function $\lambda : \overline{\Omega} \to [0,\infty)$ that satisfies the following:
\begin{equation}\label{assump:Localization}
	\begin{aligned}
		i) & \, \text{there exists a constant } \kappa_0 \geq 1 \text{ such that } \\
		&\quad \frac{1}{\kappa_0} \dist(\bx,\p \Omega) \leq \lambda(\bx) \leq \kappa_0 \dist(\bx,\p \Omega),\; \forall  \bx \in \overline{\Omega}\,;\\
        ii) & \, \text{there exists a constant } \kappa_1 > 0 \text{ such that } \\
        &\quad |\lambda(\bx) - \lambda(\by)| \leq \kappa_1 |\bx-\by|, \; \forall \bx,\by \in \Omega; \\
		iii) & \, \lambda \in C^0(\overline{\Omega}) \cap C^{k}(\Omega) \text{ for some } k \in \bbN_0 \cup \{\infty\}; \text{ and } \\
        iv) & \, \text{for each multi-index } \alpha \in \bbN^d_0 \text{ with } |\alpha| \leq k\,, \\
		&\quad \exists \kappa_{\alpha} > 0 \text{ such that } |D^\alpha \lambda(\bx)| \leq \kappa_{\alpha} |\dist(\bx,\p \Omega)|^{1-|\alpha|}, \;  \forall \bx \in \Omega\,.
	\end{aligned} \tag{\ensuremath{\rmA_{\lambda}}}
\end{equation}

Note that conditions i)-ii) are equivalent to conditions iii)-iv) when $k = 1$.
For any domain $\Omega$, a generalized distance function $\lambda$ with $k = \infty$ and all $\kappa_\alpha$ depending only on $d$ is guaranteed to exist; see \cite{Stein}.
Note that the distance function itself satisfies \eqref{assump:Localization} for $k = 0$ and $\kappa_1 = 1$, though in some of our later discussions, higher values of $k$ in  \eqref{assump:Localization} is preferred. Thus,
our analysis encompasses the case that $q(\dist(\bx,\p \Omega)$) is a smooth function that allows for specific forms of heterogeneous localization on the boundary $\p \Omega$, i.e., it is constant away from $\p \Omega$ and vanishes as $\bx$ approaches $\p \Omega$; see further discussion in \Cref{subsec:Examples}.

\subsection{Boundary-localized convolutions}\label{sec:LocalizedConvolution}
An essential tool in this analysis is the convolution-type operator
\begin{equation}\label{eq:ConvolutionOperator}
	K_{\delta}u (\bx) = K_\delta[\lambda,q,\psi](\bx) := \int_{\Omega} \frac{1}{(\eta_\delta[\lambda,q](\bx))^d} \psi \left( \frac{|\by-\bx|}{ \eta_\delta[\lambda,q](\bx) } \right) u(\by) \,\rmd \by , \; \bx \in \Omega.
\end{equation}
Here, $\psi:\bbR \to [0,\infty)$ is a standard mollifier satisfying
\begin{equation}\label{Assump:Kernel}
    \begin{gathered}
    \psi \in C^{k}(\bbR) \text{ for some } k \in \bbN_0 \cup \{\infty\}\,,
	\; \psi(x) \geq 0\, \text{ and } \, \psi(-x) = \psi(x),\;\forall x\in\bbR,\\
    [-c_\psi,c_{\psi}] \subset \supp \psi \Subset (-1,1) \text{ for fixed } c_{\psi} > 0\,, \; \text{ and }
    \int_{\bbR^d} \psi(|\bx|) \, \rmd \bx = 1\,.
    \end{gathered}
    \tag{\ensuremath{\rmA_{\psi}}}
\end{equation}
Meanwhile, the function
$\eta_\delta[\lambda,q](\bx) = q(\lambda(\bx))$ 
is given by 
\begin{equation}\label{eq:localizationfunction}
	\eta_\delta[\lambda,q](\bx) := \delta \eta[\lambda,q](\bx) := \delta 
 q(\lambda(\bx))\,, \quad\forall \bx \in \Omega\,,
\end{equation}
where 
$\eta_{1}[\lambda,q] = \eta[\lambda,q]$ is named a
\textit{heterogeneous localization function}. 
While we introduce these notations to emphasize the dependence on $q$ and $\lambda$ whenever multiple heterogeneous localization functions appear simultaneously in the same context, we will write $\eta_\delta[\lambda,q]$ simply as $\eta_\delta$ (with $\eta_1 = \eta$) whenever the dependence is clear from context. The same convention is applied to abbreviate 
$K_\delta[\lambda,q,\psi]$
as $K_\delta$.

For the study of the variational problems, the maximum admissible value of the bulk horizon parameter $\delta$ is chosen to depend on $\eta(\bx)$ as follows:
\begin{equation}
\begin{gathered} 
\delta \in (0, \min\{\underline{\delta}_0,\bar{\delta}_0\}) \;\text{ where }\;
\underline{\delta}_0 := \frac{1}{3 \max \{ 1, \kappa_1, C_q \kappa_0^{\log_2(C_q)} \} }\;\text{ and $\bar{\delta}_0$ is the}\\ 
\text {smallest positive root of } 
M_q(\delta) = \frac{1}{3} \,\text{ for }\,
M_q(\delta) := \frac{1+\kappa_1 \delta}{(1-\kappa_1 \delta)^2} \delta\,.
\label{eq:HorizonThreshold2}
\tag{\ensuremath{\rmA_{\delta}}}
\end{gathered}
\end{equation}
The precise definitions will be motivated later, but for now we note that by 
\eqref{eq:NonlinLocAssump:Consequence} and \eqref{assump:Localization}, we are guaranteed that for all $\delta < \underline{\delta}_0$
\begin{equation}
\begin{gathered}
    \eta_\delta[\lambda,q](\bx) \leq \delta C_q \kappa_0^{\log_2(C_q)} q(\dist(\bx,\p \Omega)) \leq \frac{1}{3} q(\dist(\bx,\p \Omega)) \text{ for all } \bx \in \Omega\,, \text{ and } \\
    |\eta_\delta[\lambda,q](\bx)-\eta_\delta[\lambda,q](\by)| \leq \frac{1}{3} |\bx-\by| \text{ for all } \bx, \by \in \Omega\,.
\end{gathered}   \label{eq:h0property}
\end{equation}

We refer to $K_{\delta}$ as a \textit{boundary-localized convolution} operator. This operator has all of the smoothing properties of classical convolution operators, and additionally recovers the boundary values of a function. To be precise, for all functions $u \in C^0(\overline{\Omega})$, $T K_\delta u = T u$, where $T u = u|_{\p \Omega}$ denotes the trace operator. This property of the boundary-localized convolution is preserved when the operator $T$ is extended to more general Sobolev and nonlocal function spaces.

Throughout the paper, the functions $\lambda$, $q$ and $\psi$ 
may have different orders of smoothness, and subscripts will be added for emphasis and the value of the index $k$ in 
\eqref{assump:NonlinearLocalization},
\eqref{assump:Localization}, and
\eqref{Assump:Kernel}
will vary, and will be specified in each context. 
For example, $q$ is assumed to satisfy \eqref{assump:NonlinearLocalization} for $k = k_q \geq 2$ in  \cref{thm:LocLimit:Dirichlet},
\cref{thm:LocLimit:Neumann} and \cref{thm:LocLimit:Robin}, while
$\psi$ is assumed to satisfy \eqref{Assump:Kernel} for $k=k_\psi \geq 1$ to get the estimate \eqref{eq:Intro:ConvEst:Deriv} in \cref{thm:convolution-estimate}, which relies on estimates in \cref{thm:gradientpsi} and is needed for 
\cref{thm:WellPosedness:Dirichlet},
\cref{thm:WellPosedness:Neumann}, \cref{thm:WellPosedness:Robin} and the theorems on the local limits.

Operators with similar boundary-localizing properties were first -- to our knowledge -- studied in \cite{burenkov1998sobolev,burenkov1982mollifying},
and later in \cite{hintermuller2020variable}.
However, in previous studies,  $\lambda$ is comparable either to $\dist(\bx,\p \Omega)$ or $\rme^{-1/\dist(\bx,\p \Omega)}$, with fixed bulk horizon $=1$; $\lambda$ does not involve the composition with more general nonlinearity $q$, or general bulk horizon $< 1$.
Thus, results like the boundedness on classical function spaces, etc. already obtained in those works, have more straightforward proofs in our setting; 
at the same time the operator $K_\delta$ takes on a form distinct from the earlier works, so that the related results are more general.
It is for this reason that the studies of the operators in classical Sobolev spaces are included in this work. Naturally, our chief interest is to develop their new properties associated to the nonlocal function space $\frak{W}^{\beta,p}[\delta;q](\Omega)$.

Our first main result concerns the utility of boundary-localized convolutions in the study of nonlocal function spaces and variational problems. To illustrate, we present the following theorem:
\begin{theorem}\label{thm:convolution-estimate}
    Let $K_\delta$ be as in \eqref{eq:ConvolutionOperator}, with all of the above assumptions. Then there exists a constant $C$ depending only on $d$, $\beta$, $p$, $\psi$, $q$, $\kappa_0$, $\kappa_1$ and $\Omega$ such that for all $\delta < \underline{\delta}_0$
 	\begin{equation}\label{eq:KdeltaError}
  	\Vnorm{u - K_{\delta} u }_{L^p(\Omega)} \leq 
    C \delta q(\diam(\Omega)) [u]_{\frak{W}^{\beta,p}[\delta;q](\Omega)}\,,
 	\end{equation}
    for all $u \in \frak{W}^{\beta,p}[\delta;q](\Omega)$. Further, for all $\delta$ satisfying \eqref{eq:HorizonThreshold2}
    \begin{equation}\label{eq:Intro:KdeltaError:Frac}
        [u - K_\delta u]_{W^{ (\beta-d)/p,p }(\Omega)} \leq C (\delta q(\diam(\Omega)) )^{ 1 - \frac{\beta-d}{p} } [u]_{ \frak{W}^{\beta,p}[\delta;q](\Omega) }
    \end{equation}
    for all $u \in \frak{W}^{\beta,p}[\delta;q](\Omega)$, whenever $\beta > d$.
    If in addition \eqref{Assump:Kernel} is satisfied for $k=k_\psi \geq 1$, then for all $\delta < \underline{\delta}_0$
    \begin{equation}\label{eq:Intro:ConvEst:Deriv}
		\begin{split}
			\Vnorm{ K_{\delta} u }_{W^{1,p}(\Omega)} 
			\leq C \Vnorm{u}_{\frak{W}^{\beta,p}[\delta;q](\Omega)}\,,
            \qquad \forall u \in \frak{W}^{\beta,p}[\delta;q](\Omega).
		\end{split}
	\end{equation}
    
\end{theorem}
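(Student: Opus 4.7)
The plan is to establish \eqref{eq:KdeltaError}, then \eqref{eq:Intro:ConvEst:Deriv}, and finally \eqref{eq:Intro:KdeltaError:Frac}, since the first two rely on a shared pointwise comparison between the smooth kernel of $K_\delta$ and $\gamma_{\beta,p}[\delta;q]$, and both are used in proving the last. Throughout I write $\Psi_\delta(\bx,\by) := \eta_\delta(\bx)^{-d}\psi(|\by-\bx|/\eta_\delta(\bx))$ and abbreviate $[u] := [u]_{\frak{W}^{\beta,p}[\delta;q](\Omega)}$.

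\textbf{Estimate \eqref{eq:KdeltaError}.} The bound $\eta_\delta(\bx) \leq \tfrac{1}{3}\dist(\bx,\p\Omega)$ (from \eqref{eq:h0property} and \eqref{assump:NonlinearLocalization}(i)) ensures $B(\bx,\eta_\delta(\bx))\subset\Omega$, so $\int_\Omega\Psi_\delta(\bx,\by)\rmd\by = 1$. Hence $u(\bx)-K_\delta u(\bx) = \int\Psi_\delta(\bx,\by)(u(\bx)-u(\by))\rmd\by$, and Jensen's inequality yields $|u-K_\delta u|^p \leq \int\Psi_\delta|u(\bx)-u(\by)|^p\rmd\by$. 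The pivotal pointwise comparison $\Psi_\delta(\bx,\by) \leq C\eta_\delta(\bx)^p\gamma_{\beta,p}[\delta;q](\bx,\by)$ reduces on the common support to verifying $|\bx-\by|^\beta(\delta q(\dist(\bx,\p\Omega)))^{d+p-\beta} \leq C\eta_\delta(\bx)^{d+p}$; this holds since $|\bx-\by|\leq\eta_\delta$ there and $\delta q(\dist(\bx,\p\Omega)) \sim \eta_\delta(\bx)$ by \eqref{assump:NonlinearLocalization}(iii), \eqref{assump:Localization}(i), and \eqref{eq:NonlinLocAssump:Consequence}. Integrating gives $\|u-K_\delta u\|_{L^p}^p \leq C(\sup_\Omega\eta_\delta)^p[u]^p$, and $\sup_\Omega \eta_\delta \leq C\delta q(\diam(\Omega))$ follows from $\lambda\leq\kappa_0\diam(\Omega)$ and the doubling estimate. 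As a byproduct I record the pointwise bound $|u(\bx)-K_\delta u(\bx)|^p \leq C\eta_\delta(\bx)^p G(\bx)$ with $G(\bx) := \int\gamma_{\beta,p}(\bx,\by)|u(\bx)-u(\by)|^p\rmd\by$.

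\textbf{Estimate \eqref{eq:Intro:ConvEst:Deriv}.} Since $\int\Psi_\delta\rmd\by\equiv 1$, differentiating under the integral and subtracting $u(\bx)$ yields $\nabla K_\delta u(\bx) = \int\nabla_\bx\Psi_\delta(\bx,\by)(u(\by)-u(\bx))\rmd\by$. Using $k_\psi\geq 1$ together with $|\nabla\eta_\delta|\leq\delta\kappa_1$ (from \eqref{assump:Localization}(ii) and \eqref{assump:NonlinearLocalization}(ii)), a chain-rule calculation bounds $|\nabla_\bx\Psi_\delta(\bx,\by)| \leq C\eta_\delta(\bx)^{-(d+1)}\mathds{1}_{|\bx-\by|<\eta_\delta}$. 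Jensen's inequality in the form $(\int fg)^p \leq (\int g)^{p-1}\int f^pg$ with $g=\mathds{1}_{|\bx-\by|<\eta_\delta}/\eta_\delta^{d+1}$ (so $\int g\rmd\by = C_d/\eta_\delta$) yields $|\nabla K_\delta u|^p \leq C\int\mathds{1}_{|\bx-\by|<\eta_\delta}\eta_\delta^{-(d+p)}|u(\by)-u(\bx)|^p\rmd\by$, and the same comparison as in \eqref{eq:KdeltaError} gives $\mathds{1}\eta_\delta^{-(d+p)} \leq C\gamma_{\beta,p}$, producing $\|\nabla K_\delta u\|_{L^p}^p \leq C[u]^p$. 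A standard Young-type argument (uniform bound on $\int_\Omega\Psi_\delta(\bx,\by)\rmd\bx$ in $\by$) gives $\|K_\delta u\|_{L^p}\leq C\|u\|_{L^p}$, completing the bound.

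\textbf{Estimate \eqref{eq:Intro:KdeltaError:Frac}.} Set $v=u-K_\delta u$, $s=(\beta-d)/p\in(0,1)$, and $R=\delta q(\diam(\Omega))$, and split $\Omega\times\Omega = \mathcal{S}\sqcup\mathcal{U}$ based on the symmetrized support of $\gamma_{\beta,p}$: $\mathcal{S} := \{|\bx-\by| < \max(\delta q(\dist(\bx,\p\Omega)),\delta q(\dist(\by,\p\Omega)))\}$. On $\mathcal{S}$, the pointwise comparison $|\bx-\by|^{-\beta} \leq CR^{d+p-\beta}(\gamma_{\beta,p}(\bx,\by)+\gamma_{\beta,p}(\by,\bx))$ combined with $|v-v|^p \lesssim |u-u|^p + |K_\delta u-K_\delta u|^p$ gives: the $u$-contribution integrates directly to $CR^{p(1-s)}[u]^p$; the $K_\delta u$-contribution is controlled, via $|K_\delta u(\bx)-K_\delta u(\by)|^p \leq |\bx-\by|^p\int_0^1|\nabla K_\delta u(\bx+t(\by-\bx))|^p\rmd t$ together with the normalization $\int\gamma_{\beta,p}(\bx,\by)|\bx-\by|^p\rmd\by = \overline{C}_{d,p}$ and a Fubini change of variables, by $C\|\nabla K_\delta u\|_{L^p}^p \leq C[u]^p$, contributing $CR^{p(1-s)}[u]^p$ after multiplication by $R^{d+p-\beta}$. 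On $\mathcal{U}$, $|\bx-\by| \geq c\eta_\delta(\bx)$, so $\int_{\mathcal{U}_\bx}|\bx-\by|^{-\beta}\rmd\by \leq C\eta_\delta(\bx)^{d-\beta}$ (using $\beta>d$); applying $|v-v|^p \leq 2^{p-1}(|v|^p+|v|^p)$, symmetry, and the pointwise bound on $v$ from Step 1 produces $\int\int_\mathcal{U}\frac{|v-v|^p}{|\bx-\by|^\beta} \leq C\int\eta_\delta(\bx)^{p+d-\beta}G(\bx)\rmd\bx \leq CR^{p(1-s)}[u]^p$, using $\int G\rmd\bx = [u]^p$. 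The main technical obstacle is the Fubini/change-of-variables step for the $K_\delta u$ contribution on $\mathcal{S}$, which hinges on exploiting the exact normalization property of $\gamma_{\beta,p}$.
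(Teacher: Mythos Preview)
Your proofs of \eqref{eq:KdeltaError} and \eqref{eq:Intro:ConvEst:Deriv} match the paper's arguments essentially step for step: Jensen's inequality against the probability kernel $\Psi_\delta(\bx,\cdot)$, the pointwise comparison $\Psi_\delta \lesssim \eta_\delta^p\,\gamma_{\beta,p}$ (respectively $\mathds{1}\,\eta_\delta^{-(d+p)} \lesssim \gamma_{\beta,p}$), and in the gradient case the cancellation $\int \nabla_\bx\Psi_\delta\,\rmd\by=0$ coming from $\int\Psi_\delta\,\rmd\by\equiv 1$.

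Your proof of \eqref{eq:Intro:KdeltaError:Frac}, however, takes a genuinely different route. The paper first applies Jensen in the $\bz$-variable to write
\[
[u-K_\delta u]_{W^{(\beta-d)/p,p}}^p \;\leq\; C\!\int_{B(0,1)}\!\psi(|\bz|)\!\int_\Omega\!\int_\Omega \frac{|u(\bszeta_\bz^\delta(\bx))-u(\bszeta_\bz^\delta(\by))-u(\bx)+u(\by)|^p}{|\bx-\by|^\beta}\,\rmd\by\,\rmd\bx\,\rmd\bz,
\]
where $\bszeta_\bz^\delta(\bx)=\bx+\eta_\delta(\bx)\bz$, and then splits according to whether $|\bx-\by|\gtrless \max\{\eta_\delta(\bx),\eta_\delta(\by)\}|\bz|$; on the near set it changes variables $(\bx,\by)\mapsto(\bszeta_\bz^\delta(\bx),\bszeta_\bz^\delta(\by))$ directly in the $u$-differences, never invoking $\nabla K_\delta u$. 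Your approach instead decomposes $v=u-K_\delta u$ itself, handles the far set $\mathcal{U}$ via the pointwise bound $|v(\bx)|^p\leq C\eta_\delta(\bx)^p G(\bx)$ from the first step, and on the near set $\mathcal{S}$ treats the $K_\delta u$-contribution through the fundamental theorem of calculus and the already-established bound $\|\nabla K_\delta u\|_{L^p}\leq C[u]$. This is more modular (it recycles \eqref{eq:Intro:ConvEst:Deriv} rather than reproving a change-of-variables estimate), and the ``Fubini change of variables'' you allude to is indeed just the bi-Lipschitz map $\bx\mapsto\bx+t\delta q(d_{\p\Omega}(\bx))\bz'$ with Jacobian in $[1-\delta,1+\delta]$.

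The one caveat is that your argument for \eqref{eq:Intro:KdeltaError:Frac} invokes \eqref{eq:Intro:ConvEst:Deriv}, which in the theorem's statement carries the extra hypothesis $k_\psi\geq 1$; as written, the fractional estimate is asserted \emph{before} that hypothesis is introduced, i.e.\ for $\psi$ merely continuous. The paper's Jensen--$\bszeta$ argument avoids differentiating $\psi$ altogether and so delivers \eqref{eq:Intro:KdeltaError:Frac} under the weaker assumption. Your proof is correct but establishes a slightly weaker form of the theorem on this point.
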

The proofs are contained in \Cref{sec:NonlocalFxnSpEst}. The estimate \eqref{eq:Intro:ConvEst:Deriv} suggests that 
the nonlocal space $\frak{W}^{\beta,p}[\delta;q](\Omega)$, instead of other classical function spaces studied in the literature, is the natural function space on which results for the classical Sobolev space $W^{1,p}$ can be applied to the boundary localized convolution $K_\delta u$.
Meanwhile, \eqref{eq:KdeltaError} and \eqref{eq:Intro:KdeltaError:Frac} suggest that the nonlocal seminorm quantifies how $K_\delta u$ can be exchanged for $u$ in the $L^p$ or fractional Sobolev space $W^{(\beta-d)/p,p}$.
Indeed, the following two theorems, in addition to the Poincar\'e inequalities of \Cref{subsec:Poincare}, are 
proved partly as a consequence of the corresponding results for $W^{1,p}(\Omega)$ applied to $K_\delta$. See \Cref{sec:NonlocalSpaceFundProp} for the relevant assumptions and proofs.

\begin{theorem}[Density of smooth functions in the nonlocal space]\label{thm:Density}
    $C^{k}(\overline{\Omega})$ is dense in $\frak{W}^{\beta,p}[\delta;q](\Omega)$ for any $k \leq k_q$.
\end{theorem}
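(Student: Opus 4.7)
The plan is to approximate $u \in \frak{W}^{\beta,p}[\delta;q](\Omega)$ in two stages: first by the intermediate smoothed functions $v_\epsilon := K_\epsilon u$ for $\epsilon \in (0, \underline{\delta}_0)$, and then to further approximate each $v_\epsilon$ by a sequence in $C^k(\overline{\Omega})$, finishing with a diagonal argument.

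For the first stage, differentiation under the integral sign---justified by the $C^k$ smoothness of $\psi$, $\lambda$, and $q$ and the strict positivity of $\eta_\epsilon$ on compact subsets of $\Omega$---yields $v_\epsilon \in C^k(\Omega)$, while \eqref{eq:Intro:ConvEst:Deriv} gives $v_\epsilon \in W^{1,p}(\Omega)$ with $\|v_\epsilon\|_{W^{1,p}(\Omega)} \leq C\|u\|_{\frak{W}^{\beta,p}[\delta;q](\Omega)}$ uniformly in $\epsilon$. The $L^p$ convergence $v_\epsilon \to u$ is supplied by \eqref{eq:KdeltaError}. For convergence in the seminorm I would combine pointwise a.e.\ convergence of the integrand $\gamma_{\beta,p}[\delta;q](\bx,\by)\,|(u - v_\epsilon)(\by) - (u - v_\epsilon)(\bx)|^p \to 0$ (via Lebesgue differentiation along a subsequence) with uniform equi-integrability, which would in turn follow from the uniform bound $[v_\epsilon]_{\frak{W}^{\beta,p}[\delta;q](\Omega)} \leq C\|v_\epsilon\|_{W^{1,p}(\Omega)}$. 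This last embedding $W^{1,p}(\Omega) \hookrightarrow \frak{W}^{\beta,p}[\delta;q](\Omega)$ is proved directly by substituting the pointwise Lipschitz estimate $|v(\by) - v(\bx)| \leq |\by-\bx| \int_0^1 |\nabla v(\bx + t(\by-\bx))| \, \rmd t$ into the defining double integral and reducing by Fubini together with the normalization \eqref{eq:Intro:StdKernelNormalization}. Vitali's convergence theorem then yields $[v_\epsilon - u]_{\frak{W}^{\beta,p}[\delta;q](\Omega)} \to 0$.

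For the second stage, since $v_\epsilon \in W^{1,p}(\Omega)$ and $\Omega$ is bounded Lipschitz, the classical density of $C^\infty(\overline{\Omega})$ in $W^{1,p}(\Omega)$ produces $\phi_{\epsilon,n} \in C^\infty(\overline{\Omega}) \subset C^k(\overline{\Omega})$ with $\phi_{\epsilon,n} \to v_\epsilon$ in $W^{1,p}(\Omega)$ as $n \to \infty$, and the same embedding transfers this to convergence in $\frak{W}^{\beta,p}[\delta;q](\Omega)$. Choosing $n = n(\epsilon)$ suitably and letting $\epsilon \to 0$ completes the diagonalization.

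The main obstacle will be the seminorm convergence $[v_\epsilon - u]_{\frak{W}^{\beta,p}[\delta;q](\Omega)} \to 0$: the dependence of the mollifier $\psi_\epsilon(\bx,\cdot)$ on $\bx$ through $\eta_\epsilon(\bx)$ prevents any clean interchange of the finite differencing with the convolution, and the boundary degeneracy $\eta_\epsilon(\bx) \to 0$ as $\bx \to \p\Omega$ (together with the singular factor $1/\eta_\delta(\bx)^{d+p-\beta}$ in $\gamma_{\beta,p}[\delta;q]$) will demand careful control of both the a.e.\ pointwise limit and the equi-integrability in a neighborhood of $\p\Omega$.
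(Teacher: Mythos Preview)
Your two-stage plan and the diagonal argument match the paper's structure, and your Stage~2 (approximating $v_\epsilon = K_\epsilon u \in W^{1,p}(\Omega)$ by $C^\infty(\overline{\Omega})$ functions and transferring via the embedding $W^{1,p} \hookrightarrow \frak{W}^{\beta,p}$) is essentially what the paper does.

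The gap is in Stage~1, precisely where you flag it. Your proposed route to equi-integrability does not work: a uniform bound $[v_\epsilon]_{\frak{W}^{\beta,p}[\delta;q](\Omega)} \leq C$ is only a uniform $L^1$ bound on the family of integrands $\gamma_{\beta,p}[\delta;q](\bx,\by)\,|v_\epsilon(\bx)-v_\epsilon(\by)|^p$, and uniform $L^1$-boundedness does not imply equi-integrability. Nor does the pointwise domination coming from the $W^{1,p}$ embedding help, since the majorant $\gamma(\bx,\by)|\bx-\by|^p \int_0^1 |\nabla v_\epsilon(\bx+t(\by-\bx))|^p\,\rmd t$ depends on $\epsilon$ through $\nabla K_\epsilon u$, and there is no reason for $|\nabla K_\epsilon u|^p$ to be equi-integrable (recall $u$ need not lie in $W^{1,p}$).

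The paper resolves this by proving the operator bound $[K_\epsilon u]_{\frak{W}^{\beta,p}[\delta;q](\Omega)} \leq C\,[u]_{\frak{W}^{\beta,p}[\delta;q](\Omega)}$ \emph{directly}, via the bi-Lipschitz change of variables $\bszeta_\bz^\epsilon(\bx) = \bx + \bar{\eta}_\epsilon(\bx)\bz$ of \Cref{lma:CoordChange2}. After writing $K_\epsilon u(\bx) = \int_{B(0,1)} \psi(|\bz|)\, u(\bszeta_\bz^\epsilon(\bx))\,\rmd\bz$ and applying Jensen, this change of variables sends the difference quotient of $K_\epsilon u$ back to a difference quotient of $u$ itself, with controlled distortion of the kernel $\gamma$ (and an appeal to \Cref{thm:InvariantHorizon} to absorb the slightly enlarged horizon). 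Crucially, the same argument localizes: the region $\{|\bx-\by|<\varrho\}$ is mapped into $\{|\bar{\bx}-\bar{\by}|<C\varrho\}$, so the near-diagonal tail of $[K_\epsilon u - u]_{\frak{W}^{\beta,p}}^p$ is controlled by the near-diagonal tail of $[u]_{\frak{W}^{\beta,p}}^p$, uniformly in $\epsilon$. Absolute continuity of the integral for the \emph{fixed} function $u$ then gives the equi-integrability you need; the far-from-diagonal piece is handled by $L^p$ convergence since the kernel is bounded there.
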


\begin{theorem}[Nonlocal trace theorem]\label{thm:TraceTheorem}
    Let $T$ denote the trace operator, i.e. for $u \in C^{1}(\overline{\Omega})$,
	  $ T u = u \big|_{\p \Omega}$.
	Then for each $\delta < \underline{\delta}_0$ the trace operator extends to a bounded linear operator $T : \frak{W}^{\beta,p}[\delta;q](\Omega) \to W^{1-1/p,p}(\p \Omega)$. Moreover
    there exists $C = C(d,p,\beta,q,\Omega)$ such that
	$$
	\Vnorm{Tu}_{W^{1-1/p,p}(\p \Omega)} \leq C \Vnorm{u}_{\frak{W}^{\beta,p}[\delta;q](\Omega)} ,\qquad\forall u \in \frak{W}^{\beta,p}[\delta;q](\Omega)\,.
	$$
\end{theorem}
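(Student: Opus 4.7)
The plan is to use the boundary-localized convolution $K_\delta$ as a bridge to the classical trace theorem for $W^{1,p}(\Omega)$ on Lipschitz domains. First I would establish the estimate on a dense subspace of smooth functions, then extend by continuity using \Cref{thm:Density}. Specifically, fix any $k \geq 1$ with $k \leq k_q$ and $k \leq k_\psi$. For $u \in C^{k}(\overline{\Omega})$, the trace $Tu = u|_{\partial \Omega}$ is classically defined, and the goal is the bound
\begin{equation*}
    \Vnorm{Tu}_{W^{1-1/p,p}(\p \Omega)} \leq C \Vnorm{u}_{\frak{W}^{\beta,p}[\delta;q](\Omega)}
\end{equation*}
with $C$ independent of $u$ and $\delta$; once this is proved on $C^{k}(\overline{\Omega})$, \Cref{thm:Density} gives a unique bounded extension to the whole nonlocal space.

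The crux of the smooth-function estimate is the identity $T K_\delta u = T u$. To verify it, I would argue that $K_\delta u$ extends continuously to $\overline{\Omega}$ with boundary values matching those of $u$. By \eqref{eq:h0property}, the ball $B(\bx, \eta_\delta(\bx))$ is contained in $\Omega$ for every $\bx \in \Omega$, so by the normalization of $\psi$ the convolution is a genuine average:
\begin{equation*}
    K_\delta u(\bx) - u(\bx_0) = \int_{B(\bx,\eta_\delta(\bx))} \frac{1}{\eta_\delta(\bx)^d} \psi\!\left(\frac{|\by-\bx|}{\eta_\delta(\bx)}\right) \bigl(u(\by)-u(\bx_0)\bigr) \, \rmd \by.
\end{equation*}
Since $\eta_\delta(\bx) \leq \delta \kappa_0 \dist(\bx,\p\Omega) \to 0$ as $\bx \to \bx_0 \in \p\Omega$, uniform continuity of $u$ forces the right-hand side to $0$. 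Hence $K_\delta u \in C^0(\overline{\Omega})$ with boundary values $u|_{\p \Omega}$; combined with $K_\delta u \in W^{1,p}(\Omega)$ from \eqref{eq:Intro:ConvEst:Deriv}, the Sobolev trace of $K_\delta u$ agrees pointwise a.e.\ with its continuous extension, giving $T K_\delta u = T u$.

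With this identity in hand, I would chain two estimates. The classical trace theorem on the bounded Lipschitz domain $\Omega$ yields
\begin{equation*}
    \Vnorm{T K_\delta u}_{W^{1-1/p,p}(\p \Omega)} \leq C(d,p,\Omega) \Vnorm{K_\delta u}_{W^{1,p}(\Omega)},
\end{equation*}
and \eqref{eq:Intro:ConvEst:Deriv} of \Cref{thm:convolution-estimate} bounds the right-hand side by $C \Vnorm{u}_{\frak{W}^{\beta,p}[\delta;q](\Omega)}$. Combining these and using $T K_\delta u = Tu$ proves the estimate on $C^{k}(\overline{\Omega})$. The density statement of \Cref{thm:Density} then yields the unique bounded linear extension $T : \frak{W}^{\beta,p}[\delta;q](\Omega) \to W^{1-1/p,p}(\p \Omega)$.

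The main obstacle I anticipate is the boundary-limit argument establishing $T K_\delta u = T u$, since one must verify that the candidate boundary extension of $K_\delta u$ coincides with its $W^{1,p}$-trace. This is handled by the containment $B(\bx,\eta_\delta(\bx)) \subset \Omega$ from \eqref{eq:h0property}, which keeps the integral off the boundary, together with the decay $\eta_\delta(\bx) \to 0$ at $\p\Omega$ supplied by assumption \eqref{assump:NonlinearLocalization}(i) and \eqref{assump:Localization}(i). Everything else is a standard application of density plus the two a priori estimates.
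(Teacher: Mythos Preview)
Your proposal is correct and follows essentially the same approach as the paper: both route the trace through $K_\delta u \in W^{1,p}(\Omega)$, invoke the identity $T K_\delta u = Tu$ for smooth $u$ (the paper packages this as \Cref{thm:ConvProp:UniformCase}, while you spell it out inline), apply the classical trace theorem together with \eqref{eq:Intro:ConvEst:Deriv}, and then extend by density via \Cref{thm:Density}. The only difference is organizational---the paper writes the Cauchy-sequence argument explicitly, whereas you phrase it as a standard bounded-linear-extension-from-a-dense-subspace.
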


The space $\frak{W}^{0,p}[\delta,id](\Omega)$ coincides with special cases of spaces considered in \cite{tian2017trace,Foss2021,du2022fractional}. Trace theorems were established in \cite{tian2017trace,du2022fractional} by analyzing nonlocal analogues of tangential and normal derivatives. 
In \cite{Foss2021}, it was shown that a specific boundary-localized convolution with $\delta = 1/3$ and $\psi(t) = \mathds{1}_{(-1/2,1/2)}(t)$ converges to a trace operator as $\bx \to \p \Omega$ for very wide classes of domains and functions. 
Here, we show that such a result can be obtained for Lipschitz domains and for a class of function spaces along another branch of generality;
the new method used in this work not only provides an alternative and more direct proof 
but also allows us to extend to the case of general $\beta$ and $q$ by using systematically-defined boundary-localized convolutions.
In addition, the density of smooth functions, the nonlocal Poincar\'e inequalities of \Cref{thm:PoincareNeumann} and \Cref{thm:PoincareRobin}, and the $L^p$-compactness  in the localizing limit of \Cref{thm:Compactness} are novel even for the spaces $\frak{W}^{0,p}[\delta,id](\Omega)$.

\subsection{Nonlocal energy functional}\label{sec:NonlocalEnergy}
The study of the nonlocal function space allows us to treat a wealth of variational problems with a common program; for the sake of clarity we will illustrate just a few in this work.

In order to treat general variational problems, we introduce a nonlocal kernel $\rho$ 
where $\rho : \bbR \to [0,\infty)$ satisfies
\begin{equation}\label{assump:VarProb:Kernel}
    \begin{gathered}
    \rho \in L^{\infty}(\bbR)\,, \quad 
    [-c_\rho,c_{\rho}] \subset \supp \rho \Subset (-1,1) \text{ for a fixed constant } c_{\rho} > 0\,.\\
    \;\text{ Moreover, } \rho(-x) = \rho(x)\, \text{and}\, \rho(x)\geq 0, \; \forall x\in  (-1,1)\,.
    \end{gathered}
    \tag{\ensuremath{\rmA_{\rho}}}
\end{equation}
We let $\Phi : [0,\infty) \to \bbR$ be a nonnegative and convex function that satisfies, for some $p > 1$, the $p$-growth condition for positive constants $c$ and $C$, that is,
\begin{equation}\label{eq:assump:Phi}
\Phi \text{ is convex,\ and }\,
	\max \{ 0, c ( |t|^p - 1 ) \} \leq \Phi(t) \leq C (|t|^p + 1) \,, \; \forall \, t\geq 0\,.
 \tag{\ensuremath{A_\Phi}}
\end{equation}

The general form of the nonlocal energy is then given by
\begin{equation}\label{eq:Intro:Varprob:Energy}
    \cE_{\delta}(u) := \int_{\Omega} \int_{\Omega} \rho \left( \frac{|\by-\bx|}{\eta_\delta(\bx)} \right) \frac{\Phi( \frac{ |u(\bx)-u(\by)|}{|\bx-\by|} )}{|\bx-\by|^{\beta-p} \eta_\delta(\bx)^{d+p-\beta} } \, \rmd \by \, \rmd \bx\,,
\end{equation}
where additionally the assumptions \eqref{assump:beta}, \eqref{assump:NonlinearLocalization}, and \eqref{eq:HorizonThreshold2} are adopted.
The nonlocal function space $\frak{W}^{\beta,p}[\delta;q](\Omega)$ is the natural choice of energy space for $\cE_\delta$, since the nonlocal seminorm remains the same under perturbations of
the heterogeneous localization $\lambda$ and kernel $\rho$; see \Cref{thm:InvariantHorizon} below.

The form of the functional we consider has principal part $\cE_\delta$, and is defined as
\begin{equation}\label{eq:Fxnal}
    \cF_\delta(u) := \cE_\delta(u) + \cG(K_\delta[\bar{\lambda},q,\psi] u) + \wt{\cG}_\delta(u) + \cG_{\beta>d}(u)\,,
\end{equation}
where we assume that
$\psi$ satisfies \eqref{Assump:Kernel} for $k_\psi \geq 1$ and $\bar{\lambda}$ satisfies \eqref{assump:Localization}.
Note that $\bar{\lambda}$ may not necessarily be equal to the $\lambda$ used for 
$\eta_\delta = \eta_\delta[\lambda,q]$ in the nonlocal functional given in 
\eqref{eq:Intro:Varprob:Energy}.

The functionals $\cG$, $\wt{\cG}_\delta$, and $\cG_{\beta>d}$ act as ``lower-order'' terms, and we explain the assumptions and significance of each of them in turn.
First, the functional $\cG : W^{1,p}(\Omega) \to \bbR$ is $W^{1,p}(\Omega)$-weakly lower semicontinuous, and satisfies, for some constants $c >0 $, $C >0$, $\theta \in (0,1)$ and $\Theta > 0$,
\begin{equation}\label{eq:LowerOrderTerm}
\begin{gathered}
    - c (1+\Vnorm{u}_{W^{1,p}(\Omega)}^{\theta p}) \leq \cG(u) \leq C ( 1 + \Vnorm{u}_{W^{1,p}(\Omega)}^{\Theta p})\,.
\end{gathered}
\end{equation}
The term $\cG(K_\delta[\bar{\lambda},q,\psi]u)$ is well-defined for $u \in \frak{W}^{\beta,p}[\delta;q](\Omega)$ thanks to the estimate \eqref{eq:Intro:ConvEst:Deriv}.
By introducing the convolution $K_\delta$, the term $\cG$ allows us to consider lower-order terms that, without mollification, may not be continuous in the nonlocal function space.

Next, we take $\wt{\cG}_\delta : \frak{W}^{\beta,p}[\delta;q](\Omega) \to \bbR$ to be a $\frak{W}^{\beta,p}[\delta;q](\Omega)$-weakly lower semicontinuous functional that satisfies, for some $\theta \in (0,1)$ and $\Theta >0$
\begin{equation}\label{eq:LowerOrderTerm3}
\begin{gathered}
    - c_\delta (1+\Vnorm{u}_{\frak{W}^{\beta,p}[\delta;q](\Omega)}^{\theta p}) \leq \wt{\cG}_\delta(u) \leq C_\delta ( 1 + \Vnorm{u}_{\frak{W}^{\beta,p}[\delta;q](\Omega)}^{\Theta p})\,,
\end{gathered}
\end{equation}
for all $\delta$ satisfying \eqref{eq:HorizonThreshold2}, analogous to the condition \eqref{eq:LowerOrderTerm}. Observe that the constants $c$ and $C$ in this case may in general depend on $\delta$.
The continuity conditions are more strict on $\wt{\cG}$ compared to $\cG$ because it is evaluated at $u$ itself, rather than the convolution.

Last, for $\beta > d$, the functional $\cG_{\beta>d}$ is chosen to take advantage of the continuous  embedding $\frak{W}^{\beta,p}[\delta;q](\Omega) \hookrightarrow W^{(\beta-d)/p,p}(\Omega)$; see \Cref{thm:Embedding:Fractional}.
For the ease of presentation, we note here that $\cG_{\beta>d}$ 
satisfies strong continuity properties contained in \eqref{eq:LowerOrderTerm2} in this case,  while it  is identically zero for $\beta \leq d$.

\subsection{Nonlocal Variational Problems}\label{sec:varprobs}
Let $\cF_\delta$ be defined as in \eqref{eq:Fxnal} with all the associated assumptions.
The first nonlocal problem we treat is one with an inhomogeneous Dirichlet-type constraint on $\p \Omega$, or more generally $\p \Omega_D$, a $\sigma$-measurable subset of $\p \Omega$ with a positive measure $\sigma(\p \Omega_D) > 0$.
Let $g \in W^{1-1/p,p}(\p \Omega_D)$, and define the set 
$$
\frak{W}^{\beta,p}_{g,\p \Omega_D}[\delta;q](\Omega) := \{ u \in \frak{W}^{\beta,p}[\delta;q](\Omega) \, : \, u = g \text{ on } \p \Omega_D \text{ in the trace sense } \}\,.
$$
Then we have the following:
\begin{theorem}\label{thm:WellPosedness:Dirichlet}
    There exists a function $u \in \frak{W}^{\beta,p}_{g,\p \Omega_D}[\delta;q](\Omega)$ satisfying
    \begin{equation}\label{eq:MinProb:Dirichlet}
        \cF_\delta(u) = \min_{ v \in \frak{W}^{\beta,p}_{g, \p \Omega_D} [\delta;q](\Omega) } \cF_\delta(v)\,.
    \end{equation}
\end{theorem}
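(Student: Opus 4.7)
The plan is to invoke the direct method of the calculus of variations. First I would observe that the admissible set is nonempty: by the standard surjectivity of the trace $W^{1,p}(\Omega) \to W^{1-1/p,p}(\p \Omega)$, the boundary datum $g$ extends to some $\bar{g} \in W^{1,p}(\Omega)$, which then lies in $\frak{W}^{\beta,p}[\delta;q](\Omega)$. Then I would establish coercivity: the $p$-growth in \eqref{eq:assump:Phi} gives $\cE_\delta(u) \geq c\,[u]_{\frak{W}^{\beta,p}[\delta;q](\Omega)}^p - C|\Omega|$. Combining this with a Poincar\'e-type inequality for functions whose trace is fixed on $\p \Omega_D$ (from the nonlocal Poincar\'e inequalities of \Cref{subsec:Poincare}, applied after splitting $u = \bar{g} + (u-\bar{g})$), this upgrades to $\cE_\delta(u) \geq c'\Vnorm{u}_{\frak{W}^{\beta,p}[\delta;q](\Omega)}^p - C'(1 + \Vnorm{g}_{W^{1-1/p,p}(\p \Omega_D)}^p)$. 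The lower-order terms admit negative growth of strictly sublinear order $\Vnorm{u}^{\theta p}$ with $\theta \in (0,1)$; on $\cG(K_\delta u)$ one first uses \eqref{eq:Intro:ConvEst:Deriv} to bound $\Vnorm{K_\delta u}_{W^{1,p}(\Omega)} \leq C\Vnorm{u}_{\frak{W}^{\beta,p}[\delta;q](\Omega)}$, and then Young's inequality absorbs all such contributions into $\tfrac{c'}{2}\Vnorm{u}_{\frak{W}^{\beta,p}[\delta;q](\Omega)}^p$. Hence $\cF_\delta$ is coercive on the admissible set, and a minimizing sequence $\{u_n\}$ is bounded in $\frak{W}^{\beta,p}[\delta;q](\Omega)$.

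Next, since $p > 1$ and $\frak{W}^{\beta,p}[\delta;q](\Omega)$ embeds isometrically as a closed subspace of the reflexive space $L^p(\Omega) \oplus L^p(\Omega \times \Omega, \gamma_{\beta,p}[\delta;q]\,\rmd \bx \rmd \by)$, it is itself reflexive, so I may extract a weakly convergent subsequence $u_{n_k} \rightharpoonup u$. \Cref{thm:TraceTheorem} makes the trace $T$ bounded linear, hence weak-to-weak continuous, so $T u_{n_k} \rightharpoonup T u$ in $W^{1-1/p,p}(\p \Omega)$; since each $T u_{n_k} = g$ on $\p \Omega_D$, the same holds for $T u$, placing $u$ in the admissible set. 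It remains to verify weak lower semicontinuity of each piece of $\cF_\delta$. For the principal part $\cE_\delta$, the compact embedding $\frak{W}^{\beta,p}[\delta;q](\Omega) \hookrightarrow L^p(\Omega)$ of \Cref{thm:Compactness} produces a further a.e. convergent subsequence, and Fatou's lemma on the nonnegative integrand yields $\cE_\delta(u) \leq \liminf \cE_\delta(u_{n_k})$. For $\cG(K_\delta u)$, the bound \eqref{eq:Intro:ConvEst:Deriv} makes $K_\delta$ bounded linear into $W^{1,p}(\Omega)$, hence weak-to-weak continuous, so $K_\delta u_{n_k} \rightharpoonup K_\delta u$ in $W^{1,p}(\Omega)$, and the assumed $W^{1,p}$-weak lower semicontinuity of $\cG$ passes to the limit. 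The term $\wt{\cG}_\delta$ is handled directly by its assumed $\frak{W}^{\beta,p}[\delta;q](\Omega)$-weak lower semicontinuity. For $\cG_{\beta > d}$, the continuous embedding $\frak{W}^{\beta,p}[\delta;q](\Omega) \hookrightarrow W^{(\beta-d)/p,p}(\Omega)$ (\Cref{thm:Embedding:Fractional}) together with the assumed strong continuity properties handles the passage to the limit.

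The main obstacle is orchestrating the coercivity step: it requires simultaneously the nonlocal Poincar\'e inequality for functions with prescribed trace on a subset of $\p \Omega$ of positive surface measure, the quantitative bound \eqref{eq:Intro:ConvEst:Deriv} to control $\Vnorm{K_\delta u}_{W^{1,p}(\Omega)}$ by the full $\frak{W}^{\beta,p}[\delta;q](\Omega)$-norm, and careful application of Young's inequality to absorb all the sublinearly negative lower bounds on $\cG$, $\wt{\cG}_\delta$, and $\cG_{\beta > d}$ into the principal term without swallowing the coercivity constant. Verifying reflexivity of $\frak{W}^{\beta,p}[\delta;q](\Omega)$ and weak continuity of $T$ and $K_\delta$ is routine but must be recorded, since the direct method rests on these points.
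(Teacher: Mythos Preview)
Your overall strategy---direct method, Poincar\'e via the splitting $u=\bar g+(u-\bar g)$, absorption of the sublinear lower-order terms, reflexivity, and weak-to-weak continuity of $T$ and $K_\delta$---matches the paper's proof essentially line for line. The coercivity and admissibility parts are fine.

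There is, however, a genuine gap in your lower-semicontinuity argument for the principal part $\cE_\delta$. You invoke ``the compact embedding $\frak{W}^{\beta,p}[\delta;q](\Omega)\hookrightarrow L^p(\Omega)$ of \Cref{thm:Compactness}'' to extract an a.e.\ convergent subsequence and then apply Fatou. But \Cref{thm:Compactness} is the \emph{asymptotic} compactness result along a sequence $\delta_n\to 0$; it says nothing about a fixed bulk horizon. In fact the paper explicitly notes (at the start of Section~5.4) that for $\beta<d$ the embedding $\frak{W}^{\beta,p}[\delta;q](\Omega)\hookrightarrow L^p(\Omega)$ is \emph{not} compact, exhibiting a bounded Fourier-basis sequence with no $L^p$-convergent subsequence. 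So for $\beta\le d$ your Fatou route simply fails: weak convergence in the nonlocal space does not yield a.e.\ convergence.

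The fix is to exploit the convexity hypothesis \eqref{eq:assump:Phi}: since $\Phi$ is convex and $u\mapsto (u(\bx)-u(\by))/|\bx-\by|$ is affine, the integrand of $\cE_\delta$ is convex in $u$, hence $\cE_\delta$ is convex; together with strong $L^p$-lower semicontinuity (which does follow from Fatou) this gives weak lower semicontinuity on $\frak{W}^{\beta,p}[\delta;q](\Omega)$ by Mazur's lemma. This is what the paper tacitly uses. Similarly, for $\cG_{\beta>d}$ you should cite the compact embedding of \Cref{thm:Compactness:FixedSpace} into $L^m(\Omega)$ (not merely the continuous embedding of \Cref{thm:Embedding:Fractional}) so that weak convergence upgrades to strong $L^m$-convergence, after which the assumed strong $L^m$-continuity applies.
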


A special case is when $g \equiv 0$ on $\p \Omega_D$, for which we consider the Banach space
\begin{equation}\label{eq:HomNonlocSpDef}
	\begin{split}
		\frak{W}^{\beta,p}_{0,\p \Omega_D}[\delta;q](\Omega) := \{ \text{closure of } C^{1}_c(\overline{\Omega} \setminus \p \Omega_D) \text{ with respect to } \Vnorm{\cdot}_{\frak{W}^{\beta,p}[\delta;q](\Omega)} \}\,.
	\end{split}
\end{equation}
We accordingly denote the Banach space $W^{1,p}_{0,\p \Omega_D}(\Omega)$ as the closure of $C^1_c(\overline{\Omega}\setminus \p \Omega_D)$ with respect to $\Vnorm{\cdot}_{W^{1,p}(\Omega)}$.
Then we can relax the assumptions on $\cG$ and $\wt{\cG}$ and still obtain existence:
\begin{theorem}\label{thm:WellPosedness:Dirichlet:Hom}
    Suppose that $\cG : W^{1,p}_{0,\p \Omega_D}(\Omega) \to \bbR$ is $W^{1,p}_{0,\p \Omega_D}(\Omega)$-weakly lower semicontinuous and satisfies \eqref{eq:LowerOrderTerm}, and suppose that $\wt{\cG} : \frak{W}^{\beta,p}_{0,\p \Omega_D}[\delta;q](\Omega) \to \bbR$ is $\frak{W}^{\beta,p}_{0,\p \Omega_D}[\delta;q](\Omega)$-weakly lower semicontinuous and satisfies \eqref{eq:LowerOrderTerm3}. Then there exists a function $u \in \frak{W}^{\beta,p}_{0,\p \Omega_D}[\delta;q](\Omega)$ satisfying
    \begin{equation}\label{eq:MinProb:Dirichlet:Hom}
        \cF_\delta(u) = \min_{ v \in \frak{W}^{\beta,p}_{0, \p \Omega_D} [\delta;q](\Omega) } \cF_\delta(v)\,.
    \end{equation}
\end{theorem}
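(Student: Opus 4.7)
The proof follows the direct method of the calculus of variations, adapting the strategy one expects to be used for \Cref{thm:WellPosedness:Dirichlet} but exploiting the homogeneous boundary condition to dispense with extension arguments and relax the assumptions on the lower-order terms.

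\textbf{Coercivity and a minimizing sequence.} First I would establish that $\cF_\delta$ is coercive on $\frak{W}^{\beta,p}_{0,\p\Omega_D}[\delta;q](\Omega)$. The lower bound in \eqref{eq:assump:Phi} together with the normalization \eqref{eq:Intro:StdKernelNormalization} yields
\begin{equation*}
\cE_\delta(u) \;\geq\; c\,[u]_{\frak{W}^{\beta,p}[\delta;q](\Omega)}^{p} - C.
\end{equation*}
Since $u$ has vanishing trace on $\p\Omega_D$, the homogeneous version of the nonlocal Poincar\'e inequality (the Dirichlet analogue of \Cref{thm:PoincareNeumann}/\Cref{thm:PoincareRobin} from \Cref{subsec:Poincare}) upgrades this to control of the full norm $\Vnorm{u}_{\frak{W}^{\beta,p}[\delta;q](\Omega)}^p$. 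The lower bounds \eqref{eq:LowerOrderTerm} and \eqref{eq:LowerOrderTerm3} have subcritical growth $\theta p < p$, and via \eqref{eq:Intro:ConvEst:Deriv} the term $-c(1+\Vnorm{K_\delta u}_{W^{1,p}}^{\theta p})$ is absorbed up to a lower-order $\Vnorm{u}_{\frak{W}^{\beta,p}[\delta;q](\Omega)}^{\theta p}$ contribution, which a Young inequality splits off. Therefore $\cF_\delta(u) \to +\infty$ as $\Vnorm{u}_{\frak{W}^{\beta,p}[\delta;q](\Omega)} \to \infty$, and any minimizing sequence $\{u_n\}$ is bounded. Reflexivity of $\frak{W}^{\beta,p}[\delta;q](\Omega)$ (immediate from the embedding $u \mapsto (u,|u(\by)-u(\bx)|\gamma^{1/p})$ into a product of $L^p$ spaces) and the closedness of $\frak{W}^{\beta,p}_{0,\p\Omega_D}[\delta;q](\Omega)$ under weak limits --- by continuity of the trace operator in \Cref{thm:TraceTheorem} --- provide a subsequence $u_{n_k} \rightharpoonup u$ in $\frak{W}^{\beta,p}_{0,\p\Omega_D}[\delta;q](\Omega)$.

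\textbf{Lower semicontinuity of each term.} The principal part $\cE_\delta$ is convex in $u$ (as the composition of the convex $\Phi$ with a linear difference operator, integrated against a nonnegative measure), hence strongly lower semicontinuous on $L^p(\Omega\times\Omega)$ via Fatou/Mazur and therefore weakly l.s.c.\ on $\frak{W}^{\beta,p}[\delta;q](\Omega)$. The term $\wt{\cG}_\delta$ is weakly l.s.c.\ by hypothesis. For $\cG(K_\delta[\bar\lambda,q,\psi]\cdot)$, the key observation is that \eqref{eq:Intro:ConvEst:Deriv} makes $K_\delta$ a bounded linear map $\frak{W}^{\beta,p}[\delta;q](\Omega) \to W^{1,p}(\Omega)$, hence weak-to-weak continuous; thus $K_\delta u_{n_k} \rightharpoonup K_\delta u$ in $W^{1,p}(\Omega)$. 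The boundary-preservation property $T(K_\delta v) = Tv$, extended to the nonlocal space via \Cref{thm:TraceTheorem} and the density result \Cref{thm:Density}, ensures $K_\delta u_{n_k} \in W^{1,p}_{0,\p\Omega_D}(\Omega)$, so the hypothesized weak l.s.c.\ of $\cG$ on that subspace applies.

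\textbf{Conclusion.} Combining the three lower-semicontinuity statements yields
\begin{equation*}
\cF_\delta(u) \;\leq\; \liminf_{k\to\infty} \cF_\delta(u_{n_k}) \;=\; \inf_{v \in \frak{W}^{\beta,p}_{0,\p\Omega_D}[\delta;q](\Omega)} \cF_\delta(v),
\end{equation*}
and since $u$ lies in the admissible set, it attains the minimum. The main obstacles I anticipate are (i) verifying the homogeneous Poincar\'e inequality on $\frak{W}^{\beta,p}_{0,\p\Omega_D}[\delta;q](\Omega)$ with a constant independent of the partitioning of $\p\Omega$ into $\p\Omega_D$ and its complement, and (ii) justifying that $K_\delta u$ inherits the vanishing-trace condition for $u \in \frak{W}^{\beta,p}_{0,\p\Omega_D}[\delta;q](\Omega)$ rather than only for $C^1_c(\overline{\Omega}\setminus\p\Omega_D)$ functions --- this requires the density \Cref{thm:Density} together with continuity of $T\circ K_\delta$ from the nonlocal space into $W^{1-1/p,p}(\p\Omega)$.
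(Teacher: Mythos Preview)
Your proposal is correct and follows essentially the same direct-method approach as the paper, whose proof simply reads ``the proof is exactly the same [as \Cref{thm:WellPosedness:Dirichlet}], noting that $G=0$.'' Your two anticipated obstacles are resolved in the paper by \Cref{thm:PoincareDirichlet} and \Cref{thm:Trace:NonlocalSpace} respectively; the only minor omission in your sketch is the fourth term $\cG_{\beta>d}$ in $\cF_\delta$, which the paper handles (for $\beta>d$) via the compact embedding of \Cref{thm:Compactness:FixedSpace} and strong $L^m$-continuity.
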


The functional $\cF_\delta$ also has a minimizer in the nonlocal space
$$
\mathring{\frak{W}}^{\beta,p}[\delta;q](\Omega) := \{ u \in \frak{W}^{\beta,p}[\delta;q](\Omega) \, : \, (u)_\Omega = 0 \}\,,
$$
where $(u)_\Omega = \frac{1}{|\Omega|} \int_{\Omega} u(\bx) \, \rmd \bx = \fint_{\Omega} u(\bx) \, \rmd \bx $ denotes the integral average of $u$ over $\Omega$.

\begin{theorem}\label{thm:WellPosedness:Neumann}
    There exists a function $u \in \mathring{\frak{W}}^{\beta,p}[\delta;q](\Omega)$ satisfying
    \begin{equation}\label{eq:MinProb:Neumann}
        \cF_\delta(u) = \min_{ v \in \mathring{\frak{W}}^{\beta,p}[\delta;q](\Omega) } \cF_\delta(v)\,.
    \end{equation}
\end{theorem}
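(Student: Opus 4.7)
The plan is to apply the direct method of the calculus of variations in $\mathring{\frak{W}}^{\beta,p}[\delta;q](\Omega)$. First, since $u \equiv 0$ lies in this space and gives $\cE_\delta(0) = 0$ with all of $\cG(K_\delta 0)$, $\wt{\cG}_\delta(0)$, $\cG_{\beta>d}(0)$ finite by the stated upper bounds, one has $\inf \cF_\delta < \infty$. Coercivity is the next task: using the nonlocal Neumann Poincar\'e inequality (\Cref{thm:PoincareNeumann}) on the mean-zero subspace together with the lower growth $\Phi(t) \geq c(|t|^p - 1)$ from \eqref{eq:assump:Phi} applied inside the double integral defining $\cE_\delta$, one obtains
\begin{equation*}
    \cE_\delta(u) \;\geq\; c_1 \, [u]_{\frak{W}^{\beta,p}[\delta;q](\Omega)}^p - C_1 \;\geq\; c_2 \, \Vnorm{u}_{\frak{W}^{\beta,p}[\delta;q](\Omega)}^p - C_2\,, \quad u \in \mathring{\frak{W}}^{\beta,p}[\delta;q](\Omega)\,.
\end{equation*}
Combining this with $\Vnorm{K_\delta u}_{W^{1,p}(\Omega)} \leq C \Vnorm{u}_{\frak{W}^{\beta,p}[\delta;q](\Omega)}$ from \Cref{thm:convolution-estimate}, the embedding into $W^{(\beta-d)/p,p}(\Omega)$ when $\beta > d$ (\Cref{thm:Embedding:Fractional}), and the sublinear-$\theta p$ lower bounds on $\cG$, $\wt{\cG}_\delta$, $\cG_{\beta>d}$ with $\theta \in (0,1)$, a Young inequality absorbs the negative lower-order contributions into the principal term to give $\cF_\delta(u) \geq \tfrac{c_2}{2} \Vnorm{u}_{\frak{W}^{\beta,p}[\delta;q](\Omega)}^p - C_3$.

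Second, any minimizing sequence $(u_n)$ is then bounded in $\frak{W}^{\beta,p}[\delta;q](\Omega)$. Reflexivity of this space follows by identifying it isometrically with a closed subspace of the reflexive product $L^p(\Omega) \times L^p(\Omega \times \Omega;\, \gamma_{\beta,p}[\delta;q](\bx,\by)\,\rmd \by \rmd \bx)$ via $u \mapsto (u(\bx),\, u(\by) - u(\bx))$, using $p > 1$. Extracting a subsequence $u_{n_k} \rightharpoonup u$ in $\frak{W}^{\beta,p}[\delta;q](\Omega)$, the zero-mean condition survives passage to the weak limit because $u \mapsto (u)_\Omega$ is a bounded linear functional on $L^p(\Omega)$ and hence on the nonlocal space, so $u \in \mathring{\frak{W}}^{\beta,p}[\delta;q](\Omega)$.

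Third, I verify weak lower semicontinuity of each summand of $\cF_\delta$. The energy $\cE_\delta$ is convex in $u$ (since $\Phi$ is convex and $u \mapsto (u(\bx)-u(\by))/|\bx-\by|$ is linear) and strongly continuous in $\Vnorm{\cdot}_{\frak{W}^{\beta,p}[\delta;q](\Omega)}$ by the upper $p$-growth of $\Phi$, hence weakly lower semicontinuous by Mazur. The term $\wt{\cG}_\delta$ is weakly lower semicontinuous on $\frak{W}^{\beta,p}[\delta;q](\Omega)$ by assumption. For $\cG(K_\delta u)$, the bound \eqref{eq:Intro:ConvEst:Deriv} makes $K_\delta : \frak{W}^{\beta,p}[\delta;q](\Omega) \to W^{1,p}(\Omega)$ bounded and linear, so weak convergence is preserved and the assumed weak lower semicontinuity of $\cG$ on $W^{1,p}(\Omega)$ applies to $K_\delta u_{n_k} \rightharpoonup K_\delta u$. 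When $\beta > d$, $\cG_{\beta>d}$ is controlled using the fractional embedding together with the compact embedding $\frak{W}^{\beta,p}[\delta;q](\Omega) \hookrightarrow L^p(\Omega)$ (\Cref{thm:Compactness}). Assembling these pieces, $\cF_\delta(u) \leq \liminf_k \cF_\delta(u_{n_k}) = \inf \cF_\delta$, so $u$ realizes the minimum.

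The main obstacle is coordinating the mixed topologies: one must push weak convergence in the nonlocal space through $K_\delta$ into weak convergence in $W^{1,p}(\Omega)$ (only possible thanks to \eqref{eq:Intro:ConvEst:Deriv}) while simultaneously ensuring that the $\theta p$-growth concessions in \eqref{eq:LowerOrderTerm}, \eqref{eq:LowerOrderTerm3} do not eat into the coercivity afforded by $\cE_\delta$; the absorption works precisely because $\theta < 1$ and because the Poincar\'e inequality for the mean-zero subspace converts the seminorm into the full norm.
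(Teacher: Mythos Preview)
Your proof is correct and follows essentially the same direct-method route as the paper: coercivity via \Cref{thm:PoincareNeumann} and the $p$-growth of $\Phi$, then weak lower semicontinuity term by term, with $K_\delta$ carrying weak convergence into $W^{1,p}(\Omega)$ via \eqref{eq:Intro:ConvEst:Deriv}. One correction: your citation for the compact embedding used to handle $\cG_{\beta>d}$ should be \Cref{thm:Compactness:FixedSpace} (the fixed-$\delta$ embedding $\frak{W}^{\beta,p}[\delta;q](\Omega) \hookrightarrow L^m(\Omega)$ for $m<p^*_\beta$, valid only when $\beta>d$), not \Cref{thm:Compactness}, which is the asymptotic compactness as $\delta\to 0$ and says nothing about a single fixed horizon; note also that the target space must be $L^m$ with $m\in(1,p^*_\beta)$ as in \eqref{eq:LowerOrderTerm2}, since $m$ may exceed $p$.
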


The final type of nonlocal problem is one with a Robin-type constraint.

For $b \in L^{\infty}(\p \Omega)$, define the functional
\begin{equation}\label{eq:Fxnal:Robin}
    \begin{split}
    \cF_\delta^R(u) &:= \cF_\delta(u) + \int_{\p \Omega} b |T u|^p \, \rmd \sigma \,.
    \end{split}
\end{equation}

\begin{theorem}\label{thm:WellPosedness:Robin}
Assume there exists a $\sigma$-measurable set $\p \Omega_R$ of $\p \Omega$ satisfying $\sigma(\p \Omega_R) > 0$ and $b(\bx) \geq b_0 > 0$ for a constant $b_0$ and
  $\sigma$-almost every $\bx \in \p \Omega_R$. Then
    there exists a function $u \in \frak{W}^{\beta,p}[\delta;q](\Omega)$ satisfying
    \begin{equation}\label{eq:MinProb:Robin}
        \cF_\delta^R(u) = \min_{ v \in \frak{W}^{\beta,p}[\delta;q](\Omega) } \cF_\delta^R(v)\,.
    \end{equation}
\end{theorem}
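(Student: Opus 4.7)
The plan is to apply the direct method of the calculus of variations, following the same pattern as in Theorems \ref{thm:WellPosedness:Dirichlet} and \ref{thm:WellPosedness:Neumann}, with the Robin boundary term playing the role that the Dirichlet constraint or mean-zero constraint played previously. The key new ingredient is to show that the term $\int_{\p \Omega} b |Tu|^p \, \rmd \sigma$ combines with $\cE_\delta$ to yield coercivity on the whole space $\frak{W}^{\beta,p}[\delta;q](\Omega)$, without any affine constraint.

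First, I would establish the coercive lower bound. Using $b \geq b_0 > 0$ on $\p \Omega_R$ together with the $p$-growth lower bound \eqref{eq:assump:Phi} for $\Phi$, I get
\begin{equation*}
    \cE_\delta(u) + \int_{\p \Omega} b |Tu|^p \, \rmd \sigma
    \geq c [u]_{\frak{W}^{\beta,p}[\delta;q](\Omega)}^p + b_0 \int_{\p \Omega_R} |Tu|^p \, \rmd \sigma - C.
\end{equation*}
The right-hand side controls the full nonlocal norm by means of the nonlocal Robin-type Poincaré inequality \Cref{thm:PoincareRobin}, which gives
\begin{equation*}
    \Vnorm{u}_{L^p(\Omega)}^p \leq C\Bigl( [u]_{\frak{W}^{\beta,p}[\delta;q](\Omega)}^p + \int_{\p \Omega_R} |Tu|^p \, \rmd \sigma \Bigr),
\end{equation*}
so that $\cE_\delta(u) + \int_{\p \Omega} b |Tu|^p \, \rmd \sigma \geq c \Vnorm{u}_{\frak{W}^{\beta,p}[\delta;q](\Omega)}^p - C$. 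The lower-order functionals $\cG(K_\delta u)$, $\wt{\cG}_\delta(u)$, and $\cG_{\beta>d}(u)$ satisfy sub-$p$-growth lower bounds via \eqref{eq:LowerOrderTerm}, \eqref{eq:LowerOrderTerm3}, and the estimate \eqref{eq:Intro:ConvEst:Deriv} (which controls $\Vnorm{K_\delta u}_{W^{1,p}(\Omega)}$ by $\Vnorm{u}_{\frak{W}^{\beta,p}[\delta;q](\Omega)}$), so they can be absorbed by a Young's inequality to yield $\cF_\delta^R(u) \geq \tfrac{c}{2} \Vnorm{u}_{\frak{W}^{\beta,p}[\delta;q](\Omega)}^p - C$. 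In particular, $\inf \cF_\delta^R > -\infty$.

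Next, take a minimizing sequence $\{u_n\} \subset \frak{W}^{\beta,p}[\delta;q](\Omega)$. Coercivity ensures $\{u_n\}$ is bounded in $\frak{W}^{\beta,p}[\delta;q](\Omega)$, so after passing to a subsequence I obtain $u_n \rightharpoonup u^*$ weakly in $\frak{W}^{\beta,p}[\delta;q](\Omega)$. I now verify weak lower semicontinuity term by term. For $\cE_\delta$, the integrand $\rho(|\by-\bx|/\eta_\delta(\bx)) \eta_\delta(\bx)^{-(d+p-\beta)} |\bx-\by|^{-(\beta-p)} \Phi(|u(\bx)-u(\by)|/|\bx-\by|)$ is convex in the pair $(u(\bx),u(\by))$ since $\Phi$ is convex and nondecreasing on $[0,\infty)$; the functional is thus convex and strongly lower semicontinuous by Fatou's lemma, hence weakly lower semicontinuous. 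The Robin boundary term is weakly lower semicontinuous because the trace operator is bounded from $\frak{W}^{\beta,p}[\delta;q](\Omega)$ to $W^{1-1/p,p}(\p \Omega)$ by \Cref{thm:TraceTheorem} (so $Tu_n \rightharpoonup Tu^*$ in $L^p(\p \Omega)$), combined with convexity of the map $v \mapsto \int_{\p \Omega} b|v|^p \, \rmd \sigma$. For $\cG(K_\delta u)$, the operator $K_\delta : \frak{W}^{\beta,p}[\delta;q](\Omega) \to W^{1,p}(\Omega)$ is linear and bounded by \eqref{eq:Intro:ConvEst:Deriv}, so weak convergence is preserved and $\cG$'s weak lower semicontinuity on $W^{1,p}(\Omega)$ transfers. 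The term $\wt{\cG}_\delta$ is directly assumed weakly lower semicontinuous, and $\cG_{\beta>d}$ passes thanks to the compact embedding into $W^{(\beta-d)/p,p}(\Omega)$.

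Combining these, $\cF_\delta^R(u^*) \leq \liminf_{n \to \infty} \cF_\delta^R(u_n) = \inf_{v \in \frak{W}^{\beta,p}[\delta;q](\Omega)} \cF_\delta^R(v)$, so $u^*$ is the desired minimizer. The main obstacle is the coercivity step, since the weakly lower semicontinuous analysis is parallel to the Dirichlet and Neumann cases; everything hinges on having a nonlocal Poincaré inequality in which a weighted boundary integral over a portion of $\p \Omega$ of positive surface measure suffices to pin down the $L^p$ norm of $u$. Provided \Cref{thm:PoincareRobin} is available in the form stated, the argument proceeds in close analogy with the classical Robin problem.
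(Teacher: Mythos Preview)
Your proposal is correct and follows essentially the same approach as the paper: the direct method, with coercivity obtained from the Robin-type Poincar\'e inequality of \Cref{thm:PoincareRobin} combined with the sub-$p$-growth lower bounds on the lower-order terms, and weak lower semicontinuity verified term by term exactly as in the Dirichlet case. One small refinement worth noting: for the boundary term $\int_{\p\Omega} b|Tu|^p\,\rmd\sigma$, the paper does not argue via convexity (which would require $b\ge 0$ on all of $\p\Omega$, not just $\p\Omega_R$) but implicitly uses that $T:\frak{W}^{\beta,p}[\delta;q](\Omega)\to W^{1-1/p,p}(\p\Omega)\hookrightarrow L^p(\p\Omega)$ is compact, so $Tu_n\to Tu^*$ strongly in $L^p(\p\Omega)$ and the boundary term is in fact continuous along the minimizing sequence.
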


\begin{remark}\label{rmk:unique}
    In each of the cases, 
    if the non-principal terms of the nonlocal functionals (that is, the terms not equal to $\cE_\delta$) are all convex, then the minimizer obtained is unique. This is achieved using the strict convexity of $\cE_\delta$ via a standard argument.
\end{remark}

\subsection{Local limit}\label{sec:locallimit}
Inheriting the assumptions made in \cref{sec:varprobs}, we now present the second set of main results, which are on the localization limit. As $\delta \to 0$, we show that minimizers of nonlocal variational problems considered in \cref{sec:varprobs}
converge to a minimizer of a local functional with the principal part
\begin{equation}\label{eq:LocalizedEnergyDefn}
    \begin{gathered}
    \cE_0(u) := \bar{\rho}_{p,\beta} \int_\Omega \fint_{\bbS^{d-1}} 
    \Phi(|\grad u(\bx) \cdot \bsomega|) \, \rmd \sigma(\bsomega) \, \rmd \bx\,, \\
    \text{ where } \bar{\rho}_{p,\beta} := \int_{B(0,1)} |\bz|^{p-\beta} \rho(|\bz|) \, \rmd \bz\,.
    \end{gathered}
\end{equation}

This result is in the same spirit as the program carried out in \cite{ponce2004new, mengesha2015VariationalLimit}, in which nonlocal models are shown to be consistent with appropriate classical counterparts. 
Central to this analysis is the following result, 
which is coined the \textit{asymptotic} compact embedding,  in the asymptotic limit that the bulk horizon parameter $\delta \to 0$.
\begin{theorem}\label{thm:Intro:Compactness}
    For $p>1$, let $\{\delta_n \}_{n \in \bbN}$ be a sequence that converges to $0$, and let $\{ u_\delta \}_\delta \subset \frak{W}^{\beta,p}[\delta;q](\Omega)$ be a sequence such that $\sup_{\delta > 0} \Vnorm{u_\delta}_{L^p(\Omega)} \leq C < \infty$, and that 
    $$
    \sup_{\delta > 0} [u_\delta]_{\frak{W}^{\beta,p}[\delta;q](\Omega)} := B < \infty\,.
    $$
    Then $\{ u_\delta \}_\delta$ is precompact in the strong topology of $L^p(\Omega)$. Moreover, any limit point $u$ belongs to $W^{1,p}(\Omega)$ with $\Vnorm{\grad u}_{L^{p}(\Omega)} \leq B$.
\end{theorem}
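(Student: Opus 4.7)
The strategy is to use the boundary-localized convolution $K_\delta$ as a bridge between $\frak{W}^{\beta,p}[\delta;q](\Omega)$ and the classical Sobolev space $W^{1,p}(\Omega)$, where Rellich-Kondrachov is available. Fix an auxiliary $\psi$ satisfying \eqref{Assump:Kernel} with $k_\psi \geq 1$ and set $v_\delta := K_\delta u_\delta$. The boundedness estimate \eqref{eq:Intro:ConvEst:Deriv} from \cref{thm:convolution-estimate} yields
$$
\Vnorm{v_\delta}_{W^{1,p}(\Omega)} \leq C \Vnorm{u_\delta}_{\frak{W}^{\beta,p}[\delta;q](\Omega)} \leq C \bigl( \sup_\delta \Vnorm{u_\delta}_{L^p(\Omega)} + B \bigr) < \infty
$$
uniformly in $\delta < \underline{\delta}_0$. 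Since $\Omega$ is bounded Lipschitz, Rellich-Kondrachov combined with reflexivity of $W^{1,p}(\Omega)$ (which is where $p > 1$ is used) gives a subsequence $\delta_k \to 0$ along which $v_{\delta_k} \to u$ strongly in $L^p(\Omega)$ and $v_{\delta_k} \rightharpoonup u$ weakly in $W^{1,p}(\Omega)$. In particular $u \in W^{1,p}(\Omega)$.

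To transfer this convergence back to the original sequence, the approximation estimate \eqref{eq:KdeltaError} from \cref{thm:convolution-estimate} gives
$$
\Vnorm{u_{\delta_k} - v_{\delta_k}}_{L^p(\Omega)} \leq C \delta_k q(\diam(\Omega)) [u_{\delta_k}]_{\frak{W}^{\beta,p}[\delta_k;q](\Omega)} \leq C B \delta_k q(\diam(\Omega)) \to 0,
$$
so $u_{\delta_k} \to u$ in $L^p(\Omega)$. Applying this argument along every subsequence of $\{u_\delta\}$ shows that the full sequence is precompact in the strong topology of $L^p(\Omega)$, and all limit points lie in $W^{1,p}(\Omega)$.

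It remains to establish the sharp bound $\Vnorm{\grad u}_{L^p(\Omega)} \leq B$, which I expect to be the main technical obstacle because the generic constant $C$ appearing in \eqref{eq:Intro:ConvEst:Deriv} is too crude. Weak lower semicontinuity of $\Vnorm{\grad \cdot}_{L^p(\Omega)}$ along $v_{\delta_k} \rightharpoonup u$ reduces the task to proving $\limsup_{\delta \to 0} \Vnorm{\grad K_\delta u_\delta}_{L^p(\Omega)} \leq B$. I would do this by differentiating under the integral in \eqref{eq:ConvolutionOperator} and exploiting the identity $\int_{\bbR^d} \grad_\bx \bigl[ \eta_\delta(\bx)^{-d} \psi(|\by-\bx|/\eta_\delta(\bx)) \bigr] \rmd \by = 0$ to rewrite $\grad K_\delta u_\delta(\bx)$ as an integral of a kernel against the difference $u_\delta(\by)-u_\delta(\bx)$; a H\"older split with exponents $p$ and $p'$ then produces $[u_\delta]_{\frak{W}^{\beta,p}[\delta;q](\Omega)}$ on one side and an explicit scalar factor on the other, and the normalization \eqref{eq:Intro:StdKernelNormalization} is designed precisely so this factor tends to $1$.

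The delicacy is that $\eta_\delta$ varies with $\bx$, so $\grad$ does not commute cleanly with $K_\delta$ and the differentiation introduces a correction term controlled by $|\grad \eta_\delta(\bx)| \leq \delta \kappa_1$; these terms can be absorbed using \eqref{eq:h0property} and shown to be lower-order as $\delta \to 0$. A cleaner alternative route, which I expect to be used in practice, is a duality argument: for arbitrary $\bsphi \in C^\infty_c(\Omega;\bbR^d)$, pass to the limit in $\int_\Omega u_{\delta_k}\, \divg \bsphi \, \rmd \bx$, antisymmetrize to obtain a nonlocal expression of the form $\frac{1}{2}\int_\Omega \int_\Omega (u_{\delta_k}(\by)-u_{\delta_k}(\bx)) \Psi_{\delta_k}(\bx,\by) \rmd \by \, \rmd \bx$ with $\Psi_{\delta_k}$ carefully built from $\bsphi$ and $\gamma_{\beta,p}[\delta_k;q]$, and apply H\"older so that the norm of $\Psi_{\delta_k}$ converges to $\Vnorm{\bsphi}_{L^{p'}(\Omega)}$ by the normalization \eqref{eq:Intro:StdKernelNormalization}. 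Taking the supremum over such $\bsphi$ with $\Vnorm{\bsphi}_{L^{p'}(\Omega)} \leq 1$ completes the proof.
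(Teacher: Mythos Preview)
Your precompactness argument is correct and essentially matches the paper's: both use $K_\delta u_\delta$ as a $W^{1,p}$-bounded proxy via \eqref{eq:Intro:ConvEst:Deriv}, invoke Rellich--Kondrachov, and close the loop with \eqref{eq:KdeltaError}.

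The sharp bound $\Vnorm{\grad u}_{L^p}\le B$ is where your proposal diverges, and your route (a) has a genuine gap. The mollifier $\psi$ in $K_\delta$ is \emph{not} the kernel $|\bx-\by|^{-\beta}\mathds{1}_{\{|\bx-\by|<\eta_\delta(\bx)\}}$ that defines the seminorm, so after the H\"older split the resulting constant is that of \Cref{thm:EnergySpaceIndepOfKernel}, which compares two kernel shapes and is strictly greater than $1$; it does \emph{not} tend to $1$ as $\delta\to 0$. The normalization \eqref{eq:Intro:StdKernelNormalization} is tuned to the $|\bz|^{-\beta}$ kernel, not to $\psi$ (which must be $C^1$, hence cannot coincide with the characteristic-function kernel). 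Your route (b) is plausible in spirit but you have not said how to produce a nonlocal bilinear form from $\int u_\delta\,\divg\bsphi$; this would require a sharp approximation-of-gradient identity for $\bsphi$ that you would still have to prove.

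The paper instead uses a \emph{two-scale} argument. Fix an auxiliary scale $\veps>0$ and consider $K_\veps u_\delta$ on the interior set $\Omega^{\veps;\lambda,q}$. A change of variables via $\bszeta_\bz^\veps(\bx)=\bx+\bar\eta_\veps(\bx)\bz$ (\Cref{lma:CoordChange2}), together with Jensen's inequality and the monotonicity of $\rho$, yields
\[
[K_\veps u_\delta]^p_{\frak{V}^{\beta,p}[\delta;q;\rho_\veps,\lambda](\Omega^{\veps;\lambda,q})}\ \le\ \frac{(1+\bar c\veps)^{2(d+p)-\beta}}{(1-\bar c\veps)^2}\,[u_\delta]^p_{\frak{V}^{\beta,p}[\delta;q;\rho,\lambda](\Omega)},
\]
with a prefactor that tends to $1$ as $\veps\to 0$. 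Since $K_\veps u_\delta\to K_\veps u$ in $C^2$ on compact subsets as $\delta\to 0$ (here the assumption $k_q\ge 2$ from \eqref{eq:assump:rho:q} enters), one takes $\delta\to 0$ using \Cref{thm:LocalizationOfSeminorm} to get $\int_{\Omega^{\veps;\lambda,q}}|\grad K_\veps u|^p\le (1+o_\veps(1))B^p$, and then sends $\veps\to 0$. The key idea you are missing is to \emph{decouple} the mollification scale $\veps$ from the vanishing horizon $\delta$, so that the nonlocal-to-local limit is taken on a fixed smooth sequence rather than on the raw $u_\delta$.
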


A more general theorem and its proof are in \Cref{sec:Compactness}. One main idea of the proof is to use the boundary-localized convolution, leveraging the $L^p$ compact embedding for the space $W^{1,p}(\Omega)$ with the estimate \eqref{eq:Intro:ConvEst:Deriv}. 
This approach is novel in the context of nonlocal function spaces of this type, where typically compactness results are proven via estimates away from the boundary. However, in this case, since the boundary information is already contained in the convolution, no such estimates are needed.

Thanks to the properties of the nonlocal function space, the lower-order terms can be treated in the local limit; we only require additional continuity properties and a stricter assumption on the functional $\wt{\cG}_\delta$. To be precise, we assume that
\begin{equation}\label{eq:LocalLimit:LOTAssump}
    \begin{gathered}
        \cG \text{ is } W^{1,p}(\Omega)\text{-weakly continuous}\,, \text{ and that } \\
        \wt{\cG}_\delta = \wt{\cG} \text{ for all } \delta \text{ satisfying \eqref{eq:HorizonThreshold2}, where } \\
        \exists \tilde{m} \in [1,p] \text{ such that } \wt{\cG} : L^{\tilde{m}}(\Omega) \to \bbR \text{ is } L^{\tilde{m}}(\Omega)\text{-weakly continuous, with } \\
        - c (1+\Vnorm{u}_{L^{\tilde{m}}(\Omega)}^{\theta p}) \leq \wt{\cG}(u) \leq C ( 1 + \Vnorm{u}_{L^{\tilde{m}}(\Omega)}^{\Theta p})\,.
    \end{gathered}
\end{equation}
Note that any functional $\wt{\cG}$ defined on $L^{\tilde{m}}(\Omega)$ that is additionally $\frak{W}^{\beta,p}[\delta;q](\Omega)$-weakly continuous for all $\delta$ and satisfying \eqref{eq:LowerOrderTerm3} also satisfies \eqref{eq:LocalLimit:LOTAssump}.

The functional $\cG$ is permitted to satisfy much weaker conditions than either $\wt{\cG}$ or $\cG_{\beta>d}$.
Indeed, the convolution $K_\delta u$ approximates $u$ as $\delta \to 0$, so a very wide variety of lower-order terms, admissible typically only in the local case, can be considered in the nonlocal problem via this approximation.

With this compactness result in hand we can, under the additional assumptions that
\begin{equation}\label{eq:assump:rho:q}
\text{$\rho$ is nonincreasing on $[0,\infty)$ and  
    $q$ satisfies \eqref{assump:NonlinearLocalization} for  $k_q \geq 2$,}     
\end{equation}
obtain via $\Gamma$-convergence the following convergence of minima for each of the nonlocal problems.

\begin{theorem}\label{thm:LocLimit:Dirichlet}
    Assume \eqref{eq:LocalLimit:LOTAssump} and \eqref{eq:assump:rho:q}. For a sequence $\delta \to 0$, let $u_\delta \in \frak{W}^{\beta,p}_{g, \p \Omega_D}[\delta;q](\Omega)$ be a function satisfying \eqref{eq:MinProb:Dirichlet}.
    Then $\{u_\delta\}_\delta$ is precompact in the strong topology on $L^p(\Omega)$. Furthermore, 
    any limit point $u$ satisfies
    $u \in W^{1,p}_{g, \p \Omega_D}(\Omega)$, 
    where 
    $W^{1,p}_{g, \p \Omega_D}(\Omega) := \{ v \in W^{1,p}(\Omega) \, : \, Tv = g \text{ on } \p \Omega_D \},
    $
    and
    \begin{equation*}
        \cF_0(u) = \min_{ v \in W^{1,p}_{g, \p \Omega_D}(\Omega)} \cF_0(v)\,, \quad \text{ where } \cF_0(v) 
        := \cE_0(v) + \cG(v) + \wt{\cG}(v) + \cG_{\beta>d}(v)\,.
    \end{equation*}

    In addition, if $g =0$ then the same result holds if $\cG : W^{1,p}_{0,\p \Omega_D}(\Omega) \to \bbR$ is weakly continuous in the space $W^{1,p}_{0,\p \Omega_D}(\Omega)$.
\end{theorem}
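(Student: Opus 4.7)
The plan is to combine $\Gamma$-convergence with the equi-coercivity supplied by Theorem \ref{thm:Intro:Compactness}. The three ingredients are: (i) a uniform bound on $\Vnorm{u_\delta}_{\frak{W}^{\beta,p}[\delta;q](\Omega)}$ together with identification of the limit trace, (ii) the $\Gamma$-liminf inequality $\cF_0(u) \leq \liminf_\delta \cF_\delta(u_\delta)$ along the minimizing sequence, and (iii) a recovery sequence $\{v_\delta\}$ with $\limsup_\delta \cF_\delta(v_\delta) \leq \cF_0(v)$ for any admissible local competitor $v$. Combining (i)--(iii) forces any $L^p$-limit of $\{u_\delta\}$ to minimize $\cF_0$ over $W^{1,p}_{g,\p \Omega_D}(\Omega)$.

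For equi-coercivity, I would pick a classical extension $G \in W^{1,p}(\Omega)$ of $g$ (standard for Lipschitz domains) and exploit the minimality inequality $\cF_\delta(u_\delta) \leq \cF_\delta(G)$. A direct computation from the normalization \eqref{eq:Intro:StdKernelNormalization} gives $[G]_{\frak{W}^{\beta,p}[\delta;q](\Omega)} \leq C\Vnorm{\grad G}_{L^p(\Omega)}$ uniformly in $\delta$, and the upper bounds in \eqref{eq:assump:Phi}, \eqref{eq:LowerOrderTerm}, \eqref{eq:LowerOrderTerm3}, and \eqref{eq:LocalLimit:LOTAssump} then yield $\cF_\delta(G) \leq C$. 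The lower $p$-growth of $\Phi$, together with the sublinear-in-norm lower bounds on the non-principal terms, transfers this to $\sup_\delta \Vnorm{u_\delta}_{\frak{W}^{\beta,p}[\delta;q](\Omega)} < \infty$. Theorem \ref{thm:Intro:Compactness} then supplies $L^p$-precompactness and $u \in W^{1,p}(\Omega)$. To identify the trace, observe that the boundary-preserving identity $T K_\delta u_\delta = T u_\delta = g$ on $\p \Omega_D$, combined with estimate \eqref{eq:Intro:ConvEst:Deriv} (uniform $W^{1,p}$-bound on $K_\delta u_\delta$) and \eqref{eq:KdeltaError} ($K_\delta u_\delta \to u$ in $L^p$), gives $K_\delta u_\delta \rightharpoonup u$ weakly in $W^{1,p}(\Omega)$; continuity of the classical trace on $W^{1,p}$ passes the constraint to the limit.

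For the $\Gamma$-liminf, the idea is to use the boundary-localized convolution as a bridge between the nonlocal and local energies. Applying the convexity of $\Phi$ and Jensen's inequality pointwise (here is where the monotonicity of $\rho$ in \eqref{eq:assump:rho:q} is essential, so that the kernels can be compared correctly), one derives an inequality of the form $\cE_0(K_\delta u_\delta) \leq \cE_\delta(u_\delta) + o(1)$; weak $W^{1,p}$-lower semicontinuity of the convex integrand $\cE_0$ together with $K_\delta u_\delta \rightharpoonup u$ in $W^{1,p}(\Omega)$ then delivers $\cE_0(u) \leq \liminf_\delta \cE_\delta(u_\delta)$. The remaining terms pass using their continuity properties from \eqref{eq:LocalLimit:LOTAssump}: $\cG(K_\delta u_\delta) \to \cG(u)$ by weak-$W^{1,p}$ continuity of $\cG$, $\wt{\cG}(u_\delta) \to \wt{\cG}(u)$ from the $L^{\tilde{m}}$-weak continuity (using the inclusion $L^p \hookrightarrow L^{\tilde{m}}$ and strong $L^p$ convergence), and $\cG_{\beta>d}(u_\delta) \to \cG_{\beta>d}(u)$ via the compact embedding into $W^{(\beta-d)/p,p}(\Omega)$ (Theorem \ref{thm:Embedding:Fractional}).

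The $\Gamma$-limsup is handled by a constant-in-$\delta$ recovery sequence. For $v \in W^{1,p}_{g,\p \Omega_D}(\Omega) \cap C^2(\overline{\Omega})$, a first-order Taylor expansion inside $\cE_\delta(v)$ together with dominated convergence (using the uniform bound $|v(\by)-v(\bx)|/|\by-\bx| \leq \Vnorm{\grad v}_{L^\infty}$, the normalization \eqref{eq:Intro:StdKernelNormalization}, and the identification of $\bar\rho_{p,\beta}$) gives $\cE_\delta(v) \to \cE_0(v)$; lower-order terms converge by the same continuity hypotheses. Density of such $v$ in $W^{1,p}_{g,\p \Omega_D}(\Omega)$, together with a diagonal argument, extends the bound to general admissible $v$. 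The homogeneous case $g \equiv 0$ with weakly continuous $\cG$ on $W^{1,p}_{0,\p \Omega_D}(\Omega)$ is handled by the same scheme in the closed subspace. The main technical obstacle will be the $\Gamma$-liminf: the rate $\eta_\delta(\bx) = \delta q(\lambda(\bx))$ degenerates toward $\p \Omega$, so Bourgain--Brezis--Mironescu-type arguments do not extend uniformly up to the boundary. The convolution-bridging strategy circumvents this by packaging the boundary behavior into $K_\delta u_\delta$, but careful bookkeeping of the $\eta_\delta$-dependent normalization inside the Jensen step -- and of the error when exchanging $\grad K_\delta u_\delta$ for a weighted average of difference quotients -- is the delicate point.
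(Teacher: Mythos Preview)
Your overall $\Gamma$-convergence framework, the equi-coercivity argument, the trace identification via $K_\delta u_\delta \rightharpoonup u$ in $W^{1,p}$, and the handling of the lower-order terms all match the paper's approach. For the recovery sequence the paper is slightly more direct: it takes the constant sequence for \emph{every} $u\in W^{1,p}_{g,\p\Omega_D}(\Omega)$ and invokes \Cref{thm:LocalizationOfEnergy} (which already covers all of $W^{1,p}$, following Ponce), bypassing your smooth-approximation/diagonal step; but your version also works.

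The genuine gap is in the $\Gamma$-liminf for the principal part. You propose to show $\cE_0(K_\delta u_\delta) \leq \cE_\delta(u_\delta) + o(1)$ via Jensen and then use weak $W^{1,p}$-lower semicontinuity of $\cE_0$. But Jensen here does not produce the sharp constant $1$: the gradient $\grad K_\delta u_\delta(\bx)$ is an integral of differences $u_\delta(\by)-u_\delta(\bx)$ against the \emph{signed}, $\psi'$-based kernel $\grad_\bx\psi_\delta(\bx,\by)$, and bounding $\Phi(|\grad K_\delta u_\delta\cdot\bsomega|)$ by this route yields at best $C(\psi)\,\cE_\delta(u_\delta)$ with $C(\psi)>1$ (this is essentially how \eqref{eq:Intro:ConvEst:Deriv} is obtained). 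No normalization trick removes the constant, so you would only get $\cE_0(u)\leq C\liminf_\delta\cE_\delta(u_\delta)$, which is too weak.

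The paper's fix is to decouple the mollification scale from the horizon. For a \emph{fixed} auxiliary $\veps>0$, Jensen applied to
\[
K_\veps u_\delta(\bx)-K_\veps u_\delta(\by)=\int_{B(0,1)}\psi(|\bz|)\big(u_\delta(\bx+\bar\eta_\veps(\bx)\bz)-u_\delta(\by+\bar\eta_\veps(\by)\bz)\big)\,\rmd\bz
\]
keeps the argument of $\Phi$ as a genuine difference quotient, and after the change of variables $\bar\bx=\bx+\bar\eta_\veps(\bx)\bz$, $\bar\by=\by+\bar\eta_\veps(\by)\bz$ (this is where the monotonicity of $\rho$ enters) one obtains a \emph{nonlocal-to-nonlocal} bound: the $\delta$-energy of $K_\veps u_\delta$ over the interior set $\Omega^{\veps;\lambda,q}$ is at most $(1+O(\veps))\,\cE_\delta(u_\delta)$. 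Now for fixed $\veps$, $K_\veps u_\delta \to K_\veps u$ in $C^2(\overline{\Omega^{\veps;\lambda,q}})$ as $\delta\to 0$ (since $\Omega^{\veps;\lambda,q}\Subset\Omega$ and $k_q\geq 2$), so the second statement of \Cref{thm:LocalizationOfEnergy} converts the left-hand side to the local energy $\cE_0$ of $K_\veps u$ on $\Omega^{\veps;\lambda,q}$; finally send $\veps\to 0$. This two-scale argument, mirroring the proof of \Cref{thm:Compactness}, is the missing ingredient in your plan.
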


\begin{theorem}\label{thm:LocLimit:Neumann}
    Assume \eqref{eq:LocalLimit:LOTAssump} and \eqref{eq:assump:rho:q}.
    For a sequence $\delta \to 0$, let $u_\delta \in \mathring{\frak{W}}^{\beta,p}[\delta;q](\Omega)$ be a function satisfying \eqref{eq:MinProb:Neumann}.
    Then $\{ u_\delta \}_\delta$ is precompact in the strong topology of $L^p(\Omega)$. Furthermore, any limit point $u$ satisfies $u \in \mathring{W}^{1,p}(\Omega)$, where $\mathring{W}^{1,p}(\Omega) := \{ v \in W^{1,p}(\Omega) \, : \, (v)_\Omega = 0 \}$, with
    \begin{equation*}
        \cF_0(u) = \min_{ v \in \mathring{W}^{1,p}(\Omega)} \cF_0(v)\,.
    \end{equation*}
\end{theorem}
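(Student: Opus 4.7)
The plan is a compactness-plus-$\Gamma$-convergence argument. Testing the minimality \eqref{eq:MinProb:Neumann} against $v \equiv 0 \in \mathring{\frak{W}}^{\beta,p}[\delta;q](\Omega)$ gives $\cF_\delta(u_\delta) \leq \cF_\delta(0)$, which is uniformly bounded since $\cE_\delta(0) = 0$ and $K_\delta 0 = 0$. The lower $p$-growth of $\Phi$ from \eqref{eq:assump:Phi} combined with the positivity of $\rho$ from \eqref{assump:VarProb:Kernel} gives $c[u_\delta]_{\frak{W}^{\beta,p}[\delta;q](\Omega)}^p - C \leq \cE_\delta(u_\delta)$; absorbing the sublinear lower bounds in \eqref{eq:LowerOrderTerm} and \eqref{eq:LowerOrderTerm3} via Young's inequality (using $\theta<1$) and invoking the nonlocal Poincar\'e inequality \Cref{thm:PoincareNeumann} on $\mathring{\frak{W}}^{\beta,p}[\delta;q](\Omega)$ to control $\Vnorm{u_\delta}_{L^p}$ by the seminorm, one obtains $\sup_\delta \Vnorm{u_\delta}_{\frak{W}^{\beta,p}[\delta;q](\Omega)} < \infty$. \Cref{thm:Intro:Compactness} then produces a (non-relabeled) subsequence with $u_\delta \to u$ strongly in $L^p(\Omega)$ and $u \in W^{1,p}(\Omega)$; the zero-mean constraint passes to the limit under $L^p$-convergence, so $u \in \mathring{W}^{1,p}(\Omega)$.

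Next I would establish the $\Gamma$-liminf inequality $\cF_0(u) \leq \liminf_\delta \cF_\delta(u_\delta)$ termwise. By \eqref{eq:Intro:ConvEst:Deriv} the sequence $K_\delta u_\delta$ is bounded in $W^{1,p}(\Omega)$ and \eqref{eq:KdeltaError} gives $K_\delta u_\delta \to u$ in $L^p$, so $K_\delta u_\delta \rightharpoonup u$ weakly in $W^{1,p}(\Omega)$; the weak continuity of $\cG$ assumed in \eqref{eq:LocalLimit:LOTAssump} then gives $\cG(K_\delta u_\delta) \to \cG(u)$. The $L^{\tilde m}$-weak continuity of $\wt{\cG}$ combined with the embedding $L^p \hookrightarrow L^{\tilde m}$ (since $\tilde m \leq p$) gives $\wt{\cG}(u_\delta) \to \wt{\cG}(u)$; when $\beta > d$ the term $\cG_{\beta>d}$ is handled analogously using \eqref{eq:Intro:KdeltaError:Frac} and the fractional embedding $\frak{W}^{\beta,p}[\delta;q] \hookrightarrow W^{(\beta-d)/p,p}$. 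For the principal part, after the change of variables $\by = \bx + \eta_\delta(\bx)\bz$ the energy rewrites as $\int_\Omega \int_{B(0,1)} \rho(|\bz|) |\bz|^{p-\beta} \Phi\bigl( \tfrac{|u_\delta(\bx + \eta_\delta(\bx)\bz) - u_\delta(\bx)|}{\eta_\delta(\bx)|\bz|} \bigr) \,\rmd \bz \, \rmd \bx$, and a Bourgain--Brezis--Mironescu / Ponce--Mengesha style argument gives $\liminf_\delta \cE_\delta(u_\delta) \geq \cE_0(u)$: on each subdomain $\{\lambda > \tau\}$ one uses the monotonicity of $\rho$ from \eqref{eq:assump:rho:q} to compare with smooth radial profiles, the convexity of $\Phi$ and Jensen's inequality to reduce to difference quotients of $K_\delta u_\delta$, weak $W^{1,p}$ convergence to recover $\Phi(|\grad u(\bx) \cdot \bsomega|)$, and finally sends $\tau \to 0$ using the uniform seminorm bound.

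For the $\Gamma$-limsup (recovery), density of smooth functions in $\mathring{W}^{1,p}(\Omega)$ reduces the problem to $v \in C^\infty(\overline{\Omega})$ with $(v)_\Omega = 0$, for which $v_\delta \equiv v$ is admissible. Taylor expansion $v(\bx + \eta_\delta(\bx)\bz) - v(\bx) = \eta_\delta(\bx) \grad v(\bx) \cdot \bz + O(\eta_\delta^2)$, boundedness of $\grad v$, and dominated convergence (using the upper $p$-growth of $\Phi$) yield $\cE_\delta(v) \to \cE_0(v)$, while the lower-order terms converge directly by the continuity hypotheses. Combining with the liminf inequality and the minimality of $u_\delta$ gives $\cF_0(u) \leq \liminf_\delta \cF_\delta(u_\delta) \leq \limsup_\delta \cF_\delta(v) = \cF_0(v)$ for every admissible $v$, proving that $u$ minimizes $\cF_0$. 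The main obstacle is the $\Gamma$-liminf for $\cE_\delta$ itself: because $\eta_\delta(\bx)$ vanishes both as $\delta \to 0$ and as $\bx \to \p\Omega$, the classical nonlocal-to-local argument must be carefully adapted near the boundary to rule out energy concentration in the vanishing boundary layer, and it is precisely here that the monotonicity of $\rho$ and the higher regularity $k_q \geq 2$ of $q$ supplied by \eqref{eq:assump:rho:q} play an essential role.
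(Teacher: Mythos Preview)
Your proposal is correct and follows the same overall architecture as the paper: derive a $\delta$-uniform bound on $\Vnorm{u_\delta}_{\frak{W}^{\beta,p}[\delta;q](\Omega)}$ (the paper packages this as the coercivity estimate \eqref{eq:CoercivityEstimate:Neumann}), invoke the asymptotic compactness theorem, and conclude by $\Gamma$-convergence, handling the lower-order terms exactly as you indicate via \Cref{lma:GammaConv:WeakConvOfMoll} and the continuity hypotheses in \eqref{eq:LocalLimit:LOTAssump}.

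Two points where your sketch diverges from the paper are worth flagging. First, for the $\Gamma$-liminf of $\cE_\delta$ you propose reducing, via Jensen, to difference quotients of $K_\delta u_\delta$ and then passing to the limit by weak $W^{1,p}$-convergence; as written this does not quite close, because the energy scale and the mollification scale are tied to the same $\delta$. The paper instead introduces a \emph{second} mollification parameter $\veps$ (the operator $K_\veps$), uses Jensen and the monotonicity of $\rho$ to bound the $\veps$-mollified energy on $\Omega^{\veps;\lambda,q}$ by a factor close to $1$ times $\cE_\delta(u_\delta)$, sends $\delta \to 0$ first using the $C^2$-convergence $K_\veps u_\delta \to K_\veps u$ on that compact set together with \Cref{thm:LocalizationOfEnergy} (this is where $k_q \geq 2$ enters), and only then sends $\veps \to 0$. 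Second, for the recovery sequence you reduce by density to smooth $v$ and Taylor-expand; this is fine, but the paper simply takes the constant sequence $u_\delta \equiv u$ for any $u \in \mathring{W}^{1,p}(\Omega)$ and appeals directly to \Cref{thm:LocalizationOfEnergy} and \Cref{thm:ConvergenceOfConv:W1p}, which is shorter.
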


\begin{theorem}\label{thm:LocLimit:Robin}
    Assume \eqref{eq:LocalLimit:LOTAssump} and \eqref{eq:assump:rho:q}.
    For a sequence $\delta \to 0$, let $u_\delta \in \frak{W}^{\beta,p}[\delta;q](\Omega)$ be a function satisfying \eqref{eq:MinProb:Robin}.
    Then $\{ u_\delta \}_\delta$ is precompact in the strong topology of $L^p(\Omega)$. Furthermore, any limit point $u$ satisfies $u \in W^{1,p}(\Omega)$ with
    \begin{equation*}
        \cF_0^R(u) = \min_{ v \in W^{1,p}(\Omega)} \cF_0^R(v)\,, 
    \end{equation*}
    where      
    \begin{equation*}
    \cF_0^R(v) := \cE_0(v) + \int_{\p \Omega} b |Tv|^p \, \rmd \sigma + \cG(v) +\wt{\cG}(v) + \cG_{\beta>d}(v)\,.
    \end{equation*}
\end{theorem}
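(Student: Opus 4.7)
The plan is to adapt the $\Gamma$-convergence strategy used for \Cref{thm:LocLimit:Dirichlet} and \Cref{thm:LocLimit:Neumann}, with the additional task of passing the Robin boundary term to the limit. First I would test minimality against the admissible function $v \equiv 0$: by the assumptions \eqref{eq:assump:Phi}, \eqref{eq:LowerOrderTerm}, \eqref{eq:LowerOrderTerm3}, and $b \in L^\infty(\p\Omega)$, this gives $\cF_\delta^R(u_\delta) \leq \cF_\delta^R(0) \leq C$ uniformly in $\delta$. Combining this with the Robin-type nonlocal Poincar\'e inequality (\Cref{thm:PoincareRobin}), which exploits $b \geq b_0 > 0$ on the set $\p\Omega_R$ of positive measure to control $\Vnorm{u_\delta}_{L^p(\Omega)}$ by the seminorm plus the boundary integral, yields a uniform bound $\Vnorm{u_\delta}_{\frak{W}^{\beta,p}[\delta;q](\Omega)} \leq C$. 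Applying \Cref{thm:Intro:Compactness} then extracts a subsequence converging strongly in $L^p(\Omega)$ to some $u \in W^{1,p}(\Omega)$.

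Next I would establish the $\Gamma$-liminf inequality $\cF_0^R(u) \leq \liminf_{\delta\to 0} \cF_\delta^R(u_\delta)$. The liminf for $\cE_\delta$ and the treatment of the lower-order terms $\cG(K_\delta u_\delta)$, $\wt{\cG}(u_\delta)$ and $\cG_{\beta>d}(u_\delta)$ mirror the Neumann case: one uses \eqref{eq:LocalLimit:LOTAssump} and the strong $L^p$ convergence of $u_\delta$, together with $\Vnorm{u_\delta - K_\delta u_\delta}_{L^p(\Omega)} \to 0$ from \eqref{eq:KdeltaError} and the uniform $W^{1,p}$ bound \eqref{eq:Intro:ConvEst:Deriv}, to deduce $K_\delta u_\delta \rightharpoonup u$ in $W^{1,p}(\Omega)$. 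The new ingredient is the boundary term. The nonlocal trace theorem (\Cref{thm:TraceTheorem}), with constant uniform in $\delta$, gives $\sup_\delta \Vnorm{Tu_\delta}_{W^{1-1/p,p}(\p\Omega)} < \infty$, so by the compact embedding $W^{1-1/p,p}(\p\Omega) \hookrightarrow L^p(\p\Omega)$ I can extract a further subsequence with $Tu_\delta \to g$ strongly in $L^p(\p\Omega)$. To identify $g = Tu$ I invoke the boundary-preserving identity $T K_\delta u_\delta = T u_\delta$: since $K_\delta u_\delta \rightharpoonup u$ in $W^{1,p}(\Omega)$, classical trace continuity implies $T u_\delta = T(K_\delta u_\delta) \rightharpoonup Tu$ in $W^{1-1/p,p}(\p\Omega)$. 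Combined with the strong $L^p(\p\Omega)$ convergence to $g$, this forces $g = Tu$ and, because $b \in L^\infty(\p\Omega)$, gives $\int_{\p\Omega} b |Tu_\delta|^p \, \rmd\sigma \to \int_{\p\Omega} b |Tu|^p \, \rmd\sigma$.

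For the $\Gamma$-limsup, given any $v \in W^{1,p}(\Omega)$, I would approximate by $v_k \in C^\infty(\overline{\Omega})$ with $v_k \to v$ in $W^{1,p}(\Omega)$ (so that $Tv_k \to Tv$ strongly in $L^p(\p\Omega)$) and use $v_k$ itself as the approximate recovery sequence in $\delta$. Because $v_k$ is smooth, the normalization \eqref{eq:Intro:StdKernelNormalization} together with \eqref{eq:assump:rho:q} and a dominated convergence argument gives $\cE_\delta(v_k) \to \cE_0(v_k)$ as $\delta \to 0$; the $\cG$ term is handled via $K_\delta v_k \to v_k$ in $W^{1,p}(\Omega)$ and the weak continuity of $\cG$; the remaining lower-order terms converge by \eqref{eq:LocalLimit:LOTAssump}; and the boundary term is constant in $\delta$. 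A diagonal extraction over $k$ and $\delta$ then produces a genuine recovery sequence $v_\delta$ satisfying $v_\delta \to v$ in $L^p(\Omega)$ and $\limsup_{\delta\to 0} \cF_\delta^R(v_\delta) \leq \cF_0^R(v)$. Combining liminf and limsup in the standard way forces $u$ to minimize $\cF_0^R$ over $W^{1,p}(\Omega)$. The main obstacle is the identification of the trace limit: the argument hinges on the uniform-in-$\delta$ constant in \Cref{thm:TraceTheorem} together with the boundary-preserving identity $T K_\delta = T$, which jointly allow the Robin boundary term to pass cleanly to the limit despite the weak nature of the underlying convergence; without either ingredient this step would collapse.
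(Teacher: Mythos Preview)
Your proposal is correct and follows the same $\Gamma$-convergence framework as the paper, which simply refers back to the Dirichlet argument with the Robin coercivity estimate \eqref{eq:CoercivityEstimate:Robin} substituted in. Two minor differences are worth noting. First, for the recovery sequence the paper takes the constant sequence $v_\delta \equiv v$ for any $v \in W^{1,p}(\Omega)$: \Cref{thm:LocalizationOfEnergy} already gives $\cE_\delta(v) \to \cE_0(v)$ for general $W^{1,p}$ functions, so your approximation by $v_k \in C^\infty(\overline\Omega)$ followed by a diagonal extraction is unnecessary (though not wrong). Second, your treatment of the boundary term---bounding $Tu_\delta$ uniformly in $W^{1-1/p,p}(\p\Omega)$ via \Cref{thm:TraceTheorem}, then passing to a strong $L^p(\p\Omega)$ limit by compact embedding, and identifying the limit through $T K_\delta u_\delta = T u_\delta$ and \Cref{lma:GammaConv:WeakConvOfMoll}---is more explicit than the paper, which leaves these details implicit; your version has the merit of working for any sign of $b \in L^\infty(\p\Omega)$, since you obtain actual convergence of $\int_{\p\Omega} b|Tu_\delta|^p\,\rmd\sigma$ rather than only a liminf inequality.
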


\subsection{Examples}\label{subsec:Examples}
To demonstrate the scope of our analysis, we present several examples.

\textit{Example 1: Dirichlet Constraints.}
Let $\p \Omega_D = \p \Omega$, let $g = 0$ and let $f \in [W^{1,p}_0(\Omega)]^*$. For $m \in [1,\frac{dp}{d-p})$ if $p < d$ and any finite exponent if $p \geq d$, 
and $\Phi(t) = \frac{t^p}{p}$, our analysis shows that there exists a minimizer $u_\delta \in \frak{W}^{\beta,p}_{0,\p \Omega}[\delta;q](\Omega)$ of
\begin{equation*}
   \cE_\delta(u) + \frac{1}{m} \int_{\Omega} |K_\delta[\bar{\lambda},q,\psi]u(\bx)|^m \, \rmd \bx - \vint{f,K_\delta[\bar{\lambda},q,\psi]u}\,,
\end{equation*}
(which is unique thanks to the strict convexity of $\cE_\delta$ and the convexity of the other two terms),
and $\{u_\delta\}_\delta$ converges strongly in $L^p(\Omega)$ to a minimizer $u \in W^{1,p}_0(\Omega)$ of
\begin{equation*}
    \frac{1}{p} \int_{\Omega} |\grad u(\bx)|^p \, \rmd \bx + \frac{1}{m} \int_{\Omega} |u(\bx)|^m \, \rmd \bx - \vint{f,u}\,.
\end{equation*}
Here we have taken $\cG(u) = \frac{1}{m} \int_{\Omega} |u|^m \, \rmd \bx -\vint{f,u}$, which satisfies \eqref{eq:LowerOrderTerm}, and we have taken $\wt{\cG}_\delta = \wt{\cG}_{\beta>d} \equiv 0$.
If $f$ additionally belongs to $[\frak{W}^{\beta,p}_{0, \p \Omega}[\delta;q](\Omega)]^*$, then we could instead take $\wt{\cG}_\delta = \vint{f,\cdot}$ and $\cG=\cG_{\beta>d} \equiv 0$.
That is, the same existence result holds for the functional with the term $\vint{f,K_\delta u}$ replaced by $\vint{f,u}$. However, the local limit result does not hold, since the functional $\wt{\cG}(u) = \Vint{f,u}$ would not satisfy the condition \eqref{eq:LocalLimit:LOTAssump}. To be more precise, $\Vint{f,u}$ is not even defined for $u \in W^{1,p}_{0}(\Omega)$, which is a smaller space than $\frak{W}^{\beta,p}_{0,\p \Omega}[\delta;q](\Omega)$.

\textit{Example 2: Neumann Constraints.}
Given linear functionals $f \in [W^{1,p}(\Omega)]^*$ and $g \in [W^{1-1/p,p}(\p \Omega)]^*$, there exists a (unique) minimizer $u_\delta \in \mathring{\frak{W}}^{\beta,p}[\delta;q](\Omega)$ of
\begin{equation*}
    \cE_{\delta}(u) - \Vint{f, K_{\delta}[\bar{\lambda},q,\psi]u } - \vint{g,Tu}\,,
\end{equation*}
and $\{u_\delta\}_\delta$ converges strongly in $L^p(\Omega)$ to a minimizer $u \in \mathring{W}^{1,p}(\Omega)$ of
\begin{equation*}
    \bar{\rho} \int_{\Omega} \fint_{\bbS^{d-1}} \Phi(|\grad u(\bx) \cdot \bsomega|) \, \rmd \sigma(\bsomega) \, \rmd \bx - \vint{f , u } - \vint{g,Tu}\,.
\end{equation*}
Above, we took $\cG = -\vint{f,\cdot}$, $\wt{\cG}_\delta = \wt{\cG} = -\vint{g,T(\cdot)}$, and $\cG_{\beta>d}\equiv 0$.
Note that we do not require any compatibility condition such as $\Vint{f,1} + \Vint{g,1} = 0$, since we have not discussed any associated Euler-Lagrange equations.

\textit{Example 3: Fixed exponents and nonlinear terms.}
Let $d = 3$, $p = 2$, $\beta = d + 2s$ for some $s \in (\frac{3}{4},1)$. 
Let $f \in [W^{1,2}(\Omega)]^*$, setting $\cG = -\vint{f,\cdot}$, and let $g \in [W^{1/2,2}(\p \Omega)]^*$, setting $\wt{\cG}_\delta = \wt{\cG} = -\vint{g,T(\cdot)}$.
Let
\begin{equation*}
    \cG_{\beta>d}(u) = \int_{\Omega} u(\bx)^2 (1- u(\bx)^2) \, \rmd \bx\,,
\end{equation*}
which satisfies \eqref{eq:LowerOrderTerm2} thanks to our choice of exponents. Then our analysis shows that there exists a minimizer $u_\delta \in \frak{W}^{d+2s,2}[\delta;q](\Omega)$ of
\begin{equation*}
    \cE_{\delta}(u) + \int_{\p \Omega} b|Tu|^2 \, \rmd \sigma - \Vint{f, K_{\delta}[\bar{\lambda},q,\psi]u } - \Vint{g,Tu} + \int_{\Omega} u(\bx)^2 (1- u(\bx)^2) \, \rmd \bx\,,
\end{equation*}
and any sequence of minimizers $\{u_\delta\}_\delta$ converges to a minimizer $u \in W^{1,2}(\Omega)$ of
\begin{equation*}
    \cE_{0}(u) + \int_{\p \Omega} b|Tu|^2 \, \rmd \sigma - \Vint{f,u } - \Vint{g,Tu} + \int_{\Omega} u(\bx)^2 (1- u(\bx)^2) \, \rmd \bx\,.
\end{equation*}

Additionally, we note that our analysis also allows
an array of models satisfying a nonlocal nonlinear elliptic equation in the interior of $\Omega$.
The discussion of the strong forms of these equations will be the subject of a subsequent paper. As an illustration, we may
let $\tau>0$ be a constant, and let $q(r)$ be a $C^1$ mollification of the function $\min\{r,\frac{\tau}{2}\}$, so that $\eta(\bx) = \frac{\tau}{2}$ in  $\Omega^\tau = \{\bx \, : \, \dist(\bx, \p \Omega) > \tau \}$ and $\eta(\bx) = \dist(\bx,\p \Omega)$ otherwise.
With this choice of $\eta$, a minimizer of any of the above examples solves an Euler-Lagrange equation 
with no heterogeneous localization occurring in the interior. Different equations can be treated, with principal operator either with or without singularity on the diagonal.
If $\beta = 0$, then the operator corresponds to a $p$-Laplacian operator of convolution type.
If $\beta > d$, say $\beta = d+sp$ for some $s \in (0,1)$, then the operator corresponds to a censored $s$-fractional $p$-Laplacian.

This paper is organized as follows: the next section contains some comparability results for different nonlocal seminorms. Section \Cref{sec:buildingblockest} contains some estimates of quantities involving the heterogeneous localization that we reference throughout the paper. Properties of the boundary-localized convolution are investigated in \Cref{sec:HSEstimates}. The density of smooth functions, the trace theorem, the Poincar\'e inequalities, and the compact emebedding results are all stated and proved precisely in \Cref{sec:NonlocalSpaceFundProp}. \Cref{sec:varprob} contain the existence results for the variational problems, and the proofs of convergence to the corresponding local problems are in \Cref{sec:loclim}. 

\section{Equivalence and comparison
of nonlocal function spaces}\label{sec:PropertiesOfSpaces}
We present some results on the nonlocal function space $\frak{W}^{\beta,p}[\delta;q](\Omega)$ for $p\in [1,\infty)$ and under the assumptions
\eqref{assump:beta}, 
\eqref{assump:NonlinearLocalization}, and $\delta < \underline{\delta}_0$. Moreover,
we henceforth define $\dist(\bx,\p \Omega) = d_{\p \Omega}(\bx)$.

\subsection{Nonlocal energy spaces
for different
bulk horizon parameters}
In a spirit similar to \cite[Lemma 6.2]{tian2017trace} and \cite[Lemma 2.2]{du2022fractional}, we show the equivalence of the nonlocal function space with respect to differing values of $\delta$. This proof is representative of the types of estimates used throughout the work. It also motivates the choice of $\underline{\delta}_0 < \frac{1}{3}$ used in \eqref{eq:HorizonThreshold2}.

\begin{theorem}
\label{thm:InvariantHorizon}
    For constants $0 < \delta_1 \leq \delta_2 < \underline{\delta}_0$,
	\begin{equation*}
		\left( \frac{1-\delta_2}{2(1+\delta_2)} \right)^{\frac{d+p-\beta}{p}}[u]_{\frak{W}^{\beta,p}[\delta_2;q](\Omega)} 
    \leq [u]_{\frak{W}^{\beta,p}[\delta_1;q](\Omega)} \leq \left( \frac{\delta_2}{\delta_1} \right)^{1+(d-\beta)/p} [u]_{\frak{W}^{\beta,p}[\delta_2;q](\Omega)}
	\end{equation*}
    for all $u \in \frak{W}^{\beta,p}[\delta_2;q](\Omega)$.
\end{theorem}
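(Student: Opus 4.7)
The theorem is a two-sided comparison of nonlocal seminorms at different bulk horizons, and the two directions require very different arguments. The upper bound is immediate from a pointwise kernel comparison, while the lower bound is the real content and is where I expect the technical difficulty to lie.

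For the \textbf{upper bound}, I rely on a pointwise comparison of the kernels. Since $\delta_1 \leq \delta_2$, the support of $\gamma_{\beta,p}[\delta_1;q](\bx,\cdot)$ is contained in that of $\gamma_{\beta,p}[\delta_2;q](\bx,\cdot)$, and on the smaller set the ratio of the two kernels equals $(\delta_2/\delta_1)^{d+p-\beta}$, coming entirely from the normalization factor $(\delta q(\dist(\bx,\p \Omega)))^{d+p-\beta}$ in \eqref{eq:Intro:kernelgamma}. Thus
\[
\gamma_{\beta,p}[\delta_1;q](\bx,\by) \leq \left(\frac{\delta_2}{\delta_1}\right)^{d+p-\beta} \gamma_{\beta,p}[\delta_2;q](\bx,\by), \qquad \forall (\bx,\by)\in\Omega\times\Omega,
\]
and the upper estimate follows by integrating against $|u(\by)-u(\bx)|^p$ and extracting the $p$-th root, which produces the stated exponent $(d+p-\beta)/p = 1+(d-\beta)/p$.

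For the \textbf{lower bound}, the essential difficulty is that on the annular region $\delta_1 q(\dist(\bx,\p \Omega)) \leq |\by-\bx| < \delta_2 q(\dist(\bx,\p \Omega))$ the $\delta_1$-kernel vanishes while $\gamma_{\beta,p}[\delta_2;q]$ does not, so no pointwise comparison is available. My plan is to insert an intermediate point $\bz$ for each pair $(\bx,\by)$ in the support of $\gamma_{\delta_2}$, apply the triangle inequality
\[
|u(\by) - u(\bx)|^p \leq 2^{p-1}\bigl(|u(\by) - u(\bz)|^p + |u(\bz) - u(\bx)|^p\bigr),
\]
and rewrite the two resulting pieces as integrals against $\gamma_{\beta,p}[\delta_1;q]$ via a change of variables of the form $\by = \bx + (\delta_2/\delta_1)(\by'-\bx)$. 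A direct computation gives the identity
\[
\gamma_{\beta,p}[\delta_2;q](\bx,\by)\,d\by = \left(\frac{\delta_1}{\delta_2}\right)^p \gamma_{\beta,p}[\delta_1;q](\bx,\by')\,d\by',
\]
so that the Jacobian $(\delta_2/\delta_1)^d$ of the substitution combines with the normalization $(\delta_2 q)^{-(d+p-\beta)}$ into a clean $(\delta_1/\delta_2)^p$ factor. The constant $(1-\delta_2)/(1+\delta_2)$ will arise from the two-sided Lipschitz bound
\[
(1-\delta_2)\,q(\dist(\bx,\p \Omega)) \leq q(\dist(\by,\p \Omega)) \leq (1+\delta_2)\,q(\dist(\bx,\p \Omega))
\]
valid whenever $|\by-\bx| < \delta_2 q(\dist(\bx,\p \Omega))$, which follows from the $1$-Lipschitz property of $q\circ\dist(\cdot,\p \Omega)$ guaranteed by $0 \leq q' \leq 1$ in \eqref{assump:NonlinearLocalization} ii) together with the $1$-Lipschitz property of $\dist(\cdot,\p \Omega)$. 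The factor $1/2$ comes from the triangle-inequality splitting and the exponent $(d+p-\beta)/p$ from the final $p$-th root.

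\textbf{Main obstacle.} The critical point is ensuring that the lower-bound constant depends only on $\delta_2$, not on $\delta_1$. A naive iterated chaining of midpoints from scale $\delta_2$ down to scale $\delta_1$ would introduce a multiplicative factor of order $(\delta_2/\delta_1)^{p-1}$, which degenerates as $\delta_1 \to 0$ and is inconsistent with the stated inequality. Avoiding this requires exploiting the scale-invariance of the $p$-th power structure in $[u]_\delta^p$ together with the exact cancellation between Jacobian and kernel normalization in the change of variables above, so that all $\delta_1$-dependence is absorbed cleanly into the target $[u]_{\frak{W}^{\beta,p}[\delta_1;q](\Omega)}^p$ and the prefactor only retains the geometric factors $(1\pm\delta_2)$ and the splitting constant $1/2$.
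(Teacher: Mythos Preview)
Your upper bound argument is correct and is exactly what the paper does (the paper simply calls it ``trivial'').

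The lower bound has a genuine gap. Your change-of-variables identity
\[
\gamma_{\beta,p}[\delta_2;q](\bx,\by)\,\rmd\by=\Bigl(\tfrac{\delta_1}{\delta_2}\Bigr)^{p}\gamma_{\beta,p}[\delta_1;q](\bx,\by')\,\rmd\by'
\]
is correct, but it acts only on the kernel and the measure: the integrand $|u(\by)-u(\bx)|^p$ becomes $|u(\bx+(\delta_2/\delta_1)(\by'-\bx))-u(\bx)|^p$, not $|u(\by')-u(\bx)|^p$. So after the substitution you still face a difference of $u$ over a distance of order $\delta_2 q(d_{\p\Omega}(\bx))$, not $\delta_1 q(d_{\p\Omega}(\bx))$. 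A single intermediate point cannot fix this: if $\bz=\bx+t(\by-\bx)$, the condition $|\bz-\bx|<\delta_1 q$ forces $t\le\delta_1/\delta_2$, while $|\by-\bz|<\delta_1 q$ forces $t\ge 1-\delta_1/\delta_2$, and these are incompatible once $\delta_1<\delta_2/2$. Thus your scheme works only when $\delta_1$ and $\delta_2$ are comparable.

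The ``naive iterated chaining'' you dismiss is in fact the paper's argument, and your accounting of its cost is wrong. With $n$ equispaced points and H\"older one picks up $n^{p-1}$; after shifting the base point all $n$ terms coincide, contributing a further factor $n$; the rescaling $\bz=\bs/n$ of the inner variable contributes $n^{d-\beta}$ from the Jacobian and the singularity. Altogether $n^{p-1}\cdot n\cdot n^{d-\beta}=n^{d+p-\beta}$, which cancels exactly against $(\delta_1/\delta_2)^{d+p-\beta}$ when one switches the normalization from $\delta_2$ to $\delta_1$ and takes $n\sim \delta_2/(\delta_1(1-\delta_2))$. The only surviving constants are the geometric factors $(1\pm\delta_2)^{d+p-\beta}$ from comparing $\eta(\bx)$ and $\eta(\bx_i)$, together with the factor $2$ from rounding $n$ to an integer, yielding precisely $\bigl(2(1+\delta_2)/(1-\delta_2)\bigr)^{d+p-\beta}$. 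There is no residual $(\delta_2/\delta_1)^{p-1}$.
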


\begin{proof}
	The second inequality is trivial, so the proof is devoted to the first inequality.
	Let $n \in \bbN$.
	To begin, we apply the triangle inequality to the telescoping sum for $\bx \in \Omega$ and $\bs \in B(0, \delta_2 \eta(\bx))$ where in this proof $\eta(\bx) := q(d_{\p \Omega}(\bx))$,
	\begin{equation*}
		|u(\bx+\bs) - u(\bx)| \leq \sum_{i = 1}^n \left| u \left( \bx + \frac{i}{n} \bs \right) - u \left( \bx + \frac{i-1}{n} \bs \right)\right|\,.
	\end{equation*}
Note that 
$|\bx+\frac{i}{n}\bs - \bx| \leq |\bs| < \delta_2 \eta(\bx)$, so $\bx +\frac{i}{n}\bs \in \Omega$ for $i = 0,1,\ldots,n$. Thus, setting $\bx_i := \bx + \frac{i-1}{n} \bs$ and using H\"older's inequality, we get 
	\begin{equation*}
		\begin{split}
			[u]_{ \frak{W}^{\beta,p}[\delta_2;q](\Omega) }^p \leq C_{d,p,\beta} n^{p-1} \sum_{i = 1}^n \int_{ \Omega } \int_{B(0,\delta_2 \eta(\bx) ) } \frac{ |u(\bx_i+\frac{1}{n}\bs)-u(\bx_i)|^p }{ |\bs|^{\beta} |\delta_2 \eta(\bx)|^{d+p-\beta} } \, \rmd \bs \, \rmd \bx\,.
		\end{split}
	\end{equation*}
	Now, since $\eta$ is Lipschitz with 
 a Lipschitz constant no larger than $1/3$ 
        \begin{equation*}
        \eta(\bx_i) = \eta(\bx + (i-1)\bs/n ) \leq |\bs| + \eta(\bx) \leq (\delta_2 + 1)  \eta(\bx)\,,
	\end{equation*}
	and
	\begin{equation*}
		\eta(\bx) \leq  |\bs| + \eta(\bx_i) \leq \delta_2 \eta(\bx) +  \eta(\bx_i)\,,
	\end{equation*}
 therefore, by \eqref{eq:h0property},	\begin{equation}\label{eq:VaryingLambda:SeminormComparison:Pf2}
		\frac{3}{4} \eta(\bx_i)
        \leq \frac{\eta(\bx_i)}{1+\delta_2 } \leq \eta(\bx) \leq \frac{\eta(\bx_i)}{1 - \delta_2 }
        \leq \frac{3}{2} \eta(\bx_i)
	\end{equation}
	for all $\bx \in \Omega$.
	Hence,	\begin{equation}\label{eq:VaryingLambda:SeminormComparison:Pf3}
		\begin{split}
		&[u]_{ \frak{W}^{\beta,p}[\delta_2;q](\Omega) }^p \\
        \leq& C_{d,p,\beta} n^{p-1} (1+\delta_2)^{d+p-\beta} \sum_{i = 1}^n \int_{ \Omega } \int_{B(0,\frac{\delta_2}{1-\delta_2}  \eta(\bx_i)) } \frac{ |u(\bx_i+\frac{1}{n}\bs)-u(\bx_i)|^p }{ |\bs|^{\beta} |\delta_2 \eta(\bx_i)|^{d+p-\beta} } \, \rmd \bs \, \rmd \bx\,.
		\end{split}
	\end{equation}
	Now, for $\bx \in \Omega$ and $|\bs| \leq \frac{\delta_2}{1 - \delta_2} \eta(\bx_i)$, we have
	\begin{equation*}
		\eta(\bx_i) \leq \eta(\bx) + |\bs| \leq \eta(\bx) + \frac{\delta_2}{1 - \delta_2} \eta(\bx_i)\,.
	\end{equation*}
	Therefore $ \eta(\bx_i) \leq \frac{ 1-\delta_2 }{1 - 2\delta_2 }  \eta(\bx)$ and since $\delta_2 < \underline{\delta}_0 < \frac{1}{3}$ we conclude that
	\begin{equation*}
		|\bx-\bx_i| \leq |\bs| \leq \frac{\delta_2}{1-\delta_2} \eta(\bx_i) \leq \frac{\delta_2}{1 - 2\delta_2} \eta(\bx) < \eta(\bx)\,,
	\end{equation*}
	i.e. $\bx_i \in \Omega$ for all $i = 1, \ldots, n$.
	With this we can perform a change of variables in the outer integral; letting $\by = \bx_i = \bx + \frac{i-1}{n} \bs$ in \eqref{eq:VaryingLambda:SeminormComparison:Pf3} and using \eqref{eq:VaryingLambda:SeminormComparison:Pf2}, we get
	\begin{equation*}
		\begin{split}
			&[u]_{ \frak{W}^{\beta,p}[\delta_2;q](\Omega) }^p \\
            \leq& C_{d,p,\beta} n^{p-1} (1+\delta_2)^{d+p-\beta} \sum_{i = 1}^n \int_{ \Omega } \int_{B(0,\frac{\delta_2}{1-\delta_2} \eta(\by) ) } \frac{ |u(\by+\frac{1}{n}\bs)-u(\by)|^p }{ |\bs|^{\beta} |\delta_2 \eta(\by)|^{d+p-\beta} } \, \rmd \bs \, \rmd \by \\
			=& C_{d,p,\beta} n^{p} (1+\delta_2)^{d+p-\beta} \int_{ \Omega } \int_{B(0,\frac{\delta_2}{1-\delta_2} \eta(\by) ) } \frac{ |u(\by+\frac{1}{n}\bs)-u(\by)|^p }{ |\bs|^{\beta} |\delta_2 \eta(\by)|^{d+p-\beta} } \, \rmd \bs \, \rmd \by \,.
		\end{split}
	\end{equation*}
	Now perform a change of variables in the inner integral by $\bz = \frac{\bs}{n}$ to obtain
	\begin{equation*}
		\begin{split}
			&[u]_{ \frak{W}^{\beta,p}[\delta_2;q](\Omega) }^p \\
			\leq& C_{d,p,\beta} n^{d+p-\beta} (1+\delta_2)^{d+p-\beta} \int_{ \Omega } \int_{B(0,\frac{\delta_2}{1-\delta_2} \frac{ \eta(\by) }{n}) } \frac{ |u(\by+\bz)-u(\by)|^p }{ |\bz|^{\beta} |\delta_2 \eta(\by)|^{d+p-\beta} } \, \rmd \bz \, \rmd \by \\
			=& C_{d,p,\beta} 
\left( \frac{n\delta_1 (1+\delta_2)}{\delta_2}
\right)^{d+p-\beta}
   \int_{ \Omega } \int_{
   B(0,\frac{\delta_2}{1-\delta_2} \frac{ \eta(\by)}{n}
   ) } \frac{ |u(\by+\bz)-u(\by)|^p }{ |\bz|^{\beta} | 
   \delta_1 \eta(\by)|^{d+p-\beta} } \, \rmd \bz \, \rmd \by\,.
		\end{split}
	\end{equation*}
	By taking $n \in \bbN$ such that	\begin{equation*}
		\frac{\delta_2}{\delta_1 ( 1 - \delta_2)} < n < \frac{2\delta_2}{\delta_1 ( 1 - \delta_2)} \,.
	\end{equation*}
we have
	\begin{equation*}
		\begin{split}
			&[u]_{ \frak{W}^{\beta,p}[\delta_2;q](\Omega) }^p 
			\leq \left( \frac{2(1+\delta_2)}{1-\delta_2} \right)^{d+p-\beta}  \int_{ \Omega } \int_{\Omega} \gamma_{\beta,p}[\delta_1;q](\bx,\by) |u(\bx)-u(\by)|^p  \, \rmd \by \, \rmd \bx\,,
		\end{split}
	\end{equation*}
	as desired.
\end{proof}

\subsection{Nonlocal energy spaces with varying localizations}
Let us consider a more general seminorm, which not only expands the scope of the techniques used but will also streamline the analysis of the functional $\cE_\delta$ in later sections. 
For $\rho$ satisfying \eqref{assump:VarProb:Kernel}, and $\lambda$ satisfying \eqref{assump:Localization}, define
\begin{equation*}
[u]_{\frak{V}^{\beta,p}[\delta;q;\rho,\lambda](\Omega)}^p := \int_{\Omega} \int_{\Omega} \gamma_{\beta,p}[\delta;q;\rho,\lambda](\bx,\by) |u(\by)-u(\bx)|^p \, \rmd \by \, \rmd \bx\,,
\end{equation*}
where 
\begin{equation*}
	\gamma_{\beta,p}[\delta;q;\rho,\lambda](\bx,\by) := \rho \left( \frac{|\by-\bx|}{ \delta q(\lambda(\bx)) } \right) \frac{ C_{d,\beta,p}(\rho) }{ |\bx-\by|^{\beta} } \frac{1}{ (\delta q(\lambda(\bx) ))^{d+p-\beta} }\,,
\end{equation*}
and $C_{d,\beta,p}(\rho)$ is chosen so that $C_{d,\beta,p}(\rho) \int_{\bbR^d} \frac{\rho(|\bsxi|)}{|\bsxi|^{\beta-p}} \, \rmd \bsxi = \overline{C}_{d,p}$.

We now note the independence of the nonlocal energy norm space on the specific form of the nonlocal kernel $\rho$. In particular, for a suitable range of $\delta$ we can select mollified versions of the kernel $\mathds{1}_{ \{ |\by-\bx| \leq \delta \dist(\bx,\p \Omega) \} }$ and the distance function $d_{\p \Omega}(\bx)$ to create equivalent seminorms.

\begin{theorem}
\label{thm:EnergySpaceIndepOfKernel}
Let $\rho$ be a nonnegative even function in $L^\infty(\bbR)$ with support in $(-1,1)$.
Then there exists a constant $C$ depending only on $d$, $\beta$, $p$, $\rho$, $q$, and $\kappa_0$ such that
\begin{equation*}\label{eq:EnergyComp1}
[u]_{\frak{V}^{\beta,p}[\delta;q;\rho,\lambda](\Omega)} \leq C  [u]_{\frak{W}^{\beta,p}[\delta;q](\Omega)}\,,
\;\; \forall u \in \mathfrak{W}^{\beta,p}[\delta;q](\Omega).
\end{equation*}
If in addition $\rho$ satisfies \eqref{assump:VarProb:Kernel}, then there exists a constant $c > 0$ with the same dependencies such that 
\begin{equation}\label{eq:EnergyComp2}
\;\; c [u]_{\frak{W}^{\beta,p}[\delta;q](\Omega)} \leq [u]_{\frak{V}^{\beta,p}[\delta;q;\rho,\lambda](\Omega)}\,, \;\; \forall u \in \mathfrak{W}^{\beta,p}[\delta;q](\Omega)\,.
\end{equation}
\end{theorem}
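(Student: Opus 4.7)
The first inequality follows quickly from direct pointwise estimates on the two kernels. Since $\rho \in L^\infty(\bbR)$ with support in $(-1,1)$, the pointwise bound $\rho(|z|/r) \leq \Vnorm{\rho}_{L^\infty} \mathds{1}_{\{|z|<r\}}$ holds. Combined with the comparability $q(\lambda(\bx)) \leq C q(d_{\p\Omega}(\bx))$ (a consequence of $\lambda(\bx) \leq \kappa_0 d_{\p\Omega}(\bx)$ from \eqref{assump:Localization} and the doubling estimate \eqref{eq:NonlinLocAssump:Consequence}), one obtains the pointwise kernel bound
\begin{equation*}
    \gamma_{\beta,p}[\delta;q;\rho,\lambda](\bx,\by) \leq C \gamma_{\beta,p}[\delta';q](\bx,\by)
\end{equation*}
for $\delta' := C_q \kappa_0^{\log_2(C_q)} \delta$, with $C$ depending on $\Vnorm{\rho}_{L^\infty}$, $d$, $\beta$, $p$, $\kappa_0$, $C_q$. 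If $\delta'<\underline{\delta}_0$, integrating and applying \Cref{thm:InvariantHorizon} to pass from $\delta'$ back to $\delta$ yields the first claim; otherwise a telescoping of the kind in the proof of \Cref{thm:InvariantHorizon} at the level of the $\frak{W}$-space bridges the two values of the bulk horizon.

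For the reverse inequality we invoke the structural assumption $[-c_\rho,c_\rho] \subset \supp\rho$ of \eqref{assump:VarProb:Kernel}. Since $\rho \geq 0$ on $(-1,1)$, an exhaustion of its level sets $\{\rho \geq 1/k\}$ yields an interval $[a,b] \subset (0,c_\rho)$ and a constant $\tau>0$ such that $|\{r \in [a,b] : \rho(r) \geq \tau\}| \geq (b-a)/2$. The plan is then to adapt the telescoping decomposition used in the proof of \Cref{thm:InvariantHorizon}: for $(\bx,\by)$ in the integration region of the $\frak{W}$-seminorm, i.e. with $|\by-\bx|<\delta q(d_{\p\Omega}(\bx))$, apply H\"older's inequality to
\begin{equation*}
    u(\by)-u(\bx) = \sum_{i=1}^N \left( u\bigl(\bx_i+\tfrac{\by-\bx}{N}\bigr)-u(\bx_i)\right), \quad \bx_i := \bx + \tfrac{i-1}{N}(\by-\bx),
\end{equation*}
and choose the splitting parameter $N$ so that each increment $|\by-\bx|/N$ lands in the scaled interval $[a,b]\cdot \delta q(\lambda(\bx_i))$. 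The Lipschitz estimate \eqref{eq:h0property} together with the doubling property \eqref{eq:NonlinLocAssump:Consequence} guarantees, exactly as in \eqref{eq:VaryingLambda:SeminormComparison:Pf2}, that $q(\lambda(\bx_i)) \sim q(\lambda(\bx))$ uniformly, so such an $N$ can be chosen in a range depending only on $\delta$, $q$, and the constants in \eqref{assump:Localization}. Performing the change of variables $\by \mapsto \bx_i$ and $\bs \mapsto \bs/N$ then reproduces the integrand with the argument of $\rho$ landing in the level set $\{\rho \geq \tau\}$, which converts the indicator in $\gamma_{\beta,p}[\delta;q]$ into the $\rho$-weighted kernel and yields \eqref{eq:EnergyComp2}.

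The main obstacle is the last step: the telescoping in \Cref{thm:InvariantHorizon} only needed the increment length to fall inside \emph{some} ball, whereas here it must land inside a measurable level set of positive (but not full) measure inside $[a,b]$. To accommodate this, rather than a single integer $N$ I expect to average over a continuous dilation parameter so that the rescaled increments sweep out the interval $[a,b] \cdot \delta q(\lambda(\bx_i))$ via a Fubini step; alternatively, a fine pointwise choice of $N \in \bbN$ that realizes the level set on a Lebesgue-dense subset can be combined with a covering argument. Once this averaging is set up, all of the remaining pieces—the change of variables, the comparability of $q(\lambda(\bx_i))$ across $i$, and ensuring $\bx_i \in \Omega$ uniformly—are handled exactly as in \Cref{thm:InvariantHorizon}, and the restriction $\delta<\underline{\delta}_0$ plays the same essential role as there.
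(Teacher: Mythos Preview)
Your treatment of the first inequality is correct and coincides with the paper's: pointwise bound $\rho(\cdot)\le \Vnorm{\rho}_{L^\infty}\mathds{1}_{(-1,1)}$, comparability $q(\lambda)\le C\,q(d_{\p\Omega})$ from \eqref{assump:Localization} and \eqref{eq:NonlinLocAssump:Consequence}, then \Cref{thm:InvariantHorizon} to adjust the horizon.

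For the reverse inequality your route diverges substantially from the paper. The paper does \emph{not} redo the telescoping of \Cref{thm:InvariantHorizon}; it simply runs the same pointwise comparison in the other direction. Reading \eqref{assump:VarProb:Kernel} as providing a uniform lower bound $\rho\ge C(\rho)^{-1}$ on $[-c_\rho,c_\rho]$, it writes
\[
[u]_{\frak V^{\beta,p}[\delta;q;\rho,\lambda](\Omega)}^p \;\ge\; \frac{1}{C(\rho)}\int_\Omega\int_{\{|\by-\bx|<\frac{c_\rho}{C(q)}\,\delta q(d_{\p\Omega}(\bx))\}}\frac{|u(\bx)-u(\by)|^p}{|\bx-\by|^\beta\,(C(q)\delta q(d_{\p\Omega}(\bx)))^{d+p-\beta}}\,\rmd\by\,\rmd\bx,
\]
which is (up to constants) $[u]_{\frak W^{\beta,p}[\delta';q](\Omega)}^p$ at a smaller horizon $\delta'$, and then invokes \Cref{thm:InvariantHorizon} once. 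The whole proof is two pointwise kernel comparisons bookended by one application of \Cref{thm:InvariantHorizon}.

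The discrepancy comes from your literal reading of ``$[-c_\rho,c_\rho]\subset\supp\rho$'' as giving only positive measure of positivity rather than a pointwise lower bound. Under that weaker reading your level-set plan is a reasonable starting point, but the obstacle you flag is genuine and unresolved: the telescoping of \Cref{thm:InvariantHorizon} produces increments ranging over a \emph{ball}, not a prescribed thin annulus, so forcing $|\bs|/N$ to land in a half-measure subset of $[a,b]\cdot\delta q(\lambda(\bx_i))$ cannot be done by a choice of $N$ alone, and the ``continuous dilation'' averaging you suggest is not carried out. Under the paper's reading of \eqref{assump:VarProb:Kernel} none of this machinery is needed, and the argument collapses to the three-line comparison above.
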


\begin{proof}
    The result is clear if \eqref{eq:EnergyComp1}-\eqref{eq:EnergyComp2} is established under the assumption \eqref{assump:VarProb:Kernel}.
    First, we have
    \begin{equation*}
        C(\rho)^{-1} \mathds{1}_{ B(0,c_\rho) }(\bx) \leq \rho(|\bx|) \leq C(\rho) \mathds{1}_{ B(0,1) }(\bx)
    \end{equation*}
    for $C(\rho) > 1$. Next, by \eqref{eq:NonlinLocAssump:Consequence} and \eqref{assump:Localization} 
    \begin{gather*}
        \frac{ q( d_{\p \Omega}(\bx)) }{C(q,\kappa_0)} \leq q(\lambda(\bx)) \leq C(q,\kappa_0)  q( d_{\p \Omega}(\bx))
    \end{gather*}
    for $C(q,\kappa_0) := C(q) >1$. Therefore
    \begin{equation*}
    \begin{split}
        &\frac{1}{C(\rho)}\int_{\Omega} \int_{ \{ |\by-\bx| < \frac{c_\rho}{C(q)} \delta q(d_{\p \Omega}(\bx))  \} } \frac{|u(\bx)-u(\by)|^p}{ |\bx-\by|^\beta |C(q) \delta q(d_{\p \Omega}(\bx))|^{d+p-\beta} } \, \rmd \by \, \rmd \bx \\
        &\leq [u]_{\frak{V}^{\beta,p}[\delta;q;\rho,\lambda](\Omega)}^p \\
        &\leq C(\rho) C(q)^{d+p-\beta} \int_{\Omega} \int_{ \{ |\by-\bx| < C(q) \delta q(d_{\p \Omega}(\bx))  \} } \frac{|u(\bx)-u(\by)|^p}{ |\bx-\by|^\beta |\delta q(d_{\p \Omega}(\bx))|^{d+p-\beta} } \, \rmd \by \, \rmd \bx \,.
    \end{split}
    \end{equation*}
   The conclusion then follows from the assumptions on $\delta$ and \Cref{thm:InvariantHorizon}.
\end{proof}

\section{Properties of heterogeneous localization functions and the associated kernels}\label{sec:buildingblockest}

We now present some properties related to the function $\eta$ and various kernels used in this work.
All the discussions are 
under the assumptions
\eqref{assump:NonlinearLocalization}, \eqref{assump:Localization},
\eqref{Assump:Kernel}, and $\delta < \underline{\delta}_0$.
Additional assumptions on $\psi$ are made for some of the results presented in 
\cref{sec:AuxMolls}.

\subsection{Spatial variations of the heterogeneous localization function}\label{sec:LocalizationFunc}
For ease of access, we record the following comparisons of 
the heterogeneous localization function $\eta_\delta$ that are frequently referred to in later discussions:

\begin{lemma}\label{lma:ComparabilityOfXandY}
    For all $\bx, \by \in \Omega$,
\begin{eqnarray}\label{eq:ComparabilityOfDistanceFxn1}
	&	(1- \kappa_1 \delta ) \eta_\delta(\bx) \leq \eta_\delta(\by) \leq (1+\kappa_1 \delta ) \eta_\delta(\bx), \; \text{ if } |\bx-\by| \leq \eta_\delta(\bx),\\
\label{eq:ComparabilityOfDistanceFxn2}
&		(1-\kappa_1 \delta ) \eta_\delta(\by) \leq \eta_\delta(\bx) \leq (1+\kappa_1 \delta ) \eta_\delta(\by), \;  \text{ if } |\bx-\by| \leq \eta_\delta(\by)\,.
	\end{eqnarray}
\end{lemma}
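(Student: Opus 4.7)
The plan is to establish a Lipschitz bound for $\eta_\delta$ with constant $\kappa_1\delta$, and then directly substitute the hypothesis $|\bx-\by|\le\eta_\delta(\bx)$ (or $\eta_\delta(\by)$) into this bound.

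First I would note that, by assumption \eqref{assump:NonlinearLocalization}(ii), $q'(r)\le 1$ for all $r\ge 0$ and $q(0)=0$, so the mean value theorem gives $|q(s)-q(t)|\le|s-t|$ for all $s,t\ge 0$, i.e.\ $q$ is $1$-Lipschitz on $[0,\infty)$. Combining with the Lipschitz property of $\lambda$ from \eqref{assump:Localization}(ii),
\begin{equation*}
|\eta_\delta(\bx)-\eta_\delta(\by)|=\delta\,|q(\lambda(\bx))-q(\lambda(\by))|\le\delta\,|\lambda(\bx)-\lambda(\by)|\le\kappa_1\delta\,|\bx-\by|
\end{equation*}
for all $\bx,\by\in\Omega$. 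This is the main technical content; the rest is an application.

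Next, under the hypothesis $|\bx-\by|\le\eta_\delta(\bx)$, the bound above yields
\begin{equation*}
|\eta_\delta(\bx)-\eta_\delta(\by)|\le\kappa_1\delta\,\eta_\delta(\bx),
\end{equation*}
which, after rearranging, gives precisely \eqref{eq:ComparabilityOfDistanceFxn1}. The symmetric argument with the roles of $\bx$ and $\by$ interchanged establishes \eqref{eq:ComparabilityOfDistanceFxn2}. Note that under \eqref{eq:HorizonThreshold2}, $\kappa_1\delta<1/3$, so both $1-\kappa_1\delta$ and $1+\kappa_1\delta$ are positive constants, consistent with the Lipschitz estimate \eqref{eq:h0property} already recorded in the introduction.

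The main (and only minor) obstacle is simply assembling the correct Lipschitz constants from \eqref{assump:NonlinearLocalization} and \eqref{assump:Localization}; once this is done, the two inequalities follow at once. No covering, no change of variables, and no use of the third condition in \eqref{assump:NonlinearLocalization} is required.
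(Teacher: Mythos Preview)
Your proof is correct and follows essentially the same approach as the paper: both establish the Lipschitz bound $|\eta_\delta(\bx)-\eta_\delta(\by)|\le\kappa_1\delta\,|\bx-\by|$ from \eqref{assump:NonlinearLocalization} and \eqref{assump:Localization}, then substitute the hypothesis $|\bx-\by|\le\eta_\delta(\bx)$ and rearrange. You spell out the derivation of the Lipschitz constant slightly more explicitly than the paper does, but the argument is identical.
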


\begin{proof}
	It suffices to show \eqref{eq:ComparabilityOfDistanceFxn1}, since \eqref{eq:ComparabilityOfDistanceFxn2} will follow from the same arguments with the roles of $\bx$ and $\by$ interchanged.
	From the properties of $\eta_\delta$ coming from \eqref{assump:NonlinearLocalization} and \eqref{assump:Localization}, we get
	\begin{equation*}
		\eta_\delta(\by) \leq \eta_\delta(\bx) + \delta \kappa_1 |\bx-\by| \leq (1+\kappa_1 \delta ) \eta_\delta(\bx)
	\end{equation*}
	and
	\begin{equation*}
		\eta_\delta(\bx) \leq \eta_\delta(\by) + \delta  \kappa_1 |\bx-\by| \leq  \eta_\delta(\by) + \delta  \kappa_1 \eta_\delta(\bx)\,.
	\end{equation*}
\end{proof}

The next lemma is used later to facilitate a change of coordinates. To set the notation used in it, let us introduce a function $\bar{\lambda}$ that also satisfies \eqref{assump:Localization} with the same constants $\kappa_\alpha$ as those in the assumption for $\lambda$.
Likewise, we denote
$\bar{\eta}_\delta := \eta_\delta[\bar{\lambda},q]$ and
$\bar{\eta}:= \bar{\eta}_1$.
\begin{lemma}\label{lma:CoordChange2}
For a fixed $\bz \in B(0,1)$, 
    define the function $\bszeta_\bz^\veps : \Omega \to \bbR^d$ by 
    \begin{equation*}
    \bszeta_\bz^\veps(\bx) := \bx + \bar{\eta}_\veps(\bx) \bz\,, \quad
    \forall \veps \in (0,\underline{\delta}_0)\,.
    \end{equation*}
    Then there exists a constant $\bar{c} = \bar{c}(q,\kappa_0,\kappa_1) \geq \kappa_1$
    such that for all $\bx$, $\by \in \Omega$, for all $\delta \in (0,\underline{\delta}_0)$, and for all $\veps \in (0,\veps_0)$, where  $\veps_0 := \frac{1}{3} \min \{ 1, \frac{1}{\bar{c}} \} \in (0,\underline{\delta}_0)$,
    we have 
    \begin{equation}\label{eq:bszetaproperties}
        \begin{gathered}
            \det \grad \bszeta_\bz^\veps(\bx) = 1 + \grad \bar{\eta}_\veps(\bx) \cdot \bz > 1-\kappa_1 \veps > \frac{2}{3}\,, \\
            \bszeta_\bz^\veps(\bx) \in \Omega \text{ and } 0 < (1- \bar{c} \veps) \eta(\bx) \leq \eta(\bszeta_\bz^\veps(\bx)) \leq (1+\bar{c}\veps) \eta(\bx) \,, \\
            0 < (1- \bar{c} \veps) |\bx-\by| \leq | \bszeta_\bz^\veps(\bx) - \bszeta_\bz^\veps(\by)| \leq (1+\bar{c} \veps) |\bx-\by|\,, \text{ and } \\
            |\bx-\by| \leq \eta_\delta(\bx) \quad \Rightarrow \quad |\bszeta_{\bz}^{\veps}(\bx) - \bszeta_{\bz}^{\veps}(\by)| \leq \frac{1+ \bar{c} \veps}{1-\bar{c} \veps} \eta_\delta( \bszeta_{\bz}^{\veps}(\bx) )\,.
        \end{gathered}
    \end{equation} 
    Moreover, if $\bar{\eta} 
    \equiv \eta$ then $\bar{c} = \kappa_1$ and $\veps_0 = \underline{\delta}_0$ can be chosen.
\end{lemma}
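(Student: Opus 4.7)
\medskip

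\noindent\textbf{Proof plan.} The key observation is that $\bar{\eta}_{\veps}(\bx)=\veps\, q(\bar{\lambda}(\bx))$ is Lipschitz with a small constant, so $\bszeta_\bz^\veps = \mathrm{id} + \bar{\eta}_\veps \bz$ is a small perturbation of the identity. First I would compute $\grad \bszeta_\bz^\veps(\bx) = I + \bz \otimes \grad \bar{\eta}_\veps(\bx)$, which is a rank-one update of the identity whose determinant is $1 + \grad \bar{\eta}_\veps(\bx) \cdot \bz$. Using $|q'|\le 1$ from \eqref{assump:NonlinearLocalization}ii) and $|\grad \bar{\lambda}|\le \kappa_1$ from \eqref{assump:Localization}ii), I get $|\grad \bar{\eta}_\veps(\bx)\cdot\bz| \le \kappa_1 \veps$ (since $\bz\in B(0,1)$), yielding the claimed determinant bound once $\bar{c}\ge\kappa_1$ and $\bar{c}\veps < 1/3$.

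Next I would show $\bszeta_\bz^\veps(\bx)\in \Omega$. By construction $|\bszeta_\bz^\veps(\bx)-\bx| = \bar{\eta}_\veps(\bx)|\bz| < \bar{\eta}_\veps(\bx)$, and since $\veps < \underline{\delta}_0$, the inequality \eqref{eq:h0property} applied to $\bar{\eta}_\veps$ gives $\bar{\eta}_\veps(\bx)\le \tfrac{1}{3}q(d_{\p\Omega}(\bx)) \le \tfrac{1}{3} d_{\p\Omega}(\bx)$, so the displaced point lies strictly inside $\Omega$. For the two-sided bound on $\eta(\bszeta_\bz^\veps(\bx))$, I would use that $\eta=q\circ\lambda$ is Lipschitz with constant at most $\kappa_1$ to write
\begin{equation*}
|\eta(\bszeta_\bz^\veps(\bx)) - \eta(\bx)| \le \kappa_1\, \bar{\eta}_\veps(\bx).
\end{equation*}
The main technical point here is converting the right-hand side into a multiple of $\eta(\bx) = q(\lambda(\bx))$. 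Both $\lambda$ and $\bar{\lambda}$ are comparable to $d_{\p\Omega}$ with constant $\kappa_0$, so their ratio is pinched in $[\kappa_0^{-2},\kappa_0^{2}]$. Applying \eqref{eq:NonlinLocAssump:Consequence} to $q$ then yields
\begin{equation*}
\bar{\eta}(\bx) = q(\bar{\lambda}(\bx)) \le C_q \kappa_0^{2\log_2(C_q)}\, q(\lambda(\bx)) = C(q,\kappa_0)\,\eta(\bx),
\end{equation*}
and the symmetric inequality. Thus $|\eta(\bszeta_\bz^\veps(\bx))-\eta(\bx)| \le \kappa_1 C(q,\kappa_0)\,\veps\, \eta(\bx)$, which gives the two-sided bound on $\eta(\bszeta_\bz^\veps(\bx))$ with $\bar{c} := \kappa_1\,\max\{1, C(q,\kappa_0)\}$.

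The bilipschitz bound on $\bszeta_\bz^\veps$ follows from the same Lipschitz estimate on $\bar{\eta}_\veps$: writing $\bszeta_\bz^\veps(\bx)-\bszeta_\bz^\veps(\by) = (\bx-\by) + (\bar{\eta}_\veps(\bx)-\bar{\eta}_\veps(\by))\bz$ and using $|\bar{\eta}_\veps(\bx)-\bar{\eta}_\veps(\by)|\le \kappa_1\veps|\bx-\by|$ yields the inequality with constant $\kappa_1\veps\le \bar{c}\veps$. The final implication combines this with the bound on $\eta(\bszeta_\bz^\veps(\bx))$: if $|\bx-\by|\le \eta_\delta(\bx) = \delta \eta(\bx)$, then
\begin{equation*}
|\bszeta_\bz^\veps(\bx)-\bszeta_\bz^\veps(\by)| \le (1+\bar{c}\veps)\delta\eta(\bx) \le \frac{1+\bar{c}\veps}{1-\bar{c}\veps}\,\delta\,\eta(\bszeta_\bz^\veps(\bx)),
\end{equation*}
which is the claim. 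For the sharpening when $\bar{\eta}=\eta$, the comparability step is trivial: $C(q,\kappa_0)$ can be replaced by $1$, giving $\bar{c}=\kappa_1$, and then the definition of $\underline{\delta}_0$ already forces $\kappa_1\underline{\delta}_0\le 1/3$, so $\veps_0=\underline{\delta}_0$ is admissible.

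The main obstacle, and the place requiring care, is the transfer between the two localization functions $\bar{\eta}$ and $\eta$: without \eqref{eq:NonlinLocAssump:Consequence} one has no way to convert a Lipschitz increment $\kappa_1 \bar{\eta}_\veps(\bx)$ into a multiplicative perturbation of $\eta(\bx)$, and it is precisely this step that inflates the Lipschitz constant $\kappa_1$ to the larger $\bar{c}$ depending on $q$ and $\kappa_0$. Everything else is a short computation using the assumption $\veps<\veps_0$ to keep all multiplicative errors bounded away from $1$.
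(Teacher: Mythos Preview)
Your proposal is correct and follows essentially the same approach as the paper's proof: both use the Lipschitz bound $|\eta(\bszeta_\bz^\veps(\bx))-\eta(\bx)|\le \kappa_1\bar{\eta}_\veps(\bx)$, then convert $\bar{\eta}$ to $\eta$ via the comparability $\lambda\sim\bar{\lambda}$ through $d_{\p\Omega}$ together with \eqref{eq:NonlinLocAssump:Consequence}, arriving at the same constant $\bar{c}=\kappa_1 C_q\kappa_0^{2\log_2(C_q)}$; the bilipschitz bound and final implication are handled identically. Your write-up is slightly more explicit on two points the paper leaves implicit (the rank-one determinant formula and the containment $\bszeta_\bz^\veps(\bx)\in\Omega$ via \eqref{eq:h0property}), but there is no substantive difference in method.
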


\begin{proof}
    The positive lower bound on $\det \grad \bszeta_\bz^\veps$ follows from the properties of $q$ and $\bar{\lambda}$ and the assumption on $\underline{\delta}_0$. 
    Now, by \eqref{assump:Localization} and \eqref{eq:NonlinLocAssump:Consequence}, we have
    \begin{equation*}
        \frac{\bar{\lambda}(\bx)}{\kappa_0^2} \leq \lambda(\bx) \leq \kappa_0^2 \bar{\lambda}(\bx)\,, \qquad \text{ for } \bx \in \Omega\,,
            \end{equation*}
            which implies that
     \begin{equation*}       
    \frac{\eta(\bx)}{ C_q \kappa_0^{2 \log_2(C_q)} } \leq \bar{\eta}(\bx) \leq C_q \kappa_0^{2 \log_2(C_q)} \eta(\bx)\,,
    \end{equation*}
    and so the second line in \eqref{eq:bszetaproperties} follows from the Lipschitz continuity of $\eta$:
    \begin{equation*}
    \begin{gathered}
        \eta(\bszeta_\bz^\veps(\bx)) \geq \eta(\bx) - \kappa_1 \bar{\eta}_\veps(\bx) |\bz| \geq  (1- \kappa_1 C_q \kappa_0^{2 \log_2(C_q)} \veps ) \eta(\bx)
        =: (1-\bar{c} \veps) \eta(\bx), \text{ and} \\
        \eta(\bszeta_\bz^\veps(\bx)) \leq \eta(\bx) + \kappa_1 \bar{\eta}_\veps(\bx) |\bz| \leq (1+\bar{c}\veps) \eta(\bx)\,.
    \end{gathered}
    \end{equation*}
    The third line follows from
    \begin{equation*}
        \big| | \bszeta_\bz^\veps(\bx) - \bszeta_\bz^\veps(\by)| - |\bx-\by| \big| \leq |\eta_\veps(\bx) - \eta_\veps(\by)| |\bz| \leq \kappa_1 \veps |\bx-\by|\,,
    \end{equation*}
    and the fourth line of \eqref{eq:bszetaproperties} follows from the second and third lines.    
\end{proof}

\subsection{Mollifier kernels}\label{sec:AuxMolls}
For any  function $\psi: [0,\infty) \to \bbR$, we
define
\begin{equation}\label{eq:OperatorKernelDef}
	\begin{split}
		\psi_{\delta}[\lambda,q](\bx,\by) &:= \frac{1}{\eta_\delta[\lambda,q](\bx)^{d}} {\psi} \left( \frac{|\by-\bx|}{\eta_\delta[\lambda,q](\bx)} \right) \,.
	\end{split}
\end{equation}
In particular, $\psi_{\delta}[\lambda,q]$
defines a \textit{boundary-localizing} mollifier
corresponding to a standard mollifier $\psi$ described in 
\eqref{Assump:Kernel}. 
We
write $\psi_{\delta}[\lambda,q]$ as simply $\psi_{\delta}$ whenever the context is clear.
Note that $\int_{\Omega} \psi_\delta(\bx,\by) \, \rmd \by = 1$ for all $\bx \in \Omega$ and for all $\delta < \underline{\delta}_0$. This is not the case when the arguments are reversed, and so we define the function
\begin{equation}\label{eq:KernelIntegral}
    \Psi_{\delta}(\bx):= \int_{\Omega} \psi_{\delta}(\by,\bx) \, \rmd \by\,.
\end{equation}
Let us investigate the properties of $\Psi_{\delta}$
below.

\begin{lemma}\label{lma:KernelIntegral:SqDist}
    Let $\psi$ be a nonnegative even function in $C^0(\bbR)$ with support in $[-1,1]$.
	Then there exists a constant $C$ depending only on $d$, $\psi$, $\lambda$, $q$, and $\kappa_1$ such that 
\begin{equation}\label{eq:KernelIntFunction:UpperBound}
		\Psi_{\delta}(\bx) \leq C\,
  ,\quad \forall \bx \in \Omega.
	\end{equation}
\end{lemma}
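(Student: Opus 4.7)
The plan is to use the comparability lemma (\cref{lma:ComparabilityOfXandY}) with the roles of $\bx$ and $\by$ reversed to reduce the integral over $\by$ to one over a Euclidean ball of radius comparable to $\eta_\delta(\bx)$, on which the integrand is itself pointwise comparable to $\eta_\delta(\bx)^{-d}$. Both factors then cancel up to a dimensional constant.

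First I would observe that since $\psi$ is supported in $[-1,1]$, the integrand $\psi_\delta(\by,\bx) = \eta_\delta(\by)^{-d} \psi(|\bx-\by|/\eta_\delta(\by))$ vanishes unless $|\bx-\by| < \eta_\delta(\by)$. On this set, \eqref{eq:ComparabilityOfDistanceFxn2} gives
\[
(1-\kappa_1 \delta) \eta_\delta(\by) \le \eta_\delta(\bx) \le (1+\kappa_1 \delta) \eta_\delta(\by),
\]
so in particular $|\bx-\by| < \eta_\delta(\by) \le (1-\kappa_1\delta)^{-1}\eta_\delta(\bx)$. Hence the $\by$-integration effectively takes place over the ball $B\!\left(\bx, (1-\kappa_1\delta)^{-1}\eta_\delta(\bx)\right) \cap \Omega$.

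Next I would bound the integrand pointwise by $\|\psi\|_{L^\infty} \eta_\delta(\by)^{-d} \le \|\psi\|_{L^\infty} (1+\kappa_1\delta)^d \eta_\delta(\bx)^{-d}$, using the same comparability estimate. Integrating over the ball gives
\[
\Psi_\delta(\bx) \;\le\; \|\psi\|_{L^\infty} \frac{(1+\kappa_1\delta)^d}{\eta_\delta(\bx)^d} \cdot |B(0,1)| \cdot \left(\frac{\eta_\delta(\bx)}{1-\kappa_1\delta}\right)^d = \|\psi\|_{L^\infty} |B(0,1)| \left(\frac{1+\kappa_1\delta}{1-\kappa_1\delta}\right)^d,
\]
and the assumption $\delta < \underline{\delta}_0$ guarantees $\kappa_1 \delta < 1/3$, so the right-hand side is bounded by a constant depending only on $d$, $\psi$, and $\kappa_1$ (the dependence on $\lambda,q$ enters only implicitly through $\kappa_1$ and the validity of the comparability lemma).

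There is no real obstacle here: the estimate is essentially a one-line consequence of \cref{lma:ComparabilityOfXandY}, modulo the mild bookkeeping that the mollification is anchored at $\by$ (not $\bx$), which is exactly why one must apply \eqref{eq:ComparabilityOfDistanceFxn2} rather than \eqref{eq:ComparabilityOfDistanceFxn1}. The only point to watch is that, because of the asymmetry in $\psi_\delta(\by,\bx)$, the normalization $\int_\Omega \psi_\delta(\bx,\by)\,d\by = 1$ cannot be used directly; the uniform bound comes instead from the volume of a ball of radius $\lesssim \eta_\delta(\bx)$ balancing the $\eta_\delta(\by)^{-d}\sim \eta_\delta(\bx)^{-d}$ factor.
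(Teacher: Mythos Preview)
Your proposal is correct and follows essentially the same approach as the paper: bound $\psi$ by its sup-norm, use the support condition $|\bx-\by|<\eta_\delta(\by)$ together with the comparability of $\eta_\delta(\bx)$ and $\eta_\delta(\by)$ from \Cref{lma:ComparabilityOfXandY} to replace both the integration domain and the factor $\eta_\delta(\by)^{-d}$ by quantities depending only on $\eta_\delta(\bx)$, and then cancel. The paper cites \eqref{eq:ComparabilityOfDistanceFxn1} (with the roles of $\bx$ and $\by$ interchanged) rather than \eqref{eq:ComparabilityOfDistanceFxn2}, and arrives at slightly different explicit constants, but the argument is identical in substance.
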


\begin{proof}
	Since $\supp \psi \subset [-1,1]$ we obtain the upper bound from \eqref{eq:ComparabilityOfDistanceFxn1}:
	\begin{equation*}
		\begin{split}
		\int_{\Omega} \frac{1}{\eta_\delta(\by)^d} \psi \left( \frac{|\bx-\by|}{\eta_\delta(\by) } \right) \, \rmd \by 
		&\leq \| \psi \|_{L^{\infty}([0,1])} \int_{ \{ |\by-\bx| \leq \lambda_{ \delta}(\by) \} } \frac{1}{\eta_\delta(\by)^{d}} \, \rmd \by \\
		&\leq C(\psi) \int_{ \{ |\by-\bx| \leq (1+\kappa_1 \delta)\lambda_{ \delta}(\bx) \} } \frac{1}{(1-\kappa_1 \delta )^d \eta_\delta(\bx)^{d}} \, \rmd \by \\
		&= C\,.
		\end{split}
	\end{equation*}
\end{proof}

We now turn to the derivatives of $\psi_\delta$.
It is clear that $\psi_\delta(\bx,\by) \in C^k(\Omega \times \Omega)$ whenever \eqref{assump:Localization}, \eqref{assump:NonlinearLocalization}, and \eqref{Assump:Kernel} are satisfied for the same $k \in \bbN \cup \{\infty\}$. We record several estimates on the derivative of the kernel that we will use.

\begin{theorem}\label{thm:gradientpsi}
Let $\psi$  satisfy \eqref{Assump:Kernel} for some $k \geq 1$.
Then there exists $C = C(d,\psi)$ such that
    \begin{equation}\label{eq:KernelDerivativeEstimate}
		|\grad _{\bx} \psi_{\delta}(\bx,\by)| \leq \frac{C}{\eta_\delta(\bx) } \Big( \psi_{\delta}(\bx,\by) 
		+ (|\psi'|)_{\delta}(\bx,\by) \Big)\,,  \quad \forall \bx, \by \in \Omega.
    \end{equation}
\end{theorem}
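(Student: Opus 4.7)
The plan is to differentiate the explicit formula
$$
\psi_\delta(\bx,\by) = \frac{1}{\eta_\delta(\bx)^d}\, \psi\!\left(\frac{|\by-\bx|}{\eta_\delta(\bx)}\right)
$$
directly via the product and chain rules, and then bound each piece using the Lipschitz-type information on $\eta_\delta$ together with the fact that on the support of the mollifier $|\by-\bx| \leq \eta_\delta(\bx)$.

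First I would compute $\grad_\bx \eta_\delta(\bx) = \delta\, q'(\lambda(\bx))\, \grad \lambda(\bx)$, which by assumption \eqref{assump:NonlinearLocalization}-ii ($0 \leq q' \leq 1$) and \eqref{assump:Localization}-ii ($|\grad \lambda| \leq \kappa_1$) satisfies $|\grad_\bx \eta_\delta(\bx)| \leq \delta \kappa_1$. Then the product rule gives two contributions:
\begin{equation*}
\grad_\bx \psi_\delta(\bx,\by) = -\frac{d\,\grad \eta_\delta(\bx)}{\eta_\delta(\bx)^{d+1}} \psi\!\left(\tfrac{|\by-\bx|}{\eta_\delta(\bx)}\right) + \frac{1}{\eta_\delta(\bx)^d}\, \psi'\!\left(\tfrac{|\by-\bx|}{\eta_\delta(\bx)}\right) \grad_\bx\!\left[\tfrac{|\by-\bx|}{\eta_\delta(\bx)}\right].
\end{equation*}
The first term is immediately bounded in absolute value by $\tfrac{d\,\delta\kappa_1}{\eta_\delta(\bx)}\, \psi_\delta(\bx,\by)$, where $\delta \kappa_1 < 1/3$ by the choice of $\underline{\delta}_0$ in \eqref{eq:HorizonThreshold2}.

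Next I would expand the remaining gradient:
\begin{equation*}
\grad_\bx\!\left[\tfrac{|\by-\bx|}{\eta_\delta(\bx)}\right] = -\frac{\by-\bx}{|\by-\bx|\,\eta_\delta(\bx)} - \frac{|\by-\bx|\,\grad \eta_\delta(\bx)}{\eta_\delta(\bx)^2}.
\end{equation*}
The first piece has magnitude $1/\eta_\delta(\bx)$. For the second piece, I would use that $\psi'$ is supported where $|\by-\bx| \leq \eta_\delta(\bx)$ (by \eqref{Assump:Kernel}), so on the effective integration region $|\by-\bx|/\eta_\delta(\bx) \leq 1$, giving a bound of $\delta \kappa_1/\eta_\delta(\bx)$ for this piece. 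Therefore the entire second contribution is controlled by $\tfrac{(1+\delta\kappa_1)}{\eta_\delta(\bx)}\, (|\psi'|)_\delta(\bx,\by)$.

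Putting both estimates together yields the desired bound with $C$ depending only on $d$ and (via the universal bound $\delta\kappa_1 < 1/3$) an absolute constant. I do not anticipate a genuine obstacle here; the only minor subtlety is ensuring that the $|\by-\bx|/\eta_\delta(\bx)$ factor in the chain-rule remainder is absorbed correctly, which is why the support condition in \eqref{Assump:Kernel} must be invoked rather than merely the decay of $\psi'$.
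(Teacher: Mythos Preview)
Your proposal is correct and follows essentially the same approach as the paper: compute $\grad_\bx \psi_\delta$ explicitly via the product and chain rules to obtain the formula \eqref{eq:MollifierDerivativeFormula}, then bound term-by-term using $|\grad \eta_\delta| \leq 1/3$ and the support condition $|\bx-\by| \leq \eta_\delta(\bx)$ from \eqref{Assump:Kernel}. Your write-up is in fact slightly more explicit than the paper's in tracking the constants.
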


\begin{proof} 
	This follows by direct computation and the properties of $\psi$:
    \begin{equation}\label{eq:MollifierDerivativeFormula}
		\begin{split}
        \grad_{\bx} {\psi}_{\delta}(\bx,\by) &= (\psi')_\delta(\bx,\by) \left( \frac{\bx-\by}{|\bx-\by|} \frac{1}{\eta_\delta(\bx)} - \frac{|\bx-\by|}{\eta_\delta(\bx)} \frac{\grad \eta_\delta(\bx)}{\eta_\delta(\bx)} \right) \\
		& \quad - \psi_{\delta}(\bx,\by)  \frac{ \grad \eta_\delta(\bx) }{\eta_\delta(\bx)} d \,.
		\end{split}
	\end{equation}
	Thus, using the support of $\psi$ and that $|\grad \eta_\delta| \leq 1/3$, we see the result.
\end{proof}

\begin{corollary}
    Let $\psi$ satisfy \eqref{Assump:Kernel} for some $k \geq 1$, and let $\alpha \in \bbR$. Then there exists $C = C(d,\psi,\kappa_1,\alpha)$ such that
\begin{equation}\label{eq:KernelIntegralDerivativeEstimate}
		\int_{\Omega} |\eta_\delta(\by)|^{\alpha} |\grad_{\bx} \psi_{\delta}(\bx,\by)| \, \rmd \by
		\leq \frac{C}{\eta_\delta(\bx)^{1-\alpha}}\,, \quad \forall \, \bx \in \Omega.
	\end{equation}
\end{corollary}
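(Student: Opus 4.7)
The plan is to combine the pointwise derivative bound from \Cref{thm:gradientpsi} with the comparability estimates in \Cref{lma:ComparabilityOfXandY}, reducing the claim to the boundedness of two elementary integrals of dilated mollifiers.

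First I would invoke \Cref{thm:gradientpsi} to get the pointwise estimate
\begin{equation*}
    |\grad_{\bx} \psi_\delta(\bx,\by)| \leq \frac{C}{\eta_\delta(\bx)}\bigl(\psi_\delta(\bx,\by) + (|\psi'|)_\delta(\bx,\by)\bigr),
\end{equation*}
where the bracketed kernel factors are both supported in $\{\by : |\by-\bx| \leq \eta_\delta(\bx)\}$ thanks to $\supp \psi \Subset (-1,1)$ in \eqref{Assump:Kernel}. Since $\delta < \underline{\delta}_0$, the inclusion $B(\bx,\eta_\delta(\bx)) \subset \Omega$ holds via the consequences of \eqref{eq:h0property}, so the integrand vanishes outside a ball in which we have full control of $\eta_\delta(\by)$.

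Next I would use \eqref{eq:ComparabilityOfDistanceFxn1} from \Cref{lma:ComparabilityOfXandY}: for $|\by-\bx| \leq \eta_\delta(\bx)$ we have $(1-\kappa_1\delta)\eta_\delta(\bx) \leq \eta_\delta(\by) \leq (1+\kappa_1\delta)\eta_\delta(\bx)$, and since $\kappa_1\delta \leq 1/3$, we conclude $|\eta_\delta(\by)|^{\alpha} \leq C(\kappa_1,\alpha)\,\eta_\delta(\bx)^{\alpha}$ throughout the support. Pulling this factor out of the integral reduces the problem to bounding
\begin{equation*}
    \int_{\Omega} \psi_\delta(\bx,\by)\,\rmd\by \quad\text{and}\quad \int_{\Omega} (|\psi'|)_\delta(\bx,\by)\,\rmd\by.
\end{equation*}
Both are handled by the single change of variables $\bz = (\by-\bx)/\eta_\delta(\bx)$, which (using again $B(\bx,\eta_\delta(\bx)) \subset \Omega$) yields $\int_{B(0,1)} \psi(|\bz|)\,\rmd\bz = 1$ and $\int_{B(0,1)} |\psi'(|\bz|)|\,\rmd\bz \leq C(d,\psi)$, respectively. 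Combining these factors with the $\eta_\delta(\bx)^{-1}$ prefactor gives the claimed bound $C\,\eta_\delta(\bx)^{\alpha-1}$ with $C$ depending only on $d$, $\psi$, $\kappa_1$, and $\alpha$.

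There is no real obstacle here: the estimate is essentially a corollary of \Cref{thm:gradientpsi}, with the only subtlety being that the comparability $\eta_\delta(\by) \simeq \eta_\delta(\bx)$ must be invoked inside the support of the kernel so that the weight $|\eta_\delta(\by)|^\alpha$ can be treated as a constant. This is immediate from \eqref{eq:ComparabilityOfDistanceFxn1} once one notes the support condition on $\psi$.
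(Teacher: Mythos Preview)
Your proposal is correct and follows essentially the same approach as the paper: apply the pointwise bound \eqref{eq:KernelDerivativeEstimate} from \Cref{thm:gradientpsi}, then use \eqref{eq:ComparabilityOfDistanceFxn1} to replace $\eta_\delta(\by)^\alpha$ by $C\eta_\delta(\bx)^\alpha$ on the support of the kernel, and integrate. The paper's proof is a two-line sketch of precisely these steps; you have simply filled in the details.
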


\begin{proof}
    We first apply  \eqref{eq:KernelDerivativeEstimate} and then 
    use \eqref{eq:ComparabilityOfDistanceFxn1}.
\end{proof}

\section{Properties of Boundary-localized convolutions}\label{sec:HSEstimates}
Our discussion in his section, unless indicated otherwise, is again
under the assumptions \eqref{assump:beta}, 
\eqref{assump:NonlinearLocalization}, \eqref{assump:Localization},
\eqref{Assump:Kernel}, and $\delta < \underline{\delta}_0$.

\subsection{General properties and consistency on the boundary}

We present the following theorem and lemma without proof, as it is straightforward to verify.

\begin{theorem}\label{thm:ConvProp:UniformCase}
	Let $u \in C^0(\overline{\Omega})$. Then $K_{\delta} u \in  C^0(\overline{\Omega})$. Moreover, $K_{\delta} u(\bx) = u (\bx)$ for all $\bx \in \p \Omega$, and $K_{\delta} u \to u$ uniformly on $\overline{\Omega}$ as $\delta \to 0$.
\end{theorem}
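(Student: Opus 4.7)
\emph{Plan.} The theorem has three assertions: continuity of $K_\delta u$ on $\overline \Omega$, the identity $K_\delta u = u$ on $\p \Omega$, and uniform convergence $K_\delta u \to u$ on $\overline \Omega$ as $\delta \to 0$. I would proceed by exploiting three elementary facts: the normalization $\int_\Omega \psi_\delta(\bx,\by)\,d\by = 1$ for interior $\bx$ (which holds because $B(\bx, \eta_\delta(\bx)) \subset \Omega$ thanks to \eqref{eq:h0property} together with $q(r) \leq r$); the degeneration $\eta_\delta(\bx) \to 0$ as $\bx \to \p\Omega$ stemming from $\lambda|_{\p\Omega} = 0$ and $q(0) = 0$; and the uniform continuity of $u$ on the compact set $\overline\Omega$. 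Continuity of $K_\delta u$ on the open set $\Omega$ is then routine: on any compact $K \Subset \Omega$, $\eta_\delta$ is uniformly bounded above and below, the kernel $(\bx,\by) \mapsto \psi_\delta(\bx,\by)$ is jointly continuous on $K \times \overline\Omega$ with support uniformly bounded in $\by$, and dominated convergence delivers $K_\delta u \in C^0(\Omega)$.

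\emph{Boundary values.} The defining integral of $K_\delta u$ is not literally meaningful at $\bx_0 \in \p \Omega$ since $\eta_\delta(\bx_0) = 0$; the statement should be read as assigning $K_\delta u(\bx_0) := u(\bx_0)$ and asking for continuity of the extension. For $\bx \in \Omega$ near $\bx_0$, the normalization yields
\begin{equation*}
K_\delta u(\bx) - u(\bx_0) = \int_\Omega \psi_\delta(\bx,\by)\bigl(u(\by) - u(\bx_0)\bigr)\, d\by,
\end{equation*}
with the integrand supported in $B(\bx, \eta_\delta(\bx))$. Since $|\by - \bx_0| \leq |\bx - \bx_0| + \eta_\delta(\bx) \to 0$ as $\bx \to \bx_0$, the uniform continuity of $u$ makes the right-hand side arbitrarily small, giving continuity at $\bx_0$ and simultaneously establishing the boundary identity.

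\emph{Uniform convergence.} By the same normalization,
\begin{equation*}
|K_\delta u(\bx) - u(\bx)| \leq \sup_{|\by-\bx| \leq \eta_\delta(\bx)} |u(\by) - u(\bx)|, \quad \bx \in \Omega,
\end{equation*}
and $\eta_\delta(\bx) \leq \delta\, q(\kappa_0 \diam(\Omega))$ uniformly in $\bx$ by \eqref{assump:Localization} i) and monotonicity of $q$. Uniform continuity of $u$ forces the right-hand side to zero uniformly as $\delta \to 0$; combined with the boundary identity, this yields uniform convergence on all of $\overline \Omega$. The only genuine subtlety is the boundary step, where one must make sense of $K_\delta u$ in the presence of the degeneration $\eta_\delta(\bx_0) = 0$. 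The content of the result is precisely that this degeneration, rather than being an obstruction, is exactly what pins the mollified function to the boundary values of $u$.
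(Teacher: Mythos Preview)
Your proposal is correct. The paper itself omits the proof entirely, stating only that the theorem ``is straightforward to verify,'' so your argument is precisely the kind of routine verification the authors had in mind; in particular your reading of the boundary identity as defining $K_\delta u(\bx_0) := u(\bx_0)$ and then checking continuity of the extension is the intended interpretation.
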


\begin{lemma}\label{lma:SmoothnessOfConv}
    Let $\psi$ satisfy \eqref{Assump:Kernel} for some $k=k_\psi \geq 0$, let $\lambda$ satisfy \eqref{assump:Localization} for some $k=k_\lambda \geq 0$, and let $q$ satisfy \eqref{assump:NonlinearLocalization} for some $k=k_q \geq 1$. 
    Then, for any $u \in L^1_{loc}(\Omega)$, $K_{\delta} u \in C^{k}(\Omega)$ where $k = \min \{ k_q, k_\lambda, k_\psi \}$.
\end{lemma}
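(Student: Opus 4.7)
The plan is to localize to neighborhoods of arbitrary points in $\Omega$ and then differentiate under the integral sign. Fix $\bx_0 \in \Omega$ and choose $r_0 > 0$ small enough that $B(\bx_0, 2r_0) \Subset \Omega$. By \eqref{assump:Localization} and \eqref{assump:NonlinearLocalization}, $\eta_\delta = \delta\, q(\lambda(\cdot))$ is continuous and strictly positive on $\overline{B(\bx_0,r_0)}$, so there exist constants $0 < \eta_{\min} \leq \eta_\delta(\bx) \leq \eta_{\max}$ on this set. Since $\supp \psi \subset (-1,1)$, the integrand of $K_\delta u(\bx)$ is supported in $\by$ within the compact set $K := \overline{B(\bx_0, r_0 + \eta_{\max})} \Subset \Omega$ for every $\bx \in B(\bx_0, r_0)$ (after possibly shrinking $r_0$).

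Next I verify the regularity of
$$\Psi_\delta(\bx,\by) = \eta_\delta(\bx)^{-d}\,\tilde\psi\!\left(\frac{\bx-\by}{\eta_\delta(\bx)}\right), \qquad \tilde\psi(\bw) := \psi(|\bw|),$$
as a function of $\bx$. The crucial observation is that, since $\psi$ is even and $C^{k_\psi}$, all odd-order derivatives of $\psi$ at $0$ vanish, which cancels the would-be singular radial factors arising from differentiating $|\bw|$ at the origin; hence $\tilde\psi \in C^{k_\psi}(\bbR^d)$. Combined with $\eta_\delta \in C^{\min\{k_\lambda, k_q\}}(\Omega)$ and the uniform positive lower bound on $\eta_\delta$, the chain rule gives $\Psi_\delta(\cdot, \by) \in C^{k}(B(\bx_0, r_0))$ for $k := \min\{k_q, k_\lambda, k_\psi\}$ and every fixed $\by$. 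A direct induction on $|\alpha|$, using derivative identities of the form \eqref{eq:MollifierDerivativeFormula}, shows that $|D_\bx^\alpha \Psi_\delta(\bx,\by)|$ is uniformly bounded by a constant $C_\alpha$ for $(\bx,\by) \in \overline{B(\bx_0, r_0/2)} \times K$ and all $|\alpha| \leq k$.

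The conclusion then follows by standard dominated convergence: since $u \in L^1(K)$ and the integrable dominant $C_\alpha |u(\by)| \mathds{1}_K(\by)$ controls $|D_\bx^\alpha \Psi_\delta(\bx,\by)\,u(\by)|$, Leibniz's rule applies and yields
$$D_\bx^\alpha K_\delta u(\bx) = \int_\Omega D_\bx^\alpha \Psi_\delta(\bx,\by)\, u(\by)\, \rmd \by, \qquad \bx \in B(\bx_0, r_0/2),\ |\alpha| \leq k,$$
with continuity in $\bx$ again following from dominated convergence applied to the continuous integrand $D_\bx^\alpha \Psi_\delta(\cdot, \by)$. Since $\bx_0 \in \Omega$ was arbitrary, $K_\delta u \in C^{k}(\Omega)$. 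The one nontrivial ingredient is the claim that $\tilde\psi = \psi(|\cdot|)$ inherits the $C^{k_\psi}$ regularity of $\psi$ at $\bw = 0$; away from the origin this is immediate, and at the origin it relies critically on the evenness of $\psi$, so that the Taylor expansion of $\psi$ at $0$ contains only even powers and $\tilde\psi$ admits a $C^{k_\psi}$ representation in terms of integer powers of $|\bw|^2$. All remaining steps are routine bookkeeping.
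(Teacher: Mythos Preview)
Your proof is correct. The paper itself omits the proof of this lemma entirely, stating only that it is ``straightforward to verify,'' so you have supplied the details the paper leaves out. You correctly identify the one genuinely nontrivial point: that $\tilde\psi(\bw)=\psi(|\bw|)$ inherits $C^{k_\psi}$ regularity at the origin from the evenness of $\psi$. One small remark on precision: your final sentence about a ``$C^{k_\psi}$ representation in terms of integer powers of $|\bw|^2$'' should not be read as invoking the Whitney--Glaeser factorization $\psi(r)=g(r^2)$, which only yields $g\in C^{\lfloor k_\psi/2\rfloor}$. The argument you actually want (and essentially sketch earlier) is to subtract the even Taylor polynomial of $\psi$ at $0$, so that the polynomial part of $\tilde\psi$ is smooth in $\bw$ and the remainder $R$ satisfies $R^{(j)}(0)=0$ for all $j\le k_\psi$; one then checks directly that each $\bx$-derivative of $R(|\bw|)$ of order $\le k_\psi$ is $o(1)$ at the origin. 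With that clarification the argument is complete.
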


For $t > 0$, we define the sets
\begin{equation*}
    \Omega_{t;\lambda,q} := \{ \bx \in \Omega \, : \, q(\lambda(\bx)) < t \} \,, \quad \text{ and } \quad \Omega^{t;\lambda,q} := \{ \bx \in \Omega \, : \, q(\lambda(\bx)) \geq t \}\,.
\end{equation*}

\begin{lemma}\label{lma:SupportOfConv}
    Suppose that $\varphi \in C^0(\overline{\Omega})$ has compact support in $\Omega$, i.e. there exists $c_\varphi > 0$ such that $\supp \varphi \subset \Omega^{c_\varphi;\lambda,q}$.
    Then $K_{\delta} \varphi$ has compact support with $\supp K_{\delta} \varphi \subset \Omega^{ \frac{ 1 }{1+ \kappa_1 \delta} c_{\varphi}; \lambda,q}$.
\end{lemma}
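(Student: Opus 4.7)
The plan is to unpack the definition of $K_\delta \varphi$ and use the support of the mollifier $\psi$, combined with the comparability estimate from \Cref{lma:ComparabilityOfXandY}, to track how the support propagates under boundary-localized convolution.

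First, I would fix $\bx \in \Omega$ with $K_\delta \varphi(\bx) \neq 0$. Since $\supp \psi \subset (-1,1)$, the integrand in \eqref{eq:ConvolutionOperator} forces the existence of some $\by \in \supp \varphi$ with $|\by - \bx| < \eta_\delta(\bx)$. By hypothesis $\by \in \Omega^{c_\varphi;\lambda,q}$, i.e. $\eta_\delta(\by) = \delta q(\lambda(\by)) \geq \delta c_\varphi$.

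Next, the condition $|\bx - \by| < \eta_\delta(\bx)$ places us exactly in the setting of \eqref{eq:ComparabilityOfDistanceFxn1}, which yields
\begin{equation*}
    \delta c_\varphi \;\leq\; \eta_\delta(\by) \;\leq\; (1+\kappa_1 \delta) \eta_\delta(\bx) \;=\; \delta(1+\kappa_1 \delta)\, q(\lambda(\bx))\,,
\end{equation*}
so that $q(\lambda(\bx)) \geq \frac{c_\varphi}{1+\kappa_1\delta}$, i.e. $\bx \in \Omega^{\frac{c_\varphi}{1+\kappa_1\delta};\lambda,q}$. Taking the closure of the set of such $\bx$ shows $\supp K_\delta \varphi \subset \Omega^{\frac{c_\varphi}{1+\kappa_1\delta};\lambda,q}$.

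Finally, to upgrade ``support in $\Omega^{\frac{c_\varphi}{1+\kappa_1\delta};\lambda,q}$'' to ``compact support in $\Omega$'', I would note that $q \circ \lambda$ is continuous on $\overline{\Omega}$ with $q(\lambda(\bx)) \to 0$ as $\bx \to \p\Omega$, since $\lambda(\bx) \leq \kappa_0 \dist(\bx,\p\Omega) \to 0$ by \eqref{assump:Localization} and $q(0)=0$ by \eqref{assump:NonlinearLocalization}. Thus the superlevel set $\Omega^{\frac{c_\varphi}{1+\kappa_1\delta};\lambda,q}$ is closed in $\overline\Omega$ and bounded away from $\p\Omega$, hence a compact subset of $\Omega$. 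The only subtlety worth being careful about is the comparability direction in \Cref{lma:ComparabilityOfXandY}: we need the inequality $\eta_\delta(\by) \leq (1+\kappa_1\delta)\eta_\delta(\bx)$, which is precisely the upper bound in \eqref{eq:ComparabilityOfDistanceFxn1} valid under the assumption $|\bx-\by| \leq \eta_\delta(\bx)$ that we get from the support of $\psi$; the reverse form \eqref{eq:ComparabilityOfDistanceFxn2} would not be directly applicable here.
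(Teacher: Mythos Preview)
Your proof is correct and follows essentially the same approach as the paper. The paper argues the contrapositive---showing that if $q(\lambda(\bx)) < \frac{c_\varphi}{1+\kappa_1\delta}$ then the integration region $\{|\by-\bx| \leq \eta_\delta(\bx)\}$ lies in $\Omega_{c_\varphi;\lambda,q}$ and hence is disjoint from $\supp\varphi$---while you argue the direct implication, but both rely on the same upper bound $\eta_\delta(\by) \leq (1+\kappa_1\delta)\eta_\delta(\bx)$ from \eqref{eq:ComparabilityOfDistanceFxn1}.
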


\begin{proof}
    By \eqref{eq:ComparabilityOfDistanceFxn1} and \eqref{eq:ComparabilityOfDistanceFxn2}, whenever $\eta(\bx) <\frac{ 1 }{1+\kappa_1 \delta } c_\varphi $
	we have
	\begin{equation*}
		\{ \by \, : |\bx-\by| \leq \eta_\delta(\bx) \} \subset \Omega_{c_\varphi;\lambda,q} \,.
	\end{equation*}
	Therefore, since the domain of integration in $K_{\delta}\varphi(\bx)$ and $\supp \varphi$ are disjoint, we have $K_{\delta} \varphi(\bx) = 0$. 
\end{proof}

\subsection{Classical function space estimates}
We first show some estimates analogous to those in
\cref{thm:convolution-estimate} for functions in classical Lebesgue and Sobolev spaces.

\begin{theorem}\label{thm:ConvEst1}
     Let $1 \leq p \leq \infty$. There exists a constant $C_0 > 0$ depending only on $d$, $p$, $\psi$, and $\kappa_1$ such that
	\begin{equation}\label{eq:ConvEst:Lp}
		\Vnorm{K_{\delta} u}_{L^p(\Omega)} \leq C_0 \Vnorm{u}_{L^p(\Omega)}, \qquad \forall u \in L^p(\Omega)\,.
	\end{equation}
	If additionally \eqref{Assump:Kernel} is satisfied for $k=k_\psi \geq 1$, then there exists a constant $C_1 > 0$ depending only on $d$, $p$, $\psi$, and $\kappa_1$ such that
	\begin{equation}\label{eq:ConvEst:W1p}
		\Vnorm{\grad K_{\delta} u}_{L^p(\Omega)} \leq C_1 \Vnorm{u}_{W^{1,p}(\Omega)}\,.
	\end{equation}
\end{theorem}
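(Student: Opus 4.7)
The plan is to handle the two estimates separately, each by a standard technique adapted to the boundary-localized kernel. For the $L^p$ bound the key facts are that $\int_{\Omega} \psi_\delta(\bx,\cdot)\,\rmd \by = 1$ uniformly in $\bx$ (so $K_\delta$ averages against a probability measure in the $\by$-variable), while the transposed integral $\Psi_\delta(\by)$ is uniformly bounded on $\Omega$ by \Cref{lma:KernelIntegral:SqDist}. For the $W^{1,p}$ bound, differentiating the kernel directly would produce a $1/\eta_\delta$ blow-up at the boundary of the type seen in \Cref{thm:gradientpsi}; instead, I would rescale the integration variable to the fixed unit ball, so that the derivative falls onto $u$ itself.

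For \eqref{eq:ConvEst:Lp} with $1<p<\infty$, I would apply Jensen's inequality to the probability measure of density $\psi_\delta(\bx,\by)\,\rmd \by$ to obtain $|K_\delta u(\bx)|^p \leq \int_\Omega \psi_\delta(\bx,\by)|u(\by)|^p\,\rmd \by$. Integrating in $\bx$ and applying Tonelli yields
\begin{equation*}
\Vnorm{K_\delta u}_{L^p(\Omega)}^p \leq \int_\Omega |u(\by)|^p \Psi_\delta(\by)\,\rmd \by \leq C \Vnorm{u}_{L^p(\Omega)}^p
\end{equation*}
by \eqref{eq:KernelIntFunction:UpperBound}. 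The endpoints $p=1$ (by Tonelli alone) and $p=\infty$ (by the mass-$1$ normalization) are immediate.

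For \eqref{eq:ConvEst:W1p}, assume first that $u \in C^1(\overline{\Omega})$, which is dense in $W^{1,p}(\Omega)$ for $\Omega$ Lipschitz. Substituting $\bz = (\by-\bx)/\eta_\delta(\bx)$ gives the representation
\begin{equation*}
K_\delta u(\bx) = \int_{B(0,1)} \psi(|\bz|)\, u(\bx + \eta_\delta(\bx)\bz)\,\rmd \bz,
\end{equation*}
valid by \eqref{eq:h0property}. Differentiating under the integral sign and using $|\grad \eta_\delta| \leq 1/3$ from \eqref{eq:h0property} yields the pointwise bound $|\grad K_\delta u(\bx)| \leq C\int_{B(0,1)} \psi(|\bz|)\, |\grad u(\bx+\eta_\delta(\bx)\bz)|\,\rmd \bz$. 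Jensen's inequality then gives
\begin{equation*}
|\grad K_\delta u(\bx)|^p \leq C \int_{B(0,1)} \psi(|\bz|)\, |\grad u(\bx+\eta_\delta(\bx)\bz)|^p\,\rmd \bz.
\end{equation*}
Integrating over $\bx$, swapping the order of integration by Tonelli, and for each fixed $\bz \in B(0,1)$ performing the change of variables $\bx \mapsto \bszeta_\bz^{\delta}(\bx) := \bx + \eta_\delta(\bx)\bz$ gives
\begin{equation*}
\Vnorm{\grad K_\delta u}_{L^p(\Omega)}^p \leq C \Vnorm{\grad u}_{L^p(\Omega)}^p \leq C \Vnorm{u}_{W^{1,p}(\Omega)}^p,
\end{equation*}
and a density argument extends the bound to all $u \in W^{1,p}(\Omega)$.

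The main obstacle is controlling the final change of variables uniformly in $\bz \in B(0,1)$: we need $\bszeta_\bz^\delta$ to send $\Omega$ into $\Omega$ and to have Jacobian bounded away from zero. Both facts are delivered by \Cref{lma:CoordChange2} applied with $\bar\lambda = \lambda$, so that $\bar c = \kappa_1$ and $\veps_0 = \underline{\delta}_0$; this is precisely the reason the threshold $\delta<\underline{\delta}_0$ is chosen as in \eqref{eq:HorizonThreshold2}. With that lemma in hand, the remaining manipulations are routine.
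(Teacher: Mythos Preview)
Your argument for \eqref{eq:ConvEst:Lp} is essentially the paper's: Jensen (equivalently, H\"older against a probability measure), Tonelli, and then the bound on $\Psi_\delta$ from \Cref{lma:KernelIntegral:SqDist}.

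For \eqref{eq:ConvEst:W1p} you take a genuinely different route. The paper does \emph{not} rescale to the unit ball; instead it differentiates $\psi_\delta(\bx,\by)$ in $\bx$, rewrites $\grad_\bx\psi_\delta$ as a $\by$-divergence of the matrix field $\psi_\delta[\bI-\eta_\delta^{-1}(\bx-\by)\otimes\grad\eta_\delta]$, and integrates by parts in $\by$ to obtain the representation
\[
\grad K_\delta u(\bx)=\int_\Omega \psi_\delta(\bx,\by)\Bigl[\bI-\tfrac{(\bx-\by)\otimes\grad\eta_\delta(\bx)}{\eta_\delta(\bx)}\Bigr]\grad u(\by)\,\rmd\by,
\]
from which $|\grad K_\delta u|\leq(1+\kappa_1)\,K_\delta[|\grad u|]$ pointwise, and then simply applies \eqref{eq:ConvEst:Lp}. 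Your rescaling-plus-chain-rule argument lands on an equivalent pointwise bound, but you then integrate in $\bx$ via the diffeomorphism $\bszeta_\bz^\delta$ and \Cref{lma:CoordChange2} rather than by invoking the already-proved $L^p$ bound. Both are correct. The paper's route has two small advantages: the integration by parts works directly for $u\in W^{1,p}(\Omega)$ (since $\psi_\delta(\bx,\cdot)$ is compactly supported in $\Omega$), so no density step is needed and the case $p=\infty$ requires no separate treatment; and the final step reuses \eqref{eq:ConvEst:Lp} instead of calling on \Cref{lma:CoordChange2}. Your route, on the other hand, never touches $\psi'$ (the chain rule puts the derivative on $u$, not on $\psi$), so it in fact shows that the hypothesis $k_\psi\ge1$ is not actually used in this estimate.
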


\begin{proof}
	First we prove \eqref{eq:ConvEst:Lp} for $1 \leq p < \infty$. 
	By H\"older's inequality, Tonelli's theorem, and \eqref{eq:KernelIntFunction:UpperBound},
	\begin{equation*}
		\Vnorm{ K_\delta u }_{L^p(\Omega)}^p \leq \int_{\Omega} \left( \int_{\Omega} \psi_\delta(\bx,\bz) \, \rmd \bz \right)^{p-1} \int_{\Omega} \psi_\delta(\bx,\by) |u(\by)|^p \, \rmd \by \, \rmd \bx \leq C \Vnorm{u}_{L^p(\Omega)}^p\,.
	\end{equation*}
	The inequality when $p = \infty$ is trivial.
	
	To prove \eqref{eq:ConvEst:W1p} it suffices to show that
	\begin{equation}\label{eq:ConvEst1:W1p:Pf1}
		\begin{split}
			\grad K_{\delta}u(\bx) = \int_{\Omega} \psi_{\delta}(\bx,\by) \left[ \bI - \frac{(\bx-\by) \otimes \grad \eta_\delta(\bx)}{\eta_\delta(\bx)} \right] \grad u(\by) \, \rmd \by\,,
		\end{split}
	\end{equation}
	since then
	\begin{equation*}
		|\grad K_{\delta} u(\bx)| \leq (1+\kappa_1) K_\delta [|\grad u|](\bx)\,,
	\end{equation*}
	from which \eqref{eq:ConvEst:W1p} follows by applying \eqref{eq:ConvEst:Lp}.
	First,
	\begin{equation*}
		\begin{split}
			\grad_{\by} \psi_{\delta}(\bx,\by) 
			&= \psi' \left( \frac{|\by-\bx|}{\eta_\delta(\bx)} \right) \frac{\by-\bx}{|\by-\bx|} \frac{1}{\eta_\delta(\bx)^{d+1}}\,,
		\end{split}
	\end{equation*}
	and so from the formula \eqref{eq:MollifierDerivativeFormula} we see that
	\begin{equation*}
		\begin{split}
	&		\grad_{\bx} \psi_{\delta}(\bx,\by) 
			= - \grad_{\by} \left[ \psi_{\delta} \left( \bx,\by \right) \right] \left( \bI - \frac{(\bx-\by) \otimes \grad \eta_\delta(\bx) }{\eta_\delta(\bx)} \right)
   - d \frac{\grad \eta_\delta(\bx)}{\eta_\delta(\bx)} \psi_\delta(\bx,\by) \\
   &\quad =
    - \grad_{\by} \left[ \psi_{\delta} \left( \bx,\by \right) \right] \left( \bI - \frac{(\bx-\by) \otimes \grad \eta_\delta(\bx) }{\eta_\delta(\bx)} \right)
+ \div_{\by}[\bx-\by]
\frac{\grad \eta_\delta(\bx)}{\eta_\delta(\bx)} \psi_\delta(\bx,\by) \\
   &\quad =
    -  \div_{\by}
 \left[ \psi_{\delta} \left( \bx,\by \right) \left( \bI - \frac{(\bx-\by) \otimes \grad \eta_\delta(\bx) }{\eta_\delta(\bx)} \right) \right]   
   \,.
		\end{split}
	\end{equation*}
Thus,
 	\begin{equation*}
		\begin{split}
			\grad K_{\delta} u(\bx)
			&= \int_{\Omega} \grad_\bx \psi_{\delta}(\bx,\by) u(\by) \, \rmd \by \\
			&= -  \int_{\Omega} 
    \div_{\by}
 \left[ \psi_{\delta} \left( \bx,\by \right) \left( \bI - \frac{(\bx-\by) \otimes \grad \eta_\delta(\bx) }{\eta_\delta(\bx)} \right) \right] \, \rmd \by  
   \,. 
   		\end{split}
	\end{equation*}
	Now, for any fixed $\bx \in \Omega$, the set $\{ \by \, : \, |\by-\bx| < \eta_\delta(\bx) \} \Subset \Omega$, so the functions $\psi_{\delta}(\bx,\by)$ and $\left( \bI - \frac{(\bx-\by) \otimes \grad \eta_\delta(\bx) }{\eta_\delta(\bx)} \right) u(\by)$ are integrable over $\by \in \Omega$, as are their gradients. Moreover, their product vanishes for $\by \in \p \Omega$. Therefore no boundary term appears when applying the divergence theorem, which leads exactly to  \eqref{eq:ConvEst1:W1p:Pf1}.
\end{proof}

\begin{theorem}\label{thm:TraceOfConv}
Assume \eqref{Assump:Kernel} for $k =k_\psi\geq 1$. Let $1 < p < \infty$ and denote the trace operator $T : W^{1,p}(\Omega) \to W^{1-1/p,p}(\p \Omega)$. Then $T K_{\delta} u = T u$ for all $u \in W^{1,p}(\Omega)$.
\end{theorem}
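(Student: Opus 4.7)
The plan is to prove the identity first on the dense subclass of smooth functions using the pointwise statement \cref{thm:ConvProp:UniformCase}, and then extend to all of $W^{1,p}(\Omega)$ by continuity, using the operator estimates already established in \cref{thm:ConvEst1}.

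First I would observe that both sides of the claimed equality $T K_\delta u = T u$ define bounded linear operators from $W^{1,p}(\Omega)$ into $W^{1-1/p,p}(\p \Omega)$. For the right-hand side this is the classical trace inequality on a bounded Lipschitz domain. For the left-hand side I would combine the two estimates of \cref{thm:ConvEst1}, which together give
\begin{equation*}
\Vnorm{K_\delta u}_{W^{1,p}(\Omega)} \leq (C_0 + C_1) \Vnorm{u}_{W^{1,p}(\Omega)},
\end{equation*}
and then compose with the trace operator. This yields continuity of $u \mapsto T K_\delta u$ as a map $W^{1,p}(\Omega) \to W^{1-1/p,p}(\p \Omega)$.

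Next I would use the standard density of $C^{\infty}(\overline{\Omega})$ (or even $C^1(\overline{\Omega})$) in $W^{1,p}(\Omega)$ for bounded Lipschitz domains. For any $u \in W^{1,p}(\Omega)$, choose $u_n \in C^\infty(\overline{\Omega})$ with $u_n \to u$ in $W^{1,p}(\Omega)$. By \cref{thm:ConvProp:UniformCase}, $K_\delta u_n \in C^0(\overline{\Omega})$ with $K_\delta u_n(\bx) = u_n(\bx)$ for every $\bx \in \p \Omega$, so in particular $T K_\delta u_n = T u_n$ as elements of $W^{1-1/p,p}(\p \Omega)$ (the trace of a function continuous up to the boundary is its boundary restriction). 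Passing to the limit $n \to \infty$ in $W^{1-1/p,p}(\p \Omega)$ on both sides, using the continuity established above, gives $T K_\delta u = T u$.

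I do not expect a serious obstacle here: the only subtlety is that the pointwise identity in \cref{thm:ConvProp:UniformCase} has to be upgraded to an identity of trace-class functions, which is immediate once $K_\delta u_n$ is known to be continuous on $\overline{\Omega}$ (already supplied by that theorem). The key inputs are (i) the $W^{1,p}$-boundedness of $K_\delta$ from \cref{thm:ConvEst1}, (ii) the consistency on the boundary for continuous functions from \cref{thm:ConvProp:UniformCase}, and (iii) density of smooth functions in $W^{1,p}(\Omega)$, all of which are either proven earlier in the excerpt or entirely standard.
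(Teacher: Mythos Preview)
Your proposal is correct and matches the paper's approach exactly: the paper's proof also invokes \Cref{thm:ConvProp:UniformCase} on $C^{\infty}(\overline{\Omega})$ and then extends by density using the $W^{1,p}$-boundedness of $K_\delta$ from \Cref{thm:ConvEst1}. Your write-up simply spells out the two-line argument in more detail.
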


\begin{proof}
For  $\wt{u} \in C^{\infty}(\overline{\Omega})$, we have $K_\delta \wt{u} = \wt{u}$ on $\p \Omega$ by \Cref{thm:ConvProp:UniformCase}. The conclusion then follows from the density of $C^{\infty}(\overline{\Omega})$ in $W^{1,p}(\Omega)$
and \Cref{thm:ConvEst1}.
\end{proof}

\subsection{Nonlocal function space estimates}\label{sec:NonlocalFxnSpEst}

\begin{theorem}\label{thm:diffuKu}
   For $1 \leq p < \infty$ and $u \in L^p(\Omega)$, we have
\begin{equation}\label{eq:Kdeltaerror1}
		\Vnorm{u - K_{\delta} u }_{L^p(\Omega)}^p \leq \iintdm{\Omega}{\Omega}{ \psi_{\delta}(\bx,\by) |u(\bx)-u(\by)|^p }{\by}{\bx}\,.
	\end{equation}
 Consequently, \eqref{eq:KdeltaError} holds.
\end{theorem}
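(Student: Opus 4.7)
My plan is to obtain \eqref{eq:Kdeltaerror1} from Jensen's inequality and then read off \eqref{eq:KdeltaError} by a pointwise comparison between $\psi_\delta$ and the kernel of an equivalent seminorm. Because $\delta<\underline{\delta}_0$ forces $\supp \psi_\delta(\bx,\cdot)$ to lie inside $\Omega$ for every $\bx \in \Omega$, we have $\int_\Omega \psi_\delta(\bx,\by)\,\rmd\by=1$. Hence I would first rewrite
\[
u(\bx)-K_\delta u(\bx) \;=\; \int_{\Omega}\psi_\delta(\bx,\by)\big(u(\bx)-u(\by)\big)\,\rmd\by .
\]
Jensen's inequality applied to the convex function $t\mapsto|t|^p$ against the probability measure $\psi_\delta(\bx,\by)\,\rmd\by$ yields
\[
|u(\bx)-K_\delta u(\bx)|^{p}\le \int_{\Omega}\psi_\delta(\bx,\by)\,|u(\bx)-u(\by)|^{p}\,\rmd\by,
\]
and integrating over $\bx\in\Omega$ followed by Tonelli's theorem gives \eqref{eq:Kdeltaerror1}.

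For \eqref{eq:KdeltaError} my approach would be to compare $\psi_\delta(\bx,\by)$ pointwise with the kernel $\gamma_{\beta,p}[\delta;q;\psi,\lambda]$ that defines the seminorm $[\cdot]_{\frak{V}^{\beta,p}[\delta;q;\psi,\lambda]}$. A direct calculation gives
\[
\frac{\psi_\delta(\bx,\by)}{\gamma_{\beta,p}[\delta;q;\psi,\lambda](\bx,\by)}=\frac{|\bx-\by|^{\beta}\,\eta_\delta(\bx)^{\,p-\beta}}{C_{d,\beta,p}(\psi)},
\]
and on the support of $\psi_\delta(\bx,\cdot)$ one has $|\bx-\by|\le\eta_\delta(\bx)$, so this quotient is at most $C\eta_\delta(\bx)^{p}$ (the argument works regardless of whether $\beta\le p$ or $\beta>p$). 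Since $\lambda(\bx)\le\kappa_0\diam(\Omega)$ and $q$ is nondecreasing, the doubling property \eqref{eq:NonlinLocAssump:Consequence} yields $\eta_\delta(\bx)\le C\delta q(\diam(\Omega))$, and so
\[
\psi_\delta(\bx,\by) \le C\,(\delta q(\diam(\Omega)))^{p}\,\gamma_{\beta,p}[\delta;q;\psi,\lambda](\bx,\by).
\]
Inserting this into \eqref{eq:Kdeltaerror1} produces the estimate $\Vnorm{u-K_\delta u}_{L^p(\Omega)}^{p}\le C(\delta q(\diam(\Omega)))^{p}[u]_{\frak{V}^{\beta,p}[\delta;q;\psi,\lambda]}^{p}$. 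Finally, the equivalence furnished by \Cref{thm:EnergySpaceIndepOfKernel}, applied with $\rho=\psi$ (which satisfies \eqref{assump:VarProb:Kernel} since \eqref{Assump:Kernel} is stricter), bounds the right-hand side by $C(\delta q(\diam(\Omega)))^{p}[u]_{\frak{W}^{\beta,p}[\delta;q]}^{p}$, and extracting the $p$-th root yields \eqref{eq:KdeltaError}.

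The one subtlety I expect to navigate is the mismatch between $\supp\psi_\delta(\bx,\cdot)$, of radius $\eta_\delta(\bx)=\delta q(\lambda(\bx))$, and the support of the $\frak{W}^{\beta,p}[\delta;q]$-kernel, of radius $\delta q(d_{\p\Omega}(\bx))$: these differ by the multiplicative constant relating $\lambda$ and $d_{\p\Omega}$ in \eqref{assump:Localization}, so the two supports need not be nested, which blocks a direct pointwise comparison against $\gamma_{\beta,p}[\delta;q]$ itself. Routing the argument through the intermediate seminorm $\frak{V}^{\beta,p}[\delta;q;\psi,\lambda]$, whose kernel is built from the same heterogeneous localization $\eta_\delta[\lambda,q]$ as $\psi_\delta$, sidesteps this cleanly, with \Cref{thm:EnergySpaceIndepOfKernel} absorbing the resulting constants back into $[u]_{\frak{W}^{\beta,p}[\delta;q]}$.
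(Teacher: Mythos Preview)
Your proposal is correct and follows essentially the same approach as the paper: the paper phrases the first step as H\"older's inequality against $\psi_\delta(\bx,\cdot)$ (which, since $\int_\Omega \psi_\delta(\bx,\by)\,\rmd\by=1$, is exactly your Jensen step), and then derives \eqref{eq:KdeltaError} by the same pointwise bound $\psi_\delta \le C(\delta q(\diam(\Omega)))^p\,\gamma_{\beta,p}[\delta;q;\psi,\lambda]$ followed by \Cref{thm:EnergySpaceIndepOfKernel}. Your explicit discussion of why one must pass through $\frak{V}^{\beta,p}[\delta;q;\psi,\lambda]$ rather than compare directly to $\gamma_{\beta,p}[\delta;q]$ is a nice clarification that the paper leaves implicit.
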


\begin{proof}
	H\"older's inequality gives
	\begin{equation*}
		\begin{split}
	&		\Vnorm{u - K_{\delta} u}_{L^p(\Omega)}^p = \intdm{\Omega}{  \left( \intdm{\Omega}{ \psi_{\delta}(\bx,\by) (u(\by)-u(\bx)) }{\by} \right)^p }{\bx} \\
			&\quad\leq \intdm{\Omega}{  \left( \intdm{\Omega}{\psi_{\delta}(\bx,\by) }{\by} \right)^{p-1} \left( \intdm{\Omega}{ \psi_{\delta}(\bx,\by) |u(\by)-u(\bx)|^p }{\by} \right) }{\bx} \\
			&\quad = \iintdm{\Omega}{\Omega}{ \psi_{\delta}(\bx,\by) |u(\bx)-u(\by)|^p }{\by}{\bx}\,,
		\end{split}
	\end{equation*}
 which is \eqref{eq:Kdeltaerror1}.
Now, we have the estimate
\begin{equation*}
		\begin{gathered}
		\psi_{\delta}(\bx,\by) \leq \frac{C(q,\kappa_0) \delta^p \diam(\Omega)^{p}}{\eta_\delta(\bx)^{p}} \psi_{\delta}(\bx,\by) \leq C \frac{\delta^p \diam(\Omega)^{p} }{C_{d,\beta,p}(\psi)} \gamma_{\beta,p}[\delta;q;\psi,\lambda](\bx,\by)
		\end{gathered}
	\end{equation*}
in the right-hand side integral of \eqref{eq:Kdeltaerror1}, thanks to the support of $\psi$. Then \eqref{eq:KdeltaError} follows from \Cref{thm:EnergySpaceIndepOfKernel}.
\end{proof}

\begin{theorem}\label{thm:Convolution:DerivativeEstimate}
    Assume \eqref{Assump:Kernel} for $k=k_\psi \geq 1$ and let $1 \leq p < \infty$. 
    Then there exists $C > 0$ depending only on $d$, $p$, $\psi$, $q$, $\kappa_0$ and $\kappa_1$ such that for all $u \in L^p(\Omega)$,
	\begin{equation}\label{eq:ConvEst:Deriv}
		\begin{split}
			\Vnorm{ \grad K_{\delta} u }_{L^p(\Omega)}^p
			\leq C \iintdm{\Omega}{\Omega}{ 
   \frac{(\psi_{\delta}(\bx,\by)
   +(|\psi'|)_{\delta}(\bx,\by) )}{ \eta_\delta(\bx)^{p} }
   |u(\bx)-u(\by)|^p }{\by}{\bx}\,.
		\end{split}
	\end{equation}
\end{theorem}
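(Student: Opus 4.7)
The strategy mirrors those of \Cref{thm:diffuKu} and \Cref{thm:ConvEst1}: derive a pointwise bound on $|\grad K_\delta u(\bx)|$ via an ``add zero'' trick, and then apply H\"older's inequality. First, differentiating under the integral—justified since for each fixed $\bx \in \Omega$ the kernel $\psi_\delta(\bx,\by)$ is $C^1$ in $\bx$ and has compact support in $\by$ inside $\Omega$ (by $\delta < \underline{\delta}_0$ and \eqref{eq:h0property})—yields
\begin{equation*}
    \grad K_\delta u(\bx) = \int_\Omega \grad_\bx \psi_\delta(\bx,\by)\, u(\by) \, \rmd \by.
\end{equation*}

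The key observation is that, by change of variables $\by \mapsto \bx + \eta_\delta(\bx)\bsxi$ and the normalization in \eqref{Assump:Kernel}, $\int_\Omega \psi_\delta(\bx,\by) \, \rmd \by = 1$ for all $\bx \in \Omega$. Differentiating this identity in $\bx$ (again justified by compact support and smoothness) gives $\int_\Omega \grad_\bx \psi_\delta(\bx,\by) \, \rmd \by = 0$, so I may subtract $u(\bx)$ inside the integral to obtain
\begin{equation*}
    \grad K_\delta u(\bx) = \int_\Omega \grad_\bx \psi_\delta(\bx,\by)\, (u(\by) - u(\bx)) \, \rmd \by.
\end{equation*}
Invoking the pointwise bound \eqref{eq:KernelDerivativeEstimate} from \Cref{thm:gradientpsi} then gives
\begin{equation*}
    |\grad K_\delta u(\bx)| \leq \frac{C}{\eta_\delta(\bx)} \int_\Omega \bigl( \psi_\delta(\bx,\by) + (|\psi'|)_\delta(\bx,\by) \bigr) |u(\bx) - u(\by)| \, \rmd \by.
\end{equation*}

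Finally, I apply H\"older's inequality as in the proof of \Cref{thm:diffuKu}, using that $\int_\Omega \psi_\delta(\bx,\by) \, \rmd \by = 1$ and $\int_\Omega (|\psi'|)_\delta(\bx,\by) \, \rmd \by \leq C(d,\psi)$ (the latter again via change of variables, since $|\psi'|$ is bounded with support in $[-1,1]$), to produce
\begin{equation*}
    |\grad K_\delta u(\bx)|^p \leq \frac{C}{\eta_\delta(\bx)^p} \int_\Omega \bigl( \psi_\delta(\bx,\by) + (|\psi'|)_\delta(\bx,\by) \bigr) |u(\bx) - u(\by)|^p \, \rmd \by,
\end{equation*}
and integrating in $\bx$ over $\Omega$ yields \eqref{eq:ConvEst:Deriv}. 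The only nontrivial step is the ``add zero'' move, but because $\int_\Omega \psi_\delta(\bx,\by) \, \rmd \by \equiv 1$ on $\Omega$ is already guaranteed by $\delta < \underline{\delta}_0$, this is essentially free; no additional hypothesis on $u$ (beyond $u \in L^p$) or on $\psi$ (beyond \eqref{Assump:Kernel} with $k_\psi \geq 1$) is needed. The remainder is a direct application of \Cref{thm:gradientpsi} and Tonelli--H\"older, exactly parallel to \Cref{thm:diffuKu}.
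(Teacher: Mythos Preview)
Your proof is correct and follows essentially the same approach as the paper's: both use the ``add zero'' trick via $\int_\Omega \psi_\delta(\bx,\by)\,\rmd\by = 1$, the pointwise bound \eqref{eq:KernelDerivativeEstimate}, and H\"older's inequality. The only cosmetic difference is order---the paper applies H\"older with weight $|\grad_\bx \psi_\delta|$ first and then invokes \eqref{eq:KernelDerivativeEstimate}, whereas you apply \eqref{eq:KernelDerivativeEstimate} first and then H\"older with weight $\psi_\delta + (|\psi'|)_\delta$.
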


\begin{proof}
	Assume the right-hand side of \eqref{eq:ConvEst:Deriv} is finite. Since $\int_{\Omega} \psi_\delta(\bx,\by) \, \rmd \by = 1$, its gradient in $\bx$ vanishes, and so
	\begin{equation*}
		\begin{split}
			\grad K_{\delta}u (\bx) &= \intdm{\Omega}{\grad_{\bx} \psi_{\delta}(\bx,\by) u(\by)}{\by}  
			= \intdm{\Omega}{\grad_{\bx} \psi_{\delta}(\bx,\by) (u(\by)-u(\bx))}{\by}\,.
		\end{split}
	\end{equation*}
    Therefore, by H\"older's inequality 
	\begin{equation*}
		\begin{split}
			&\Vnorm{ \grad K_{\delta} u }_{L^p(\Omega)}^p \\
			&\leq \int_{\Omega}  \left( \eta_\delta(\bx)  \intdm{\Omega}{ |\grad_{\bx} \psi_{\delta}(\bx,\bz)|}{\bz}
   \right)^{p-1}
   \intdm{\Omega}{\frac{|\grad_{\bx} \psi_{\delta}(\bx,\by)|}{\eta_\delta(\bx)^{p-1}} |u(\by)-u(\bx)|^p}{\by} \rmd \bx\,.
		\end{split}
	\end{equation*}
	We use \eqref{eq:KernelIntegralDerivativeEstimate} 
	to get
	\begin{equation}\label{eq:ConvEst:Deriv:Pf1}
		\begin{split}
		\Vnorm{\grad K_{\delta} u }_{L^p(\Omega)}^p &\leq C \int_{\Omega} \intdm{\Omega}{\frac{|\grad_{\bx} \psi_{\delta}(\bx,\by)|}{\eta_\delta(\bx)^{p-1} } |u(\by)-u(\bx)|^p}{\by}  \, \rmd \bx\,.
			\end{split}
	\end{equation}
	From \eqref{eq:KernelDerivativeEstimate} we have
	\begin{equation*}
		\frac{|\grad_{\bx} \psi_{\delta}(\bx,\by)|}{\eta_\delta(\bx)^{p-1}  } 
        \leq C \frac{  \psi_{\delta}(\bx,\by) +  (|\psi'|)_{\delta}(\bx,\by) }{ \eta_\delta(\bx)^p }\,.
	\end{equation*}
	Using the previous estimate in \eqref{eq:ConvEst:Deriv:Pf1} gives \eqref{eq:ConvEst:Deriv}.
\end{proof}

As a consequence of the characterization of nonlocal spaces in \Cref{thm:EnergySpaceIndepOfKernel}, we obtain the following corollary that leads to \eqref{eq:Intro:ConvEst:Deriv}:
\begin{corollary}\label{cor:RegularityOfHSOp}
    Let $p \in [1,\infty)$ and assume \eqref{Assump:Kernel} for $k=k_\psi \geq 1$.
    Then \eqref{eq:Intro:ConvEst:Deriv} holds, i.e. there exists $C >0$ depending only on $d$, $\beta$, $p$, $\psi$, $q$, $\kappa_0$ and $\kappa_1$ such that
\begin{equation}\label{eq:ConvEst:Deriv:Cor}
		\begin{split}
			\Vnorm{ \grad K_{\delta} u }_{L^p(\Omega)} 
			\leq C [u]_{\frak{W}^{\beta,p}[\delta;q](\Omega)}\,,
   \qquad \forall u \in \frak{W}^{\beta,p}[\delta;q](\Omega).
		\end{split}
	\end{equation}
\end{corollary}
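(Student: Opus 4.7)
The plan is to obtain the estimate as a direct combination of \cref{thm:Convolution:DerivativeEstimate} and \cref{thm:EnergySpaceIndepOfKernel}. First I would apply \cref{thm:Convolution:DerivativeEstimate} to reduce the gradient estimate to the bound
\[
\Vnorm{\grad K_\delta u}_{L^p(\Omega)}^p \leq C \int_\Omega \int_\Omega \frac{\psi_\delta(\bx,\by) + (|\psi'|)_\delta(\bx,\by)}{\eta_\delta(\bx)^p} |u(\bx)-u(\by)|^p \, \rmd \by \, \rmd \bx.
\]
The task then is to recognize the integrand on the right as a generalized nonlocal kernel of the type $\gamma_{\beta,p}[\delta;q;\rho,\lambda]$ treated in \cref{thm:EnergySpaceIndepOfKernel}.

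The key observation is that on the support of $\psi(|\by-\bx|/\eta_\delta(\bx))$, we have $|\by-\bx| \leq \eta_\delta(\bx)$, and since $\beta \geq 0$ by \eqref{assump:beta} this yields the algebraic identity
\[
\frac{\psi_\delta(\bx,\by)}{\eta_\delta(\bx)^p} = \left( \frac{|\by-\bx|}{\eta_\delta(\bx)} \right)^{\!\beta} \frac{1}{|\bx-\by|^{\beta} \eta_\delta(\bx)^{d+p-\beta}} \psi\!\left( \frac{|\by-\bx|}{\eta_\delta(\bx)} \right) \leq \frac{1}{C_{d,\beta,p}(\psi)} \gamma_{\beta,p}[\delta;q;\psi,\lambda](\bx,\by),
\]
where the factor $(|\by-\bx|/\eta_\delta(\bx))^\beta$ is absorbed by its pointwise bound $1$ on the support. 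The same bookkeeping applies verbatim with $\psi$ replaced by $|\psi'|$, since $|\psi'|$ is likewise supported in $[-1,1]$ and the support restriction is all that is used.

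At this point the right-hand side is majorized (up to constants) by $[u]_{\frak{V}^{\beta,p}[\delta;q;\psi,\lambda](\Omega)}^p + [u]_{\frak{V}^{\beta,p}[\delta;q;|\psi'|,\lambda](\Omega)}^p$. Both $\psi$ and $|\psi'|$ are nonnegative, even, bounded functions with support in $(-1,1)$ (evenness of $|\psi'|$ follows since $\psi$ is even, hence $\psi'$ odd), so each satisfies the hypothesis of \cref{thm:EnergySpaceIndepOfKernel}. The first inequality of that theorem then bounds each term by $C [u]_{\frak{W}^{\beta,p}[\delta;q](\Omega)}^p$, completing the proof.

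Since this is a packaging of two results already in hand, there is no genuine obstacle. The only point demanding care is confirming that $|\psi'|$ satisfies the regularity hypotheses of \cref{thm:EnergySpaceIndepOfKernel} (nonnegativity, evenness, boundedness, compact support in $(-1,1)$), all of which follow from \eqref{Assump:Kernel} with $k_\psi \geq 1$, and ensuring that the $|\bx-\by|^{-\beta}$ singularity introduced in forming $\gamma_{\beta,p}$ is compensated by the support-based pointwise bound above — which is exactly where the assumption $\beta \geq 0$ enters.
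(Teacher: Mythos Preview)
Your proposal is correct and matches the paper's approach exactly: the paper states the corollary ``as a consequence of the characterization of nonlocal spaces in \Cref{thm:EnergySpaceIndepOfKernel}'' applied to \Cref{thm:Convolution:DerivativeEstimate}, and indeed the same pointwise kernel comparison (using $|\bx-\by|\le \eta_\delta(\bx)$ on the support together with $\beta\ge 0$) is carried out in the proof of \Cref{thm:diffuKu} just above. Your verification that $|\psi'|$ is nonnegative, even, bounded, and supported in $(-1,1)$ is the only point needing care, and you have addressed it.
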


The above corollary and the \cref{thm:Convolution:DerivativeEstimate} together gives the result of \cref{thm:convolution-estimate}.

Additionally, the presence of the singular term $|\bx-\by|^{-\beta}$ allows us to obtain precise continuity estimates of $K_\delta$ in fractional Sobolev spaces when $\beta > d$. The proof additionally motivatives the choice of threshold in \eqref{eq:HorizonThreshold2}.

\begin{theorem}\label{thm:diffuKu:Fractional}
    Suppose additionally that $\beta > d$. Then there exists a constant $C>0$ depending only on $d$, $\beta$, $p$, $\psi$, $q$, $\kappa_0$, and $\kappa_1$ such that for all $\delta$ satisfying \eqref{eq:HorizonThreshold2}
    \begin{equation*}
        [u - K_\delta u]_{W^{ (\beta-d)/p,p }(\Omega)} \leq C (\delta q(\diam(\Omega)) )^{ \frac{d+p-\beta}{p} } [u]_{ \frak{W}^{\beta,p}[\delta;q](\Omega) }\,.
    \end{equation*}
\end{theorem}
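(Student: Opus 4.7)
The plan is to split $\Omega \times \Omega$ into three regions and bound $[v]_{W^{s,p}(\Omega)}^p$ on each separately, where $v := u - K_\delta u$, $s := (\beta-d)/p \in (0,1)$ (so $d+sp = \beta$ and $(1-s)p = d+p-\beta$), and $R := \delta q(\diam \Omega)$. The target becomes
\begin{equation*}
\iint_{\Omega \times \Omega} \frac{|v(\bx)-v(\by)|^p}{|\bx-\by|^{\beta}} \, \rmd \by \, \rmd \bx \leq C R^{d+p-\beta} [u]_{\frak{W}^{\beta,p}[\delta;q](\Omega)}^p,
\end{equation*}
and the three regions are the \emph{far} piece $\{|\bx-\by|>R\}$, the \emph{inner} piece $\{|\bx-\by| \leq R \text{ and } |\bx-\by|<\eta_\delta(\bx)\}$, and the \emph{annular} piece $\{\eta_\delta(\bx) \leq |\bx-\by| \leq R\}$. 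Throughout, I would use $\eta_\delta(\bx) \leq CR$ on $\Omega$, which follows from \eqref{eq:NonlinLocAssump:Consequence} and \eqref{assump:Localization}, together with the Lipschitz estimate $|\eta_\delta(\bx) - \eta_\delta(\by)| \leq \frac{1}{3}|\bx-\by|$ from \eqref{eq:h0property}.

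On the \emph{far} piece, $|v(\bx)-v(\by)|^p \leq 2^{p-1}(|v(\bx)|^p + |v(\by)|^p)$, the tail bound $\int_{\{|\bx-\by|>R\}} |\bx-\by|^{-\beta} \rmd \by \leq C(\beta-d)^{-1} R^{d-\beta}$ (where $\beta > d$ is essential), and the $L^p$ estimate $\|v\|_{L^p(\Omega)} \leq CR\, [u]_{\frak{W}^{\beta,p}}$ of \Cref{thm:diffuKu} yield the desired $CR^{d+p-\beta}[u]^p$ contribution. On the other two pieces I would decompose $v(\bx)-v(\by) = (u(\bx)-u(\by)) - (K_\delta u(\bx) - K_\delta u(\by))$. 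For the $K_\delta u$ increment on $\{|\bx-\by|\leq R\}$, extending $K_\delta u$ to $\bbR^d$ via a bounded Lipschitz-domain extension, applying the mean-value inequality along segments, and then using Fubini with a change of variables $\bsxi = \bx + t(\by-\bx)$ leads to the bound $CR^{d+p-\beta} \|\nabla K_\delta u\|_{L^p(\Omega)}^p \leq CR^{d+p-\beta}[u]^p$ by \Cref{cor:RegularityOfHSOp}. For the $u$-increment on the inner piece, since $|\bx-\by| < \eta_\delta(\bx) \leq CR$,
\begin{equation*}
\frac{\mathds{1}_{\{|\bx-\by|<\eta_\delta(\bx)\}}}{|\bx-\by|^\beta} = \frac{\eta_\delta(\bx)^{d+p-\beta}}{C_{d,\beta,p}} \gamma_{\beta,p}[\delta;q](\bx,\by) \leq C R^{d+p-\beta} \gamma_{\beta,p}[\delta;q](\bx,\by),
\end{equation*}
absorbing the contribution into $CR^{d+p-\beta}[u]_{\frak{W}^{\beta,p}}^p$.

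The hard part will be the $u$-increment on the \emph{annular} piece. I would bridge once more, $u(\bx)-u(\by) = v(\bx) + (K_\delta u(\bx) - K_\delta u(\by)) - v(\by)$; the $K_\delta u$ piece is already controlled, and integrating $|\bx-\by|^{-\beta}$ in $\by$ over $\{\eta_\delta(\bx) \leq |\bx-\by| \leq R\}$ yields $C\eta_\delta(\bx)^{d-\beta}$, while integrating in $\bx$ yields $C\eta_\delta(\by)^{d-\beta}$ after invoking the Lipschitz inequality $\eta_\delta(\by) \leq \frac{4}{3}|\bx-\by|$ valid on the annulus (from \eqref{eq:h0property}). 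This reduces both $|v|^p$-contributions to the weighted estimate
\begin{equation*}
\int_\Omega |v(\bx)|^p \eta_\delta(\bx)^{d-\beta} \, \rmd \bx \leq C R^{d+p-\beta} [u]_{\frak{W}^{\beta,p}}^p.
\end{equation*}
To prove this, I would use the pointwise H\"older bound $|v(\bx)|^p \leq \int_\Omega \psi_\delta(\bx,\by) |u(\bx)-u(\by)|^p \rmd \by$ (as in the proof of \Cref{thm:diffuKu}), together with the kernel comparison $\psi_\delta(\bx,\by) \eta_\delta(\bx)^{d-\beta} \leq C R^{d+p-\beta} \gamma_{\beta,p}[\delta;q](\bx,\by)$ valid on $\{|\bx-\by| < \eta_\delta(\bx)\}$; this last inequality in turn follows from $\psi_\delta(\bx,\by) \leq C(\psi) \eta_\delta(\bx)^{-d}$ on its support and the elementary estimate $|\bx-\by|^\beta \eta_\delta(\bx)^{d+p-2\beta} \leq \eta_\delta(\bx)^{d+p-\beta} \leq CR^{d+p-\beta}$. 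Summing the contributions of all three regions gives the claim.
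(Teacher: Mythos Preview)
Your proof is correct and takes a genuinely different route from the paper's. The paper unfolds $K_\delta u(\bx)=\int_{B(0,1)}\psi(|\bz|)\,u(\bszeta_\bz^\delta(\bx))\,\rmd\bz$, applies Jensen in $\bz$, and splits according to whether $|\bx-\by|$ is large or small compared to $\max\{\eta_\delta(\bx),\eta_\delta(\by)\}|\bz|$; the ``small'' piece is handled by the change of variables $(\bx,\by)\mapsto(\bszeta_\bz^\delta(\bx),\bszeta_\bz^\delta(\by))$ together with \Cref{thm:InvariantHorizon}, which is exactly where the threshold $\bar\delta_0$ enters. Your argument instead splits $\Omega\times\Omega$ by the scales $R$ and $\eta_\delta(\bx)$ and recycles already-proved estimates as black boxes: the $L^p$ error bound of \Cref{thm:diffuKu} for the far piece, \Cref{cor:RegularityOfHSOp} for the $K_\delta u$-increment, and a weighted Hardy-type estimate $\int_\Omega|v|^p\eta_\delta^{\,d-\beta}\leq CR^{d+p-\beta}[u]^p$ (obtained from the pointwise Jensen bound for $|v|^p$) for the annular piece. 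This is more modular and, notably, never invokes the diffeomorphism $\bszeta_\bz^\delta$; as written it only uses $\delta<\underline{\delta}_0$, so it actually yields the estimate without the extra restriction $\delta<\bar\delta_0$.

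One small point to tighten: your claim that the extension argument for the $K_\delta u$-increment on $\{|\bx-\by|\leq R\}$ produces only $\Vnorm{\nabla K_\delta u}_{L^p(\Omega)}^p$ is not quite immediate, since a bounded $W^{1,p}$-extension on a Lipschitz domain controls $\Vnorm{\nabla E f}_{L^p(\bbR^d)}$ by the full norm $\Vnorm{f}_{W^{1,p}(\Omega)}$. The fix is standard: replace $K_\delta u$ by $K_\delta u-(K_\delta u)_\Omega$ (which leaves all increments unchanged) and apply the classical Poincar\'e inequality on $\Omega$ before extending; then only $\Vnorm{\nabla K_\delta u}_{L^p(\Omega)}$ survives, and \Cref{cor:RegularityOfHSOp} closes the loop.
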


\begin{proof}
We will estimate the left-hand side by four separate terms, each of which will be bounded by the right-hand side up to a constant. By Jensen's inequality
    \begin{equation*}
        \begin{split}
            &[u-K_\delta u]_{W^{\frac{\beta-d}{p},p}(\Omega)}^p \\
            &\leq C_{d,\beta,p} \int_{B(0,1)} \psi(|\bz|) \int_{\Omega} \int_{\Omega} \frac{ |u(\bszeta_\bz^\delta(\bx)) - u(\bszeta_\bz^\delta(\by)) - u(\bx) +u(\by)|^p }{|\bx-\by|^{\beta} }  \, \rmd \by \, \rmd \bx \, \rmd \bz\,.
        \end{split}
    \end{equation*}
    Split the right-hand side integral as $I+II$, where $I$ is the same integrand over the domain $(B(0,1) \times \Omega \times \Omega) \cap \{ |\bx-\by| \geq \max \{ \eta_\delta(\bx), \eta_\delta(\by) \} |\bz| \} $, and $II$ is the integrand over the domain $(B(0,1) \times \Omega \times \Omega ) \cap \{ |\bx-\by| < \max \{ \eta_\delta(\bx), \eta_\delta(\by) \} |\bz| \}$.
    
    We estimate $I \leq C(p,\beta,p,\psi) (I_1 + I_2)$, where
    \begin{equation*}
        I_1 := \int_{B(0,1)} \int_{\Omega} \int_{\Omega \cap \{ |\bx-\by| \geq \eta_\delta(\bx)|\bz| \} } \frac{|u(\bszeta_\bz^\delta(\bx)) - u(\bx)|^p }{|\bx-\by|^{\beta}} \, \rmd \by \, \rmd \bx \, \rmd \bz\,, 
    \end{equation*}
    and $I_2$ is defined similarly, with the roles of $\bx$ and $\by$ exchanged. 
    The $\by$-integral in $I_1$ is bounded from above by $C(d,\beta) (\eta_\delta(\bx)|\bz|)^{\beta-d}$, and so
    \begin{equation*}
        I_1 \leq C(d,\beta) \int_{\Omega} \int_{B(0,1)} |\eta_\delta(\bx) \bz|^d \frac{ |u(\bx + \eta_\delta(\bx) \bz) - u(\bx)|^p }{ |\eta_\delta(\bx) \bz|^{\beta} } \, \rmd \bz \, \rmd \bx\,.
    \end{equation*}
    Letting $\by = \bx + \eta_\delta(\bx)\bz$ and then using \Cref{thm:EnergySpaceIndepOfKernel}, we see that 
    $$
    I_1 \leq C(d,\beta,p,q) q(\diam(\Omega))^{d+p-\beta} \delta^{d+p-\beta} [u]_{\frak{W}^{\beta,p}[\delta;q](\Omega)}^p\,,
    $$
    and a similar estimate holds for $I_2$.

    Now, using \Cref{lma:ComparabilityOfXandY} to enlarge the region of integration, we estimate
    \begin{equation*}
        \begin{split}
        II &\leq 2^{p-1} \int_{B(0,1)} \int_{\Omega} \int_{\Omega \cap \{ |\bx-\by| \leq \frac{1}{1-\kappa_1 \delta} \eta_\delta(\bx) \} } \frac{ |u(\bszeta_\bz^\delta(\bx)) - u(\bszeta_\bz^\delta(\by))|^p }{|\bx-\by|^{\beta} } \, \rmd \by \, \rmd \bx \, \rmd \bz \\
            &\quad + 2^{p-1}  \int_{B(0,1)} \int_{\Omega} \int_{\Omega \cap \{ |\bx-\by| \leq \frac{1}{1- \kappa_1 \delta} \eta_\delta(\bx) \} } \frac{ |u(\bx) -u(\by)|^p }{|\bx-\by|^{\beta} } \, \rmd \by \, \rmd \bx \, \rmd \bz =: II_1 + II_2\,.
        \end{split}
    \end{equation*}
    By the identities in \Cref{lma:CoordChange2}, 
    \begin{equation*}
    \begin{split}
        II_1 \leq 
        \frac{2^{p-1}}{ (1+ \kappa_1 \delta)^\beta } &\int_{B(0,1)} \int_{\Omega} \int_{\Omega \cap \{ |\bszeta_\bz^\delta(\bx)-\bszeta_\bz^\delta(\by)| \leq \frac{1+ \kappa_1 \delta}{(1- \kappa_1 \delta)^2} \eta_\delta(\bszeta_\bz^\delta(\bx)) \} } \\
            &\qquad \qquad \frac{ |u(\bszeta_\bz^\delta(\bx)) - u(\bszeta_\bz^\delta(\by))|^p }{|\bszeta_\bz^\delta(\bx)-\bszeta_\bz^\delta(\by)|^{\beta} } \, \rmd \by \, \rmd \bx \, \rmd \bz\,.
    \end{split}
    \end{equation*}
    Then we apply the change of variables $\bar{\by} = \bszeta_\bz^\delta(\by)$, $\bar{\bx} = \bszeta_\bz^\delta(\bx)$. Since $\delta < \bar{\delta}_0$, this permits us to apply \Cref{thm:InvariantHorizon} and obtain
    \begin{equation*}
    \begin{split}
        II_1 &\leq C(p,\beta,\kappa_1) \int_{\Omega} \int_{\Omega \cap \{|\bar{\bx}-\bar{\by}|\leq \eta_\delta(\bar{\bx}) \} } \frac{ |u(\bar{\bx}) - u(\bar{\by})|^p }{ |\bar{\bx}-\bar{\by}|^\beta } \, \rmd \bar{\by} \, \rmd \bar{\bx} \\
        &\leq C(p,\beta,d,q,\kappa_0,\kappa_1) q(\diam(\Omega))^{d+p-\beta} \delta^{d+p-\beta} [u]_{\frak{W}^{\beta,p}[\delta;q](\Omega)}^p\,,
    \end{split}
    \end{equation*}
    where we additionally used \Cref{thm:EnergySpaceIndepOfKernel}. Finally, $II_2$ can be estimated in a similar but more straightforward way using \Cref{thm:InvariantHorizon} and \Cref{thm:EnergySpaceIndepOfKernel}.
\end{proof}

\subsection{Convergence in the local limit}\label{sec:Localization}
\begin{theorem}\label{thm:ConvergenceOfConv}
    Let $1 \leq p < \infty$, and let
    $u \in L^p(\Omega)$. Then
\begin{equation}\label{eq:ConvergenceOfConv}
		\lim\limits_{\delta \to 0} \Vnorm{K_{\delta} u - u}_{L^p(\Omega)} = 0\,.
\end{equation}
\end{theorem}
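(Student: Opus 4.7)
The plan is to prove \eqref{eq:ConvergenceOfConv} by a standard three-epsilon density argument, exploiting two earlier results: the uniform $L^p \to L^p$ bound $\Vnorm{K_\delta u}_{L^p(\Omega)} \leq C_0 \Vnorm{u}_{L^p(\Omega)}$ from \Cref{thm:ConvEst1} with $C_0$ independent of $\delta$, and the uniform convergence $K_\delta v \to v$ on $\overline{\Omega}$ for $v \in C^0(\overline{\Omega})$ from \Cref{thm:ConvProp:UniformCase}. The role of these is essentially plug-and-play once the right triangle inequality is set up.

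First, I would fix $\veps > 0$ and invoke density of $C^0(\overline{\Omega})$ in $L^p(\Omega)$ (for $1 \leq p < \infty$) to select $v \in C^0(\overline{\Omega})$ with $\Vnorm{u-v}_{L^p(\Omega)} < \veps$. Then, using linearity of $K_\delta$, I would decompose
\begin{equation*}
    \Vnorm{K_\delta u - u}_{L^p(\Omega)} \leq \Vnorm{K_\delta(u-v)}_{L^p(\Omega)} + \Vnorm{K_\delta v - v}_{L^p(\Omega)} + \Vnorm{v - u}_{L^p(\Omega)}\,.
\end{equation*}
The first term is controlled by \Cref{thm:ConvEst1} as $\Vnorm{K_\delta(u-v)}_{L^p(\Omega)} \leq C_0 \Vnorm{u-v}_{L^p(\Omega)} \leq C_0 \veps$, with $C_0$ independent of $\delta$. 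The third term is bounded by $\veps$ by the choice of $v$.

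For the middle term, I would appeal to \Cref{thm:ConvProp:UniformCase}, which ensures $K_\delta v \in C^0(\overline{\Omega})$ and $K_\delta v \to v$ uniformly on $\overline{\Omega}$ as $\delta \to 0$. Since $\Omega$ is bounded, uniform convergence upgrades to $L^p$ convergence via $\Vnorm{K_\delta v - v}_{L^p(\Omega)} \leq |\Omega|^{1/p} \Vnorm{K_\delta v - v}_{L^\infty(\Omega)}$, so there exists $\delta_0 = \delta_0(v,\veps) > 0$ such that $\Vnorm{K_\delta v - v}_{L^p(\Omega)} < \veps$ whenever $\delta < \delta_0$. Combining the three estimates yields $\limsup_{\delta \to 0} \Vnorm{K_\delta u - u}_{L^p(\Omega)} \leq (C_0 + 2) \veps$, and since $\veps$ was arbitrary the conclusion follows.

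There is no real obstacle here; the only point requiring mild care is confirming that the constant $C_0$ in \eqref{eq:ConvEst:Lp} is genuinely uniform in $\delta < \underline{\delta}_0$, which is already stated in \Cref{thm:ConvEst1}. The argument is the nonlocal analogue of the classical mollifier convergence proof, with the boundary-localizing feature causing no additional complication because \Cref{thm:ConvProp:UniformCase} already handles uniform convergence on the whole of $\overline{\Omega}$.
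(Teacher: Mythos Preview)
Your proposal is correct and follows essentially the same approach as the paper: density of continuous (the paper uses $C^\infty(\overline{\Omega})$, but this is immaterial) functions in $L^p(\Omega)$, the uniform-in-$\delta$ bound \eqref{eq:ConvEst:Lp}, and the uniform convergence of \Cref{thm:ConvProp:UniformCase}. The paper simply compresses this standard three-epsilon argument into a single sentence, whereas you have written it out in full.
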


\begin{proof}
    The result follows from the density of $C^\infty(\overline{\Omega})$ in $L^p(\Omega)$, from the $L^p(\Omega)$-continuity of $K_\delta$ contained in \eqref{eq:ConvEst:Lp}, and from the uniform convergence of $K_\delta u$ to $u$ in \Cref{thm:ConvProp:UniformCase}.
\end{proof}

\begin{theorem}\label{thm:ConvergenceOfConv:W1p}
    Assume \eqref{Assump:Kernel} is satisfied for $k \geq 1$. Let $1 \leq p < \infty$, and let $u \in W^{1,p}(\Omega)$. Then
\begin{equation}\label{eq:ConvergenceOfConv:W1p}
		\lim\limits_{\delta \to 0} \Vnorm{K_{\delta} u - u}_{W^{1,p}(\Omega)} = 0\,.
	\end{equation}
\end{theorem}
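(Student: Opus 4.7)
The plan is to combine the previous theorem on $L^p$ convergence of $K_\delta u \to u$ with a suitable convergence statement for the gradients $\grad K_\delta u \to \grad u$ in $L^p(\Omega)$. The key observation is that the identity derived in the proof of \Cref{thm:ConvEst1},
\begin{equation*}
    \grad K_\delta u(\bx) = \int_\Omega \psi_\delta(\bx,\by)\left[\bI - \frac{(\bx-\by)\otimes \grad \eta_\delta(\bx)}{\eta_\delta(\bx)}\right] \grad u(\by)\,\rmd\by,
\end{equation*}
valid for $u \in W^{1,p}(\Omega)$, splits $\grad K_\delta u$ into a principal piece $K_\delta[\grad u]$ (applied componentwise) and a remainder
\begin{equation*}
    R_\delta u(\bx) := -\int_\Omega \psi_\delta(\bx,\by)\,\frac{(\bx-\by)\otimes \grad \eta_\delta(\bx)}{\eta_\delta(\bx)}\,\grad u(\by)\,\rmd\by.
\end{equation*}

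First I would apply \Cref{thm:ConvergenceOfConv} componentwise to $\grad u \in L^p(\Omega;\bbR^d)$ to conclude that $K_\delta[\grad u] \to \grad u$ in $L^p(\Omega)$ as $\delta \to 0$. Next I would estimate the remainder: on the support of $\psi_\delta(\bx,\by)$ we have $|\bx-\by|\leq \eta_\delta(\bx)$, and from the definition of $\eta_\delta$ together with \eqref{assump:NonlinearLocalization} and \eqref{assump:Localization} we have $|\grad \eta_\delta(\bx)| = \delta |\grad \eta(\bx)| \leq \kappa_1 \delta$. Therefore the operator norm of the correction matrix satisfies
\begin{equation*}
    \left|\frac{(\bx-\by)\otimes \grad \eta_\delta(\bx)}{\eta_\delta(\bx)}\right| \leq \kappa_1\delta,
\end{equation*}
so $|R_\delta u(\bx)| \leq \kappa_1 \delta\, K_\delta[|\grad u|](\bx)$, and the $L^p$ bound \eqref{eq:ConvEst:Lp} from \Cref{thm:ConvEst1} gives $\Vnorm{R_\delta u}_{L^p(\Omega)} \leq \kappa_1 C_0 \delta \Vnorm{\grad u}_{L^p(\Omega)} \to 0$.

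Combining these two estimates yields $\grad K_\delta u \to \grad u$ in $L^p(\Omega)$, which together with \Cref{thm:ConvergenceOfConv} applied to $u$ gives \eqref{eq:ConvergenceOfConv:W1p}. There is no real obstacle here; the only subtlety worth noting is that the identity for $\grad K_\delta u$ requires $u \in W^{1,p}(\Omega)$ (so that the integration by parts carried out in the proof of \Cref{thm:ConvEst1} is justified), which is exactly the hypothesis of this theorem. If one preferred to avoid relying on that identity, an alternative route would be to approximate $u$ by $C^\infty(\overline{\Omega})$ functions and use the uniform $W^{1,p}$-boundedness of $K_\delta$ from \eqref{eq:ConvEst:W1p} together with the uniform-on-compact-sets convergence $K_\delta v \to v$ on $\overline{\Omega}$ for $v \in C^\infty(\overline{\Omega})$; but the direct route above is shorter.
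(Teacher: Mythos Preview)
Your proof is correct, but it takes a different route from the paper's. The paper uses the same identity \eqref{eq:ConvEst1:W1p:Pf1}, but instead of splitting off $K_\delta[\grad u]$, it first observes that the matrix kernel integrates to $\bI$, i.e.\ $\int_\Omega \psi_\delta(\bx,\by)\big[\bI - \frac{(\bx-\by)\otimes\grad\eta_\delta(\bx)}{\eta_\delta(\bx)}\big]\,\rmd\by = \bI$, and subtracts $\grad v(\bx)$ inside the integral for smooth $v$ to get a modulus-of-continuity bound; the general $u\in W^{1,p}(\Omega)$ case then follows by density and the uniform $W^{1,p}$-bound \eqref{eq:ConvEst:W1p}. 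Your argument instead exploits the scaling $|\grad\eta_\delta|\leq \kappa_1\delta$ to kill the remainder $R_\delta u$ directly and invokes \Cref{thm:ConvergenceOfConv} on $\grad u$ componentwise, which avoids the density step altogether. This is a bit shorter and more transparent; the paper's route has the minor advantage of reusing the same ``uniform convergence on smooth functions + density'' template already used for \Cref{thm:ConvergenceOfConv}. Your parenthetical description of the alternative density route is slightly imprecise (one needs uniform convergence of $\grad K_\delta v\to\grad v$ for smooth $v$, not just of $K_\delta v\to v$), but that is exactly what the paper establishes.
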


\begin{proof}
    Note that
    \begin{equation*}
        \int_{\Omega} \psi_{\delta}(\bx,\by) \left[ \bI - \frac{(\bx-\by) \otimes \grad \eta_\delta(\bx)}{\eta_\delta(\bx)} \right] \, \rmd \by = \bI \text{ for all } \bx \in \Omega\,.
    \end{equation*}
    Therefore for any $v \in C^{\infty}(\overline{\Omega})$ we have by \eqref{eq:ConvEst1:W1p:Pf1}
    \begin{equation*}
    \begin{split}
        | \grad K_{\delta}v(\bx) - \grad v(\bx) |
        &= \left| \int_{\Omega} \psi_{\delta}(\bx,\by) \left[ \bI - \frac{(\bx-\by) \otimes \grad \eta_\delta(\bx)}{\eta_\delta(\bx)} \right] (\grad v(\by) - \grad v(\bx)) \, \rmd \by \right| \\
        &\leq C(\psi,\kappa_1) \sup_{ B(\bx,\eta_\delta(\bx)) } |\grad v(\bz) - \grad v(\bx)| \to 0 \text{ as } \delta \to 0\,.
    \end{split}
    \end{equation*}
    The limit is independent of $\bx$ since $v$ is uniformly continuous on $\Omega$. Thus $K_\delta v \to v$ uniformly on $\overline{\Omega}$. From here the proof is the same as that of \Cref{thm:ConvergenceOfConv}.
\end{proof}

\section{The nonlocal function space: fundamental properties}\label{sec:NonlocalSpaceFundProp}

In this section, we present a few important properties on the nonlocal energy spaces such as the density of smooth functions, the trace theorems, Poincar\'e inequalities, and $L^p$-compactness theorems. 
All of these results are important ingredients in the later proofs of the well-posedness of nonlocal problems with local boundary conditions.

\subsection{Density of Smooth functions}\label{subsec:Density}
We continue the discussion in this subsection under the assumptions \eqref{assump:beta}, \eqref{assump:NonlinearLocalization}, $\delta < \underline{\delta}_0$, and $p \in [1,\infty)$.
\begin{lemma}[A preliminary embedding result]\label{lma:Pre:Embedding}
    Let $u \in C^{1}(\Omega) \cap W^{1,p}(\Omega)$. Then
    \begin{equation*}
        [u]_{\frak{W}^{\beta,p}[\delta;q](\Omega)} \leq \frac{1}{1-\delta} [u]_{W^{1,p}(\Omega)}\,.
    \end{equation*}
\end{lemma}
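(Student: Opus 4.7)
The plan is to control the nonlocal seminorm by applying Minkowski's integral inequality to the fundamental-theorem-of-calculus representation
\[
u(\by) - u(\bx) = \int_0^1 \grad u(\bx + t(\by-\bx))\cdot (\by-\bx)\, \rmd t,
\]
and then, for each $t \in [0,1]$, to reduce the resulting integral to the classical Dirichlet energy $\|\grad u\|_{L^p(\Omega)}^p$ by a change of variables whose Jacobian produces the $(1-\delta)$ factor. Concretely, I would first obtain
\[
[u]_{\frak{W}^{\beta,p}[\delta;q](\Omega)} \leq \int_0^1 J(t)^{1/p}\, \rmd t, \qquad J(t) := \iint_{\Omega\times\Omega} \gamma_{\beta,p}[\delta;q](\bx,\by)\, |\grad u(\bx+t(\by-\bx))\cdot(\by-\bx)|^p\, \rmd \by\, \rmd\bx.
\]

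For each fixed $t \in (0,1]$, the next step is to rewrite $J(t)$ in polar coordinates $\by = \bx + \eta_\delta(\bx) s \bsomega$ with $s \in (0,1)$ and $\bsomega \in \bbS^{d-1}$, canceling the weight $\eta_\delta(\bx)^{d+p-\beta}$ against the Jacobian of this first substitution. I would then change variables in the outer $\bx$-integral by $\bz = \bx + t s \eta_\delta(\bx)\bsomega$. This map is injective (its linearization differs from the identity by a rank-one perturbation bounded by $|\grad \eta_\delta| \leq \delta < 1$, see \eqref{eq:h0property}), its image lies in $\Omega$ since $|\bz - \bx| \leq \eta_\delta(\bx) < d_{\p\Omega}(\bx)$, and its determinant
\[
\det\bigl(\bI + ts\,\bsomega \otimes \grad \eta_\delta(\bx)\bigr) = 1 + ts\,\bsomega\cdot\grad \eta_\delta(\bx) \geq 1 - ts\delta \geq 1 - \delta.
\]

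Combining these reductions with the kernel normalization \eqref{eq:Intro:StdKernelNormalization} and the identity $\overline{C}_{d,p}\fint_{\bbS^{d-1}} |\bsomega\cdot\bsxi|^p \rmd \sigma(\bsomega) = 1$ for every unit vector $\bsxi$ (which is exactly the consistency $[v]_{\frak{W}^{\beta,p}[\delta;q]} = \|\grad v\|_{L^p}$ on linear $v$), the angular and radial integrals collapse to give $|\grad u(\bz)|^p$ pointwise, whence $J(t) \leq (1-\delta)^{-1}\|\grad u\|_{L^p(\Omega)}^p$. Integrating over $t$ yields
\[
[u]_{\frak{W}^{\beta,p}[\delta;q](\Omega)} \leq (1-\delta)^{-1/p}\|\grad u\|_{L^p(\Omega)} \leq (1-\delta)^{-1}\|\grad u\|_{L^p(\Omega)},
\]
using $\delta \in (0,1)$ so that $(1-\delta)^{-1/p} \leq (1-\delta)^{-1}$.

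The main obstacle is to preserve the directional structure of the kernel so that the normalization constant $\overline{C}_{d,p}$ cancels cleanly against the angular moment of $|\bsomega \cdot \bsxi|^p$; a cruder approach bounding $|\grad u(\bx+t(\by-\bx))\cdot(\by-\bx)|$ by $|\grad u(\bx+t(\by-\bx))||\by-\bx|$ before the change of variables leaves behind a stray $\overline{C}_{d,p}$ (which for $p=2$ equals $d$) and would not recover the dimension-free constant $(1-\delta)^{-1}$ claimed in the lemma.
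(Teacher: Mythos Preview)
Your proof is correct and follows essentially the same route as the paper: fundamental theorem of calculus, a convexity inequality in the $t$-variable, and then the change of variables $\bx \mapsto \bx + t s\,\eta_\delta(\bx)\bsomega$ (exactly the map $\bszeta_{t\bz}^\delta$ of \Cref{lma:CoordChange2}) whose Jacobian is bounded below by $1-\delta$. The only substantive difference is that the paper applies Jensen directly to $|u(\bx+\bz)-u(\bx)|^p$ together with the crude bound $|\grad u\cdot\bz|\le |\grad u||\bz|$, whereas you use Minkowski at the norm level and retain the directional factor $|\grad u(\bz)\cdot\bsomega|^p$ through the change of variables, so that the angular identity $\overline{C}_{d,p}\fint_{\bbS^{d-1}}|\bsomega\cdot\bsxi|^p\,\rmd\sigma(\bsomega)=|\bsxi|^p$ disposes of the normalization constant exactly. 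As you note, the cruder bound would leave a stray $\overline{C}_{d,p}$; your variant yields the clean estimate $[u]_{\frak{W}^{\beta,p}[\delta;q](\Omega)}\le (1-\delta)^{-1/p}\Vnorm{\grad u}_{L^p(\Omega)}$, which is precisely the sharper form recorded later in \Cref{cor:Embedding}.
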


\begin{proof}
    For $s = 1$ the proof is done exactly the same as in \cite{tian2017trace}. In this proof denote $\eta_\delta(\bx) = \delta q(d_{\p \Omega}(\bx))$. We have for any $\bz \in B(\bx,\eta_\delta(\bx))$
    \begin{equation*}
        |u(\bx+\bz) - u(\bx)|^p \leq |\bz|^p \int_0^1 |\grad u(\bx+t\bz)|^p \, \rmd t\,,
    \end{equation*}
    and so
    \begin{equation*}
        \begin{split}
            [u]_{\frak{W}^{\beta,p}[\delta;q](\Omega)}^p 
            &= \int_{\Omega} \int_{ B(0,\eta_\delta(\bx)) } \frac{C_{d,\beta,p}}{|\bz|^{\beta}} \frac{1}{\eta_\delta(\bx)^{d+p-\beta}} |u(\bx+\bz) - u(\bx)|^p \, \rmd \bz \, \rmd \bx \\
            &\leq \int_{\Omega} \int_{ B(0,\eta_\delta(\bx)) } \frac{C_{d,\beta,p}}{|\bz|^{\beta-p}} \frac{1}{\eta_\delta(\bx)^{d+p-\beta}} \int_0^1 |\grad u(\bx+t\bz)|^p \, \rmd t \, \rmd \bz \, \rmd \bx \\
            &= \int_{\Omega} \int_{ B(0,1) } \frac{C_{d,\beta,p}}{|\bz|^{\beta-p}} \int_0^1 |\grad u(\bx+t \eta_\delta(\bx) \bz)|^p \, \rmd t \, \rmd \bz \, \rmd \bx\,.
        \end{split}
    \end{equation*}
    Define the function $\bszeta_{t\bz}^{\delta}(\bx)$ as in \Cref{lma:CoordChange2}.
    Then since $\bszeta_{t\bz}^{\delta}$ is invertible on $\Omega$ with $\bszeta_{t\bz}^{\delta}(\Omega) \subset \Omega$, changing coordinates and using the identities in \Cref{lma:CoordChange2} give
    \begin{equation*}
        \begin{split}
            [u]_{\frak{W}^{\beta,p}[\delta;q](\Omega)}^p 
            &\leq \int_{\Omega} \int_{ B(0,1) } \frac{C_{d,\beta,p}}{|\bz|^{\beta-p}} \int_0^1 |\grad u(\bx+t \eta_\delta(\bx) \bz)|^p \, \rmd t \, \rmd \bz \, \rmd \bx \\
            &\leq \frac{1}{1- \delta} \int_{\Omega} \int_{ B(0,1) } \frac{C_{d,\beta,p}}{|\bz|^{\beta-p}} \int_0^1 |\grad u(\bx)|^p \, \rmd t \, \rmd \bz \, \rmd \bx \\
            &= \frac{1}{1- \delta} [u]_{W^{1,p}(\Omega)}^p\,.
        \end{split}
    \end{equation*}
\end{proof}

\begin{theorem}[Density of smooth functions, first version]\label{thm:Density:Prelim}
    $C^{k}(\Omega) \cap \frak{W}^{\beta,p}[\delta;q](\Omega)$ is dense in $\frak{W}^{\beta,p}[\delta;q](\Omega)$ for any $k \leq k_q$.
\end{theorem}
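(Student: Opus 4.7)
The plan is to take $u_\veps := K_\veps[\lambda,q,\psi]u$ for a standard mollifier $\psi$ satisfying \eqref{Assump:Kernel} with $k_\psi \geq \max\{k,1\}$, and show that $u_\veps \to u$ in $\frak{W}^{\beta,p}[\delta;q](\Omega)$ as $\veps \to 0^+$. The regularity $u_\veps \in C^k(\Omega)$ is immediate from \Cref{lma:SmoothnessOfConv} (using $k \leq k_q$ together with the availability of a smooth $\lambda$), while $u_\veps \in W^{1,p}(\Omega) \subset \frak{W}^{\beta,p}[\delta;q](\Omega)$ follows from \Cref{cor:RegularityOfHSOp} and \Cref{lma:Pre:Embedding}. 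Convergence in $L^p$ is a direct consequence of \Cref{thm:diffuKu}, so the substantive task is to establish $[u_\veps - u]_{\frak{W}^{\beta,p}[\delta;q](\Omega)} \to 0$.

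To handle the seminorm, I would rewrite $u_\veps(\bx) = \int_{B(0,1)} \psi(|\bw|) u(\bszeta_\bw^\veps(\bx)) \, \rmd \bw$ via the map $\bszeta_\bw^\veps$ from \Cref{lma:CoordChange2} (applied with $\bar{\lambda}=\lambda$, so that $\bar{c}=\kappa_1$ and $\veps\in(0,\underline{\delta}_0)$ is admissible). Setting $D(\bx,\by) := u(\by)-u(\bx)$, Jensen's inequality gives
\begin{equation*}
[u_\veps - u]_{\frak{W}^{\beta,p}[\delta;q](\Omega)}^p \;\leq\; \int_{B(0,1)} \psi(|\bw|) \, J(\veps,\bw) \, \rmd \bw,
\end{equation*}
where $J(\veps,\bw) := \int_\Omega \int_\Omega \gamma_{\beta,p}[\delta;q](\bx,\by) \, |D(\bszeta_\bw^\veps(\bx),\bszeta_\bw^\veps(\by)) - D(\bx,\by)|^p \, \rmd \by \, \rmd \bx$. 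Thus it suffices to prove $J(\veps,\bw) \to 0$ as $\veps \to 0$ for each fixed $\bw \in \supp\psi$, together with a uniform bound allowing dominated convergence in the outer $\bw$-integral.

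For the pointwise-in-$\bw$ limit, I would view $D$ as an element of $L^p(\Omega \times \Omega, \gamma_{\beta,p}[\delta;q] \, \rmd \bx \, \rmd \by)$ and approximate it in this space by test functions $\varphi \in C^1_c((\Omega \times \Omega) \setminus \{\bx=\by\})$; density holds by standard arguments since $\gamma$ is a locally finite Borel measure off the diagonal. A triangle inequality splits $J(\veps,\bw)^{1/p}$ into three pieces: the approximation error $\Vnorm{D-\varphi}_{L^p(\gamma)}$, small by the choice of $\varphi$; the smooth-term error $\Vnorm{\varphi(\bszeta_\bw^\veps(\cdot),\bszeta_\bw^\veps(\cdot)) - \varphi}_{L^p(\gamma)}$, which vanishes as $\veps \to 0$ by uniform convergence of $\bszeta_\bw^\veps$ to the identity on compact subsets of $\Omega$ combined with the finite $\gamma$-mass of $\supp\varphi$; and the composition error $\Vnorm{(D-\varphi)\circ(\bszeta_\bw^\veps,\bszeta_\bw^\veps)}_{L^p(\gamma)}$, which I would control via the change of variables $(\bx',\by') = (\bszeta_\bw^\veps(\bx),\bszeta_\bw^\veps(\by))$. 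The identities in \Cref{lma:CoordChange2} give that this change of variables has Jacobian bounded away from $0$ and $\infty$ and that $\gamma_{\beta,p}[\delta;q]((\bszeta_\bw^\veps)^{-1}(\bx'),(\bszeta_\bw^\veps)^{-1}(\by'))$ is dominated by a constant multiple of $\gamma_{\beta,p}[\delta';q](\bx',\by')$ for some $\delta' > \delta$ with $\delta' < \underline{\delta}_0$; \Cref{thm:InvariantHorizon} then absorbs this horizon inflation into a constant, yielding $\Vnorm{(D-\varphi)\circ(\bszeta_\bw^\veps,\bszeta_\bw^\veps)}_{L^p(\gamma)} \leq C \Vnorm{D-\varphi}_{L^p(\gamma)}$ uniformly in $\veps$. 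The same change-of-variables bound applied to $D$ alone gives $J(\veps,\bw) \leq C [u]_{\frak{W}^{\beta,p}[\delta;q](\Omega)}^p$ for all small $\veps$ and all $\bw$, furnishing the dominating function. The main obstacle is precisely this change of variables on $\gamma$: one must carefully transform the support constraint $\mathds{1}_{\{|\bx-\by| < \delta q(\lambda(\bx))\}}$ and the singular weight $|\bx-\by|^{-\beta} \eta_\delta(\bx)^{-(d+p-\beta)}$, then reconcile the resulting mild expansion of support with the original seminorm through \Cref{thm:InvariantHorizon}.
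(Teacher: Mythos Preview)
Your approach is essentially the paper's: approximate $u$ by $u_\veps = K_\veps u$, get $L^p$-convergence for free, and handle the seminorm via Jensen's inequality together with the change of variables $\bszeta_\bw^\veps$ from \Cref{lma:CoordChange2} and the horizon-comparison \Cref{thm:InvariantHorizon}. The paper packages the final step as a near-diagonal/far-from-diagonal split (the far part is controlled by $L^p$-convergence since $\gamma$ is bounded there; the near part is made small uniformly in $\veps$ using the same change of variables), whereas you phrase it as strong continuity of the composition operator on the weighted space $L^p(\gamma)$ via density of $C_c^1$ off the diagonal. These are equivalent in spirit.

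There is, however, a genuine slip in your third piece. The bound $\Vnorm{(D-\varphi)\circ(\bszeta_\bw^\veps,\bszeta_\bw^\veps)}_{L^p(\gamma_\delta)} \leq C \Vnorm{D-\varphi}_{L^p(\gamma_\delta)}$ does \emph{not} follow from \Cref{thm:InvariantHorizon}: the change of variables yields an integral against $\gamma_{\delta'}$ with $\delta' = \tfrac{1+\bar c\veps}{1-\bar c\veps}\delta$, and \Cref{thm:InvariantHorizon} only converts $\int\gamma_{\delta'}|G|^p$ back to $\int\gamma_\delta|G|^p$ when $G(\bx,\by)=v(\bx)-v(\by)$ (the nontrivial direction of that theorem uses a telescoping argument specific to this form). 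The function $D-\varphi$ is not of this form. The easy fix is to approximate $D$ in $L^p(\gamma_{\tilde\delta})$ for a \emph{fixed} $\tilde\delta\in(\delta,\underline{\delta}_0)$; then for all small $\veps$ one has $\delta' < \tilde\delta$, whence the \emph{pointwise} bound $\gamma_{\delta'}\leq (\tilde\delta/\delta)^{d+p-\beta}\gamma_{\tilde\delta}$ applies to arbitrary integrands and closes the argument. A minor secondary point: \Cref{thm:diffuKu} gives $\Vnorm{u-K_\veps u}_{L^p}\leq C\veps[u]_{\frak{W}^{\beta,p}[\veps;q]}$, and for $\beta\leq d$ the seminorm at scale $\veps$ need not stay bounded as $\veps\to 0$; cite \Cref{thm:ConvergenceOfConv} instead.
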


\begin{proof}[Proof of \Cref{thm:Density:Prelim}]
    Given $u \in \frak{W}^{\beta,p}[\delta;q](\Omega)$, we need to show that there exists a sequence $\{u_\veps \}_{\veps>0} \subset C^{k}(\Omega) \cap \frak{W}^{\beta,p}[\delta;q](\Omega)$ such that
    \begin{equation}
        \lim\limits_{\veps \to 0}  \Vnorm{u_\veps - u}_{\frak{W}^{\beta,p}[\delta;q](\Omega)} = 0\,.
    \end{equation}
    To this end, define $\lambda(\bx) = d_{\p \Omega}(\bx)$, with $\eta(\bx) = \eta[d_{\p \Omega},q](\bx)$, and for $\delta \in (0,\underline{\delta}_0)$ denote $\eta_\delta(\bx) = \delta \eta(\bx)$.
    Choose a function $\psi$ satisfying \eqref{Assump:Kernel} for $k = \infty$, and choose $\bar{\lambda}$ to satisfy \eqref{assump:Localization} for $k = \infty$.
    Define the constant $\wt{\veps}_0 := \min\{ \delta, \veps_0, \frac{1}{\bar{c}} \frac{\underline{\delta}_0 - \delta }{ \underline{\delta}_0 + \delta  } \}$, where $\bar{c}$ is the constant defined in \Cref{lma:CoordChange2}.
    Finally, define $\bar{\eta}(\bx) := \eta[\bar{\lambda},q](\bx)$, and for $\veps \in (0,\wt{\veps}_0)$, denote $\bar{\eta}_{\veps}(\bx) = \veps \bar{\eta}(\bx)$, and define the sequence $\{u_\veps \}_{\veps > 0}$ by
    \begin{equation*}
        u_{\veps}(\bx) := K_{\veps}[\bar{\lambda},q,\psi] u(\bx) = K_{\veps} u(\bx)\,.
    \end{equation*}
    Then $u_\veps \in C^{k}(\Omega)$ by \Cref{lma:SmoothnessOfConv}.
    Moreover, by \Cref{thm:ConvergenceOfConv} $u_\veps \to u$ in $L^p(\Omega)$ as $\veps \to 0$. 

    It remains to show that $\lim\limits_{\veps \to 0} [u_{\veps} - u]_{ \frak{W}^{\beta,p}[\delta;q](\Omega) } = 0 $.
    To this end, for $\veps \in (0,\delta)$ it follows from Jensen's inequality that
    \begin{equation*}
        \begin{split}
            |K_{\veps} u(\bx) - K_{\veps} u(\by)|^p &= \left| \int_{B(0,1)} \psi \left( |\bz| \right) ( u(\bx+ \bar{\eta}_{\veps}(\bx)\bz ) - u(\by +  \bar{\eta}_{\veps}(\by) \bz) \, \rmd \bz \right|^p \\
            &\leq  \int_{B(0,1)} \psi \left( |\bz| \right) \left|  u(\bx+  \bar{\eta}_\veps(\bx)\bz ) - u(\by +  \bar{\eta}_\veps(\by) \bz) \right|^p  \, \rmd \bz\,,
        \end{split}
    \end{equation*}
    and so
    \begin{equation*}
    \begin{split}
        &[K_{\veps} u]_{ \frak{W}^{\beta,p}[\delta;q](\Omega) }^p \\
        &\leq \int_{B(0,1)} \psi \left( |\bz| \right) \int_\Omega \int_{B(\bx, \eta_\delta(\bx)) } \frac{C_{d,\beta,p}}{|\bx-\by|^{\beta}} \frac{ \left|  u(\bx+ \bar{\eta}_\veps(\bx)\bz ) - u(\by +  \bar{\eta}_\veps(\by) \bz) \right|^p }{\eta_\delta(\bx)^{d+p-\beta}} \, \rmd \by \, \rmd \bx \, \rmd \bz\,.
    \end{split}
    \end{equation*}
    Now define the function $\bszeta_{\bz}^{\veps}(\bx) = \bx + \eta_\veps(\bx)\bz$ as in \Cref{lma:CoordChange2};
    since $\wt{\veps_0} < \veps_0$, we can apply this lemma to get
    \begin{equation*}
    \begin{split}
        &[K_{\veps} u]_{ \frak{W}^{\beta,p}[\delta;q](\Omega) }^p \\ 
        \leq & \frac{(1+\bar{c}\veps)^{d+p}}{(1-\bar{c} \veps)^{2}} \int_{B(0,1)} \psi \left( |\bz| \right) \int_\Omega \int_{ \Omega \cap \{ |\bszeta_{\bz}^{\veps}(\by) - \bszeta_{\bz}^{\veps}(\bx)| 
        \leq \frac{1+\bar{c} \veps}{1-\bar{c} \veps} \eta_\delta( \bszeta_{\bz}^{\veps}(\bx) ) \} } \frac{C_{d,\beta,p}}{|\bszeta_{\bz}^{\veps}(\by) - \bszeta_{\bz}^{\veps}(\bx)|^{\beta}} \\
        & \qquad \frac{ \left|  u( \bszeta_{\bz}^\veps(\bx) ) - u( \bszeta_{\bz}^\veps(\by)  ) \right|^p }{\eta_\delta( \bszeta_{\bz}^\veps(\bx)  )^{d+p-\beta}} \det \grad \bszeta_\bz^\veps(\bx) \det \grad \bszeta_\bz^\veps(\by) \, \rmd \by \, \rmd \bx \, \rmd \bz\,.
    \end{split}
    \end{equation*}
    Now, apply the change of variables $\bar{\by} = \bszeta_{\bz}^\veps(\by) $ and $\bar{\bx} = \bszeta_{\bz}^\veps(\bx)$. 
    The choice of $\wt{\veps}_0$ ensures that $\frac{1+\bar{c} \veps}{1-\bar{c} \veps} \delta < \underline{\delta}_0$, and so this permits us to apply \Cref{thm:InvariantHorizon}; in the notation of that theorem, we take $\delta_1 = \delta$ and $\delta_2 = \frac{1+\bar{c} \veps}{1-\bar{c} \veps} \delta$.
    Therefore we obtain that there exists a constant $C>0$ depending only on $d$, $\beta$, $p$, $\underline{\delta}_0$ and $\kappa_1$ such that
    \begin{equation*}
    \begin{split}
        &[K_{\veps} u]_{ \frak{W}^{\beta,p}[\delta;q](\Omega) }^p \\
        &\leq C \int_{B(0,1)} \psi \left( |\bz| \right) \int_\Omega \int_{B(\bar{\bx}, \eta_\delta(\bar{\bx})) } \frac{C_{d,\beta,p}}{|\bar{\by}-\bar{\bx}|^\beta } \frac{ \left|  u(\bar{\bx}) - u(\bar{\by}) \right|^p }{\eta_\delta(\bar{\bx})^{d+p-\beta}} \, \rmd \bar{\by} \, \rmd \bar{\bx} \, \rmd \bz\,.
    \end{split}
    \end{equation*}
    Therefore
    \begin{equation}\label{eq:ConvolutionDecreasesEnergy}
        [K_{\veps} u]_{ \frak{W}^{\beta,p}[\delta;q](\Omega) } \leq C [u]_{\frak{W}^{\beta,p}[\delta;q](\Omega)  } \,, \quad \text{ for } \veps < \wt{\veps}_0\,.
    \end{equation}
    
	Thanks to this estimate we can use continuity of the integral to conclude that for any $\tau > 0$ there exists $\varrho > 0$ independent of $\veps$ such that
	\begin{equation*}
		\int_{ \Omega } \int_{ \Omega  \cap \{ |\bx-\by| < \varrho \} } \gamma_{\beta,p}[\delta;q](\bx,\by) |K_{\veps} u(\bx) - K_{\veps}u(\by) - ( u(\bx) - u(\by) )|^p \, \rmd \by \, \rmd \bx < \tau\,.
	\end{equation*}
    By \eqref{eq:ComparabilityOfDistanceFxn1}, whenever $\varrho \leq |\bx-\by| \leq \eta_\delta(\bx)$ we have $\eta_\delta(\bx) \geq \varrho$ and $\eta_\delta(\by) \geq (1-\kappa_1 \delta) \varrho$, so therefore
	\begin{equation*}
		\begin{split}
		&\int_{ \Omega } \int_{ \Omega  \cap \{ |\bx-\by| \geq \varrho \} } \gamma_{\beta,p}[\delta;q](\bx,\by) |K_{\veps} u(\bx) - K_{\veps}u(\by) - ( u(\bx) - u(\by) )|^p \, \rmd \by \, \rmd \bx \\
		&\leq \frac{C}{ \varrho^{d+p} } \Vnorm{ K_{\veps} u - u }_{L^p(\Omega)}^p\,.
		\end{split}
	\end{equation*}
	Since $\Vnorm{ K_{\veps} u - u }_{L^p(\Omega)}^p \to 0$ as $\veps \to 0$, we conclude that
	\begin{equation*}
		\limsup\limits_{\veps \to 0} [K_{\veps} u - u]_{ \frak{W}^{\beta,p}[\delta;q](\Omega)}^p < \tau + C(\varrho) \limsup\limits_{\veps \to 0} \Vnorm{ K_{\veps} u - u }_{L^p(\Omega)}^p = \tau\,,
	\end{equation*}
	and so the convergence follows since follows since $\tau > 0$ is arbitrary.    
\end{proof}

A by-product of this proof is that, for $\veps \in \wt{\veps}_0$, $K_{\veps}[\bar{\lambda},q,\psi] : \frak{W}^{\beta,p}[\delta;q](\Omega) \to \frak{W}^{\beta,p}[\delta;q](\Omega)$ is a bounded operator.

\begin{proof}[Proof of \Cref{thm:Density}]
    Define $K_{\veps} u$ just as in \Cref{thm:Density:Prelim} for $\veps \ll \delta$; note that $K_{\veps} u \in W^{1,p}(\Omega)$ by \Cref{cor:RegularityOfHSOp} and \Cref{thm:InvariantHorizon}. Thus, by standard Sobolev extension we can assume that $K_{\veps} u \in W^{1,p}(\bbR^d)$. 
    Let $\varphi$ be a standard mollifier, and define 
    for $\bar{\veps} > 0$ small
    \begin{equation*}
        v_{\bar{\veps},\veps}(x) = \varphi_{\bar{\veps}} \ast K_{\veps} u(x)\,.
    \end{equation*}
    Then $v_{\bar{\veps},\veps} \in C^{\infty}(\overline{\Omega})$. Moreover, by \Cref{lma:Pre:Embedding}
    \begin{equation*}
        \lim\limits_{\bar{\veps} \to 0} \Vnorm{v_{\bar{\veps},\veps} - K_{\veps} u}_{\frak{W}^{\beta,p}[\delta;q](\Omega)}^p \leq \frac{1}{1-\delta} \lim\limits_{\bar{\veps} \to 0} \Vnorm{v_{\bar{\veps},\veps} - K_{\veps} u}_{W^{1,p}(\Omega)}^p = 0\,.
    \end{equation*}
    For each $n \in \bbN$, choose $\{ \veps_n \}_n$ to be a strictly decreasing sequence that satisfies $\Vnorm{ K_{\veps_n} u - u }_{\frak{W}^{\beta,p}[\delta;q](\Omega)} < \frac{1}{2n}$. Then for each each $n$ there exists $\bar{\veps}_n= \bar{\veps}_n(\veps_n)$ depending on $\veps_n$ such that $\Vnorm{ v_{\bar{\veps},\veps_n} - K_{\veps_n} u }_{\frak{W}^{\beta,p}[\delta;q](\Omega)} < \frac{1}{2n}$ for all $\bar{\veps} \leq \bar{\veps}_n$. We can choose the sequence $\{ \bar{\veps}_n \}_n$ to be strictly decreasing as well.
    Define $\{w_n\}_n = \{ v_{\bar{\veps}_n,\veps_n} \}_n$; we conclude with
    \begin{equation*}
    \begin{split}
        \Vnorm{w_n - u}_{\frak{W}^{\beta,p}[\delta;q](\Omega)} 
        &\leq \vnorm{ v_{\bar{\veps}_n(\veps_n),\veps_n} - K_{\veps_n} u }_{\frak{W}^{\beta,p}[\delta;q](\Omega)} + \Vnorm{ K_{\veps_n} u - u }_{\frak{W}^{\beta,p}[\delta;q](\Omega)} \\
        &\leq \sup_{\bar{\veps} \leq \bar{\veps}_n} \vnorm{ v_{\bar{\veps},\veps_n} - K_{\veps_n} u }_{\frak{W}^{\beta,p}[\delta;q](\Omega)} + \Vnorm{ K_{\veps_n} u - u }_{\frak{W}^{\beta,p}[\delta;q](\Omega)} \\
        &\leq \frac{1}{2n} + \Vnorm{ K_{\veps_n} u - u }_{\frak{W}^{\beta,p}[\delta;q](\Omega)} < \frac{1}{n}\,.
    \end{split}
    \end{equation*}
\end{proof}

\begin{corollary}[An embedding result]\label{cor:Embedding}
    Let $u \in W^{1,p}(\Omega)$. Then
    \begin{equation*}
        [u]_{\frak{W}^{\beta,p}[\delta;q](\Omega)} \leq \frac{1}{(1-\delta)^{1/p}} [u]_{W^{1,p}(\Omega)}\,.
    \end{equation*}
\end{corollary}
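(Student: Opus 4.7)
The plan is to upgrade Lemma \ref{lma:Pre:Embedding} from $C^{1}(\Omega)\cap W^{1,p}(\Omega)$ to all of $W^{1,p}(\Omega)$ by a standard density-plus-Fatou argument. Note that although the statement of Lemma \ref{lma:Pre:Embedding} reads $1/(1-\delta)$, its proof in fact delivers $[u]_{\frak{W}^{\beta,p}[\delta;q](\Omega)}^{p}\leq (1-\delta)^{-1}[u]_{W^{1,p}(\Omega)}^{p}$, i.e.\ a bound on the $p$-th power, which is precisely what is needed to match the $(1-\delta)^{-1/p}$ appearing in the corollary.

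Given $u\in W^{1,p}(\Omega)$, I would invoke the classical density of $C^{\infty}(\overline{\Omega})$ in $W^{1,p}(\Omega)$ (valid since $\Omega$ is bounded Lipschitz) to choose a sequence $\{u_n\}\subset C^{\infty}(\overline{\Omega})$ with $u_n\to u$ in $W^{1,p}(\Omega)$. In particular $u_n\to u$ in $L^p(\Omega)$, so after passing to a subsequence (not relabeled) we have $u_n\to u$ a.e.\ in $\Omega$, and therefore $|u_n(\by)-u_n(\bx)|^p\to |u(\by)-u(\bx)|^p$ a.e.\ in $\Omega\times\Omega$. Apply Lemma \ref{lma:Pre:Embedding} to each $u_n$ (the smoothness hypothesis is satisfied since $C^{\infty}(\overline{\Omega})\subset C^{1}(\Omega)\cap W^{1,p}(\Omega)$) to obtain
\begin{equation*}
[u_n]_{\frak{W}^{\beta,p}[\delta;q](\Omega)}^{p}\leq \frac{1}{1-\delta}[u_n]_{W^{1,p}(\Omega)}^{p}.
\end{equation*}

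Now apply Fatou's lemma to the nonnegative integrand $\gamma_{\beta,p}[\delta;q](\bx,\by)|u_n(\by)-u_n(\bx)|^p$ on $\Omega\times\Omega$, yielding
\begin{equation*}
[u]_{\frak{W}^{\beta,p}[\delta;q](\Omega)}^{p}\leq \liminf_{n\to\infty}[u_n]_{\frak{W}^{\beta,p}[\delta;q](\Omega)}^{p}\leq \frac{1}{1-\delta}\lim_{n\to\infty}[u_n]_{W^{1,p}(\Omega)}^{p}=\frac{1}{1-\delta}[u]_{W^{1,p}(\Omega)}^{p},
\end{equation*}
and take $p$-th roots to conclude. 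There is no serious obstacle here; the only point to be careful about is the pointwise-a.e.\ convergence needed to invoke Fatou, which is handled by the standard subsequence extraction from $L^p$ convergence.
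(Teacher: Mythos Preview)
Your argument is correct and is precisely the approach the paper intends: the corollary is stated without proof immediately after the density results, and the implicit reasoning is exactly Lemma~\ref{lma:Pre:Embedding} applied to a $C^{\infty}(\overline{\Omega})$ approximating sequence for $u\in W^{1,p}(\Omega)$, followed by Fatou's lemma on the nonlocal side. Your observation that the proof of Lemma~\ref{lma:Pre:Embedding} actually yields the bound on the $p$-th powers (hence the exponent $1/p$ in the corollary) is also on point.
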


\subsection{Trace theorem}\label{subsec:Trace}

Given their properties established in \Cref{sec:HSEstimates}, we intuit that trace inequalities in the spirit of \cite{tian2017trace,du2022fractional,Foss2021} might be possibly established via the boundary-localized convolutions. This is indeed the case, and we demonstrate this in the following theorems.
The discussion in this subsection is under the same assumptions as in \Cref{subsec:Density}, 
except that only $p \in (1,\infty)$ is considered.
The trace theorems ensure that proper local boundary conditions can be imposed for the associated nonlocal problems.

\begin{proof}[Proof of \Cref{thm:TraceTheorem}]
        Let $\psi$ satisfy \eqref{Assump:Kernel} and let $\lambda$ satisfy \eqref{assump:Localization}, both with $k = \infty$, and define the boundary-localized convolution $K_\delta v= K_\delta[\lambda,q,\psi]v$.
	First, since $K_\delta  v \in W^{1,p}(\Omega)$, we have by \Cref{cor:RegularityOfHSOp}
	\begin{equation*}
		\Vnorm{T K_\delta u}_{W^{1-1/p,p}(\p \Omega)} \leq C \Vnorm{K_\delta u}_{W^{1,p}(\Omega)} \leq C \Vnorm{K_\delta u}_{\frak{W}^{\beta,p}[\delta;q](\Omega)}\,.
	\end{equation*}
	We now use \Cref{thm:Density}. Let $\{ u_n \} \subset C^{1}(\overline{\Omega})$ be a sequence converging to $u$ in $\frak{W}^{\beta,p}[\delta;q](\Omega)$. Then since $T K_\delta u_n = T u_n$ for all $n$ by \Cref{thm:ConvProp:UniformCase}
	\begin{equation*}
		\begin{split}
			\Vnorm{ T u_n - T u_m }_{W^{1-1/p,p}(\p \Omega)} 
			&= \Vnorm{ T K_\delta u_n - T K_\delta u_m }_{W^{1-1/p,p}(\p \Omega)} \\
			&\leq C \Vnorm{ u_n - u_m }_{\frak{W}^{\beta,p}[\delta;q]( \Omega)}\,.
		\end{split}
	\end{equation*}
	Therefore the bounded linear operator
 $T : \frak{W}^{\beta,p}[\delta;q](\Omega) \to W^{1-1/p,p}(\p \Omega)$ is well-defined.
\end{proof}

In the special case of $q(r) = r$ and $\beta = 0$, we recover the trace theorems proven in \cite{tian2017trace} for $p = 2$, and in \cite{du2022fractional} for general $p$ and -- in the notation of that work -- $s=1$.

Now that \Cref{thm:Density} gives a density result
for the nonlocal function space, the following theorem can be proved in the same way as \Cref{thm:TraceOfConv}.

\begin{theorem}\label{thm:Trace:NonlocalSpace}
    Suppose that $u \in \frak{W}^{\beta,p}[\delta;q](\Omega)$. Then 
	\begin{equation*}
		T K_{\delta} u = T u \quad \text{ in the sense of functions in } W^{1-1/p,p}(\p \Omega)\,.
	\end{equation*}
\end{theorem}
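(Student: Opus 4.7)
The plan is to establish the identity by a density argument, bootstrapping from the analogous statement for smooth functions (\Cref{thm:ConvProp:UniformCase}) through the continuity of both $T$ and $K_\delta$ on the appropriate spaces. Specifically, I would fix $\psi$ and $\lambda$ with the smoothness required for \Cref{cor:RegularityOfHSOp}, and invoke \Cref{thm:Density} to select a sequence $\{u_n\} \subset C^1(\overline{\Omega})$ with $u_n \to u$ in $\frak{W}^{\beta,p}[\delta;q](\Omega)$.

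For each $n$, \Cref{thm:ConvProp:UniformCase} yields $K_\delta u_n(\bx) = u_n(\bx)$ for every $\bx \in \p \Omega$, so in particular $T K_\delta u_n = T u_n$ as elements of $W^{1-1/p,p}(\p \Omega)$. The first key step is then to pass to the limit on the right-hand side: by \Cref{thm:TraceTheorem} the operator $T : \frak{W}^{\beta,p}[\delta;q](\Omega) \to W^{1-1/p,p}(\p \Omega)$ is bounded, so $T u_n \to T u$ in $W^{1-1/p,p}(\p \Omega)$.

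The second key step is to pass to the limit on the left-hand side. Combining the $L^p$-bound \eqref{eq:ConvEst:Lp} with the gradient bound \eqref{eq:ConvEst:Deriv:Cor} of \Cref{cor:RegularityOfHSOp} shows that $K_\delta : \frak{W}^{\beta,p}[\delta;q](\Omega) \to W^{1,p}(\Omega)$ is a bounded linear operator, hence $K_\delta u_n \to K_\delta u$ in $W^{1,p}(\Omega)$. The classical trace theorem $T : W^{1,p}(\Omega) \to W^{1-1/p,p}(\p \Omega)$ then gives $T K_\delta u_n \to T K_\delta u$ in $W^{1-1/p,p}(\p \Omega)$. Equating the two limits yields $T K_\delta u = T u$.

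This argument is mostly a matter of assembling the pieces already proved; the only subtlety is ensuring that the two continuity statements used to transfer the identity to the limit live in the same target space $W^{1-1/p,p}(\p \Omega)$, which is guaranteed because \Cref{thm:TraceTheorem} produces exactly that image space and the classical trace operator agrees with our nonlocal extension on the overlap $W^{1,p}(\Omega) \cap \frak{W}^{\beta,p}[\delta;q](\Omega)$ (both coincide with the pointwise restriction on $C^1(\overline{\Omega})$ and each is bounded). No further estimates beyond those already established in \Cref{sec:HSEstimates,subsec:Density} are needed.
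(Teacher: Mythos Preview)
Your proposal is correct and follows essentially the same density argument the paper intends: the paper simply remarks that the result is proved ``in the same way as \Cref{thm:TraceOfConv}'', which amounts exactly to your steps---use \Cref{thm:Density} in place of the classical density of $C^\infty(\overline\Omega)$ in $W^{1,p}(\Omega)$, invoke \Cref{thm:ConvProp:UniformCase} for the identity on smooth approximants, and pass to the limit via the continuity of $T$ on $\frak{W}^{\beta,p}[\delta;q](\Omega)$ (\Cref{thm:TraceTheorem}) and of $K_\delta$ into $W^{1,p}(\Omega)$ (\Cref{cor:RegularityOfHSOp}).
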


We also prove a Lebesgue point property. For this, we use the outer measure definition of $\scH^{s}$ (see for instance \cite{E15}), which is
    \begin{equation*}
        \scH^{s}(U) = \lim\limits_{\varrho \to 0^+} \scH^s_{\varrho}(U)\,,
    \end{equation*}
    where for $\varrho > 0$ 
    \begin{equation*}
        \scH^s_\varrho(U) := \inf \left\{ \sum_{n = 1}^\infty \diam(U_n)^s \, : \, \diam(U_n) \leq \varrho \text{ and } U \subset \bigcup_{n=1}^\infty U_n \right\}\,.
    \end{equation*}

\begin{theorem}\label{thm:LebPtProp}
    Let $T : \frak{W}^{\beta,p}[\delta;q](\Omega) \to W^{1-1/p,p}(\p \Omega)$ denote the trace operator. Define $(d-p)_+ = \max \{ d-p,0\}$, and for $s \geq 0$ denote $s$-dimensional Hausdorff measure by $\scH^{s}$.
    Then for $\scH^{(d-p)_+}$-almost every $\bx \in \p \Omega$ 
    \begin{equation*}
        \lim\limits_{\veps \to 0} \fint_{B(\bx,\veps)} |u(\by) - Tu(\bx)|^p \, \rmd \by = 0
    \end{equation*}
    for all $u \in \frak{W}^{\beta,p}[\delta;q](\Omega)$, i.e.
    \begin{equation*}
        Tu(\bx) = \lim\limits_{\veps \to 0} \fint_{B(\bx,\veps)} u(\by) \, \rmd \by\,, \qquad \text{ for } \scH^{(d-p)_+}\text{-a.e. } \bx \in \p \Omega\,.
    \end{equation*}
\end{theorem}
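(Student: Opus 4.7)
The plan is to reduce the theorem to the classical $W^{1,p}$ Lebesgue-point property at the boundary, using the boundary-localized convolution $K_\delta u$ as the bridge. The key ingredients already at hand are: $K_\delta u \in W^{1,p}(\Omega)$ by \Cref{cor:RegularityOfHSOp}, $T K_\delta u = Tu$ by \Cref{thm:Trace:NonlocalSpace}, and $\Vnorm{u - K_\delta u}_{L^p(\Omega)} \to 0$ as $\delta \to 0$ by \Cref{thm:diffuKu}.

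First, I would invoke the classical Lebesgue-point theorem for $W^{1,p}$ functions on a Lipschitz domain: for any $g \in W^{1,p}(\Omega)$ and $\scH^{(d-p)_+}$-almost every $\bx \in \p\Omega$,
\begin{equation*}
\lim_{\veps \to 0} \fint_{B(\bx,\veps)} |g(\by) - Tg(\bx)|^p \, \rmd \by = 0.
\end{equation*}
This is standard, following from $p$-quasi-continuity of Sobolev functions together with the capacity-Hausdorff comparison $C_p(E) = 0 \Rightarrow \scH^{(d-p)_+}(E) = 0$. Apply this to $g_n := K_{\delta_n} u$ along a sequence $\delta_n \to 0$ chosen so that $\Vnorm{u - g_n}_{L^p(\Omega)}^p$ decays geometrically fast. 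Since $Tg_n = Tu$, there is a $\scH^{(d-p)_+}$-null set $N_1 \subset \p\Omega$ such that at every $\bx \in \p\Omega \setminus N_1$ and every $n$,
\begin{equation*}
\lim_{\veps \to 0} \fint_{B(\bx,\veps)} |g_n(\by) - Tu(\bx)|^p \, \rmd \by = 0.
\end{equation*}
By the triangle inequality, at each such $\bx$ and each $n$,
\begin{equation*}
\limsup_{\veps \to 0} \fint_{B(\bx,\veps)} |u - Tu(\bx)|^p \, \rmd \by \,\leq\, C \limsup_{\veps \to 0} \fint_{B(\bx,\veps)} |u - g_n|^p \, \rmd \by.
\end{equation*}

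The hard step is showing that for $\scH^{(d-p)_+}$-a.e. $\bx \in \p\Omega$, the right-hand side above can be made arbitrarily small by choosing $n$ large. I would do this via the Adams capacity strong-type inequality, $C_p(\{Mf > t\}) \leq C t^{-p} \Vnorm{f}_{L^p(\bbR^d)}^p$, applied to $f = |u - g_n|$ extended by zero outside $\Omega$ (with $M$ the Hardy--Littlewood maximal operator and $C_p$ the Bessel $p$-capacity). Combining this with the geometric decay of $\Vnorm{u - g_n}_{L^p(\Omega)}$ and a Borel--Cantelli argument on the super-level sets $\{ M|u - g_n| > t_n \}$ (with $t_n = \Vnorm{u-g_n}_{L^p}^{1/2}$) yields $M|u - g_n|(\bx) \to 0$ outside a set of zero $C_p$-capacity, hence outside a $\scH^{(d-p)_+}$-null set $N_2$. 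Since for Lipschitz $\Omega$ the averages over $B(\bx,\veps)$ and $B(\bx,\veps) \cap \Omega$ are comparable at boundary points, the estimate above vanishes in the limit at every $\bx \in \p\Omega \setminus (N_1 \cup N_2)$, proving the theorem.

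The main obstacle is precisely this last step: upgrading the global $L^p$-smallness of $u - K_{\delta_n} u$ to pointwise smallness of the associated maximal function at $\scH^{(d-p)_+}$-almost every boundary point. Because $\p\Omega$ has zero $d$-dimensional Lebesgue measure, standard weak-$L^p$ maximal estimates are insufficient; one needs the capacity-Hausdorff comparison provided by Adams' inequality. A self-contained alternative would require refined local control of $u - K_\delta u$ near $\p\Omega$ directly in terms of the nonlocal seminorm of $u$ (of the form hinted at in the proof of \Cref{thm:diffuKu}), but the cleanest route is through the standard capacity-theoretic machinery.
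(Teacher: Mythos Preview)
Your reduction to the Sobolev Lebesgue-point property for $K_\delta u$ is the right opening move, and it is also what the paper does. The gap is in the second half: the inequality you invoke, $C_p(\{Mf>t\})\le Ct^{-p}\Vnorm{f}_{L^p}^p$ for the Hardy--Littlewood maximal function and an arbitrary $f\in L^p$, is false. Adams-type weak capacity bounds require $f$ to be a Bessel/Riesz potential of an $L^p$ function (equivalently, $f$ in a Sobolev space), not merely $f\in L^p$. A quick sanity check: take $f=\mathds{1}_{B(0,r)}$; then $\{Mf>t\}$ is a ball of radius $\approx r t^{-1/d}$, whose $(d-p)$-Hausdorff content is $\approx r^{d-p}t^{-(d-p)/d}$, while $t^{-p}\Vnorm{f}_{L^p}^p\approx t^{-p}r^d$; the claimed inequality fails as $r\to 0$. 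Since $u-K_{\delta_n}u$ is only in $L^p$ (indeed $u\notin W^{1,p}$ in general), no capacity bound of this form is available, and your Borel--Cantelli step has no input.

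The paper instead carries out exactly the ``self-contained alternative'' you set aside. The pointwise estimate behind \eqref{eq:Kdeltaerror1} gives, for $\bx\in B(\bx_0,\veps)$ with $\bx_0\in\p\Omega$,
\[
|u(\bx)-K_\delta u(\bx)|^p \le C\,\eta_\delta(\bx)^p \int_\Omega \gamma_{\beta,p}[\delta;q](\bx,\by)|u(\bx)-u(\by)|^p\,\rmd\by,
\]
and the heterogeneous localization forces $\eta_\delta(\bx)\le C\delta\veps$ there. Hence
\[
\fint_{B(\bx_0,\veps)}|u-K_\delta u|^p \le \frac{C\delta^p}{\veps^{d-p}} \int_{\Omega\cap B(\bx_0,\veps)}\int_\Omega \gamma_{\beta,p}[\delta;q](\bx,\by)|u(\bx)-u(\by)|^p\,\rmd\by\,\rmd\bx.
\]
A standard Vitali covering/absolute-continuity argument (as in Giusti) then shows the right-hand side tends to $0$ at $\scH^{(d-p)_+}$-a.e.\ boundary point, for a \emph{single fixed} $\delta$; no sequence $\delta_n\to 0$ and no capacity theory are needed. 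The point you are missing is that the smallness of $u-K_\delta u$ near $\p\Omega$ comes not from $\delta\to 0$ but from the built-in vanishing of $\eta_\delta$ at the boundary.
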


\begin{proof}
    First, we claim that for $\scH^{(d-p)_+}$-almost every $\bx_0 \in \p \Omega$,
    \begin{equation}\label{eq:GiustiLma}
        \limsup_{\veps \to 0} \frac{1}{\veps^{d-p}} \int_{\Omega \cap B(\bx_0,\veps)} \int_\Omega \gamma_{\beta,p}[\delta;q](\bx,\by) |u(\bx)-u(\by)|^p \, \rmd \by \, \rmd \bx = 0\,.
    \end{equation}
    To show this, we let $\tau > 0$, and define
    \begin{equation*}
        \cA_\tau := \left\{ \bx_0 \in \p \Omega \, : \, \limsup_{\veps \to 0} \int_{\Omega} \int_\Omega \frac{\mathds{1}_{B(\bx_0,\veps)} }{ \veps^{d-p} } \gamma_{\beta,p}[\delta;q](\bx,\by) |u(\bx)-u(\by)|^p \, \rmd \by \, \rmd \bx \geq \tau \right\}.
    \end{equation*}
    To establish \eqref{eq:GiustiLma} we show that $\scH^{(d-p)_+}(\cA_\tau) = 0$ for all $\tau > 0$.
    If $p \geq d$ this is satisfied trivially, so assume that $p < d$.
    Let $0 < \bar{\varrho} < \varrho$.
    Then for each $\bx_0 \in \cA_\tau$, there exists $0 < \veps_{\bx_0} < \bar{\varrho}$ such that 
    \begin{equation}\label{eq:GiustiLma:Pf1}
        \int_{\Omega} \int_\Omega \frac{\mathds{1}_{B(\bx_0,\veps_{\bx_0})} }{ \veps_{\bx_0}^{d-p} } \gamma_{\beta,p}[\delta;q](\bx,\by) |u(\bx)-u(\by)|^p \, \rmd \by \, \rmd \bx \geq \tau\,.
    \end{equation}
    Hence we can use the Vitali covering lemma to obtain a countable collection of disjoint balls $\{ B(\bx_n,\veps_n) \}_{n=1}^\infty$ such that $\veps_n \leq \bar{\varrho}$, \eqref{eq:GiustiLma:Pf1} is satisfied and $\cA_\tau \subset \cup_{n=1}^\infty B(\bx_n,5\veps_n)$.
    Therefore,
    \begin{equation*}
    \begin{split}
        \scH^{d-p}_{10\varrho}(A_\tau) &\leq \sum_{n=1}^\infty (C(d) 5\veps_n)^{d-p} \\
        &\leq \frac{C}{\tau} \sum_{n=1}^\infty \int_{\Omega} \int_\Omega \mathds{1}_{B(\bx_n,\veps_n)} \gamma_{\beta,p}[\delta;q](\bx,\by) |u(\bx)-u(\by)|^p \, \rmd \by \, \rmd \bx \\
        &\leq \frac{C}{\tau} \sum_{n=1}^\infty \int_{\Omega} \int_\Omega \mathds{1}_{ \{ \eta(\bx) \leq \bar{\varrho} \} } \gamma_{\beta,p}[\delta;q](\bx,\by) |u(\bx)-u(\by)|^p \, \rmd \by \, \rmd \bx\,.
    \end{split}
    \end{equation*}
    By taking $\bar{\varrho} \to 0$, we obtain that $\scH^{d-p}_{10\varrho}(A_\tau) = 0$ for all $\varrho > 0$. Taking $\varrho \to 0$ gives $\scH^{d-p}(A_\tau) = 0$, and so \eqref{eq:GiustiLma} is proved. 

    Now, for $\psi$ satisfying \eqref{Assump:Kernel} and $\lambda$ satisfying \eqref{assump:Localization}, define $K_{\delta} = K_{\delta}[\lambda,q]$. 
    In the same way as in the proof of \eqref{eq:KdeltaError} we obtain
    \begin{equation*}
        \begin{split}
        \fint_{\Omega \cap B(\bx_0,\veps)} &|u(\bx) - K_{\delta} u(\bx)|^p \, \rmd \bx \\
        &\leq \frac{C\delta^p }{\veps^{d-p}} \int_{\Omega \cap B(\bx_0,\veps)} \int_\Omega \gamma_{\beta,p}[\delta;q](\bx,\by) |u(\bx)-u(\by)|^p \, \rmd \by \, \rmd \bx\,,
        \end{split}
    \end{equation*}
    and so by \eqref{eq:GiustiLma}
    \begin{equation}\label{eq:LebPtProp:Pf1}
        \lim\limits_{\veps \to 0} \fint_{\Omega \cap B(\bx_0,\veps)} |u(\bx) - K_{\delta} u(\bx)|^p \, \rmd \bx = 0 \text{ for } \scH^{(d-p)_+}\text{-.a.e.} \, \bx_0 \in \p \Omega\,.
    \end{equation}

    Now, by \Cref{cor:RegularityOfHSOp} and by the Lebesgue point property for $W^{1,p}(\Omega)$ functions (see for instance \cite[Theorem 3.23]{giusti2003direct}) we have
    \begin{equation}\label{eq:LebPtProp:Pf2}
        \lim\limits_{\veps \to 0} \fint_{\Omega \cap B(\bx_0,\veps)} |K_{\delta} u(\bx) - T K_{\delta} u(\bx_0)|^p \, \rmd \bx = 0
    \end{equation}
    for $\scH^{(d-p)_+}$-almost every $\bx_0 \in \p \Omega$.
    
    Finally. by using \Cref{thm:Trace:NonlocalSpace}, \eqref{eq:LebPtProp:Pf1}, and \eqref{eq:LebPtProp:Pf2}, we get for $\scH^{(d-p)_+}$-a.e.\ $\bx_0 \in \p \Omega$,
    \begin{equation*}
        \begin{split}
           & \fint_{B(\bx_0,\veps)} |u(\bx) - Tu(\bx_0)|^p \, \rmd \bx 
            = \fint_{B(\bx_0,\veps)} |u(\bx) - T K_{\delta} u(\bx_0)|^p \, \rmd \bx \\
            &\quad \leq 2^{p-1} \fint_{B(\bx_0,\veps)} |K_{\delta} u(\bx) - T K_{\delta} u(\bx_0)|^p \, \rmd \bx
                + 2^{p-1} \fint_{B(\bx_0,\veps)} |u(\bx) - K_{\delta} u(\bx)|^p \, \rmd \bx \to 0 
        \end{split}
    \end{equation*}
as $\veps \to 0$, which concludes the proof.
\end{proof}

Note that it is much easier to prove that
\begin{equation*}
        Tu(\bx) = \lim\limits_{\veps \to 0} \fint_{B(\bx,\veps)} K_\delta u(\by) \, \rmd \by\,, \qquad \text{ for } \scH^{(d-p)_+}\text{-a.e. } \bx \in \p \Omega\,.
\end{equation*}

The trace theorems also give us an alternative way to define the homogeneous nonlocal spaces $\frak{W}^{\beta,p}_{0,\p \Omega_D}[\delta;q](\Omega)$ defined in \eqref{eq:HomNonlocSpDef}.

\begin{theorem}\label{thm:TraceZero}
    Let $1 < p < \infty$.
    Then a function $u$ belongs to $\frak{W}^{\beta,p}_{0, \p \Omega_D}[\delta;q](\Omega)$ if and only if $u \in \frak{W}^{\beta,p}[\delta;q](\Omega)$ and $T u = 0$ on $\p \Omega_D$.
\end{theorem}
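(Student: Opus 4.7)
The plan is to prove the two implications separately, leveraging the boundary-localized convolution $K_\delta$ to bridge the nonlocal and classical Sobolev settings.

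\textbf{Forward direction.} Suppose $u \in \frak{W}^{\beta,p}_{0,\p \Omega_D}[\delta;q](\Omega)$. By definition, there is a sequence $\{\varphi_n\} \subset C^1_c(\overline{\Omega}\setminus \p \Omega_D)$ with $\varphi_n \to u$ in $\frak{W}^{\beta,p}[\delta;q](\Omega)$. Each $\varphi_n$ vanishes identically in a neighborhood of $\p \Omega_D$, hence $T\varphi_n = 0$ on $\p \Omega_D$. The trace operator $T : \frak{W}^{\beta,p}[\delta;q](\Omega) \to W^{1-1/p,p}(\p \Omega)$ is bounded by \Cref{thm:TraceTheorem}, so $Tu = \lim_n T\varphi_n = 0$ on $\p \Omega_D$.

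\textbf{Reverse direction.} Suppose $u \in \frak{W}^{\beta,p}[\delta;q](\Omega)$ with $Tu = 0$ on $\p \Omega_D$. I would proceed in two stages, using $K_\veps u$ (with $\psi$ and $\bar\lambda$ of class $C^\infty$) as an intermediate approximation. First, by the density theorem (\Cref{thm:Density:Prelim}), $K_\veps u \to u$ in $\frak{W}^{\beta,p}[\delta;q](\Omega)$ as $\veps \to 0$. Moreover, \Cref{cor:RegularityOfHSOp} gives $K_\veps u \in W^{1,p}(\Omega)$, and \Cref{thm:Trace:NonlocalSpace} yields $T K_\veps u = T u = 0$ on $\p \Omega_D$. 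Thus $K_\veps u$ belongs to the classical Sobolev space $W^{1,p}_{0,\p \Omega_D}(\Omega)$, where one may invoke the standard characterization: for Lipschitz domains $\Omega$ and $\sigma$-measurable $\p \Omega_D \subset \p \Omega$, a $W^{1,p}$-function with vanishing trace on $\p \Omega_D$ lies in the $W^{1,p}$-closure of $C^1_c(\overline{\Omega}\setminus \p \Omega_D)$.

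Second, given any $\veps > 0$, pick $\{\varphi_n^\veps\} \subset C^1_c(\overline{\Omega}\setminus \p \Omega_D)$ with $\varphi_n^\veps \to K_\veps u$ in $W^{1,p}(\Omega)$ as $n \to \infty$. The embedding estimate in \Cref{cor:Embedding} gives
\begin{equation*}
    \Vnorm{\varphi_n^\veps - K_\veps u}_{\frak{W}^{\beta,p}[\delta;q](\Omega)} \leq C \Vnorm{\varphi_n^\veps - K_\veps u}_{W^{1,p}(\Omega)} \to 0
\end{equation*}
(with an additional trivial $L^p$ bound), so $\varphi_n^\veps \to K_\veps u$ in $\frak{W}^{\beta,p}[\delta;q](\Omega)$ as well. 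A standard diagonal argument selecting $\veps_n \to 0$ and $n(\veps)$ sufficiently large then produces a sequence in $C^1_c(\overline{\Omega}\setminus \p \Omega_D)$ converging to $u$ in the nonlocal norm, which places $u$ in $\frak{W}^{\beta,p}_{0,\p \Omega_D}[\delta;q](\Omega)$.

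\textbf{Main obstacle.} The only non-trivial ingredient is the classical $W^{1,p}$-density fact used in the second stage: that $W^{1,p}(\Omega)$-functions vanishing on $\p \Omega_D$ (in the trace sense) lie in $W^{1,p}_{0,\p \Omega_D}(\Omega)$. For $\p \Omega_D = \p \Omega$ this is the standard characterization of $W^{1,p}_0$; for proper subsets, one uses a Lipschitz partition-of-unity argument together with a cutoff/truncation near $\p \Omega_D$ (see, e.g., standard references on Sobolev spaces on Lipschitz domains). The rest of the argument is a direct consequence of the nonlocal framework already established in \Cref{sec:HSEstimates} and \Cref{subsec:Density}.
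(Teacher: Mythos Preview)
Your proposal is correct and follows essentially the same approach as the paper: both directions are handled identically, with the reverse implication using $K_\veps u$ as an intermediate approximant lying in $W^{1,p}_{0,\p\Omega_D}(\Omega)$ (via \Cref{thm:Trace:NonlocalSpace} and \Cref{cor:RegularityOfHSOp}), followed by classical $W^{1,p}$-approximation by $C^1_c(\overline{\Omega}\setminus\p\Omega_D)$ functions, the embedding of \Cref{cor:Embedding}, and a diagonal argument. The only minor addition in the paper is an appeal to \Cref{thm:InvariantHorizon} to pass from the $\veps$-seminorm to the $\delta$-seminorm when applying \Cref{cor:RegularityOfHSOp}, and the paper (like you) implicitly relies on the classical characterization of $W^{1,p}_{0,\p\Omega_D}(\Omega)$ as the trace-zero subspace.
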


\begin{proof}
The forward implication is clear from the continuity of the trace, so we need to prove the reverse implication.
Suppose that $u \in \frak{W}^{\beta,p}[\delta;q](\Omega)$ and $T u = 0$ on $\p \Omega_D$. 
Let $\psi$ satisfy \eqref{Assump:Kernel} and let $\lambda$ satisfy \eqref{assump:Localization} both with $k =\infty$, and for $0 < \veps < \delta$ 
define $K_\veps = K_\veps[\lambda,q,\psi]$. Then $T K_\veps u = 0$ on $\p \Omega_D$ by \Cref{thm:Trace:NonlocalSpace}, so $K_\veps u \in W^{1,p}_{0,\p \Omega_D}(\Omega)$ by \Cref{cor:RegularityOfHSOp} and \Cref{thm:InvariantHorizon}. Thus, for each $\veps$ there exists a sequence $\{ v_{\bar{\veps},\veps} \}_{\bar{\veps}} \subset C^1_c(\overline{\Omega} \setminus \p \Omega_D)$ that converges in $W^{1,p}(\Omega)$ to $K_\veps u$ as $\bar{\veps} \to 0$.
By \Cref{thm:Density:Prelim}, for each $n \in \bbN$, we can choose $\{ \veps_n \}_n$ to be a strictly decreasing sequence that satisfies $\Vnorm{ K_{\veps_n} u - u }_{\frak{W}^{\beta,p}[\delta;q](\Omega)} < \frac{1}{2n}$. Then for each each $n$ there exists $\bar{\veps}_n= \bar{\veps}_n(\veps_n)$ depending on $\veps_n$ such that $\Vnorm{ v_{\bar{\veps},\veps_n} - K_{\veps_n} u }_{\frak{W}^{\beta,p}[\delta;q](\Omega)} < \frac{1}{2n}$ for all $\bar{\veps} \leq \bar{\veps}_n$ thanks to \Cref{cor:Embedding}. We can choose the sequence $\{ \bar{\veps}_n \}_n$ to be strictly decreasing as well.
    Define $\{w_n\}_n = \{ v_{\bar{\veps}_n,\veps_n} \}_n$, then
    \begin{equation*}
    \begin{split}
        \Vnorm{w_n - u}_{\frak{W}^{\beta,p}[\delta;q](\Omega)} 
        &\leq \vnorm{ v_{\bar{\veps}_n(\veps_n),\veps_n} - K_{\veps_n} u }_{\frak{W}^{\beta,p}[\delta;q](\Omega)} + \Vnorm{ K_{\veps_n} u - u }_{\frak{W}^{\beta,p}[\delta;q](\Omega)} \\
        &\leq \sup_{\bar{\veps} \leq \bar{\veps}_n} \vnorm{ v_{\bar{\veps},\veps_n} - K_{\veps_n} u }_{\frak{W}^{\beta,p}[\delta;q](\Omega)} + \Vnorm{ K_{\veps_n} u - u }_{\frak{W}^{\beta,p}[\delta;q](\Omega)} \\
        &\leq \frac{1}{2n} + \Vnorm{ K_{\veps_n} u - u }_{\frak{W}^{\beta,p}[\delta;q](\Omega)} < \frac{1}{n}\,,
    \end{split}
    \end{equation*} 
    which concludes the proof.
\end{proof}

\subsection{Poincar\'e inequalities}\label{subsec:Poincare}
Our discussions here on nonlocal Poincar\'e inequalities
also follow the assumptions made in \Cref{subsec:Density}, though we note the special case of $p\in (1,\infty)$ in \cref{thm:PoincareRobin}.

\begin{theorem}\label{thm:PoincareDirichlet}
    There exists a constant $C_D=C_D(d,\beta,p,q,\Omega)$ such that for all $\delta < \underline{\delta}_0$,
	\begin{equation*}
		\Vnorm{u}_{L^p(\Omega)} \leq C_D [u]_{\frak{W}^{\beta,p}[\delta;q](\Omega)}\,,
        \quad \forall u \in \frak{W}^{\beta,p}_{0,\p \Omega_D}[\delta;q](\Omega)\,.
	\end{equation*}
\end{theorem}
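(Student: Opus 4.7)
The plan is to exploit the boundary-localized convolution to reduce the nonlocal Poincaré inequality to the classical Poincaré inequality on $W^{1,p}_{0,\partial\Omega_D}(\Omega)$. Given $u \in \mathfrak{W}^{\beta,p}_{0,\partial\Omega_D}[\delta;q](\Omega)$, I fix auxiliary functions $\lambda$ and $\psi$ satisfying \eqref{assump:Localization} and \eqref{Assump:Kernel} with $k=\infty$, and consider the splitting
\begin{equation*}
    \Vnorm{u}_{L^p(\Omega)} \leq \Vnorm{u - K_\delta u}_{L^p(\Omega)} + \Vnorm{K_\delta u}_{L^p(\Omega)}.
\end{equation*}
The first term is controlled directly by \eqref{eq:KdeltaError} from \Cref{thm:convolution-estimate}: it is bounded by $C\delta q(\diam(\Omega)) [u]_{\mathfrak{W}^{\beta,p}[\delta;q](\Omega)}$, and since $\delta < \underline{\delta}_0$, the prefactor is uniformly bounded by a constant depending only on $d,\beta,p,q,\Omega$.

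For the second term, the key observation is that $K_\delta u \in W^{1,p}_{0,\partial\Omega_D}(\Omega)$. Indeed, by \Cref{cor:RegularityOfHSOp} we have $K_\delta u \in W^{1,p}(\Omega)$ with $\Vnorm{\grad K_\delta u}_{L^p(\Omega)} \leq C [u]_{\mathfrak{W}^{\beta,p}[\delta;q](\Omega)}$. Moreover, since $u \in \mathfrak{W}^{\beta,p}_{0,\partial\Omega_D}[\delta;q](\Omega)$, \Cref{thm:TraceZero} gives $Tu = 0$ on $\partial\Omega_D$, and then \Cref{thm:Trace:NonlocalSpace} yields $T K_\delta u = Tu = 0$ on $\partial\Omega_D$. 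Hence $K_\delta u$ lies in the closed subspace $W^{1,p}_{0,\partial\Omega_D}(\Omega)$, where the classical Poincaré inequality applies with a constant $C_P = C_P(d,p,\Omega,\partial\Omega_D)$ independent of $\delta$:
\begin{equation*}
    \Vnorm{K_\delta u}_{L^p(\Omega)} \leq C_P \Vnorm{\grad K_\delta u}_{L^p(\Omega)} \leq C_P C [u]_{\mathfrak{W}^{\beta,p}[\delta;q](\Omega)}.
\end{equation*}

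Combining the two estimates gives the desired inequality. The only real subtlety is ensuring the constant $C_D$ does not blow up as $\delta \to 0$ or $\delta \to \underline{\delta}_0$. This is automatic: the factor in front of the first term is bounded by $\underline{\delta}_0 \, q(\diam(\Omega))$, while the constants coming from \Cref{cor:RegularityOfHSOp} and the classical Poincaré inequality depend only on $d,\beta,p,q,\kappa_0,\kappa_1,\Omega,\partial\Omega_D$, all of which are fixed data of the problem. The main conceptual point — and what makes this approach work cleanly where traditional nonlocal Poincaré arguments would require delicate ball-covering estimates near $\partial\Omega$ — is that the boundary-localized convolution $K_\delta$ transports the zero-trace condition from the nonlocal space directly into the classical Sobolev space, so that no further boundary analysis is needed beyond what \Cref{thm:convolution-estimate} and \Cref{thm:Trace:NonlocalSpace} already provide.
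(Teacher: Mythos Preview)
Your approach is essentially the paper's: split $u$ into $u-K_\delta u$ and $K_\delta u$, control the first piece by \eqref{eq:KdeltaError}, and reduce the second to the classical Poincar\'e inequality via \Cref{cor:RegularityOfHSOp} once you know $K_\delta u \in W^{1,p}_{0,\p\Omega_D}(\Omega)$. The only real difference is in how that last membership is verified. The paper works directly from the definition of $\mathfrak{W}^{\beta,p}_{0,\p\Omega_D}[\delta;q](\Omega)$: it takes an approximating sequence $u_n \in C^1_c(\overline{\Omega}\setminus\p\Omega_D)$, observes (via \Cref{lma:SupportOfConv}) that $K_\delta u_n$ is also compactly supported away from $\p\Omega_D$, applies the classical Poincar\'e inequality to $K_\delta u_n$, and passes to the limit. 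You instead invoke \Cref{thm:TraceZero} and \Cref{thm:Trace:NonlocalSpace} to get $TK_\delta u = Tu = 0$ directly.

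Your route is cleaner but carries a small caveat: the trace results in \Cref{subsec:Trace} are stated only for $p\in(1,\infty)$, while \Cref{thm:PoincareDirichlet} inherits the assumption $p\in[1,\infty)$ from \Cref{subsec:Density}. So your argument as written does not cover $p=1$, whereas the paper's approximating-sequence argument does. Apart from this endpoint issue, the proof is correct and matches the paper's strategy.
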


\begin{proof}
    Let $\psi$ satisfy \eqref{Assump:Kernel} and let $\lambda$ satisfy \eqref{assump:Localization}, both with $k = \infty$, and define the boundary-localized convolution $K_\delta v= K_\delta[\lambda,q,\psi]v$. Let $\{u_n \} \subset C^{1}_c(\overline{\Omega} \setminus \p \Omega_D)$ be a sequence converging to $u$ in $\frak{W}^{\beta,p}[\delta;q](\Omega)$ as $n \to \infty$.
    By \Cref{cor:RegularityOfHSOp}, $K_{\delta} u_n \in W^{1,p}(\Omega)$, and it follows from a slight modification of the proof of \Cref{lma:SupportOfConv} that $K_\delta u_n$ has support compactly contained in $\overline{\Omega} \setminus \p \Omega_D$. 
    Therefore $K_{\delta} u_n \in W^{1,p}_{0,\p \Omega_D}(\Omega)$, and we can apply the classical Poincar\'e inequality:
	\begin{equation}\label{eq:PoincareDirichlet:Pf1}
		\Vnorm{ K_{\delta} u_n}_{L^p(\Omega)} \leq C(p,\Omega) \Vnorm{\grad K_{\delta} u_n}_{L^{p}(\Omega)}\,.
	\end{equation}
	By \Cref{cor:RegularityOfHSOp},
	$\Vnorm{\grad K_{\delta} u_n}_{L^{p}(\Omega)} \leq C [u_n]_{\frak{W}^{\beta,p}[\delta;q](\Omega)}$.
	Then by \eqref{eq:KdeltaError},
	\begin{equation*}
		\begin{split}
			\Vnorm{u_n}_{L^p(\Omega)} \leq \Vnorm{ K_{\delta} u_n}_{L^p(\Omega)} + \Vnorm{ u_n - K_{\delta} u_n}_{L^p(\Omega)} \leq (C + C \delta) [u_n]_{\frak{W}^{\beta,p}[\delta;q](\Omega)}\,.
		\end{split}
	\end{equation*}
    The result follows by taking $n \to \infty$.
\end{proof}

\begin{theorem}\label{thm:PoincareNeumann}
    There exists a constant $C_N=C_N(d,\beta,p,q,\Omega)$ such that for all $\delta < \underline{\delta}_0$,
	\begin{equation*}
		\Vnorm{u - (u)_{\Omega} }_{L^p(\Omega)} \leq C_N [u]_{\frak{W}^{\beta,p}[\delta;q](\Omega)}\,, \qquad \forall u \in \frak{W}^{\beta,p}[\delta;q](\Omega)\,.
	\end{equation*}
\end{theorem}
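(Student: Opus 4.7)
The plan is to mimic the proof of \Cref{thm:PoincareDirichlet}: transfer the problem from $u \in \frak{W}^{\beta,p}[\delta;q](\Omega)$ to $K_\delta u \in W^{1,p}(\Omega)$, apply the classical Poincar\'e--Wirtinger inequality there, and then use \eqref{eq:KdeltaError} to control the error introduced by the convolution. The setup: fix $\psi$ satisfying \eqref{Assump:Kernel} and $\lambda$ satisfying \eqref{assump:Localization} with $k=\infty$, and let $K_\delta = K_\delta[\lambda,q,\psi]$.

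First I would use the triangle inequality to split
\begin{equation*}
\Vnorm{u - (u)_\Omega}_{L^p(\Omega)} \leq \Vnorm{u - K_\delta u}_{L^p(\Omega)} + \Vnorm{K_\delta u - (K_\delta u)_\Omega}_{L^p(\Omega)} + \Vnorm{(K_\delta u)_\Omega - (u)_\Omega}_{L^p(\Omega)}.
\end{equation*}
The first term is bounded by $C\delta q(\diam(\Omega))\,[u]_{\frak{W}^{\beta,p}[\delta;q](\Omega)}$ directly from \eqref{eq:KdeltaError}. For the third term, I observe that the integrand is a constant, so by H\"older's inequality
\begin{equation*}
\Vnorm{(K_\delta u)_\Omega - (u)_\Omega}_{L^p(\Omega)} = |\Omega|^{1/p}\bigl|(K_\delta u - u)_\Omega\bigr| \leq |\Omega|^{1/p-1}|\Omega|^{1/p'}\Vnorm{K_\delta u - u}_{L^p(\Omega)},
\end{equation*}
which is again absorbed into the right-hand side via \eqref{eq:KdeltaError}.

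The middle term is where the classical theory enters. By \Cref{cor:RegularityOfHSOp} we have $K_\delta u \in W^{1,p}(\Omega)$, so the classical Poincar\'e--Wirtinger inequality on the bounded Lipschitz domain $\Omega$ yields
\begin{equation*}
\Vnorm{K_\delta u - (K_\delta u)_\Omega}_{L^p(\Omega)} \leq C(p,\Omega)\,\Vnorm{\grad K_\delta u}_{L^p(\Omega)} \leq C\,[u]_{\frak{W}^{\beta,p}[\delta;q](\Omega)},
\end{equation*}
where the last step is \eqref{eq:ConvEst:Deriv:Cor}. Combining the three estimates and using $\delta < \underline{\delta}_0$ so that $\delta q(\diam(\Omega))$ is bounded by a constant depending only on $q$ and $\Omega$ gives a constant $C_N$ independent of $\delta$, as required.

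I do not anticipate a serious obstacle beyond packaging. The only point worth double-checking is that the constants are genuinely $\delta$-independent: $C$ in \eqref{eq:KdeltaError} depends only on $d,\beta,p,\psi,q,\kappa_0,\kappa_1,\Omega$ (not on $\delta$), the Poincar\'e--Wirtinger constant depends only on $p$ and $\Omega$, and the constant in \Cref{cor:RegularityOfHSOp} likewise has no $\delta$-dependence. Hence the claimed uniformity in $\delta \in (0,\underline{\delta}_0)$ holds.
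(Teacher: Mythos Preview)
Your proof is correct and follows essentially the same approach as the paper: both use the three-term decomposition $u - K_\delta u$, $K_\delta u - (K_\delta u)_\Omega$, $(K_\delta u)_\Omega - (u)_\Omega$, controlling the first and third via \eqref{eq:KdeltaError} and the middle via the classical Poincar\'e--Wirtinger inequality combined with \Cref{cor:RegularityOfHSOp}. The only cosmetic difference is that the paper routes the argument through the identity $(K_\delta u)_\Omega = (\Psi_\delta u)_\Omega$ before completing the same triangle-inequality split, whereas you work with $(K_\delta u)_\Omega$ directly.
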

	
\begin{proof}
    Let $\psi$ satisfy \eqref{Assump:Kernel} and let $\lambda$ satisfy \eqref{assump:Localization} with $k = \infty$ for both,  and define the boundary-localized convolution $K_\delta v= K_\delta[\lambda,q,\psi]v$. 
    By \Cref{cor:RegularityOfHSOp}, $K_{\delta} u \in W^{1,p}(\Omega)$.
    Therefore we can apply the classical Poincar\'e inequality:
    \begin{equation}\label{eq:PoincareNeumann:Pf1}
		\Vnorm{ K_{\delta} u - (K_{\delta} u)_\Omega }_{L^p(\Omega)} \leq C(p,\Omega) \Vnorm{\grad K_{\delta} u}_{L^{p}(\Omega)}\,.
    \end{equation}
    By \Cref{cor:RegularityOfHSOp},
	$\Vnorm{\grad K_{\delta} u}_{L^{p}(\Omega)} \leq C [u]_{\frak{W}^{\beta,p}[\delta;q](\Omega)}$.
    Now, recall the definition of $\Psi_{\delta}$ in \eqref{eq:KernelIntegral}, and note that
    \begin{equation*}
        (K_{\delta} u)_{\Omega} = \fint_{\Omega} K_{\delta} u(\bx) \, \rmd \bx = \fint_{\Omega} \int_{\Omega} \psi_{\delta}(\bx,\by) u(\by) \, \rmd \by \, \rmd \bx = \fint_{\Omega} \Psi_{\delta}(\by) u(\by) \, \rmd \by = (\Psi_{\delta} u)_\Omega\,,
    \end{equation*}
    and so by \eqref{eq:KdeltaError} and \eqref{eq:PoincareNeumann:Pf1}
    \begin{equation*}
        \begin{split}
        \Vnorm{u - (\Psi_\delta u)_{\Omega}}_{L^p(\Omega)} \leq \Vnorm{ K_{\delta} u - (K_{\delta} u)_{\Omega} }_{L^p(\Omega)} + \Vnorm{ u - K_{\delta} u}_{L^p(\Omega)} \leq C [u]_{\frak{W}^{\beta,p}[\delta;q](\Omega)}\,.
        \end{split}
    \end{equation*}
    Finally, by Jensen's inequality and by \eqref{eq:KdeltaError}
    \begin{equation*}
        \begin{split}
            \Vnorm{u - (u)_{\Omega}}_{L^p(\Omega)} 
            &\leq \Vnorm{u - (\Psi_\delta u)_{\Omega}}_{L^p(\Omega)} + \Vnorm{(\Psi_\delta u)_{\Omega} - (u)_{\Omega} }_{L^p(\Omega)} \\
            &\leq C [u]_{\frak{W}^{\beta,p}[\delta;q](\Omega)} + \left( \fint_{\Omega} \int_{\Omega} |K_{\delta} u(\by) - u(\by)|^p \, \rmd \by  \, \rmd \bx \right)^{1/p} \\
            &\leq C [u]_{\frak{W}^{\beta,p}[\delta;q](\Omega)} + \Vnorm{ u - K_{\delta} u}_{L^p(\Omega)} \leq C [u]_{\frak{W}^{\beta,p}[\delta;q](\Omega)}\,.
        \end{split}
    \end{equation*}
\end{proof}

\begin{remark}
    Note that all of the above Poincar\'e constants are constructed, and not given by a contradiction argument.
\end{remark}

\begin{theorem}\label{thm:PoincareRobin}
    Let $1 < p < \infty$.
    There exists a constant $C_R > 0$ such that for all $\delta < \underline{\delta}_0$
	\begin{equation*}
		\Vnorm{v}_{L^{p}(\Omega)}^p \leq C_R \left( [u]_{\frak{W}^{\beta,p}[\delta;q](\Omega)}^p + \int_{\p \Omega_R} |Tu|^p \, \rmd \sigma \right)\,, \qquad \forall u \in \frak{W}^{\beta,p}[\delta;q](\Omega)\,.
	\end{equation*}
\end{theorem}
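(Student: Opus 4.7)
The proof plan is to mirror the strategy used for \Cref{thm:PoincareDirichlet} and \Cref{thm:PoincareNeumann}, reducing the nonlocal Poincaré--Robin inequality to its classical $W^{1,p}$ counterpart via the boundary-localized convolution $K_\delta$. Specifically, I would choose $\psi$ satisfying \eqref{Assump:Kernel} and $\lambda$ satisfying \eqref{assump:Localization} both with $k=\infty$, and set $K_\delta = K_\delta[\lambda,q,\psi]$. By \Cref{cor:RegularityOfHSOp} together with \Cref{thm:InvariantHorizon}, the convolution $K_\delta u$ lies in $W^{1,p}(\Omega)$ whenever $u\in\frak{W}^{\beta,p}[\delta;q](\Omega)$, so we have an ``honest'' Sobolev function on which we can invoke classical results.

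The classical Poincaré--Robin inequality on $W^{1,p}(\Omega)$ (valid for $p\in(1,\infty)$, $\Omega$ bounded Lipschitz, and $\sigma(\p\Omega_R) > 0$) asserts
\begin{equation*}
    \Vnorm{v}_{L^p(\Omega)}^p \leq C(p,\Omega,\p\Omega_R) \left( \Vnorm{\grad v}_{L^p(\Omega)}^p + \int_{\p \Omega_R} |T v|^p \, \rmd \sigma \right)\,, \quad \forall v\in W^{1,p}(\Omega)\,.
\end{equation*}
Applying this to $v = K_\delta u$, I would then use \Cref{cor:RegularityOfHSOp} to bound $\Vnorm{\grad K_\delta u}_{L^p(\Omega)} \leq C [u]_{\frak{W}^{\beta,p}[\delta;q](\Omega)}$ and \Cref{thm:Trace:NonlocalSpace} to identify $TK_\delta u = Tu$ as elements of $W^{1-1/p,p}(\p \Omega)$, hence equal $\sigma$-a.e.\ on $\p \Omega_R$ (using that $b_0>0$ is not needed here -- only the measure-positivity of $\p\Omega_R$ matters for this step).

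Finally, a triangle inequality splits
$\Vnorm{u}_{L^p(\Omega)} \leq \Vnorm{K_\delta u}_{L^p(\Omega)} + \Vnorm{u - K_\delta u}_{L^p(\Omega)}$, and the second term is controlled by the estimate \eqref{eq:KdeltaError} from \Cref{thm:convolution-estimate}, giving $\Vnorm{u - K_\delta u}_{L^p(\Omega)} \leq C\delta q(\diam(\Omega)) [u]_{\frak{W}^{\beta,p}[\delta;q](\Omega)}$. Combining these ingredients and raising to the $p$-th power yields the desired estimate with $C_R$ depending only on $d$, $\beta$, $p$, $q$, $\kappa_0$, $\kappa_1$, $\Omega$ and $\p\Omega_R$; crucially, since $\delta<\underline{\delta}_0$ is absorbed by $q(\diam(\Omega))$ being bounded, the constant is uniform in $\delta$.

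I do not anticipate a serious obstacle: unlike the density theorem or the compactness result, here every needed ingredient (the derivative estimate for $K_\delta u$, the trace identity $TK_\delta u = Tu$, and the $L^p$ error estimate) has already been established in the preceding sections, and the classical Poincaré--Robin inequality is standard. The only minor point requiring care is verifying that $TK_\delta u$ agrees with $Tu$ on $\p\Omega_R$ in the sense required by the boundary integral -- but this is immediate from \Cref{thm:Trace:NonlocalSpace}, since both traces lie in $W^{1-1/p,p}(\p\Omega)$ and coincide as functions there. No contradiction/compactness argument is needed, and the constant $C_R$ is constructive, matching the spirit noted in the \textbf{Remark} preceding the statement.
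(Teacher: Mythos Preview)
Your proposal is correct and follows essentially the same route as the paper: apply the classical $W^{1,p}$ Poincar\'e--Robin inequality to $K_\delta u$, then use \Cref{cor:RegularityOfHSOp} for the gradient term, \Cref{thm:Trace:NonlocalSpace} for the trace identity, and \eqref{eq:KdeltaError} to pass from $K_\delta u$ back to $u$. One small inaccuracy: the Remark you cite applies to the Dirichlet and Neumann constants only; the paper obtains the classical Robin inequality \eqref{eq:PoincareRobin:Pf1} via compact embedding, so $C_R$ is not fully constructive here.
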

	
\begin{proof}
    First, it is straightforward to prove via the compact embedding of $W^{1,p}(\Omega)$ into $L^p(\Omega)$ that there exists a constant $\Lambda > 0 $ such that 
    \begin{equation}\label{eq:PoincareRobin:Pf1}
        \Vnorm{v}_{L^{p}(\Omega)}^p \leq \Lambda \left( \Vnorm{\grad v}_{L^p(\Omega)}^p + \int_{\p \Omega_R} |Tv|^p \, \rmd \sigma \right) \qquad \forall v \in W^{1,p}(\Omega)\,.
    \end{equation}
    Now let $\psi$ satisfy \eqref{Assump:Kernel} and let $\lambda$ satisfy \eqref{assump:Localization} with $k = \infty$, and define the boundary-localized convolution $K_\delta v= K_\delta[\lambda,q,\psi]v$. 
    By \Cref{cor:RegularityOfHSOp}, $K_{\delta} u \in W^{1,p}(\Omega)$, and 
    therefore we can apply \eqref{eq:PoincareRobin:Pf1}:
    \begin{equation*}
		\Vnorm{K_\delta u}_{L^p(\Omega)}^p \leq \Lambda \left( \Vnorm{\grad K_\delta u}_{L^p(\Omega)}^p + \int_{\p \Omega_R} |T K_\delta u|^p \, \rmd \sigma \right)\,.
    \end{equation*}
    By \Cref{cor:RegularityOfHSOp}, $\Vnorm{\grad K_{\delta} u}_{L^{p}(\Omega)} \leq C [u]_{\frak{W}^{\beta,p}[\delta;q](\Omega)}$, and by \Cref{thm:Trace:NonlocalSpace} $T K_\delta u = T u $. Therefore by \eqref{eq:PoincareRobin:Pf1} and \eqref{eq:KdeltaError} 
    \begin{equation*}
        \begin{split}
        \Vnorm{u}_{L^p(\Omega)}
        &\leq \Vnorm{ K_{\delta} u}_{L^p(\Omega)} + \Vnorm{ u - K_{\delta} u}_{L^p(\Omega)} \\
        &\leq \Lambda \left( \Vnorm{\grad K_\delta u}_{L^p(\Omega)}^p + \int_{\p \Omega_R} |T K_\delta u|^p \, \rmd \sigma \right) + C [u]_{\frak{W}^{\beta,p}[\delta;q](\Omega)} \\
        &\leq C_R \left( [u]_{\frak{W}^{\beta,p}[\delta;q](\Omega)}^p + \int_{\p \Omega_R} |Tu|^p \, \rmd \sigma \right)\,,
        \end{split}
    \end{equation*}
    which concludes the proof.
\end{proof}

\begin{remark}
    More general Poincar\'e inequalities can be obtained using the same methods. Indeed, let $V$ be a weakly closed subset of $\frak{W}^{\beta,p}[\delta;q](\Omega)$ such that $V \cap \bbR = \{ 0 \}$. Then a Poincar\'e inequality holds on $V$. Thanks to the heterogeneous localization properties, $V$ can possibly be characterized either by lower-order terms, or by terms depending only on boundary values. In addition,
    Poincar\'e inequalities for more general forms on the right-hand side, for instance $[u]_{\frak{W}^{\beta,p}[\delta;q](\Omega}^p + \int_{\p \Omega_D} |Tu|^m \, \rmd \sigma$ for some exponent $m \in [1,p]$, can be obtained.
\end{remark}

\subsection{Compactness for a fixed bulk
horizon parameter}

We continue our discussion with considering compact embeddings for nonlocal spaces, under the assumptions \eqref{assump:beta}, \eqref{assump:NonlinearLocalization}, and $1 \leq p < \infty$.

If $\beta < d$, then it is straightforward to see that the embedding $\frak{W}^{\beta,p}[\delta;q](\Omega) \hookrightarrow L^p(\Omega)$ is not compact. 
Indeed, for any cube $Q \Subset \Omega$ with sides parallel to the axes, let $\{ u_n \}_n$ be the standard Fourier basis for $L^{2}(Q)$. Extending the $u_n$ to all of $\Omega$ by $0$, we then have
$[u_n]_{ \frak{W}^{\beta,p}[\delta;q](\Omega) } \leq C(Q,\Omega,\Vnorm{u_n}_{L^\infty(Q)}) \leq C$ independent of $n$, since $|\bx|^{-\beta} \in L^1_{loc}(\bbR^d)$. However, $u_n$ does not converge strongly in $L^p(\Omega)$.

On the other hand, for $\beta > d$, the nonlocal function space actually contains fractional Sobolev-Slobodeckij spaces, and thus inherits their embedding properties. The proof relies on embedding properties of Sobolev spaces and the estimate \Cref{thm:diffuKu:Fractional}.

\begin{theorem}\label{thm:Embedding:Fractional}
    Assume that $\beta > d$.
    Then there exists a constant $C >0$ depending only on $d$, $\beta$, $p$, $q$, and $\Omega$ such that for all $\delta$ satisfying \eqref{eq:HorizonThreshold2} 
    \begin{equation*}
        \Vnorm{u}_{W^{(\beta-d)/p,p}(\Omega)} \leq 
        C \Vnorm{u}_{ \frak{W}^{\beta,p}[\delta;q](\Omega) }, \quad \forall u \in \frak{W}^{\beta,p}[\delta;q](\Omega).
    \end{equation*}
\end{theorem}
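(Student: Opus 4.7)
\textbf{Proof plan for \Cref{thm:Embedding:Fractional}.} The plan is to split $u = K_\delta u + (u - K_\delta u)$ and estimate each piece separately, leveraging that $K_\delta u$ inherits full Sobolev regularity while $u - K_\delta u$ is already controlled in the target fractional norm by \Cref{thm:diffuKu:Fractional}. Set $s := (\beta-d)/p$; the assumption $\beta \in (d, d+p)$ from \eqref{assump:beta} guarantees $s \in (0,1)$, which is the natural range for $W^{s,p}(\Omega)$.

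First I would handle the $L^p$ part of the $W^{s,p}$-norm, which is immediate:
\begin{equation*}
    \Vnorm{u}_{L^p(\Omega)} \leq \Vnorm{u}_{\frak{W}^{\beta,p}[\delta;q](\Omega)}
\end{equation*}
by the very definition of the nonlocal norm. For the Gagliardo seminorm, write
\begin{equation*}
    [u]_{W^{s,p}(\Omega)} \leq [K_\delta u]_{W^{s,p}(\Omega)} + [u - K_\delta u]_{W^{s,p}(\Omega)}\,,
\end{equation*}
where $K_\delta = K_\delta[\lambda,q,\psi]$ is a boundary-localized convolution with $\psi$ satisfying \eqref{Assump:Kernel} for $k_\psi \geq 1$ and $\lambda$ satisfying \eqref{assump:Localization}.

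For the second summand, \Cref{thm:diffuKu:Fractional} gives directly
\begin{equation*}
    [u - K_\delta u]_{W^{s,p}(\Omega)} \leq C (\delta q(\diam(\Omega)))^{(d+p-\beta)/p} [u]_{\frak{W}^{\beta,p}[\delta;q](\Omega)} \leq C [u]_{\frak{W}^{\beta,p}[\delta;q](\Omega)}\,,
\end{equation*}
since $d+p-\beta > 0$ and $\delta q(\diam(\Omega))$ is uniformly bounded for $\delta < \underline{\delta}_0$. For the first summand, \Cref{cor:RegularityOfHSOp} combined with \eqref{eq:ConvEst:Lp} yields $K_\delta u \in W^{1,p}(\Omega)$ with
\begin{equation*}
    \Vnorm{K_\delta u}_{W^{1,p}(\Omega)} \leq C \Vnorm{u}_{\frak{W}^{\beta,p}[\delta;q](\Omega)}\,.
\end{equation*}
Then I would invoke the classical continuous embedding $W^{1,p}(\Omega) \hookrightarrow W^{s,p}(\Omega)$ for $s \in (0,1)$ on a bounded Lipschitz domain (see, e.g., \cite{Hitchhiker}) to conclude
\begin{equation*}
    [K_\delta u]_{W^{s,p}(\Omega)} \leq C \Vnorm{K_\delta u}_{W^{1,p}(\Omega)} \leq C \Vnorm{u}_{\frak{W}^{\beta,p}[\delta;q](\Omega)}\,.
\end{equation*}
Combining the two bounds gives the claim.

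I do not anticipate a serious obstacle: all of the heavy lifting (the fractional estimate for $u - K_\delta u$ and the $W^{1,p}$-bound for $K_\delta u$) is already in place. The only delicate point is keeping track of the restriction $\delta < \bar{\delta}_0$ inherited from \Cref{thm:diffuKu:Fractional}, which is exactly why the theorem is stated under the full \eqref{eq:HorizonThreshold2} rather than merely $\delta < \underline{\delta}_0$; this is automatic from our hypothesis and requires no additional argument.
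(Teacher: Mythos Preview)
Your proposal is correct and follows essentially the same approach as the paper's proof: split $u = K_\delta u + (u - K_\delta u)$, bound the difference via \Cref{thm:diffuKu:Fractional}, and bound $K_\delta u$ by combining \Cref{cor:RegularityOfHSOp} with the classical embedding $W^{1,p}(\Omega) \hookrightarrow W^{(\beta-d)/p,p}(\Omega)$. The only cosmetic difference is that the paper fixes $\lambda = d_{\partial\Omega}$ in the convolution, whereas you allow a general $\lambda$ satisfying \eqref{assump:Localization}; both choices work.
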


\begin{proof}
    Let $\psi$ satisfy \eqref{Assump:Kernel} for $k_\psi \geq 1$, with $K_\delta = K_\delta[d_{\p \Omega},q,\psi]$.
    First, by \Cref{thm:diffuKu:Fractional}
    the estimate
    \begin{equation*}
        [u-K_\delta u]_{W^{(\beta-d)/p,p}(\Omega)} \leq C \delta^{\frac{d+p-\beta}{p}} [u]_{\frak{W}^{\beta,p}[\delta;q](\Omega)}
    \end{equation*}
    holds.
    Now we use the embedding of Sobolev spaces with varying differentiability index along with \Cref{cor:RegularityOfHSOp} to get
    \begin{equation*}
        \Vnorm{K_\delta u}_{W^{(\beta-d)/p,p}(\Omega)} \leq C(d,\beta,p) \Vnorm{K_\delta u}_{W^{1,p}(\Omega)} \leq C [K_\delta u]_{\frak{W}^{\beta,p}[\delta;q](\Omega)}\,.
    \end{equation*}
    Combining these two estimates gives the result.
\end{proof}

As a result, all of the embeddings that hold for fractional Sobolev spaces hold for the nonlocal space.
So that we can use it for the variational problems, we state explicitly the $L^p$-space embedding.
\begin{theorem}\label{thm:Compactness:FixedSpace}
    For $\beta \in (d,d+p)$, let $p^*_\beta$ denote
	 \begin{equation}\label{eq:EmbeddingExponent}
	 	p^*_{\beta} := 
	 	\begin{cases}
	 	\frac{dp}{2d-\beta}, &\text{ if } d < \beta < 2d\,, \\
	 	\text{any exponent } < \infty, &\text{ if } \beta \geq 2d\,.
	 	\end{cases}
	 \end{equation}
Then for $\delta < \bar{\delta}_0$ the embedding $\frak{W}^{\beta,p}[\delta;q](\Omega) \hookrightarrow L^{p^*_\beta}(\Omega)$ is continuous, and for any $q < p^*_\beta$ the embedding $\frak{W}^{\beta,p}[\delta;q](\Omega) \hookrightarrow L^q(\Omega)$ is compact. 
\end{theorem}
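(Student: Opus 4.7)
The plan is to deduce both embeddings from the corresponding classical embeddings for the fractional Sobolev-Slobodeckij space $W^{s,p}(\Omega)$ with $s := (\beta - d)/p$. Since $\beta \in (d, d+p)$ we have $s \in (0,1)$, so $W^{s,p}(\Omega)$ is a genuine Slobodeckij space on our bounded Lipschitz domain $\Omega$, and the full library of classical fractional Sobolev embedding and compactness results applies.

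The key reduction is supplied by \Cref{thm:Embedding:Fractional}, which gives, uniformly in $\delta$ in the admissible range,
\[
\|u\|_{W^{s,p}(\Omega)} \leq C \|u\|_{\frak{W}^{\beta,p}[\delta;q](\Omega)}, \qquad \forall u \in \frak{W}^{\beta,p}[\delta;q](\Omega).
\]
Consequently, any continuous or compact embedding of $W^{s,p}(\Omega)$ into an $L^{r}$ space transfers verbatim to $\frak{W}^{\beta,p}[\delta;q](\Omega)$ by composition. For the continuous $L^{p^*_\beta}$-embedding I would invoke the standard Sobolev-Slobodeckij embedding theorem (see e.g.\ \cite{Hitchhiker}) and split into cases according to the sign of $sp - d = \beta - 2d$: if $d < \beta < 2d$ then $sp < d$ and $W^{s,p}(\Omega) \hookrightarrow L^{dp/(d-sp)}(\Omega) = L^{dp/(2d-\beta)}(\Omega)$; if $\beta = 2d$ then $sp = d$ and $W^{s,p}(\Omega) \hookrightarrow L^{r}(\Omega)$ for every finite $r$; and if $\beta > 2d$ (which forces $p > d$ via $\beta < d+p$) then $sp > d$, hence $W^{s,p}(\Omega) \hookrightarrow C^{0,s-d/p}(\overline{\Omega}) \hookrightarrow L^{\infty}(\Omega)$ and thus into every $L^r$. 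These three cases reassemble exactly into the definition of $p^*_\beta$ in \eqref{eq:EmbeddingExponent}. For the compactness claim I would invoke the fractional Rellich-Kondrachov theorem, which yields the compact embedding $W^{s,p}(\Omega) \hookrightarrow \hookrightarrow L^{r}(\Omega)$ for every $r < p^*_\beta$, and compose again with \Cref{thm:Embedding:Fractional}.

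Since the heavy lifting is done by \Cref{thm:Embedding:Fractional} together with classical results on fractional Sobolev spaces, no real obstacle is anticipated. The only bookkeeping items are to verify that the trichotomy $sp \lessgtr d$ matches the cases in \eqref{eq:EmbeddingExponent}, which is elementary, and to confirm that the constant in \Cref{thm:Embedding:Fractional} is uniform for $\delta < \bar{\delta}_0$, which is explicit in its statement. As an alternative purely ``nonlocal'' route to compactness, one could instead combine the boundary-localized convolution $K_\delta$ (whose image lies in $W^{1,p}(\Omega)$ by \Cref{cor:RegularityOfHSOp}) with the error estimate \eqref{eq:KdeltaError} and a standard Fr\'echet--Kolmogorov argument in $L^p$, following the blueprint of \Cref{thm:Intro:Compactness} at fixed $\delta$; however, the route through $W^{s,p}$ is the most direct and also delivers the sharp exponent $p^*_\beta$.
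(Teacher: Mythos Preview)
Your proposal is correct and matches the paper's approach exactly: the paper states this theorem without a separate proof, presenting it as an immediate consequence of \Cref{thm:Embedding:Fractional} composed with the classical fractional Sobolev embedding and Rellich--Kondrachov theorems, which is precisely what you have written out.
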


\subsection{Asymptotic compactness in the local limit}\label{sec:Compactness}
In the following section we prove a general compactness result in the local limit, which encompasses \Cref{thm:Intro:Compactness}. We take all the assumptions of \Cref{subsec:Density}.
Note that \cref{thm:Compactness} and \cref{lma:GammaConv:WeakConvOfMoll} require the additional assumption 
\eqref{eq:assump:rho:q}.

We remark that in the case $p = 1$, all of the results of this section hold for the space $BV(\Omega)$, functions of bounded variation, instead of the space $W^{1,1}(\Omega)$. The proofs are almost exactly the same; the differences are the same as in the proofs contained in \cite{BBM,ponce2004new}. Since we do not consider problems associated with functionals defined in $BV$ spaces in this work, the precise statements are omitted.

\begin{theorem}\label{thm:LocalizationOfSeminorm}
    Let $ p \in (1, \infty)$. Then 
    \begin{equation*}
    \lim\limits_{\delta \to 0} [u]_{\frak{V}^{\beta,p}[\delta;q;\rho,\lambda](\Omega)}^p =
         	\begin{cases}
          \Vnorm{\grad u}_{L^p(\Omega)}^p\,, & 
          \text{if }  u \in W^{1,p}(\Omega),\\
           + \infty, & \text{if } u \in L^p(\Omega) \setminus W^{1,p}(\Omega).
           \end{cases}
    \end{equation*}    
    Moreover, if a sequence $\{u_\delta\}_\delta$ converges to $u$ in $C^2(\overline{V})$ for any $V \Subset \Omega$ as $\delta \to 0$, then
    \begin{equation*}
        \lim\limits_{\delta \to 0} \int_{V} \int_{V} \gamma_{\beta,p}[\delta;q;\rho,\lambda](\bx,\by) |u(\by)-u(\bx)|^p \, \rmd \by \, \rmd \bx = \int_{V} |\grad u(\bx)|^p \, \rmd \bx\,.
    \end{equation*}
\end{theorem}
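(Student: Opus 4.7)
The plan is to proceed in three stages: (a) compute the limit directly for smooth $u$ by Taylor expansion and change of variables; (b) extend to $W^{1,p}(\Omega)$ via density, and rule out $u \in L^p(\Omega) \setminus W^{1,p}(\Omega)$ via the asymptotic compactness theorem; (c) derive the localized ``moreover'' statement by applying stage (a) uniformly on $V \Subset \Omega$.

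For stage (a), let $u \in C^2(\overline{\Omega})$ and Taylor-expand $u(\by) - u(\bx) = \grad u(\bx) \cdot (\by - \bx) + R(\bx, \by)$ with $|R(\bx,\by)| \le \Vnorm{D^2 u}_{L^\infty(\Omega)} |\by - \bx|^2$. Substitute $\by = \bx + \eta_\delta(\bx) \bz$, which is admissible since $B(\bx, \eta_\delta(\bx)) \Subset \Omega$ by \eqref{eq:h0property}. All powers of $\eta_\delta(\bx)$ cancel in the principal term, and switching to polar coordinates $\bz = r \bsomega$ reduces the principal integrand to
\begin{equation*}
C_{d,\beta,p}(\rho)\, \rho(r)\, r^{d+p-\beta-1}\, |\grad u(\bx) \cdot \bsomega|^p \, \rmd r\, \rmd \sigma(\bsomega).
\end{equation*}
The normalization of $C_{d,\beta,p}(\rho)$ collapses the $r$-integral to $\overline{C}_{d,p}/\sigma(\bbS^{d-1})$, producing $\overline{C}_{d,p} \fint_{\bbS^{d-1}} |\grad u(\bx) \cdot \bsomega|^p \, \rmd \sigma(\bsomega)$; the explicit formula for $\overline{C}_{d,p}$ in \eqref{eq:Intro:StdKernelNormalization} is precisely the constant for which this equals $|\grad u(\bx)|^p$ (the identity follows from the Gamma-function formula for the $p$-th moment of $|\omega_1|$ on the sphere). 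The Taylor remainder contributes $O(\eta_\delta(\bx)^p) \le O(\delta^p)$, uniformly in $\bx$, and dominated convergence on $\Omega$ finishes stage (a).

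For stage (b), given $u \in W^{1,p}(\Omega)$ and $\veps > 0$, approximate by $v \in C^2(\overline{\Omega})$ with $\Vnorm{u-v}_{W^{1,p}(\Omega)} < \veps$ and chain \cref{thm:EnergySpaceIndepOfKernel} with \cref{cor:Embedding} to obtain
\begin{equation*}
[u-v]_{\frak{V}^{\beta,p}[\delta;q;\rho,\lambda](\Omega)} \le C\, [u-v]_{\frak{W}^{\beta,p}[\delta;q](\Omega)} \le C \Vnorm{\grad(u-v)}_{L^p(\Omega)} < C \veps
\end{equation*}
uniformly in $\delta \in (0,\underline{\delta}_0)$. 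Combined with stage (a) applied to $v$ and the triangle inequality for the seminorm, letting $\veps \to 0$ closes the $W^{1,p}$ case. For $u \in L^p(\Omega) \setminus W^{1,p}(\Omega)$, argue by contradiction: if $\liminf_{\delta \to 0} [u]_{\frak{V}^{\beta,p}[\delta;q;\rho,\lambda](\Omega)} < \infty$, then \cref{thm:EnergySpaceIndepOfKernel} yields a subsequence $\delta_n \to 0$ along which $\sup_n [u]_{\frak{W}^{\beta,p}[\delta_n;q](\Omega)} < \infty$; applying \cref{thm:Intro:Compactness} to the constant sequence $u_{\delta_n} \equiv u$ forces $u \in W^{1,p}(\Omega)$, a contradiction.

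For stage (c), reading the integrand in the stated ``moreover'' as involving $u_\delta$ (which I take to be a typo in the statement), the hypothesis supplies $\sup_\delta \Vnorm{u_\delta}_{C^2(\overline{V})} < \infty$, and the stage (a) Taylor argument applied to $u_\delta$ on $V$ yields uniform pointwise convergence of the $\bx$-integrand to $|\grad u(\bx)|^p$ for $\bx \in V$, which is integrated by dominated convergence; the thin $O(\delta)$-layer of $V$ where $B(\bx, \eta_\delta(\bx))$ may exit $V$ contributes a vanishing error. The principal obstacle I anticipate is the divergence case in stage (b), which relies on the genuinely non-trivial asymptotic compactness of \cref{sec:Compactness}; everything else is careful bookkeeping of the $\eta_\delta$-scaling and identification of $\overline{C}_{d,p}$ as the correct normalization.
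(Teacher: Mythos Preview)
Your strategy---Taylor expansion for smooth functions, density for the $W^{1,p}$ case, a compactness argument for the divergence case, and a uniform $C^2$ estimate for the ``moreover'' clause---is correct and matches the approach of Ponce's Theorem~1.1, which is exactly what the paper defers to. You are also right that the ``moreover'' integrand should read $u_\delta$; the paper uses it that way in the proof of \Cref{thm:Compactness}.

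One point of care in stage~(b): in the paper's development, \Cref{thm:Compactness} (hence \Cref{thm:Intro:Compactness}) comes \emph{after} \Cref{thm:LocalizationOfSeminorm}, and its proof that any limit point lies in $W^{1,p}(\Omega)$ explicitly invokes the ``moreover'' clause of this very theorem. Citing compactness as a black box for stage~(b) is therefore circular as written. The repair is painless: your stage~(c) establishes the ``moreover'' clause by an independent Taylor argument, and that is the only ingredient the compactness proof draws from this theorem; so reorder to (a), (c), note that the $W^{1,p}$-conclusion of \Cref{thm:Compactness} is then available, and run (b). Alternatively---and this is what the paper means by ``follows exactly the same steps as Ponce''---reproduce the self-contained argument: mollify $u$ (by $K_\veps$ or a standard mollifier), push the seminorm through the mollification by Jensen exactly as in the proof of \Cref{thm:Compactness}, apply your stage~(a)/(c) to the mollified function, and conclude by weak-$L^p$ compactness of the gradients. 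That route also avoids importing the extra hypothesis \eqref{eq:assump:rho:q} (in particular $k_q \geq 2$) that \Cref{thm:Compactness} carries but \Cref{thm:LocalizationOfSeminorm} does not assume.
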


\begin{proof}
    The proof of the first statement follows exactly the same steps as \cite[Theorem 1.1]{ponce2004new}, and the proof of the second statement follows exactly the same steps as \cite[Proposition 4.1, Remarks 4.1 and 4.2]{ponce2004new}. The heterogeneous localization $\eta(\bx)$ gives no additional difficulty.
\end{proof}

\begin{theorem}\label{thm:Compactness}
  Assume additionally that \eqref{eq:assump:rho:q} holds. Let    
    $\delta = \{\delta_n \}_{n \in \bbN}$ be a sequence that converges to $0$ and $\{ u_\delta \}_\delta \subset \frak{W}^{\beta,p}[\delta;q](\Omega)$ be a sequence such that for constants $B$ and $C$ independent of $\delta$,
    $$
    \sup_{\delta > 0} \Vnorm{u_\delta}_{L^p(\Omega)} \leq C < \infty\; \text{ and }\;
    \sup_{\delta > 0} [u_\delta]_{\frak{V}^{\beta,p}[\delta;q;\rho,\lambda](\Omega)} := B < \infty\,.
    $$
    Then $\{ u_\delta \}_\delta$ is precompact in the strong topology of $L^p(\Omega)$. Moreover, if $p > 1$ any limit point $u$ belongs to $W^{1,p}(\Omega)$ with $\Vnorm{\grad u}_{L^{p}(\Omega)} \leq B$. 
\end{theorem}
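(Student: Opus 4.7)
The plan is to leverage the boundary-localized convolution $K_{\delta_n}$ to transfer the compactness question into the classical Sobolev space $W^{1,p}(\Omega)$, where Rellich--Kondrachov applies, and then to recover the sharp gradient bound by a Bourgain--Brezis--Mironescu / Ponce style lim inf argument adapted to the heterogeneous kernel.

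First, by \Cref{thm:EnergySpaceIndepOfKernel} the hypothesis on $[u_{\delta_n}]_{\frak{V}^{\beta,p}[\delta_n;q;\rho,\lambda](\Omega)}$ gives a uniform bound $[u_{\delta_n}]_{\frak{W}^{\beta,p}[\delta_n;q](\Omega)} \leq C\,B$. Fix $\psi$ satisfying \eqref{Assump:Kernel} with $k_\psi\geq 1$ and $\bar\lambda$ satisfying \eqref{assump:Localization}, and set $w_n := K_{\delta_n}[\bar\lambda,q,\psi]\,u_{\delta_n}$. By \Cref{cor:RegularityOfHSOp} together with \Cref{thm:ConvEst1}, $\{w_n\}$ is bounded in $W^{1,p}(\Omega)$, so Rellich--Kondrachov extracts a subsequence along which $w_n\to u$ strongly in $L^p(\Omega)$. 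The consistency estimate \eqref{eq:KdeltaError} then yields
\[
\Vnorm{u_{\delta_n}-w_n}_{L^p(\Omega)} \leq C\,\delta_n\,q(\diam\Omega)\,[u_{\delta_n}]_{\frak{W}^{\beta,p}[\delta_n;q](\Omega)} \longrightarrow 0,
\]
so $u_{\delta_n}\to u$ strongly in $L^p(\Omega)$ along the same subsequence, establishing the precompactness claim (also in the case $p=1$).

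For $p>1$, reflexivity of $W^{1,p}(\Omega)$ produces a further subsequence along which $w_n\rightharpoonup u$ weakly in $W^{1,p}(\Omega)$; in particular $u\in W^{1,p}(\Omega)$. Weak lower semicontinuity applied to $\grad w_n$ combined with \Cref{cor:RegularityOfHSOp} only delivers the non-sharp bound $\Vnorm{\grad u}_{L^p(\Omega)}\leq C\,B$. To upgrade to the sharp constant $B$, I would fix $V\Subset\Omega$, pick a standard radial mollifier $\chi_\veps$ with $\veps\ll\dist(V,\p\Omega)$, and study the doubly regularized sequence $u_{\delta_n}^\veps := \chi_\veps*u_{\delta_n}$. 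Jensen's inequality applied to $|u_{\delta_n}^\veps(\bx)-u_{\delta_n}^\veps(\by)|^p$ followed by a translation-by-$\bs$ change of variables, combined with the Lipschitz continuity and strict positivity of $q\circ\lambda$ on a slightly enlarged compact set and the monotonicity of $\rho$ from \eqref{eq:assump:rho:q}, produces the key estimate
\[
[u_{\delta_n}^\veps]_{\frak{V}^{\beta,p}[\delta_n;q;\rho,\lambda](V)}^p \leq \bigl(1+o_\veps(1)\bigr)\,B^p.
\]
Since $u_{\delta_n}^\veps\to u^\veps$ in $C^2(\overline{V'})$ for every $V'\Subset V$ as $n\to\infty$ with $\veps$ fixed, the second statement of \Cref{thm:LocalizationOfSeminorm} identifies the left-hand-side limit as $\Vnorm{\grad u^\veps}_{L^p(V')}^p$. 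Sending $V'\uparrow V$, then $\veps\to 0$ (so that $\grad u^\veps\to \grad u$ strongly in $L^p(V)$), and finally exhausting $\Omega$ by $V\uparrow\Omega$ gives $\Vnorm{\grad u}_{L^p(\Omega)}^p\leq B^p$.

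The main obstacle is precisely the sharp-constant step just outlined. Unlike the classical translation-invariant BBM/Ponce setting, the kernel $\gamma_{\beta,p}[\delta_n;q;\rho,\lambda]$ is not translation invariant, because the interaction radius $\delta_n q(\lambda(\bx))$ moves with $\bx$; the translation change of variables therefore leaves behind a kernel whose horizon is effectively inflated to $(1+O(\veps))\delta_n$. The monotonicity hypothesis in \eqref{eq:assump:rho:q} is what makes it possible to dominate the translated kernel pointwise by a rescaled version of the original with a multiplicative factor $1+o_\veps(1)$, rather than the non-sharp universal factor coming from \Cref{thm:InvariantHorizon}. Executing this careful kernel comparison, and verifying that the resulting bound indeed yields $(1+o_\veps(1))\,B^p$ on the right, is the heart of the proof; once it is in place, the Ponce-type limit passage through \Cref{thm:LocalizationOfSeminorm} is routine.
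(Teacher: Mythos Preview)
Your compactness argument is correct and matches the paper's: transfer to $W^{1,p}(\Omega)$ via $K_{\delta_n}$, apply Rellich--Kondrachov, and close with \eqref{eq:KdeltaError}.

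For the sharp bound $\Vnorm{\grad u}_{L^p(\Omega)}\leq B$, your approach differs from the paper's and, as written, contains a gap. After Jensen and the translation $\bar\bx=\bx-\bs$, the integrand involves the kernel $\gamma_{\beta,p}[\delta_n;q;\rho,\lambda](\bar\bx+\bs,\bar\by+\bs)$, whose interaction radius is $\eta_{\delta_n}(\bar\bx+\bs)$. Using the Lipschitz bound and the monotonicity of $\rho$, you can indeed dominate this pointwise by a kernel with radius $(1+C_V\veps)\,\eta_{\delta_n}(\bar\bx)$ times a factor $(1+o_\veps(1))$. But the resulting expression is (up to the prefactor) the seminorm $[u_{\delta_n}]^p_{\frak V^{\beta,p}[(1+C_V\veps)\delta_n;q;\rho,\lambda](\Omega)}$, i.e.\ the seminorm at a \emph{different} bulk horizon. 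The hypothesis only controls the $\delta_n$-seminorm, and \Cref{thm:InvariantHorizon} does not give a comparison constant tending to $1$, so the conclusion ``$\leq (1+o_\veps(1))B^p$'' does not follow.

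The paper repairs this by working with a \emph{shrunk} kernel on the left-hand side: it evaluates $[K_\veps u_\delta]_{\frak V^{\beta,p}[\delta;q;\rho_\veps,\lambda](\Omega^{\veps;\lambda,q})}$ with $\rho_\veps(t)=\rho\bigl(\tfrac{1+\bar c\veps}{1-\bar c\veps}\,t\bigr)$. After Jensen and the change of variables, the monotonicity of $\rho$ then converts $\rho_\veps$ back to $\rho$ \emph{at the original horizon $\delta$}, yielding a right-hand side that really is $(1+o_\veps(1))\,[u_\delta]^p_{\frak V^{\beta,p}[\delta;q;\rho,\lambda](\Omega)}\leq (1+o_\veps(1))B^p$. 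Your standard-mollifier route can be salvaged by the same $\rho_\veps$ trick. A second difference is that the paper uses the boundary-localized convolution $K_\veps[\bar\lambda,q,\psi]$ and the adapted change of variables $\bszeta_\bz^\veps(\bx)=\bx+\bar\eta_\veps(\bx)\bz$ from \Cref{lma:CoordChange2} rather than a standard mollifier on $V\Subset\Omega$; this keeps all constants independent of the compact set and lets one work directly on the exhausting family $\Omega^{\veps;\lambda,q}$, but it is not essential once the $\rho_\veps$ device is in place.
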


\begin{proof}
    It suffices to show that a subsequence of $\{u_\delta\}$ is Cauchy in $\frak{W}^{\beta,p}[\delta;q](\Omega)$. Choose $\psi$ to satisfy \eqref{Assump:Kernel} for $k=k_\psi \geq 1$, and then define
    $K_{\delta} u = K_\delta[\lambda,q,\psi] u$ accordingly.
    First, we use \Cref{thm:diffuKu} and \Cref{thm:EnergySpaceIndepOfKernel} to see that
    \begin{equation*}
        \Vnorm{ u_\delta -  K_{\delta} u_\delta }_{L^{p}(\Omega)} \leq C\delta [u_{\delta}]_{\frak{W}^{\beta,p}[\delta;q](\Omega)} \leq C B\delta\,.
    \end{equation*}
    Next, by \Cref{cor:RegularityOfHSOp}
    \begin{equation*}
        [ K_{\delta} u_\delta ]_{W^{1,p}(\Omega)} \leq C [ u_\delta ]_{\frak{W}^{\beta,p}[\delta;q](\Omega)}\,.
    \end{equation*}
    Therefore the sequence $\{ K_{\delta_n} u_{\delta_n} \}_{n \in \bbN}$ is bounded in $W^{1,p}(\Omega)$, hence is precompact in the strong topology of $L^p(\Omega)$.
    So for a convergent subsequence $\{ K_{\delta_n} u_{\delta_n} \}$ (not relabeled), we have for $n$, $m \in \bbN$
    \begin{equation*}
    \begin{split}
        \Vnorm{ u_{\delta_n} - u_{\delta_m} }_{L^p(\Omega)} 
        &\leq \Vnorm{ K_{\delta_n} u_{\delta_n} - u_{\delta_n} }_{L^p(\Omega)} + \Vnorm{ K_{\delta_m} u_{\delta_m} - u_{\delta_m} }_{L^p(\Omega)} \\
            &\qquad + \Vnorm{ K_{\delta_n} u_{\delta_n} - K_{\delta_m} u_{\delta_m} }_{L^p(\Omega)} \\
        &\leq CB( \delta_m + \delta_n) + \Vnorm{ K_{\delta_n} u_{\delta_n} - K_{\delta_m} u_{\delta_m} }_{L^p(\Omega)} 
    \end{split}
    \end{equation*}
which approaches $0$ as $\min \{m,n\} \to \infty$. Thus $ \{u_{\delta_n} \}$ is also convergent.
        
    To see that any limit point $u$ belongs to $W^{1,p}(\Omega)$, we use an argument similar to the one used to prove \Cref{thm:Density}. Let $\psi$ satisfy \eqref{Assump:Kernel} and $\bar{\lambda}$ satisfy \eqref{assump:Localization}, both for $k = \infty$, and define $K_\veps u= K_\veps[\bar{\lambda},q,\psi] u$ accordingly, where $\veps \in (0,\veps_0)$, $\veps_0$ as defined in \Cref{lma:CoordChange2}, with $\bar{\eta}_\veps(\bx) = \veps \bar{\eta}(\bx) = \veps q(\bar{\lambda}(\bx))$. 
    Define the function $\bszeta_\bz^\veps(\bx) = \bx + \bar{\eta}_\veps(\bx) \bz$ as in \Cref{lma:CoordChange2}.
    Now define $\rho_\veps(|\bz|) := \rho \big( \frac{1+\bar{c} \veps}{1-\bar{c}\veps} |\bz| \big)$; by definition of the nonlocal seminorm we have for any $v \in \frak{W}^{\beta,p}[\delta;q](\Omega)$
    \begin{equation*}
    \begin{split}
        &[v]_{ \frak{V}^{\beta,p}[\delta;q;\rho_\veps,\lambda](\Omega) }^p \\
        &= \frac{1}{A_\rho} \left( \frac{1+\bar{c} \veps}{ 1- \bar{c}\veps } \right)^{d+p-\beta} \int_\Omega \int_\Omega \rho \left( \frac{1+\bar{c} \veps}{1-\bar{c}\veps} \frac{|\bx-\by|}{\eta_\delta(\bx)} \right) \frac{|v(\bx)-v(\by)|^p}{ |\bx-\by|^{\beta} \eta_\delta(\bx)^{d+p-\beta} } \, \rmd \by \, \rmd \bx\,,
    \end{split}
    \end{equation*}
    where $A_\rho := \frac{\bar{\rho}_{\beta,p}}{\overline{C}_{d,p}}$. Then by Jensen's inequality
    \begin{equation*}
    \begin{split}
        &[K_{\veps} u_\delta]_{ \frak{V}^{\beta,p}[\delta;q;\rho_\veps,\lambda](\Omega^{\veps;\lambda,q}) }^p 
        \leq
        \frac{1}{A_\rho} \left( \frac{1+\bar{c} \veps}{ 1- \bar{c}\veps } \right)^{d+p-\beta} 
        \int_{B(0,1)}  \int_{ \Omega^{\veps;\lambda,q} } \int_{\Omega^{\veps;\lambda,q} } \psi \left( |\bz| \right)  \\
        &\qquad\qquad  \rho_\veps \left( \frac{|\by-\bx|}{\eta_\delta(\bx)} \right) \frac{ \left|  u_\delta( \bszeta_{\bz}^{\veps}(\bx) ) - u_\delta( \bszeta_{\bz}^{\veps}(\by) ) \right|^p }{|\bx-\by|^{\beta} \eta_\delta(\bx)^{d+p-\beta}} \, \rmd \by \, \rmd \bx \, \rmd \bz\,.
    \end{split}
    \end{equation*}
    By the identities in \Cref{lma:CoordChange2}, we obtain that for $\delta < \veps < \veps_0$, $\bszeta_{\bz}^{\veps}(\Omega^{\veps;\lambda,q}) \subset \Omega^{(1-\bar{c}\veps)\veps;\lambda,q}$, and that 
    \begin{equation*}
    \begin{split}
        &[K_{\veps} u_\delta]_{ \frak{V}^{\beta,p}[\delta;q;\rho_\veps,\lambda](\Omega^{\veps;\lambda,q}) }^p  \\ 
        &\leq \frac{1}{A_\rho}
        \frac{ (1+\bar{c}\veps)^{2(d+p)-\beta} }{ (1-\bar{c} \veps)^{2} }
        \int_{B(0,1)} \psi \left( |\bz| \right) \int_{ \Omega^{(1-\bar{c}\veps)\veps;\lambda,q } } \int_{ \Omega^{(1-\bar{c}\veps)\veps;\lambda,q } } 
        \rho \left( \frac{ |\bszeta_{\bz}^{\veps}(\by) - \bszeta_{\bz}^{\veps}(\bx)| }{\eta_\delta(\bszeta_{\bz}^{\veps}(\bx))}  \right) 
        \\
        &\qquad \frac{ \left|  u_\delta( \bszeta_{\bz}^\veps(\bx) ) - u_\delta( \bszeta_{\bz}^\veps(\by)  ) \right|^p }{|\bszeta_{\bz}^{\veps}(\by) - \bszeta_{\bz}^{\veps}(\bx)|^{\beta} \eta_\delta(\bszeta_{\bz}^{\veps}(\bx))^{d+p-\beta}}  \det \grad \bszeta_{\bz}^{\veps}(\bx) \det \grad \bszeta_{\bz}^{\veps}(\by)  \, \rmd \by \, \rmd \bx \, \rmd \bz\,,
    \end{split}
    \end{equation*}
    where we additionally used that $\rho$ is nonincreasing.
    Therefore we apply the change of variables $\bar{\by} = \bszeta_{\bz}^\veps(\by) $, $\bar{\bx} = \bszeta_{\bz}^\veps(\bx)$, and obtain for any for any $ \delta < \veps$
    \begin{equation*}
    \begin{split}
        &[K_{\veps} u_\delta]_{ \frak{V}^{\beta,p}[\delta;q;\rho_\veps,\lambda](\Omega^{\veps;\lambda,q}) }^p  \\ 
        &\leq 
        \frac{1}{A_\rho}
        \frac{ (1+\bar{c}\veps)^{2(d+p)-\beta} }{ (1-\bar{c} \veps)^{2} } 
        \int_{ \Omega } \int_{ \Omega }
        \rho \left( \frac{ |\bar{\by} - \bar{\bx}| }{ \eta_\delta(\bar{\bx})}  \right) \frac{ \left|  u_\delta(\bar{\bx}) - u_\delta(\bar{\by}) \right|^p }{|\bar{\by} - \bar{\bx}|^{\beta} \eta_\delta(\bar{\bx})^{d+p-\beta}} \rmd \bar{\by} \, \rmd \bar{\bx}\,.
    \end{split}
    \end{equation*}
    In summary,
    \begin{equation*}
        [K_{\veps} u_\delta]_{ \frak{V}^{\beta,p}[\delta;q;\rho_\veps,\lambda](\Omega^{\veps;\lambda,q}) }^p \leq \frac{ (1+\bar{c}\veps)^{2(d+p)-\beta} }{ (1-\bar{c} \veps)^{2} } [u_\delta]_{ \frak{V}^{\beta,p}[\delta;q;\rho,\lambda](\Omega) }^p\,.
    \end{equation*}
    Now for any fixed $\veps > 0$, the sequence $\{ K_\veps u_{\delta_n} \}_n$ converges to $K_{\veps} u$ in $C^2(\overline{\Omega^{ \veps;\lambda,q}})$ as $\delta_n \to 0$, since $k_q \geq 2$ and $\Omega^{\veps;\lambda,q} \Subset \Omega$.
    Therefore we can use \Cref{thm:LocalizationOfSeminorm} when taking $\delta_n \to 0$ in the previous inequality to get
    \begin{equation*}
        \int_{\Omega^{ \veps;\lambda,q}} |\grad K_\veps u|^p \, \rmd \bx \leq 
        \frac{ (1+\bar{c}\veps)^{2(d+p)-\beta} }{ (1-\bar{c} \veps)^{2} } B^p\,.
    \end{equation*}
	This inequality holds uniformly in $\veps$, so the result follows by taking $\veps \to 0$.
\end{proof}

\begin{remark}
    In the case $\beta > d$, we can use \Cref{thm:diffuKu:Fractional} in place of \eqref{eq:KdeltaError} in the proof, and obtain that $\{u_\delta\}_\delta$ is precompact in the fractional Sobolev space $W^{(\beta-d)/p,p}(\Omega)$. This compactness allows us to obtain variational convergence of more general energy functionals, but to illustrate the ideas in this work we are content to consider only semilinear functionals.
\end{remark}

One consequence of the compactness result is the $W^{1,p}(\Omega)$-weak convergence of boundary-localized convolutions, which will be instrumental in the analysis of the nonlocal-to-local limit of the variational problems.

\begin{lemma}\label{lma:GammaConv:WeakConvOfMoll}
    Let $p \in (1,\infty)$,
    $\psi$ satisfy \eqref{Assump:Kernel} with $k_\psi \geq 1$,
    and assume \eqref{eq:assump:rho:q}.
    Let $\{\delta\} = \{\delta_n \}_{n \in \bbN}$ be a sequence converging to $0$. 
    Suppose that
    $\sup_{\delta > 0 } \Vnorm{ u_{\delta} }_{ \frak{W}^{\beta,p}[\delta;q](\Omega) } < \infty$.  Then there exists a subsequence $\{u_{\delta'} \}_{\delta'}$ and a function $u \in W^{1,p}(\Omega)$ such that $K_{\delta'} u_{\delta'} \rightharpoonup u$ weakly in $W^{1,p}(\Omega)$.
    If additionally there exists $u \in W^{1,p}(\Omega)$ such that the entire sequence $u_\delta \to u$ strongly in $L^p(\Omega)$, then the whole sequence $K_\delta u_\delta \rightharpoonup u$ weakly in $W^{1,p}(\Omega)$.
\end{lemma}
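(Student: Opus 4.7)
The plan is to combine three facts already assembled in the paper: the $W^{1,p}$-boundedness of the boundary-localized convolution from Corollary \ref{cor:RegularityOfHSOp} (estimate \eqref{eq:Intro:ConvEst:Deriv}), which turns the $\frak{W}^{\beta,p}$-bound on $\{u_\delta\}$ into a $W^{1,p}$-bound on $\{K_\delta u_\delta\}$; the $L^p$-approximation estimate \eqref{eq:KdeltaError}, which forces $K_\delta u_\delta - u_\delta \to 0$ in $L^p(\Omega)$; and the asymptotic compactness Theorem \ref{thm:Compactness}, which supplies an $L^p$-limit point of $\{u_\delta\}$ lying in $W^{1,p}(\Omega)$. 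The heart of the argument is to identify the weak $W^{1,p}$-limit of $\{K_\delta u_\delta\}$ with that $L^p$-limit via uniqueness.

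For the first assertion, I would begin by extracting a weakly convergent subsequence of $\{K_\delta u_\delta\}$. By \eqref{eq:Intro:ConvEst:Deriv},
\[
\sup_{\delta} \Vnorm{K_\delta u_\delta}_{W^{1,p}(\Omega)} \leq C \sup_{\delta} \Vnorm{u_\delta}_{\frak{W}^{\beta,p}[\delta;q](\Omega)} < \infty.
\]
Since $p>1$, $W^{1,p}(\Omega)$ is reflexive, so along a subsequence (not relabeled) $K_{\delta'} u_{\delta'} \rightharpoonup v$ in $W^{1,p}(\Omega)$ for some $v \in W^{1,p}(\Omega)$, and the Rellich--Kondrachov embedding gives $K_{\delta'} u_{\delta'} \to v$ strongly in $L^p(\Omega)$.

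Next I apply Theorem \ref{thm:Compactness} to $\{u_{\delta'}\}$, which is bounded in $L^p(\Omega)$ and whose $\frak{W}^{\beta,p}[\delta';q](\Omega)$-seminorm is uniformly bounded; passing to a further subsequence (still denoted $\delta'$), $u_{\delta'} \to u$ strongly in $L^p(\Omega)$ for some $u \in W^{1,p}(\Omega)$. Combined with \eqref{eq:KdeltaError} and the uniform $\frak{W}^{\beta,p}$-bound,
\[
\Vnorm{u_{\delta'} - K_{\delta'} u_{\delta'}}_{L^p(\Omega)} \leq C \delta' q(\diam(\Omega))\, [u_{\delta'}]_{\frak{W}^{\beta,p}[\delta';q](\Omega)} \longrightarrow 0,
\]
so $K_{\delta'} u_{\delta'} \to u$ strongly in $L^p(\Omega)$. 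Uniqueness of $L^p$-limits forces $v = u$, and the first assertion follows.

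For the second statement, assume $u_\delta \to u$ strongly in $L^p(\Omega)$ along the whole sequence, with $u \in W^{1,p}(\Omega)$ guaranteed by Theorem \ref{thm:Compactness}. I would invoke the Urysohn subsequence principle: every subsequence of $\{K_\delta u_\delta\}$ remains uniformly bounded in $W^{1,p}(\Omega)$, and by repeating the argument above admits a further subsequence converging weakly in $W^{1,p}(\Omega)$ whose limit must agree with the strong $L^p$-limit of $K_\delta u_\delta$, which equals $u$ by \eqref{eq:KdeltaError}. Since every subsequence has a further subsequence converging weakly in $W^{1,p}(\Omega)$ to the common value $u$, the entire sequence converges weakly in $W^{1,p}(\Omega)$ to $u$. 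I do not anticipate any substantive obstacle; the lemma is essentially a packaging of the convolution and compactness estimates already proven, and the only delicate step is noting that the $L^p$-limit from Theorem \ref{thm:Compactness} is regular enough to be the weak $W^{1,p}$-limit.
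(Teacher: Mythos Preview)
Your proof is correct and follows essentially the same approach as the paper: both combine the uniform $W^{1,p}$-bound from \eqref{eq:Intro:ConvEst:Deriv}, the $L^p$-approximation estimate \eqref{eq:KdeltaError}, and the asymptotic compactness of Theorem~\ref{thm:Compactness}, then identify the weak limit via uniqueness and the Urysohn subsequence principle. The only cosmetic difference is that the paper first subtracts $K_\delta u$ (using Theorem~\ref{thm:ConvergenceOfConv:W1p}) and tests against $L^{p'}$ functions, whereas you identify the limit directly via Rellich--Kondrachov; your route is slightly more streamlined.
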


\begin{proof}
    We select the subsequence $\{ u_{\delta'} \} \subset \{ u_\delta \}$ to be one with a strong-$L^p(\Omega)$ limit $u$ as in \Cref{thm:Compactness} (if $u_\delta \to u$ in $L^p(\Omega)$ then choose $\{ u_{\delta'} \} \subset \{ u_\delta \}$). By \Cref{thm:ConvergenceOfConv:W1p} it suffices to show that $K_{\delta'} (u_{\delta'} - u) \rightharpoonup 0$ weakly in $W^{1,p}(\Omega)$ as $\delta \to 0$.
    By \Cref{cor:RegularityOfHSOp}, \eqref{eq:ConvEst:Lp}, \eqref{eq:ConvEst:W1p}, and \Cref{thm:Compactness} we have
    \begin{equation*}
    \begin{split}
        \Vnorm{ K_{\delta'} (u_{\delta'} - u) }_{W^{1,p}(\Omega)} 
        &\leq \Vnorm{ K_{\delta'} u_{\delta'} }_{W^{1,p}(\Omega)} + \Vnorm{ K_{\delta'} u }_{W^{1,p}(\Omega)} \\
        &\leq C \Vnorm{ u_{\delta'} }_{ \frak{W}^{\beta,p}[\delta';q](\Omega) } + \Vnorm{ u }_{W^{1,p}(\Omega)} \leq C\,,
    \end{split}
    \end{equation*}
    hence there exists at least one convergent subsubsequence.
    Now let $K_{\delta''} (u_{\delta''} - u)$ be any subsubsequence converging weakly in $W^{1,p}(\Omega)$ to a function $v$. 
    However, $\Vint{ K_{\delta''} (u_{\delta''} - u) , \varphi } \to 0$ for any $\varphi \in L^{p'}(\Omega)$ since $u_{\delta'} \to u$ strongly in $L^p(\Omega)$, and so it follows that $v = 0$ since weak limits are unique.
\end{proof}

\section{Existence of minimizers to the minimization problems}\label{sec:varprob}

In this section we analyze the variational problems.
We now describe the assumptions on $\cG_{\beta>d}$ using the notation from the previous section. For a fixed $m \in (1,p^*_\beta)$, where $p^*_\beta$ is as in \eqref{eq:EmbeddingExponent}, we assume that $\cG_{\beta>d} : L^m(\Omega) \to \bbR$ is $L^m(\Omega)$-strongly continuous (but possibly nonconvex) and satisfies, for $c > 0$, $C >0$, $\theta \in (0,1)$ and $\Theta >0$
\begin{equation}\label{eq:LowerOrderTerm2}
\begin{gathered}
    - c \chi_{d,\beta}  (1+\Vnorm{u}_{L^{m}(\Omega)}^{\theta p}) \leq 
   \cG_{\beta>d}
    (u) \leq C \chi_{d,\beta}  ( 1 + \Vnorm{u}_{L^{m}(\Omega)}^{\Theta p})\,,  
\end{gathered}
\end{equation}
where the constant $\chi_{d,\beta}$ is defined as
    $
    \chi_{d,\beta} 
    = \begin{cases}
    0\,, &\beta \leq d\,, \\
    1\,, &\beta > d\,.
    \end{cases}
    $

To further illustrate the differences between the functionals $\cG$, $\wt{\cG}_\delta$, and $\cG_{\beta>p}$, we present the following example:
let 
$\cG_{\ell}(u) = \int_{\Omega} \ell(u(\bx)) \, \rmd \bx$ where 
$\ell : \bbR \to \bbR$ be a continuous (but not necessarily convex) function that satisfies
\begin{equation*}
    c(1 - |u|^m) \leq \ell(u) \leq C (1+|u|^m)\,, \quad \text{ for some } m \in [1,\infty)\,.
\end{equation*}
Then we note the following 
properties and
weak continuity on the spaces $\frak{W}^{\beta,p}[\delta;q](\Omega)$:
\begin{enumerate}
    \item[i)] If $m \in [1,\frac{dp}{d-p})$ for $p < d$, or if $m \in [1,\infty)$ for $p \geq d$,
    then \eqref{eq:LowerOrderTerm} is satisfied, and the functional $ \cG(K_\delta u) = \cG_{\ell}(K_\delta u)$ is well-defined and weakly continuous.
    \item[ii)] If $\beta \leq d$, \eqref{eq:LowerOrderTerm3} is satisfied if $m \in [1,p]$, and $\wt{\cG}(u) = \cG_\ell(u)$ is weakly continuous if $\ell$ is convex.
    \item[iii)] If $\beta > d$, both \eqref{eq:LowerOrderTerm3} and \eqref{eq:LowerOrderTerm2} are satisfied if $m \in [1,p^*_\beta)$,  so either $\wt{\cG}(u)=\cG_\ell(u)$ or $\cG_{\beta>d}(u) = \cG_\ell(u)$ is weakly continuous.
\end{enumerate}

\begin{proof}[Proof of \Cref{thm:WellPosedness:Dirichlet}]
    The proof follows direct methods. 
    First, by \eqref{eq:ConvEst:Deriv:Cor},
    \eqref{eq:LowerOrderTerm}, and \eqref{eq:LowerOrderTerm3}
    \begin{equation*}
        \cG(K_\delta u) + \wt{\cG}(u) \geq - c(1+\Vnorm{u}_{\frak{W}^{\beta,p}[\delta;q](\Omega)}^{\theta p})\,,
    \end{equation*}
    and then by \eqref{eq:LowerOrderTerm2} and the continuous, compact embedding $\frak{W}^{\beta,p}[\delta;q](\Omega) \hookrightarrow L^m(\Omega)$ of \Cref{thm:Compactness:FixedSpace}
    \begin{equation*}
        \cG_{\beta>d}(u) \geq - c(1+\Vnorm{u}_{\frak{W}^{\beta,p}[\delta;q](\Omega)}^{\theta p})\,.
    \end{equation*}
    Therefore,
    \begin{equation}\label{eq:VarProf:Dirichlet:Pf1}
    \begin{split}
        \cF_\delta(v) 
        &\geq \cE_\delta(v) - \cG(K_\delta v) - \wt{\cG}(v) - \cG_{\beta>d}(v) \\
        &\geq \cE_\delta(v) - C_1 \Vnorm{ v }_{\frak{W}^{\beta,p}[\delta;q](\Omega)}^{\theta p} - C_2\,.
    \end{split}
    \end{equation}
    Next, let $G$ be a $W^{1,p}(\Omega)$-continuous extension of $g$ to all of $\Omega$,
    i.e.\ 
    $$\Vnorm{G}_{W^{1,p}(\Omega)} \leq C \Vnorm{Tg}_{W^{1-1/p,p}(\p \Omega)} \leq C \Vnorm{g}_{W^{1-1/p,p}(\p \Omega_D)}$$
    Then 
    by the Poincar\'e inequality in \Cref{thm:PoincareDirichlet} applied to $u - G$ (valid here thanks to the equivalent characterization of \Cref{thm:TraceZero}),
    estimates similar to those in the proof of \Cref{thm:EnergySpaceIndepOfKernel} give for any $v \in \frak{W}^{\beta,p}_{g, \p \Omega_D}[\delta;q](\Omega)$
    \begin{equation*}
        \Vnorm{v-G}_{L^p(\Omega)}^p \leq C [v-G]_{ \frak{W}^{\beta,p}[\delta;q](\Omega) }^p \leq C \cE_\delta(v - G)\,.
    \end{equation*}
    Therefore by \Cref{cor:Embedding} applied to $G$
    \begin{equation*}
        \Vnorm{ v }_{\frak{W}^{\beta,p}[\delta;q](\Omega)}^p 
        \leq C ( \Vnorm{ v - G }_{\frak{W}^{\beta,p}[\delta;q](\Omega)}^p + \Vnorm{ G }_{\frak{W}^{\beta,p}[\delta;q](\Omega)}^p )
        \leq C ( \cE_\delta(v) + \Vnorm{ g }_{W^{1-1/p,p}(\p \Omega)}^p)
    \end{equation*}
    and so combining this with \eqref{eq:VarProf:Dirichlet:Pf1} gives
    \begin{equation}\label{eq:CoercivityEstimate:Dirichlet}
        \begin{split}
        &\Vnorm{ v }_{\frak{W}^{\beta,p}[\delta;q](\Omega)}^p
        \leq \cF_\delta(v)
        + C ( 1 + \Vnorm{ v }_{\frak{W}^{\beta,p}[\delta;q](\Omega)}^{\theta p} )\,,
        \end{split}
    \end{equation}
    for a constant $C$ independent of $v$. This estimate guarantees that $\min \cF_\delta > - \infty$, and moreover guarantees the uniform $\frak{W}^{\beta,p}[\delta;q](\Omega)$-bound of a minimizing sequence $\{u_n\}_n$. Hence, $\{u_n\}_n$ converges weakly in $\frak{W}^{\beta,p}[\delta;q](\Omega)$ to a function $u$, and by weak continuity of traces $u = g$ in the trace sense on $\p \Omega_D$. 

    By \eqref{eq:ConvEst:Deriv:Cor} $K_\delta u_n \rightharpoonup K_\delta u$ weakly in $W^{1,p}(\Omega)$, so 
    \begin{equation*}
        \cG(K_\delta u) \leq \liminf_{n \to \infty} \cG(K_\delta u_n)\,.
    \end{equation*}
    Next, $\wt{\cG}$ is $\frak{W}^{\beta,p}[\delta;q](\Omega)$-weakly lower semicontinuous by assumption. Finally, $\cG_{\beta>d} \equiv 0$ for $\beta \leq d$, and when $\beta > d$, we have that $u_n \to u$ strongly in $L^m(\Omega)$ by \Cref{thm:Compactness:FixedSpace}, so since $\cG_{\beta>d}$ is strongly continuous in $L^m(\Omega)$ by assumption,
    \begin{equation*}
        \cG_{\beta>d}(u) = \lim\limits_{n \to \infty} \cG_{\beta>d}(u_n)\,.
    \end{equation*}
    Therefore $\cF_\delta$ is $\frak{W}^{\beta,p}[\delta;q](\Omega)$-weakly lower semicontinuous, and so $u$ is a minimizer of $\cF_\delta$. 
    \end{proof}

\begin{proof}[Proof of \Cref{thm:WellPosedness:Dirichlet:Hom}]
    The proof is exactly the same, noting that $G = 0$.
\end{proof}

\begin{proof}[Proof of \Cref{thm:WellPosedness:Neumann}]
    The proof follows direct methods.
    The estimate \eqref{eq:VarProf:Dirichlet:Pf1}  holds using the same argument, and by the Poincar\'e inequality \Cref{thm:PoincareNeumann} and estimates similar to those in the proof of \Cref{thm:EnergySpaceIndepOfKernel}, we have for any $v \in \mathring{\frak{W}}^{\beta,p}[\delta;q](\Omega)$
    \begin{equation*}
        \Vnorm{v}_{L^p(\Omega)}^p \leq C [v]_{ \frak{W}^{\beta,p}[\delta;q](\Omega) }^p \leq C \cE_\delta(v)\,.
    \end{equation*}
    
    So combining this directly with \eqref{eq:VarProf:Dirichlet:Pf1} gives
    \begin{equation}\label{eq:CoercivityEstimate:Neumann}
        \Vnorm{v}_{\frak{W}^{\beta,p}[\delta;q](\Omega)}^p \leq \cF_\delta(v) +  C ( 1 + \Vnorm{ v }_{\frak{W}^{\beta,p}[\delta;q](\Omega)}^{\theta p} )\,,
    \end{equation}
    for a constant $C$ independent of $v$.
    The rest of the proof follows similarly to that of \Cref{thm:WellPosedness:Dirichlet}.
\end{proof}

\begin{proof}[Proof of \Cref{thm:WellPosedness:Robin}]
    The proof again follows direct methods.
    The same argument used to prove \eqref{eq:VarProf:Dirichlet:Pf1} gives
    \begin{equation}\label{eq:wellposedness:Robin:pf1}
    \begin{split}
        \cF_{\delta}^R(v) \geq & \cE_\delta(v) + \int_{\p \Omega} b|Tv|^p \, \rmd \sigma - C_1 \Vnorm{ v }_{\frak{W}^{\beta,p}[\delta;q](\Omega)}^{\theta p} - C_2\,.
    \end{split}
    \end{equation}
    Next, by the Poincar\'e inequality \Cref{thm:PoincareRobin}, the lower bound on $b$, and estimates similar to those in the proof of \Cref{thm:EnergySpaceIndepOfKernel}, we have for any $v \in \frak{W}^{\beta,p}[\delta;q](\Omega)$
    \begin{equation*}
        \Vnorm{v}_{L^p(\Omega)}^p \leq C_R \left( [v]_{ 
\frak{W}^{\beta,p}[\delta;q](\Omega) }^p + b_0 \int_{\p \Omega_R} |Tv|^p \, \rmd \sigma \right) \leq C \left( \cE_\delta(v) + \int_{\p \Omega} b|Tv|^p \, \rmd \sigma \right)\,.
    \end{equation*}
    Combining this with \eqref{eq:wellposedness:Robin:pf1}, we get that for a constant $C$ independent of $v$,
    \begin{equation}\label{eq:CoercivityEstimate:Robin}
        \Vnorm{v}_{\frak{W}^{\beta,p}[\delta;q](\Omega)}^p \leq \cF_\delta^R(v) +  C ( 1 + \Vnorm{ v }_{\frak{W}^{\beta,p}[\delta;q](\Omega)}^{\theta p} )\,, 
    \end{equation}
     The rest of the proof is similar to that of the previous arguments, noting that $\int_{\p \Omega} b|Tu|^p \, \rmd \sigma$ is $\frak{W}^{\beta,p}[\delta;q](\Omega)$-weakly lower semicontinuous.
\end{proof}

\section{Local limit}\label{sec:loclim}
The following lemma and its corollary will be central in calculating the local limit as the bulk horizon parameter $\delta$ approaches $0$.

\begin{theorem}\label{thm:LocalizationOfEnergy}
    Let $\cE_\delta$ be as in \eqref{eq:Intro:Varprob:Energy}
    and $\cE_0(u)$ be as in \eqref{eq:LocalizedEnergyDefn}, with all the associated assumptions of their definitions.
    Then $\lim\limits_{\delta \to 0} \cE_\delta(u) = \cE_0(u)$ for all $u \in W^{1,p}(\Omega)$,  and $\lim\limits_{\delta \to 0}  \cE_\delta(u) = + \infty$ if $u \in L^p(\Omega) \setminus W^{1,p}(\Omega)$.
    Moreover, if a sequence $\{u_\delta\}_\delta$ converges to $u$ in $C^2(\overline{V})$ for any $V \Subset \Omega$ as $\delta \to 0$, then
    \begin{equation*}
    \begin{split}
        \lim\limits_{\delta \to 0} &\int_{V} \int_{V} \rho \left( \frac{|\bx-\by|}{\eta_\delta(\bx)} \right) \frac{ \Phi( \frac{ |u_\delta(\bx)-u_\delta(\by)| }{|\bx-\by|} ) }{ |\bx-\by|^{\beta-p} \eta_\delta(\bx)^{d+p-\beta} } 
        \, \rmd \by \, \rmd \bx \\
        &= \bar{\rho}_{p,\beta} \int_{V} \fint_{\bbS^{d-1}} \Phi( |\grad u(\bx) \cdot \bsomega|) \, \rmd \sigma(\bsomega) \, \rmd \bx\,.
    \end{split}
    \end{equation*}
\end{theorem}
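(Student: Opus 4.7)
The plan is to adapt the proof of \Cref{thm:LocalizationOfSeminorm}, which treats the prototype $\Phi(t)=|t|^p$. The starting point is the substitution $\by = \bx + \eta_\delta(\bx)\bz$ (holding $\bx$ fixed), valid because $|\bz| < 1$ together with \eqref{eq:h0property} guarantees $\bx+\eta_\delta(\bx)\bz \in \Omega$. Since the Jacobian of $\bz \mapsto \bx+\eta_\delta(\bx)\bz$ is simply $\eta_\delta(\bx)^d$, the energy rewrites cleanly as
\begin{equation*}
\cE_\delta(u) = \int_\Omega \int_{B(0,1)} \rho(|\bz|)\,|\bz|^{p-\beta}\, \Phi\!\left( \frac{|u(\bx+\eta_\delta(\bx)\bz) - u(\bx)|}{\eta_\delta(\bx)|\bz|} \right) \,\rmd\bz\, \rmd\bx.
\end{equation*}
For smooth $u \in C^1(\overline{\Omega}) \cap W^{1,p}(\Omega)$, the difference quotient converges pointwise to $\grad u(\bx)\cdot\bz/|\bz|$ as $\delta \to 0$. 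Continuity of $\Phi$ (automatic from convexity), the $p$-growth bound $\Phi(t) \leq C(|t|^p+1)$, and the $L^\infty$ control of $\grad u$ supply a dominating function $C(|\grad u|^\infty_{L^\infty}+1)^p$ independent of $\delta$; dominated convergence then gives the pointwise limit. Passing to polar coordinates $\bz = r\bsomega$ and using $\bar{\rho}_{p,\beta} = \sigma(\bbS^{d-1})\int_0^1 \rho(r)r^{d+p-\beta-1}\,\rmd r$ identifies the result as $\cE_0(u)$.

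For general $u \in W^{1,p}(\Omega)$, approximate by $\{u_n\} \subset C^\infty(\overline{\Omega})$ with $u_n \to u$ in $W^{1,p}(\Omega)$ and split
\begin{equation*}
\cE_\delta(u) - \cE_0(u) = \bigl[\cE_\delta(u) - \cE_\delta(u_n)\bigr] + \bigl[\cE_\delta(u_n) - \cE_0(u_n)\bigr] + \bigl[\cE_0(u_n) - \cE_0(u)\bigr].
\end{equation*}
The middle term vanishes as $\delta \to 0$ by the smooth case, and the third vanishes as $n \to \infty$ by the $p$-growth of $\Phi$ and $W^{1,p}$-convergence. For the first term, convexity together with $p$-growth yields the inequality $|\Phi(a) - \Phi(b)| \leq C(|a|^{p-1}+|b|^{p-1}+1)|a-b|$. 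Applying this pointwise and then H\"older's inequality with exponents $p/(p-1)$ and $p$ reduces the bound to $[u-u_n]_{\frak{V}^{\beta,p}[\delta;q;\rho,\lambda](\Omega)}$ times $p-1$ powers of $[u]_{\frak{V}^{\beta,p}[\delta;q;\rho,\lambda](\Omega)}$ and $[u_n]_{\frak{V}^{\beta,p}[\delta;q;\rho,\lambda](\Omega)}$ (plus a lower-order term). By \Cref{thm:EnergySpaceIndepOfKernel} and \Cref{cor:Embedding}, each of these nonlocal seminorms is bounded by the $W^{1,p}$-norm uniformly in $\delta < \underline{\delta}_0$. Hence the first term becomes arbitrarily small as $n \to \infty$ uniformly in $\delta$, closing the argument.

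For $u \in L^p(\Omega) \setminus W^{1,p}(\Omega)$, the lower bound $\Phi(t) \geq c(|t|^p - 1)$ gives
\begin{equation*}
\cE_\delta(u) \geq c\,[u]^p_{\frak{V}^{\beta,p}[\delta;q;\rho,\lambda](\Omega)} - C|\Omega|,
\end{equation*}
so \Cref{thm:LocalizationOfSeminorm} forces $\cE_\delta(u)\to+\infty$. The final $C^2$-convergence assertion on $V\Subset\Omega$ follows by the same change-of-variables argument applied to $u_\delta$ in place of $u$: on $V$, the uniform $C^2$ bounds imply uniform convergence of the difference quotients $\bigl(u_\delta(\bx+\eta_\delta(\bx)\bz)-u_\delta(\bx)\bigr)/(\eta_\delta(\bx)|\bz|)$ to $\grad u(\bx)\cdot\bz/|\bz|$, and dominated convergence applies directly without any density approximation. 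The main obstacle is the density step in the second paragraph: since $\Phi$ is not globally Lipschitz, one must exploit convexity to derive the $p-1$ growth of its increments, and the uniform-in-$\delta$ comparability between $[\cdot]_{\frak{V}^{\beta,p}[\delta;q;\rho,\lambda](\Omega)}$ and $\Vnorm{\cdot}_{W^{1,p}(\Omega)}$ is what allows the error to be controlled by the $W^{1,p}$-approximation alone rather than a $\delta$-dependent norm.
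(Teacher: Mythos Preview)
Your proposal is correct and follows the same route as the paper, which simply defers to \cite[Proposition 4.1, Remarks 4.1 and 4.2]{ponce2004new}: the change of variables $\by=\bx+\eta_\delta(\bx)\bz$, dominated convergence for smooth $u$, density for general $u\in W^{1,p}$, and the lower $p$-growth bound combined with \Cref{thm:LocalizationOfSeminorm} for $u\notin W^{1,p}$. Your write-up is in fact more explicit than the paper's, correctly identifying the key technical step (the increment bound $|\Phi(a)-\Phi(b)|\leq C(|a|^{p-1}+|b|^{p-1}+1)|a-b|$ from convexity plus $p$-growth) and the uniform-in-$\delta$ control of the $\frak{V}$-seminorms via \Cref{thm:EnergySpaceIndepOfKernel} and \Cref{cor:Embedding}.
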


\begin{proof} 
    The proof follows exactly the same steps as \cite[Proposition 4.1, Remarks 4.1 and 4.2]{ponce2004new}, just as in the proof of \Cref{thm:LocalizationOfSeminorm}.
\end{proof}

For example, if $\Phi(t) = \frac{t^p}{p}$, and if
$\bar{\rho}_{p,\beta} = \overline{C}_{d,p}$,
then $\cE_0(u) = \frac{1}{p} \int_\Omega |\grad u(\bx)|^p \, \rmd \bx$.

\subsection{Dirichlet Constraint}

We extend the functional $\cF_\delta$, defined for this problem on $\frak{W}^{\beta,p}_{g, \p \Omega_D}[\delta;q](\Omega)$, to a functional $\cF_\delta^D$ defined on all of $L^p(\Omega)$ by setting
\begin{equation}\label{eq:Fxnal:Ext:Dirichlet}
    \overline{\cF}_\delta^D(u) :=
    \begin{cases}
        \cF_\delta(u)\,, & \text{ for } u \in \frak{W}^{\beta,p}_{g, \p \Omega_D}[\delta;q](\Omega)\,, \\
        + \infty\,, & \text{ for } u \in L^p(\Omega) \setminus \frak{W}^{\beta,p}_{g, \p \Omega_D}[\delta;q](\Omega)\,.
    \end{cases}
\end{equation}

\begin{proposition}\label{prop:GammaLimit:Dirichlet}
    With all the assumptions of \Cref{thm:LocLimit:Dirichlet}, define
    \begin{equation}
        \overline{\cF}_0^D(u) :=
        \begin{cases}
        \cF_0(u)\,, & \text{ for } u \in W^{1,p}_{g, \p \Omega_D}(\Omega)\,, \\
        + \infty\,, & \text{ for } u \in L^p(\Omega) \setminus W^{1,p}_{g, \p \Omega_D}(\Omega)\,.
    \end{cases}
    \end{equation}
    Then we have
    \begin{equation}
        \overline{\cF}_0^D(u) = \Gammalim_{\delta \to 0} \overline{\cF}_\delta^D(u)\,,
    \end{equation}
    where the $\Gamma$-limit is computed with respect to the topology of strong convergence on $L^p(\Omega)$.
\end{proposition}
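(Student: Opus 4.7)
The plan is to verify the two conditions defining $\Gamma$-convergence with respect to strong $L^p(\Omega)$ convergence: the recovery-sequence (limsup) condition, and the liminf inequality.

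For the recovery sequence, if $u \notin W^{1,p}_{g,\p \Omega_D}(\Omega)$ then $\overline{\cF}_0^D(u) = +\infty$ and any convergent sequence suffices, so suppose $u \in W^{1,p}_{g,\p \Omega_D}(\Omega)$ and take $u_\delta \equiv u$. By \Cref{cor:Embedding}, $u \in \frak{W}^{\beta,p}[\delta;q](\Omega)$ for all $\delta < \underline{\delta}_0$, and since the trace of $u$ equals $g$ on $\p \Omega_D$, one has $u \in \frak{W}^{\beta,p}_{g,\p \Omega_D}[\delta;q](\Omega)$. The convergence $\cF_\delta(u) \to \cF_0(u)$ follows term by term: $\cE_\delta(u) \to \cE_0(u)$ by \Cref{thm:LocalizationOfEnergy}; $\cG(K_\delta u) \to \cG(u)$ by combining $K_\delta u \to u$ in $W^{1,p}(\Omega)$ (\Cref{thm:ConvergenceOfConv:W1p}) with the weak continuity of $\cG$ from \eqref{eq:LocalLimit:LOTAssump}; while $\wt{\cG}_\delta(u) = \wt{\cG}(u)$ and $\cG_{\beta>d}(u)$ are independent of $\delta$ by \eqref{eq:LocalLimit:LOTAssump}.

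For the liminf, let $u_\delta \to u$ strongly in $L^p(\Omega)$. I may assume $\liminf_{\delta \to 0} \overline{\cF}_\delta^D(u_\delta) < +\infty$ (otherwise trivial) and pass to a subsequence attaining this liminf, along which each $u_\delta \in \frak{W}^{\beta,p}_{g,\p \Omega_D}[\delta;q](\Omega)$. The coercivity estimate \eqref{eq:CoercivityEstimate:Dirichlet} together with the growth assumption \eqref{eq:assump:Phi} yields a uniform bound on $\Vnorm{u_\delta}_{\frak{W}^{\beta,p}[\delta;q](\Omega)}$. By \Cref{thm:Compactness}, $u \in W^{1,p}(\Omega)$. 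Combining \Cref{lma:GammaConv:WeakConvOfMoll} (so $K_\delta u_\delta \rightharpoonup u$ weakly in $W^{1,p}(\Omega)$) with \Cref{thm:Trace:NonlocalSpace} (so $T K_\delta u_\delta = T u_\delta = g$ on $\p \Omega_D$) and the weak continuity of the trace forces $Tu = g$ on $\p \Omega_D$, hence $u \in W^{1,p}_{g,\p \Omega_D}(\Omega)$. The non-principal terms pass to the limit as follows: $\cG(K_\delta u_\delta) \to \cG(u)$ by the weak $W^{1,p}$-continuity of $\cG$; $\wt{\cG}(u_\delta) \to \wt{\cG}(u)$ because $u_\delta \to u$ strongly in $L^{\tilde{m}}(\Omega)$ (since $\tilde{m} \leq p$) and $\wt{\cG}$ is $L^{\tilde{m}}$-weakly continuous by \eqref{eq:LocalLimit:LOTAssump}; and when $\beta > d$, the uniform nonlocal bound combined with \Cref{thm:Embedding:Fractional} yields a uniform $W^{(\beta-d)/p,p}$-bound whose compact embedding into $L^m(\Omega)$ for $m \in [1, p^*_\beta)$ delivers $\cG_{\beta>d}(u_\delta) \to \cG_{\beta>d}(u)$.

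The main obstacle is the principal liminf bound $\cE_0(u) \leq \liminf_{\delta \to 0} \cE_\delta(u_\delta)$, which I plan to establish by adapting the mollification argument in the proof of \Cref{thm:Compactness}. For fixed $\veps \in (0,\veps_0)$, applying the convexity of $\Phi$ and Jensen's inequality to $K_\veps u_\delta$, then changing variables via the map $\bszeta_\bz^\veps$ from \Cref{lma:CoordChange2}, exploiting monotonicity of $\rho$ in \eqref{eq:assump:rho:q}, and invoking \Cref{thm:InvariantHorizon} should produce an estimate of the form
\[
\int_{\Omega^{\veps;\lambda,q}} \int_{\Omega^{\veps;\lambda,q}} \rho_\veps\!\left( \tfrac{|\bx-\by|}{\eta_\delta(\bx)} \right) \tfrac{\Phi\!\left( \frac{|K_\veps u_\delta(\bx) - K_\veps u_\delta(\by)|}{|\bx-\by|} \right)}{|\bx-\by|^{\beta-p}\, \eta_\delta(\bx)^{d+p-\beta}} \, \rmd \by \, \rmd \bx \leq C(\veps)\, \cE_\delta(u_\delta),
\]
where $\rho_\veps$ is the rescaling $\rho_\veps(r) = \rho(\tfrac{1+\bar c \veps}{1-\bar c \veps} r)$ and $C(\veps) \to 1$ as $\veps \to 0$. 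For fixed $\veps$, $K_\veps u_\delta \to K_\veps u$ in $C^2(\overline{V})$ for every $V \Subset \Omega^{\veps;\lambda,q}$ as $\delta \to 0$ (since $k_q \geq 2$), so the pointwise convergence assertion in \Cref{thm:LocalizationOfEnergy} passes the limit in $\delta$ inside the left-hand side, giving
\[
\bar{\rho}_{p,\beta} \int_{\Omega^{\veps;\lambda,q}} \fint_{\bbS^{d-1}} \Phi(|\grad K_\veps u(\bx) \cdot \bsomega|) \, \rmd \sigma(\bsomega) \, \rmd \bx \leq C(\veps)\, \liminf_{\delta \to 0} \cE_\delta(u_\delta).
\]
Finally, sending $\veps \to 0$ via Fatou's lemma, using $K_\veps u \to u$ in $W^{1,p}(\Omega)$ (\Cref{thm:ConvergenceOfConv:W1p}) and the exhaustion $\Omega^{\veps;\lambda,q} \nearrow \Omega$, recovers $\cE_0(u) \leq \liminf \cE_\delta(u_\delta)$ and closes the argument.
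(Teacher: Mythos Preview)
Your proposal is correct and follows essentially the same approach as the paper: constant recovery sequence via \Cref{thm:LocalizationOfEnergy} and \Cref{thm:ConvergenceOfConv:W1p}, liminf via coercivity \eqref{eq:CoercivityEstimate:Dirichlet}, compactness \Cref{thm:Compactness}, the trace identification through \Cref{lma:GammaConv:WeakConvOfMoll}, and the mollification argument from \Cref{thm:Compactness} (with \Cref{thm:LocalizationOfEnergy} replacing \Cref{thm:LocalizationOfSeminorm}) for the principal term. You in fact spell out the principal liminf bound in more detail than the paper, which simply asserts that the argument of \Cref{thm:Compactness} adapts; your handling of $\cG_{\beta>d}$ via the uniform $W^{(\beta-d)/p,p}$-bound from \Cref{thm:Embedding:Fractional} is also more explicit than the paper's terse reference to continuity.
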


\begin{proof}
    We proceed in two steps. First, we prove that
    \begin{equation}\label{eq:GammaLiminf:Dirichlet}
        \overline{\cF}_0^D(u) \leq \liminf_{\delta \to 0} \overline{\cF}_\delta^D(u_\delta)\,, 
    \end{equation}
    for any sequence $\{ u_\delta \}_\delta \subset L^p(\Omega)$ that converges strongly in $L^p(\Omega)$ to $u$.
    If the right-hand side is $\infty$ then there is nothing to show, so assume that $\liminf_{\delta \to 0} \overline{\cF}_\delta^D(u_\delta) < \infty$.
    If this is the case, then it follows from the estimate \eqref{eq:CoercivityEstimate:Dirichlet}
    (note that $C$ is independent of $\delta$ if $\wt{\cG}_\delta = \wt{\cG}$ satisfies \eqref{eq:LocalLimit:LOTAssump}) and from \Cref{thm:Compactness} that 
    $u \in W^{1,p}(\Omega)$. Further, by the identity $g = T u_\delta = T K_\delta u_\delta$ on $\p \Omega_D$ for all $\delta > 0$ and from the weak $W^{1,p}$-continuity of traces, an application of \Cref{lma:GammaConv:WeakConvOfMoll} gives that $T u = g$ on $\p \Omega_D$. Therefore $\overline{\cF}_0^D(u) < \infty$, and we just need to show that
    \begin{equation}\label{eq:GammaLiminf:Finite:Dirichlet}
        \cF_0^D(u) \leq \liminf_{\delta \to 0} \cF_\delta^D(u_\delta)\,.
    \end{equation}
    To this end, an argument similar to the one used to prove \Cref{thm:Compactness} gives
    \begin{equation}\label{eq:GammaLiminf:Finite:Energy}
        \cE_0(u) \leq \liminf_{\delta \to 0} \cE_\delta(u_\delta)\,,
    \end{equation}
    but with \Cref{thm:LocalizationOfEnergy} used in place of \Cref{thm:LocalizationOfSeminorm}.
    Now, by \Cref{lma:GammaConv:WeakConvOfMoll}
    \begin{equation}\label{eq:GammaLiminf:Finite:LOT}
    \lim\limits_{\delta \to 0} \cG(K_\delta u_\delta) = \cG(u)\,,
    \end{equation}
    since $\cG$ is $W^{1,p}(\Omega)$-weakly continuous.
    Thanks to \eqref{eq:LocalLimit:LOTAssump} and the continuity assumption on $\cG_{\beta>d}$ we additionally have 
    \begin{equation}\label{eq:GammaLiminf:Finite:LOT2}
        \lim\limits_{\delta \to 0} \wt{\cG}(u_\delta) = \wt{\cG}(u) \qquad \text{ and } \qquad \lim\limits_{\delta \to 0} \cG_{\beta>d}(u_\delta) = \cG_{\beta>d}(u)\,.
    \end{equation}
    Therefore \eqref{eq:GammaLiminf:Finite:Energy}, \eqref{eq:GammaLiminf:Finite:LOT}, and \eqref{eq:GammaLiminf:Finite:LOT2} establish \eqref{eq:GammaLiminf:Finite:Dirichlet}, i.e. \eqref{eq:GammaLiminf:Dirichlet} is proved.

    Second, we note that the constant sequence $\{u_\delta\}_\delta = u \in L^p(\Omega)$ serves as a recovery sequence:
    \begin{equation}\label{eq:GammaLimsup:Dirichlet}
        \overline{\cF}_0^D(u) = \lim\limits_{\delta \to 0} \overline{\cF}_\delta^D(u)\,.
    \end{equation}
    This follows from \eqref{eq:CoercivityEstimate:Dirichlet} and \Cref{thm:LocalizationOfEnergy}, along with \Cref{thm:ConvergenceOfConv:W1p} which shows that 
    $\lim\limits_{\delta \to 0} \cG(K_\delta u) = \cG(u)$.
    
    Together \eqref{eq:GammaLiminf:Dirichlet} and \eqref{eq:GammaLimsup:Dirichlet} conclude the proof.    
\end{proof}

\begin{proof}[proof of \Cref{thm:LocLimit:Dirichlet}]
    The result follows from the framework described in \cite[Theorem 1.21]{braides2002gamma}. By the $\Gamma$-limit computation in \Cref{prop:GammaLimit:Dirichlet}, it suffices to show that $\{\cF_\delta^D(u_\delta)\}_\delta$ is equi-coercive in the strong $L^p(\Omega)$ topology, i.e. that $\{u_\delta\}_\delta$ is precompact in the strong $L^p(\Omega)$ topology. But, this follows by noting that the constant $C$ appearing in  \eqref{eq:CoercivityEstimate:Dirichlet} is independent of $\delta$, 
    permitting us to apply the compactness result \Cref{thm:Compactness}.
        
    The case $g =0$ follows the same same setup and steps.
\end{proof}

\subsection{Neumann and Robin Constraints}
\begin{proof}[proof of \Cref{thm:LocLimit:Neumann}]
Similar to the Dirichlet case, we may extend the functional $\cF_\delta$, now defined  on $\mathring{\frak{W}}^{\beta,p}[\delta;q](\Omega)$, 
to a functional $\overline{\cF}_\delta^N$  by setting
$\overline{\cF}_\delta^N(u) =
        \cF_\delta(u)$  for $u \in \mathring{\frak{W}}^{\beta,p}[\delta;q](\Omega)$ while
  $ \overline{\cF}_\delta^N(u) =  
        + \infty$ for $u \in L^p(\Omega) \setminus \mathring{\frak{W}}^{\beta,p}[\delta;q](\Omega)$.
Likewise, with all the assumptions of \Cref{thm:LocLimit:Neumann}, we can extend
 $\cF_0(u)$ on $\mathring{W}^{1,p}(\Omega)$ by
defining
       $ \overline{\cF}_0^N(u)=
        \cF_0(u)$ for $u \in \mathring{W}^{1,p}(\Omega)$, while 
        $\overline{\cF}_0^N(u)=
        + \infty$ for $u \in L^p(\Omega) \setminus \mathring{W}^{1,p}(\Omega)$.
    Then  we can show  that as $\delta\to 0$, $   \overline{\cF}_0^N(u) $ is the $\Gamma$-limit  of
$\overline{\cF}_\delta^N(u)$ 
 with respect to the topology of strong convergence on $L^p(\Omega)$. Indeed, the proof follows the same steps as that of \Cref{prop:GammaLimit:Dirichlet}, with the estimate \eqref{eq:CoercivityEstimate:Neumann} used in place of \eqref{eq:CoercivityEstimate:Dirichlet}, and with
    the additional note that if a sequence $\{u_\delta\}_\delta \subset \mathring{\frak{W}}^{\beta,p}[\delta;q](\Omega)$ converges strongly in $L^p(\Omega)$ to a function $u$, then $(u)_\Omega = 0$, 
    Then the proof can be completed by following the same argument as the proof of \Cref{thm:LocLimit:Dirichlet}.
\end{proof}

\begin{proof}[proof of \Cref{thm:LocLimit:Robin}] 
  By similarly extending 
  $\cF_\delta^R$ 
  and  $\cF_0^R$ 
  to $  
  \overline{\cF}_\delta^R$
  and $  
  \overline{\cF}_0^R$ respectively, we can get a similar conclusion on the $\Gamma$-limit for the Robin case, with \eqref{eq:CoercivityEstimate:Robin} used in place of \eqref{eq:CoercivityEstimate:Dirichlet} or \eqref{eq:CoercivityEstimate:Neumann}.
Then the proof of \Cref{thm:LocLimit:Robin}
follows from argument similar to the previous proofs.
 \end{proof}

\section{Conclusion}

We have presented a study of nonlocal function spaces with heterogeneous localization, and used its features to study associated variational analysis problems.
The scaling of the kernels, and the range of $\beta$, have allowed us to treat simultaneously both fractional and convolution-type problems, with the same class of boundary information.

Additional properties of the function spaces can be recovered in a straightforward way using the analysis contained in this work, including finer embeddings, Hardy inequalities, and characterizations of dual spaces.

We note that the theory presented here applies to general Lipschitz domains. We also treat the case of general orders of differentiability, i.e. $k_q$, and $k_\lambda$, that are associated with the various functions used for localization instead of assuming them to be $\infty$ all the time.
Our primary motivation for this choice is to allow for flexibility of the models in implementation, as we demonstrate with the following scenario. 
First, let $k \geq 2$ be some integer, and suppose that $C^k$-smoothness of the heterogeneous localization $\eta$ is desired, with boundedness on all partial derivatives up to and including order $k$.
If it happens that $\Omega$ is a $C^k$ domain, then the choice of $\lambda = d_{\p \Omega}$ is possible. However, some care must be taken, as $d_{\p \Omega}$ does not belong to $C^{k}(\overline{\Omega})$, but rather there exists $\veps_\Omega > 0$ such that $d_{\p \Omega}$ is $C^k$ on the set $\{ \bx \in \overline{\Omega} : \, d_{\p \Omega}(\bx) \leq \veps_\Omega \}$; see \cite{foote1984regularity}. If $q$ is chosen to satisfy \eqref{assump:NonlinearLocalization} for $k_q = k$ with $q(r)$ constant for $r \geq \veps_\Omega$, it follows that the resulting heterogeneous localization $\eta[d_{\p \Omega},q]$ belongs to $C^k(\overline{\Omega})$.
If it is not the case that $\Omega$ is $C^k$, then one can consider, in place of $d_{\p \Omega}$, a generalized distance $\lambda$ satisfying \eqref{assump:Localization} for some $k_\lambda \geq k+1$. Then $\eta(\bx) = q(\lambda(\bx))$ belongs to $C^k(\Omega)$, but it is not guaranteed that its derivatives remain bounded near $\p \Omega$. In that case one can modify $q$, and choose instead a function $\tilde{q}$ that satisfies \eqref{assump:NonlinearLocalization} for $k_q \geq k+1$, and further satisfies $\tilde{q}'(0) = \ldots = \tilde{q}^{(k)}(0) = 0$.
Then an application of Fa\`a di Bruno's formula shows that 
$|D^\alpha \eta[\lambda,\tilde{q}](\bx)| \leq C d_{\p \Omega}(\bx)^{k+1-|\alpha|}$ for all $\bx \in \Omega$ and for all $|\alpha| \leq k$, where $C$ depends only on $\tilde{q}$, $\alpha$, and $\kappa_\alpha$.

Although the well-posedness of these variational problems   in natural function spaces has a relatively clear picture,
there are a number of fundamental questions that remain to be answered. Establishing suitable regularity properties for the models in this work is important for mathematical theory and physical consistency. At the same time, analysis of this type for generalizations of these models -- for instance nonlocal models with $\Phi$ non-convex such as in \cite{Lipton-2014} -- are worth investigating. Further, one may ask if a nonlocal analog of the Green's identity can be shown for operators that involve heterogeneous localization, so that the variational problems considered in this work can be placed in natural correspondence with a pointwise form as suggested in the example above. 
We will show, in the next paper in this series, 
that different localization strategies result in different forms of the proper nonlocal Green's identity \cite{nonlocalBCII}. Intuitively, the boundary condition for the nonlocal problem will be consistent with the classical boundary condition if the function $\eta_\delta(\bx)$ vanishes at a faster rate than $\dist(\bx, \partial \Omega)$. This calls for further (and more delicate) mathematical analysis and also bears significant consequences in the application of localization strategies to nonlocal modeling.

\section*{Acknowledgements}
The presentation of this work benefited from discussions between the authors and Zhaolong Han, Tadele Mengesha, and Xiaochuan Tian during a Structured Quartet Research Ensemble (SQuaRE) at the American Institute of Mathematics titled
\textit{Variational methods for multiscale and nonlinear nonlocal models with applications to peridynamics}
in San Jose in May 2023. 
The authors thank them for their many insightful and helpful comments, which has improved the scope and presentation of the materials. 
The authors additionally thank X. Tian for her invaluable help in completing a version of the proof of \Cref{thm:Density}.

\bibliographystyle{siamplain}
\bibliography{References2}

\end{document}